\documentclass[11pt,hyp,]{nyjm}
\usepackage{amssymb,amsmath,amsthm}

\usepackage{graphicx,epstopdf,color}
\usepackage{esint}

\usepackage{mathtools} 
\usepackage{mathrsfs}
\mathtoolsset{showonlyrefs=true}

\usepackage{dsfont}
\usepackage{verbatim}
\usepackage{enumerate}
\numberwithin{equation}{section}

\allowdisplaybreaks
\usepackage[alphabetic, initials]{amsrefs}
\usepackage[latin1]{inputenc}

\usepackage{stackengine}

\usepackage{hyperref}
\hypersetup{nesting=true,debug=true,naturalnames=true}
\usepackage{graphicx,amssymb,upref}

\newcommand{\R}{\mathbb{R}}

\newcommand{\N}{\mathbb{N}}

\usepackage{miama}
\usepackage[T1]{fontenc}

\newtheorem{theorem}{Theorem}[section]
\newtheorem{corollary}[theorem]{Corollary}
\newtheorem{lemma}[theorem]{Lemma}
\newtheorem{proposition}[theorem]{Proposition}
\newtheorem{definition}[theorem]{Definition}

\theoremstyle{remark}
\newtheorem{remark}[theorem]{Remark}
\newtheorem{example}[theorem]{Example}

\newcommand{\ml}{\mathcal{L}}

\newcommand{\Int}{\int_{\mathbb{R}^n}}
\newcommand{\am}{\mathcal{A}}

\newcommand{\bs}{\Bar{s}}

\newcommand{\into}{\int_{\Omega}}

\DeclareMathOperator*{\esssup}{ess\,sup}
\makeatletter
\newsavebox\myboxA
\newsavebox\myboxB
\newlength\mylenA

\newcommand*\xoverline[2][0.75]{%
	\sbox{\myboxA}{$\m@th#2$}%
	\setbox\myboxB\null
	\ht\myboxB=\ht\myboxA%
	\dp\myboxB=\dp\myboxA%
	\wd\myboxB=#1\wd\myboxA
	\sbox\myboxB{$\m@th\overline{\copy\myboxB}$}
	\setlength\mylenA{\the\wd\myboxA}
	\addtolength\mylenA{-\the\wd\myboxB}%
	\ifdim\wd\myboxB<\wd\myboxA%
	\rlap{\hskip 0.5\mylenA\usebox\myboxB}{\usebox\myboxA}%
	\else
	\hskip -0.5\mylenA\rlap{\usebox\myboxA}{\hskip 0.5\mylenA\usebox\myboxB}%
	\fi}
\makeatother

\let\<\langle
\let\>\rangle

\let\uml\"

\renewcommand{\leq}{\leqslant}
\renewcommand{\le}{\leqslant}
\renewcommand{\geq}{\geqslant}
\renewcommand{\ge}{\geqslant}

\renewcommand{\epsilon}{\varepsilon}

\providecommand{\keywords}[1]{\textbf{\textit{Keywords---}} #1}

\providecommand{\subjclass}[1]{\textbf{\textit{2020 Mathematics Subject Classification---}} #1}

\title[Fredholm alternative for nonlocal operators]{Fredholm alternative for a general class\\ of nonlocal operators}

\author{Francesco De Pas}  
\address{The University of Western Australia, 35 Stirling Highway,
  Crawley WA 6009, Australia} 
\email{francesco.depas@research.uwa.edu.au}  
\author{Serena Dipierro}  
\address{The University of Western Australia, 35 Stirling Highway,
  Crawley WA 6009, Australia} 
\email{serena.dipierro@uwa.edu.au}  
\author{Enrico Valdinoci}  
\address{The University of Western Australia, 35 Stirling Highway,
  Crawley WA 6009, Australia} 
\email{enrico.valdinoci@uwa.edu.au}  
\thanks{Supported by the Australian Research Council Laureate Fellowship FL190100081 and by the Australian Future Fellowship FT230100333.}
\keywords{Fractional gradient, nonlocal operator, superposition of operators of variable orders.}

\subjclass[2010]{26A33, 35R11, 47A10, 47N20, 	47F10.}

\begin{document}

\begin{abstract}
We develop a Fredholm alternative for a fractional elliptic operator~$\mathcal{L}$ of mixed order built on the notion of fractional gradient. This operator constitutes the nonlocal extension of the classical second order elliptic operators with measurable coefficients treated by Neil Trudinger in~\cite{trudinger}. We build~$\mathcal{L}$ by weighing the order~$s$ of the fractional gradient over a measure
(which can be either continuous, or discrete, or of mixed type). The coefficients of~$\mathcal{L}$ may also depend on~$s$, giving this operator a possibly non-homogeneous structure with variable exponent.
These coefficients can also be either unbounded, or discontinuous, or both.

A suitable functional analytic framework is introduced and investigated and our main results strongly rely on some custom analysis of appropriate functional spaces. 
\end{abstract}

\maketitle
\tableofcontents

\section{Introduction}\label{intro}

A classical problem in many areas of mathematics is determining whether a given equation possesses a solution and, if so, whether the solution is unique. In some cases, physical constraints introduce restrictions, known as ``resonances'', in which
specific parameter configurations either prevent solutions from existing or lead to their high multiplicity.

The case of finite-dimensional linear equations is perhaps the simplest to analyze, yet it already illustrates the potential effects of resonances caused by parameter choices. For example, consider a matrix~\(A\), a scalar~\(\lambda\), and a vector~\(b\). Elementary linear algebra examines the number of solutions \(x\) for the vector equation
\[
Ax = \lambda x + b.
\]
Two complementary scenarios arise:
\begin{itemize}
    \item\(\lambda\) is an eigenvalue of~\(A\) (resonance): The equation is solvable only if~\(b\) is orthogonal to the null space of~\((A - \lambda I)^*\), where~\(I\) is the identity matrix and the superscript~\(*\) denotes the conjugate transpose (or transpose in the real case). When this compatibility condition holds, the equation has infinitely many solutions.
    \item  \(\lambda\) is not an eigenvalue of~\(A\) (non resonance): The matrix~\(A - \lambda I\) is invertible, and the equation has a unique solution, given by~$x = (A - \lambda I)^{-1}b$.
\end{itemize}

 This simple example illustrates the interplay between resonances, compatibility conditions and solution structure, serving as a finite dimensional model to understand more complex scenarios.

The Fredholm Alternative extends these ideas to infinite-dimensional spaces, addressing linear equations involving compact operators and providing a dichotomy analogous to the finite-dimensional case. As such, the Fredholm Alternative has become a cornerstone of both abstract functional analysis and applied fields such as quantum mechanics, fluid dynamics and linear partial differential equations.

 In the context of  elliptic partial differential equations, a thorough description of this dichotomy was introduced by Neil Trudinger in~\cite{trudinger}. This framework allowed for very general linear equations, including lower-order terms and with only minimal regularity requirements on the coefficients involved.

In this article, we extend the Fredholm Alternative to nonlocal operators of fractional type. Relying on the notion of fractional gradient, we develop a framework for inhomogeneous equations and address cases involving superposition of operators of different fractional orders, including infinite sums. This extension is particularly relevant for applications, as these operators model phenomena such as biological species in which individuals exhibit diffusive patterns characterized by different L\'evy exponents. 
In this regard, the distribution of the measure weighing operators of different orders describes the proportion of a biological population adopting a certain dispersal strategy (e.g., for breeding or foraging purposes).
Moreover, this approach offers a ``unified'' treatment of both classical and nonlocal cases, with the former emerging as a specific instance of the broader theory.

The following paragraphs outline the classical setting, introduce the formalism required for nonlocal operators, and present our main results.

\subsection{The operator under consideration}\label{opISPJDL}
In~\cite{trudinger},
Neil Trudinger developed a Fredholm alternative for an elliptic operator~$\mathcal{T}$ of the form
\begin{equation}\label{trudingi}
	\mathcal{T} u:=  -\frac{\partial}{\partial x_i} \left( a^{ij}(x) u_{x_j} + a^i(x) u \right) + b^i(x) u_{x_i} + a(x) u, 
\end{equation}
whose coefficients are measurable functions on a bounded domain~$\Omega \subset \R^n$
(according to custom, here above and in the rest of the paper, the repeated index summation
convention is employed). The goal of this work is to extend this theory to a more general operator~$\mathcal{L}$ of nonlocal nature.

To this end, we recall that the \textit{fractional gradient} operator of order~$ s\in (0,1)$ can be defined as 
\begin{equation}\label{dcw}
D^s u (x) := c_{s,n} \Int \frac{(u(y)-u(x))}{|x-y|^{n+s+1}} (y-x) \, dx,
\end{equation}
where~$c_{s,n}$ is a normalizing constant vanishing as~$s$ approaches~$1$ (see Section~\ref{gitti} below). We will  denote the~$i^{th}$ component of the vector~$D^su$
by~$D_i^s u$. 

According to~\cite[Section~1]{silhavi},
the first appearance of the fractional gradient dates back to the papers~\cite{MR107788, MR500133}.
The operator in~\eqref{dcw} has been treated in the recent literature (see e.g.~\cite{MR3420498, shieh, MR3615452, MR3714833, shiehtwo, comi, silhavi, piola, mora}), though a complete understanding of its rather complex behavior is still under development, and plays an important role in the definition of nonlocal
counterparts of classical elliptic operators. 

To be consistent with the classical case,
it is customary (see e.g.~\cite{silhavi}) to extend the setting
in~\eqref{dcw} to the case~$s=1$, by taking~$D^1u$
as the classical gradient of~$u$, here denoted by~$Du$. This choice will be formally justified in Proposition~\ref{gra}
below. In general, we refer the reader to Section~\ref{gitti} for a self-contained introduction to the operator in~\eqref{dcw}.

Now, let~$\mu(s)$ be a nonnegative measure on~$[0,1]$, whose support is bounded away from~$0$. We formally define the operator~$\mathcal{L}$ as follows:
			\begin{equation}\label{rawop}
	\ml u := \int_{(0,1]} \Big( - D^s_i \left(a^{ij}(s,x) D^s_j u + a^i(s,x) u \right) + b^i(s,x) D^s_i u \Big) \, d\mu(s) + a(x) u.
\end{equation}
We stress that if~$\mu(s)$ is a Dirac delta at~$s=1$, than the operators~$\mathcal{T}$ and~$\mathcal{L}$ coincide. Furthermore, $\mu(s)$ can be either continuous, or discrete, or of mixed type, giving~$\mathcal{L}$ the nature of a fractional differential operator of mixed order (see Section~\ref{application} for some practical examples).

{W}e remark that the study of operators of mixed order is important both from the theoretical perspective and in view of concrete applications: indeed, on the one side, these operators often pose challenging theoretical questions due to their lack of scale invariance, and, on the other side, they appear naturally
in biological models, since animals of different species, and also different individuals of the same species,
in many instances exhibit different diffusive patterns of fractional type with different L\'evy exponents, see e.g.~\cite{MR4249816}.
The operator considered in~\eqref{rawop} is however structurally different than several instances already studied in the literature that dealt with general L\'evy measures, since
here we aim at capturing the salient features encoded specifically
by a ``divergence-type'' design arising from spatial inhomogeneity (but, due to the complicated 
structure of the equation under consideration, the development of the theory cannot simply rely on variations of
the classical second order theory of elliptic operators in divergence form).

{F}rom the technical standpoint, one of the main issues is
the construction of a variational framework for our Fredholm alternative.
Since the fractional gradient appears in~$\mathcal{L}$, we are led to consider suitable Bessel-type spaces~$H_0^{s,p}(\R^n)$, endowed with the norm
\begin{equation*}
 \Vert u \Vert_{H_0^{s,p}(\R^n)}:=\left( \Vert  u \Vert^p_{L^p(\R^n)}+ \Vert D^s u \Vert^p_{L^p(\R^n)}\right)^{\frac1p} .
\end{equation*}
These spaces have been extensively treated in the literature (see e.g.~\cite{shieh, comi, piola}),
nevertheless some particular features emerge regarding~$\mathcal{L}$ which will require some bespoke arguments.

Indeed, first of all, $\mathcal{L}$ weighs the fractional parameter~$s$ over a possibly continuous measure, making it difficult, for a given~$\bar{s} \in (0,1]$, to make~$D^{\bar{s}}$ appear explicitly in our operator
(notice that, in the generality that we consider, it can well be that~$\mu\{ \bar{s} \}=0$).

In addition, $\mathcal{L}$ depends on coefficients that are functions of~$s$ and~$x$. In particular, none of these coefficients are required to be bounded and it is always possible for them to be either discontinuous, or unbounded, or both, and the operator~$\mathcal{L}$ may not have a specific order of differentiation.

Also, Fredholm alternatives in their simplest formulations are often set in Hilbert spaces: however, in our case,
asking for~$\mathcal{L}$ to be well-defined in~$H_0^{s,2}(\R^n)$ would force us to impose stronger regularity assumptions on the matrix~$[a^{ij}]$, as we will show in Section~\ref{ipo}. To avoid this additional restriction, a suitable functional analytic framework will be
presented and investigated: in particular, a thorough analysis of the operator~$\mathcal{L}$ will hinge on some bespoke analysis of appropriate functional spaces.
 
\subsection{Notations}

In this article we make use of the following notations.

\begin{itemize}
\item~For any~$\Omega\subseteq\R^n$, $\mathcal{D}(\Omega)$ refers to the set of~$C^{\infty}$ functions with compact support in~$\Omega$. Also, as customary, the Schwartz space~$\mathcal{S}(\R^n)$ denotes the space of all smooth functions whose derivatives are rapidly decreasing.
\item~The symbols~$\omega_n$ and~$ S_n$ refer, respectively, to the volume and the surface of the unit ball in~$\R^n$. Moreover,~$B_R$ denotes the open ball of radius~$R>0$ centered at the origin.
\item~Given an open and bounded set~$\Omega\subset\R^n$, we suppose that~$h:\Omega\to[0,+\infty)$ is a measurable function, such that~$h^{-1} \in L^t(\Omega)$ for some~$t \in[1,+\infty]$. Then, we refer to~$L^2(h, \Omega)$ as the Hilbert space induced by the inner product
           \begin{equation*}
           	\langle u,v \rangle_{L^2(h, \Omega)} := \int_{\Omega} h(x) u (x)v(x) \, dx.
           \end{equation*}
In particular, we refer to Remark~\ref{vozzi} to show that this space is not empty.
\item~Given a Banach space $V$, we denote\footnote{ For typographical convenience, on some occasions the dual space will be denoted by~$(V)\rq{}$ instead of simply~$V\rq{}$: this occurs for instance for spaces such as~$H^{s,p}(\R^n)$ or~$ H^{s,p}_0(\Omega) $, whose dual space will be denoted, respectivly, by~$\left(H^{s,p}(\R^n)\right)\rq{}$ and~$ \left(  H^{s,p}_0(\Omega) \right) \rq{}$.} its dual space by~$V\rq{}$.
For any~$P \in V\rq{}$,~$Q \in V\rq{}$ and~$x \in V$, the notation~$\langle P, x \rangle $ \label{miwfh:302rtojgFWSa}
denotes the application of~$P$ to~$x$, while the equality~${P=Q \mbox{ in } V\rq{}}$ means
\[ \langle P, x \rangle = \langle Q, x \rangle \] for all $x\in V$.

Consistently with this setting,
and as usual in the literature, the space of tempered distributions~$ \mathcal{S}\rq{}(\R^n)$
consists of the dual of  the Schwartz space~$\mathcal{S}(\R^n)$.

\item~Given~$u \in \mathcal{S}\rq{}(\R^n)$, we denote by either~$\widehat u$ or~$\mathcal{F}(u)$ the
Fourier transform of~$u$, intended in a distributional sense\footnote{  The reason for which
we have two notations to denote the same object is merely for typographical convenience.
For instance, in the setting of the forthcoming Lemma~\ref{fr}, the notation
$\widehat u$ reads better than~$\mathcal{F}(u)$,
while~$\mathcal{F}(I_{\alpha}u)$ is clearer than~$\widehat{I_{\alpha}u}$}.
\item~Regarding the operator $\mathcal{L}$, we denote by~$\mathcal{A}=[a^{ij}]$ the coefficients matrix and by~$\mathcal{A}_S=[a_S^{ij}]$ its symmetric part.
The matrix~$\mathcal{B}=[b^{ij}]$ is the inverse of~$\mathcal{A}$.
Also,~$\mathcal{L}^*$  is the formal dual of~$\mathcal{L}$, namely 
\begin{equation*}
           \mathcal{L}^*u := \int_{(0,1]}  \Big( -D^s_i \left(a^{ji}(s,x)D^s_j u + b^i(s,x)u \right) + a^i(s,x) D_i^s u \Big) \, d\mu(s) + a(x) u.
\end{equation*}
\end{itemize}

\subsection{Hypotheses on~$\mathcal{L}$}\label{hyp_section}

This paragraph collects the assumptions we make on the operator~$\mathcal{L}$ defined in~\eqref{rawop}. These hypotheses are meant to hold throughout the whole article.

We ask~$\mu$ to be a~$\sigma$-finite and nonnegative measure on~$[0,1]$, whose support is bounded away from~$0$.

In addition, we require~$\mathcal{A}$,~$a^i(s,x)$ and~$b^i(s,x)$ to be measurable functions in~$[0,1]\times \R^n$.

Then, we ask~$\mathcal{A}$ to be positive definite for every~$s \in (0,1]$. In addition, we require that there exist~$\lambda$ , $\Lambda:\R^n\to[0,+\infty)$ such that, for any~$x$, $\xi \in \R^n$ and~$s \in (0,1]$, 
\begin{equation}\label{essa}
\lambda(x) |\xi|^2 \leq \xi^T \mathcal{A} (s,x)\,\xi \leq \Lambda(x) |\xi|^2.
\end{equation}
The quantities~$\lambda$ and~$\Lambda$ can be seen as
ellipticity bounds on~$\mathcal{A}$ (and on~$\mathcal{A}_S$). We stress that these elliptic bounds are not necessarily uniform; in fact, we only assume that there exist~$R>0$, $C>0$, $\delta>0$ and~$p<n$ such that
\begin{equation}\label{a_condi}
\begin{cases}
&\Lambda \in L^1(B_R) \quad\mbox{and}\quad \Lambda(x) \leq C |x|^{p} \quad\mbox{for any}\quad x \in \R^n \setminus B_R, \\
&\lambda^{-1}\in L^{1+\delta}_{{\rm{loc}}}(\R^n).
\end{cases}
\end{equation}  
Roughly speaking, the role of~$B_R$ in~\eqref{a_condi}
is to prescribe~$\Lambda$ to be
integrable in a neighborhood of the origin and grow at most like~$|x|^p$ at infinity. The structural
parameter~$R$ does not play a major role in this paper (in particular, we do not
need to take it sufficiently large with respect to the size of~$\Omega$).

Then, we suppose that there exists a constant~$\mathcal{K}_{\am}>0$ satisfying, for any~$x$, $\xi$, $\psi \in \R^n$ and~$s \in (0,1]$,
			\begin{equation}\label{vwm}
				\left|\xi^T \mathcal{A}(s,x) \psi\right|^2 \leq \mathcal{K}_{\am} (\xi^T \mathcal{A}(s,x)\xi)(\psi^T \mathcal{A}(s,x) \psi).
			\end{equation}
We remark that condition~\eqref{vwm} is fairly general (see e.g.
Lemma~\ref{lemon} for sufficients conditions to satisfy~\eqref{vwm}).

Concerning~$\mathcal{B}(s,x)$, $a^i(s,x)$ and~$b^i(s,x)$, we assume that there exist~$\mathcal{B}(x)=[b^{ij}(x)]$, $a^i(x)$ and~$b^i(x)$ such that, for any~$i,j=1,\ldots,n$, any~$s \in (0,1]$ and any~$x \in \R^n$, 
\begin{equation}\label{fds}
|b^{ij}(s,x)|\leq b^{ij}(x), \quad |a^i(s,x)| \leq a^i(x) \quad\mbox{and}\quad |b^i(s,x)| \leq b^i(x).
\end{equation}

\subsection{Main results}
To cope with the functional analytic difficulties mentioned in Section \ref{opISPJDL}, we set the following Hilbert spaces (see Section~\ref{ipo} for a formal introduction).

Let~$\Omega$ be a bounded domain of $\R^n$. For any~$g \in L^1(\Omega,[0,+\infty))$, we define the following scalar product in $\mathcal{D}(\Omega)$ (see Lemma~\ref{wieg})
           \begin{equation}\label{saojdcvn-435tyhnfwegbX}\begin{split}
           	\langle u, v \rangle_{H^0(\mathcal{A}, g, \Omega)} : =\;& \int_{\R^n}  \int_{(0,1]} a^{ij}_S(s,x) D_i^su (x)D^s_jv(x) \, d\mu(s) \, dx \\&\qquad+ \int_{\Omega} g(x) u(x) v(x) \, dx.\end{split}
           \end{equation}
We denote by $\|\cdot\|_{H^0(\mathcal{A}, g, \Omega)}$ the norm induced by this scalar product.

\begin{definition}\label{qeimsot}
Let~$\Omega$ be a bounded domain of $\R^n$ and~$g: \Omega \to [0, +\infty)$ be a measurable function. 
We define
\begin{equation}\label{NVLDDMDsC}
H^0(\mathcal{A}, g, \Omega) := \overline{ \mathcal{D}(\Omega) }^{\Vert \cdot \Vert_{H^0(\mathcal{A}, g, \Omega)}}.\end{equation}
\end{definition}

To ease the notation, when~$g=0$, we denote the Hilbert space in~\eqref{NVLDDMDsC}
as~$H^0(\mathcal{A}, \Omega)$.

A fundamental role will be played by the notion of compact boundedness:

\begin{definition}\label{qeimsot2}
We say that a nonnegative measurable function~$f$ is \textit{compactly bounded} on~$H^0(\am, g, \Omega )$ if, for any~$\epsilon>0$, there exists~$K_{\epsilon}\ge0$ such that, for any~$\phi \in \mathcal{D}(\Omega)$,
           \begin{equation}\label{ghhg}
           	\Vert \phi \Vert_{L^2(f, \Omega)}^2 \leq \epsilon \|\phi \|^2_{H^0(\am, g, \Omega )} + K_{\epsilon} \Vert \phi \Vert_{L^1(\Omega)}^2.
           \end{equation}
\end{definition}

\begin{remark}
The notion of compact boundedness plays a relevant role in our framework, since it allows us to retrieve a compact embedding for the space~$H^0(\am, \Omega)$ (see Theorem~\ref{trente}) which in turn is necessary in order to apply the Fredholm alternative. 
This assumption is also quite general (see e.g. Theorems~\ref{mcza} and~\ref{zega2}
for sufficient conditions in~$L^p$-class guaranteeing compact boundedness).
\end{remark}
With Definition~\ref{qeimsot2} in hand, we set
\begin{equation}\label{efe}
           f(x) := b^{ij}(x) \Big( a^i(x) a^j(x)+ b^i(x) b^j(x)\Big) + |a(x)|
\end{equation} 
and we ask~$f$ to be compactly bounded in~$H^0(\mathcal{A}, \Omega)$. We also observe that, $\mathcal{A}$ being positive definite, so it is $\mathcal{B}$, and therefore $f$ is nonnegative.

In this work we establish the Fredholm alternative in a weak framework for $\mathcal{L}$, i.e. we deal with the following bilinear form, defined on $H^0(\mathcal{A}, \Omega) \times  H^0(\mathcal{A}, \Omega)$,
\begin{equation}\label{mainop}
\begin{split}&
\left(\mathcal{L}u, v \, \right) \\&
:= \int_{\R^n} \left( \int_{(0,1]} \Big(a^{ij}(s,x) D^s_ju D^s_i v + a^i(s,x) u D^s_i v + b^i(s,x)v D^s_i u \Big)\, d\mu(s)  \right)\, dx \\
&\quad\quad+ \int_{\Omega} a(x) u v \, dx.
\end{split} 
\end{equation}
The fact that \eqref{mainop}
constitutes a meaningful variational formulation of the operator $\mathcal{L}$ in~\eqref{rawop} is justified in Theorem~\ref{hdhdhd}. In particular, we will show in Section~\ref{mainres} that this bilinear form is continuous and weakly coercive in~$H^0(\mathcal{A}, \Omega)$.

Then, for any~$u$, $v \in H^0(\mathcal{A}, \Omega)$ and~$\sigma \in \R$, we set
\begin{equation}\label{jnh}\begin{split}&
( \mathcal{I}(f) u, v ) := \langle u,v \rangle_{L^2(f, \Omega)}, \quad \mathcal{L}_{\sigma}(f) := \mathcal{L}+ \sigma  \mathcal{I}(f) \\&\mbox{and}\quad \mathcal{L}^*_{\sigma}(f) := \mathcal{L}^*+ \sigma  \mathcal{I}(f).\end{split}
\end{equation}

{W}e aim at solving the following \textit{eigenvalue problem}: given~$\sigma \in \R$ and~$T \in \left(H^0(\mathcal{A}, \Omega )\right)\rq{}$, we wish to find a function~$u \in H^0(\mathcal{A}, \Omega)$ such that
\begin{equation}\label{probbo}
 \mathcal{L}_{\sigma}(f) u = T \quad\mbox{in}\quad \left(H^0(\mathcal{A}, \Omega )\right)\rq{}.
\end{equation}

To this end, we establish the following result:

\begin{theorem}\label{provv}
Let~$f$ be as defined in~\eqref{efe} and suppose that it is compactly bounded on~$H^0(\am, \Omega)$.

Then, there exists a countable isolated set~$\Sigma$ of eigenvalues of~$\mathcal{L}$.

In particular, if~$\sigma \notin \Sigma$, then problem~\eqref{probbo} admits a unique solution~$u$.
If instead~$\sigma \in \Sigma$, then either it admits no solutions or it admits infinitely many solutions.  
\end{theorem}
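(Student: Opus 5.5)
The plan is to follow the Trudinger scheme: continuity of the bilinear form, a G\aa rding-type coercivity in which $\mathcal{I}(f)$ absorbs the lower order terms, and then the Riesz--Schauder theory for a compact perturbation of an isomorphism. Write $H:=H^0(\mathcal{A},\Omega)$.

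\textbf{Continuity.} I would show that the form in~\eqref{mainop} is bounded on $H\times H$, and that $(\mathcal{I}(f)u,v)$ is also bounded. The principal term is controlled by~\eqref{vwm} and Cauchy--Schwarz in $d\mu\,dx$. This gives $\big|\int\!\int a^{ij}D^s_juD^s_iv\big|\le \sqrt{\mathcal{K}_{\am}}\,\|u\|_H\|v\|_H$.

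For the mixed terms, use $\mathcal{B}=\mathcal{A}^{-1}$ and the Cauchy--Schwarz inequality in the metric defined by $\mathcal{A}$:
\[
|a^i(s,x)\,u\,D^s_iv|\le \big(a^T\mathcal{B}a\big)^{1/2}|u|\,\big(D^sv^T\mathcal{A}_SD^sv\big)^{1/2}.
\]
This form uses $\mathcal{A}_S$, so I would adjust it via~\eqref{vwm} if needed. The weight $a^T\mathcal{B}a$ is bounded by $f$ through~\eqref{fds}. Integrating in $\mu$ then gives
\[
\Big|\int\!\!\int a^iuD^s_iv\Big|\le C\,\|u\|_{L^2(f,\Omega)}\|v\|_H.
\]
The same argument handles the $b^i$ term. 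The zero order term is bounded by $\|u\|_{L^2(f)}\|v\|_{L^2(f)}$. Finally, $\|u\|_{L^2(f)}\le C\|u\|_H$ by compact boundedness~\eqref{ghhg} with $\epsilon=1$, together with $\|u\|_{L^1(\Omega)}\lesssim \|u\|_H$. The last bound comes from the embedding results for $H$ established earlier, namely the Poincar\'e/compact embedding of Theorem~\ref{trente}. Density of $\mathcal{D}(\Omega)$ extends all bounds to $H$.

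\textbf{Weak coercivity.} Using ellipticity and Young's inequality on the estimates above, I would get
\[
(\mathcal{L}u,u)\ge \tfrac12\|u\|_H^2 - C\|u\|_{L^2(f,\Omega)}^2.
\]
Hence $\mathcal{L}_{\sigma_0}(f)$ is coercive for $\sigma_0$ large. By Lax--Milgram, $\mathcal{L}_{\sigma_0}(f):H\to H'$ is an isomorphism.

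\textbf{Compactness and Fredholm theory.} This is the key step and the main obstacle. I must show that $\mathcal{I}(f):H\to H'$ is compact. Take a bounded sequence $u_k$ in $H$. By Theorem~\ref{trente}, which gives a compact embedding into $L^1(\Omega)$ derived from $\lambda^{-1}\in L^{1+\delta}_{\rm loc}$ and the $\Lambda$ growth conditions, a subsequence is Cauchy in $L^1(\Omega)$. Applying~\eqref{ghhg} to $u_k-u_m$ gives
\[
\|u_k-u_m\|_{L^2(f)}^2\le \epsilon C+K_\epsilon\|u_k-u_m\|_{L^1}^2,
\]
so the subsequence is Cauchy in $L^2(f,\Omega)$. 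Then $\|\mathcal{I}(f)(u_k-u_m)\|_{H'}\le C\|u_k-u_m\|_{L^2(f)}\to0$.

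\textbf{Conclusion.} Rewrite~\eqref{probbo} as
\[
u+(\sigma-\sigma_0)\,\mathcal{L}_{\sigma_0}(f)^{-1}\mathcal{I}(f)u=\mathcal{L}_{\sigma_0}(f)^{-1}T .
\]
The operator $K:=\mathcal{L}_{\sigma_0}(f)^{-1}\mathcal{I}(f)$ is compact on $H$. Riesz--Schauder theory then gives the result. The set of $\sigma$ with nontrivial kernel consists of those with $1/(\sigma_0-\sigma)$ in the spectrum of $K$ minus $\{0\}$. That set is countable with no finite accumulation point; this is $\Sigma$. For $\sigma\notin\Sigma$, the Fredholm alternative (injective implies surjective) gives a unique solution. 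For $\sigma\in\Sigma$, the kernel is nontrivial and finite dimensional, and solvability requires $T$ to annihilate $\ker\mathcal{L}^*_\sigma(f)$. So there are either no solutions or an affine family $u_0+\ker$, i.e.\ infinitely many. The adjoint identification uses the same bounds applied to $\mathcal{L}^*$.
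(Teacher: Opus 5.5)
Your proposal is correct and follows essentially the paper's own route. The chain is: continuity of the form (Proposition~\ref{ghighi}), the G\aa rding-type inequality (Proposition~\ref{ghi}), Lax--Milgram for $\mathcal{L}_{\sigma_0}(f)$ (Proposition~\ref{laximi}), compactness of $H^0(\mathcal{A},\Omega)\hookrightarrow L^2(f,\Omega)$ from the compact $L^1$ embedding plus~\eqref{ghhg} (Theorem~\ref{trente}), and then Riesz--Schauder theory, which you write out explicitly via $K=\mathcal{L}_{\sigma_0}(f)^{-1}\mathcal{I}(f)$ where the paper invokes a Hilbert-triplet version. The only slip is a citation: the embedding of $H^0(\mathcal{A},\Omega)$ into $L^1(\Omega)$ (continuous and compact) is Proposition~\ref{qer}, whereas Theorem~\ref{trente} is precisely the $L^2(f,\Omega)$ compactness you reprove in your third step.
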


Theorem~\ref{provv} is an immediate consequence of the more detailed result stated below.
          
\begin{theorem}\label{freddi}
Let~$f$ be as defined in~\eqref{efe} and suppose that it is compactly bounded on~$H^0(\am, \Omega)$.

Then, there exist~$\sigma_0\in\R$ and a countable isolated set of real numbers~$\Sigma \subset (- \infty,\sigma_0 )$ such that, for any~${\sigma} \notin \Sigma$, we have that~$ \mathcal{L}_{{\sigma}}(f)$ is a bijective mapping from~$H^0(\am, \Omega)$ to~$\left( H^0(\am, \Omega) \right)\rq{}$. 

If instead~${\sigma} \in \Sigma$, then the kernels of~$ \mathcal{L}_{{\sigma}}(f)$ and~$ \mathcal{L}^*_{{\sigma}}(f)$ coincide and are of positive, finite dimension.

In particular, problem~\eqref{probbo} admits a solution~$u \in H^0(\mathcal{A}, \Omega)$ if and only if, for any~$u^*\in H^0(\mathcal{A}, \Omega)$ which is a solution of
\begin{equation*}
\mathcal{L}^*_{\sigma}(f) u = 0 \mbox{ in } \left( H^0(\mathcal{A}, \Omega)\right)\rq{},
\end{equation*}
it holds that~$\langle T, u^* \rangle =0$.
\end{theorem}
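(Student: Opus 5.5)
The plan is to follow Trudinger's classical scheme: reduce to the Riesz--Schauder theory of compact operators on the Hilbert space $H:=H^0(\am,\Omega)$.

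\emph{Step 1: continuity and weak coercivity.} We first show that the bilinear form in~\eqref{mainop} is bounded on $H\times H$. For the principal part we use~\eqref{vwm} pointwise in $(s,x)$, then Cauchy--Schwarz in $d\mu\,dx$, which bounds it by $\sqrt{\mathcal{K}_{\am}}\,\|u\|_H\|v\|_H$. For the lower order terms we use the inverse matrix $\mathcal{B}$. Writing $a^i u D^s_i v=(\mathcal{A}^{1/2}D^s v)\cdot(\mathcal{A}^{-1/2}\cdots)$, or more concretely using $|\xi\cdot\eta|^2\le(\xi^T\mathcal{A}_S\xi)(\eta^T\mathcal{A}_S^{-1}\eta)$ together with~\eqref{fds}, we get the bound $\|u\|_{L^2(f,\Omega)}\|v\|_H$. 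Here we expect to need a comparison of $\mathcal{A}_S^{-1}$ with $\mathcal{B}$, which~\eqref{vwm} should provide up to constants. Compact boundedness of $f$, with $\|\phi\|_{L^1}\le C\|\phi\|_{L^2(f)}$-type control, or more simply the embedding of $H$ into $L^1(\Omega)$, then gives $\|u\|_{L^2(f)}\le C\|u\|_H$. For coercivity, the same estimates with Young's inequality give
\begin{equation*}
(\mathcal{L}u,u)\ge \tfrac12\|u\|_H^2 - C\|u\|^2_{L^2(f,\Omega)}.
\end{equation*}
Applying~\eqref{ghhg} with small $\epsilon$ yields $(\mathcal{L}u,u)+\sigma_0(\mathcal{I}(f)u,u)\ge c\|u\|_H^2 - K\|u\|_{L^1(\Omega)}^2$. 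By a further interpolation, or by directly choosing $\sigma_0$ large, we obtain that $\mathcal{L}_{\sigma_0}(f)$ is coercive on $H$.

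\emph{Step 2: Lax--Milgram and compactness.} By Lax--Milgram, $\mathcal{L}_{\sigma_0}(f):H\to H'$ is an isomorphism. Then
\begin{equation*}
\mathcal{L}_\sigma(f)u=T \iff u+(\sigma-\sigma_0)\,\mathcal{L}_{\sigma_0}(f)^{-1}\mathcal{I}(f)u=\mathcal{L}_{\sigma_0}(f)^{-1}T.
\end{equation*}
The key point is that $\mathcal{I}(f):H\to H'$ is compact. Take $u_k\rightharpoonup0$ in $H$. By the compact embedding of Theorem~\ref{trente}, $u_k\to0$ in $L^1(\Omega)$. Then~\eqref{ghhg} gives $\limsup\|u_k\|_{L^2(f)}^2\le\epsilon\sup\|u_k\|_H^2$ for every $\epsilon$, so $u_k\to0$ in $L^2(f,\Omega)$. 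Since $\|\mathcal{I}(f)u\|_{H'}\le C\|u\|_{L^2(f)}$, compactness follows. We expect this step to be the main obstacle, namely making rigorous that \eqref{ghhg}, stated for test functions, extends by density to $H$, and that Theorem~\ref{trente} indeed applies.

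\emph{Step 3: Riesz--Schauder.} Set $K:=\mathcal{L}_{\sigma_0}(f)^{-1}\mathcal{I}(f)$, which is compact on $H$. The equation becomes $(I-\tau K)u=\tilde T$ with $\tau=\sigma_0-\sigma$. Riesz--Schauder theory gives that the characteristic values $\tau$ form a countable set with no finite accumulation point. We define $\Sigma:=\{\sigma_0-\tau\}$.

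Positivity of $\tau$ would follow from coercivity. If $Ku=\tau^{-1}u$, then $(\mathcal{L}_{\sigma_0}(f)u,u)=\tau(\mathcal{I}(f)u,u)>0$, which forces $\tau>0$ and hence $\Sigma\subset(-\infty,\sigma_0)$. For $\sigma\notin\Sigma$, the Fredholm alternative gives bijectivity. For $\sigma\in\Sigma$, the kernel of $I-\tau K$ is finite-dimensional and nontrivial, and its dimension equals that of the kernel of the adjoint. The adjoint is identified with $\mathcal{L}^*_\sigma(f)$ through the relation $(\mathcal{L}u,v)=(\mathcal{L}^*v,u)$, which is read off from~\eqref{mainop}.

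Finally, solvability holds iff $\tilde T$ annihilates the adjoint kernel, which translates to $\langle T,u^*\rangle=0$ for all $u^*$ with $\mathcal{L}^*_\sigma(f)u^*=0$.
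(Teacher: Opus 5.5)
Your proposal is correct and follows essentially the paper's route: continuity via \eqref{vwm} and the comparison in Lemma~\ref{spexx} (Proposition~\ref{ghighi}), weak coercivity and Lax--Milgram for $\sigma\ge\sigma_0$ (Propositions~\ref{ghi} and~\ref{laximi}), compactness of $\mathcal{I}(f)$ coming from Theorem~\ref{trente}, and then Riesz--Schauder theory, which the paper only cites while you spell out the reduction to $I-\tau K$, the positivity of $\tau$ and the identification of the adjoint kernel through $(\mathcal{L}u,v)=(\mathcal{L}^*v,u)$. Two small corrections: the compact embedding into $L^1(\Omega)$ you invoke is Proposition~\ref{qer}, while Theorem~\ref{trente} already gives directly the compact embedding into $L^2(f,\Omega)$ that you re-derive; and the detour through $K\|u\|_{L^1(\Omega)}^2$ in Step~1 is unnecessary, since taking $\sigma_0$ at least your constant $C$ already gives coercivity.
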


{W}e recall that, consistently with the setting
introduced on page~\pageref{miwfh:302rtojgFWSa},
the notation~$\langle T, u^* \rangle $ denotes the application of~$T\in\left(H^0(\mathcal{A}, \Omega )\right)\rq{}$ to~$u^*\in H^0(\mathcal{A}, \Omega )$. Also,
in our framework,
the value~$\sigma_0$ mentioned in Theorem~\ref{freddi} plays the role of a coercivity constant for the operator~$\mathcal{L}_{\sigma}(f)$ (as it will be specified in Proposition~\ref{ghi}).  
\begin{remark} \label{remi}
We remark that, if~$\mu(\{1\})>0$, then Theorem~\ref{provv} and Theorem~\ref{freddi} hold true even relaxing condition~\eqref{a_condi}: in particular, 
if~$\mu(\{1\})>0$, our results remain valid even when~$\delta=0$
in~\eqref{a_condi}. This is a useful observation, since it allows us to retrieve also the
classical result for~$\mathcal{T}$ in~\cite{trudinger}, which only asks for~$ \lambda^{-1} \in L^1(\Omega)$. A formal proof for this statement is provided at the end of Section~\ref{mainres}.  
\end{remark}

{T}he rest of this article is organized as follows. We complete this introduction by providing some concrete examples of the operator~$\mathcal{L}$ to which Theorems~\ref{provv} and~\ref{freddi} apply.

Then, in Section~\ref{prelimin} we provide a self-contained introduction to the fractional gradient~$D^s $ (highlighting its connection
with the Riesz potential).

Section~\ref{asd} is devoted to some pivotal embeddings and inequalities for the Bessel-type space~$H^{s,p}_0(\Omega)$. The main focus here is on controlling
explicitly the dependence on the fractional parameter~$s$.

In Sections~\ref{ipo} and~\ref{bounded} we present the function space~$H^0(\mathcal{A},g,\Omega)$ and the concept of boundedness in this space. Finally, Section~\ref{mainres} is devoted to the proofs of Theorem~\ref{freddi} and the statement in Remark~\ref{remi}.

This paper also contains four appendices.  Appendix~\ref{sappend} contains sufficient conditions in~$L^p$-class to guarantee compact boundedness on $H^0(\mathcal{A},g,\Omega)$, while in Appendix~\ref{counterexa} we provide an example of function
that is bounded  but not compactly bounded on~$H^0(\mathcal{A},\Omega)$.
Appendix~\ref{tappend} is devoted to calculate some integrals and is used in the computation of the Fourier Transform of the fractional gradient carried out in Proposition~\ref{huy}. Finally, Appendix~\ref{fappend} contains some technical results regarding the matrix $\mathcal{A}$ and is extensively used in Section~\ref{mainres}.

\subsection{Examples and applications}\label{application}

This section presents some paradigmatic examples for the operator~$\mathcal{L}$ that we consider in this work.

\begin{example}
{I}f we take~$\mu:= \delta(1)$, then the operator~$\mathcal{L}$ in~\eqref{rawop}
boils down to the operator~$\mathcal{T}$ in~\eqref{trudingi}, that has been taken into account by Trudinger in~\cite{trudinger}. \end{example}

\begin{example}
Furthermore, we can take~$N>0$, $s_k \in (0,1]$ for any~$k=1, \ldots, N$, with~$s_N=1$, and~$\varsigma\in[0,+\infty)$
and set~$\mathcal{A}:=I$, $a^i=b^i=a=0$ and 
\[ \mu(s) := \sum_{k=1}^{N-1} \delta(s_k) + \varsigma \delta(s_N).\]
In this way,
\[ \mathcal{L}u= \sum_{k=1}^{N-1} (- \Delta)^{s_k}u + \varsigma \mathcal{T}u. \]

We stress that, when~$\varsigma\ne0$, this operator gathers both fractional and classical contributions.
This is an interesting property from the theoretical point of view, since it allows the treatment of structurally different operators via a unified method, and also in terms of concrete applications
(e.g., in the L\'evy flight foraging hypothesis it is customary to compare individuals
performing Gaussian and L\'evy dispersal strategies, in epidemic managements one may have to consider the coexistence of local and global lockdowns, etc.).\end{example}

\begin{example}
We can also consider more general operators, taking into account a measure~$\mu$ given by an infinite sum of Dirac deltas and letting~$\mathcal{A}(s,x) = \mathcal{A}(s)$
and~$a^i=b^i=a=0$. For this purpose, we consider a sequence~$(c_k)$ of nonnegative real numbers such that
\[ \sum_{k=1}^{+\infty} c_k < +\infty\]  and we define
\[\mu(s):= \sum_{k=2}^{+\infty} c_k\,\delta\left(1 - \frac{1}{k}\right) . \]
Then, we obtain the non-homogeneous operator
\[ \mathcal{L}u= -\sum_{k=2}^{+\infty} c_k \,  a^{ij} \left(1 - \frac{1}{k}\right) D_i^{1 - \frac{1}{k}} D_j^{1 - \frac{1}{k}}u \,. \]\end{example}

\begin{example}
While the previous examples deal with a discrete measure, it is also possible to consider a continuous contribution. For this, let~$\phi \in L^1(0,1)$ be positive and vanishing in a right neighbourhood of~$0$ and suppose that
\[ d\mu(s) = \phi(s)\,ds.\]
Then,
\[ \mathcal{L} u = \int_0^1  \left[ - D^s_i \left(a^{ij} D^s_j u + a^i u \right) + b^i D^s_i u \right] \phi(s) \, ds+ a u.\]

We point out that, when~$a^i=b^i=a=0$, the function~$f$ defined in~\eqref{efe} is null and thus it is compactly bounded in~$H^0(\mathcal{A}, \Omega)$. When it is not the case, instead, in order to apply Theorems~\ref{provv} and~\ref{freddi}, we must check the compact boundedness of~$f$ in~$H^0(\mathcal{A}, \Omega)$ (sufficient conditions are provided in Theorems~\ref{mcza} and~\ref{zega2}).  \end{example}

\section{A glance at the fractional gradient}\label{prelimin}

This section provides a self-contained introduction to the fractional gradient and its
connection with the Riesz potential. 

The results shown in this section are already present, at least in some form, in the literature,
but we provide here a simple and self-contained introduction to the basics of the fractional gradient and we extend the known results to the level of generality needed for our goals.
		
\subsection{The fractional gradient}\label{gitti}		
Let~$n$ denote the space dimension. For any~$s \in [-1,1)$, we set 
			\begin{equation}\label{constant}
				c_{s,n} := \frac{2^{s} \pi^{-\frac{n}{2}} \Gamma(\frac{n+s+1}{2})}{\Gamma(\frac{1-s}{2})},    
			\end{equation}
			where~$\Gamma$ is the Euler Gamma function. Since in this work the dimension~$n$ is given once and for all, from now on, we will refer to this constant simply as~$c_s$. 
			
We point out that the range~$s \in [-1,1)$ for the definition of~$c_s$ is needed when discussing
the fractional Fundamental Theorem of Calculus in~\eqref{yu} and Corollary~\ref{aar}.

			We mention here that the following results hold true for~$c_s$ (see~\cite[Lemma~2.4]{mora}):
			\begin{equation}\label{constdef}
\sup_{s \in [-1,1)} \frac{c_s}{(1-s)} < + \infty  \end{equation} and
\begin{equation}\label{gf}
 \lim_{s \nearrow 1} \frac{c_s}{(1-s)} = \frac{1}{\omega_n}. 
			\end{equation}

In line with~\cite{comi, silhavi}, we define the fractional gradient as follows.
\begin{definition}\label{curate}
Let $s \in (0,1]$ and $u \in \mathcal{D}(\R^n)$. We define the fractional gradient of $u$ as
			\begin{equation}\label{def}
				D^s u (x) := 
				\begin{cases}\displaystyle
					c_{s} \,\lim_{\epsilon \searrow 0} \displaystyle\int_{\R^n \setminus B_{\epsilon}} \frac{z u(x+z)}{|z|^{n+s+1}} \, dz & \mbox{ if } s \in(0,1), \\
					Du(x) & \mbox{ if } s=1.
				\end{cases}
			\end{equation}
\end{definition}
In Lemma~\ref{kj} here below we show that, for any $u \in\mathcal{D}(\R^n)$ with compact support, $D^s u$ is defined pointwise for any $s \in (0,1)$. Moreover, to justify this definition for~$s=1$, we will show in Proposition~\ref{gra} below
that the fractional gradient~$D^su$ converges to the classical gradient~$Du$, as~$s \nearrow 1$.

{I}n line with~\cite[Definition 2.2]{mora}, we now extend the definition in~\eqref{def} to a broader class of functions.
\begin{definition}\label{defini}
Let $s \in (0,1]$ and $p \in [1, +\infty)$.
Let $u \in L^p(\R^n)$ be such that there exists a sequence $(u_k) \subset \mathcal{D}(\R^n)$ converging to $u$ in $L^p(\R^n)$ as~$k\to+\infty$ and for which $(D^s u_k)$ is a Cauchy sequence in $L^p(\R^n, \R^n)$. 
Then, we define $D^s u$ as the limit in $L^p(\R^n, \R^n)$ of $D^s u_k$ as $k \to+ \infty$.
\end{definition} 
\begin{remark}
Definition~\ref{defini} depends neither on the sequence $(u_k)$ nor on the exponent $p$ (see \cite[Lemma 2.3]{mora}). Also, for any $u \in \mathcal{D}(\R^n)$, Definitions~\ref{curate} and~\ref{defini} coincide.
\end{remark}

We note that, by odd symmetry, for any~$\epsilon>0$,
			\begin{equation}\label{simzeromenomale09}
				\int_{\R^n \setminus B_{\epsilon}} \frac{z }{|z|^{n+s+1}} \, dz = 0.
			\end{equation}
Hence, applying the change of variable~$y:=z+x$, we obtain an equivalent definition of the fractional gradient when~$s \in (0,1)$, that is
			\begin{eqnarray*}
				D^s u (x) &=& c_{s}  \,\lim_{\epsilon \searrow 0} \int_{\R^n \setminus B_{\epsilon}} \frac{z (u(x+z)-u(x))}{|z|^{n+s+1}} \, dz  \\
				&=& c_{s}  \,\lim_{\epsilon \searrow 0} \int_{\R^n \setminus B_{\epsilon}(x)} \frac{(y-x) (u(y)-u(x))}{|x-y|^{n+s+1}} \, dy.
			\end{eqnarray*}
			
			\begin{lemma}\label{kj} Let~$s\in(0,1)$ and~$u \in \mathcal{D}(\R^n)$. Let~$\overline R >0$ be such that~$B_{\overline R}$ contains the support of~$u$.
				
				Then, for any~$x_0\in\R^N$,
				$x\in B_1(x_0)$ and~$R\geq |x_0|+ \overline R+1$, we have that
				\begin{equation}\begin{split}\label{hu43ty854b97689v6cb}
					D^su(x) &= c_s \int_{B_{R}}  \frac{z(u(x+z)-u(x))}{|z|^{n+s+1}} \, dz \\ &= c_s \int_{B_{R}(x)}  \frac{(y-x)(u(y)-u(x))}{|x-y|^{n+s+1}} \, dy.
				\end{split}\end{equation}				 
			\end{lemma}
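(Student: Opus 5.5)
The plan is to get rid of the limit $\epsilon\searrow 0$ in Definition~\ref{curate}, using the odd symmetry~\eqref{simzeromenomale09} and the compact support of $u$. First I rewrite the principal value with the subtracted term $u(x)$, as in the display just before the lemma. For every $\epsilon>0$ small,
\begin{equation*}
\int_{\R^n\setminus B_\epsilon}\frac{z\,u(x+z)}{|z|^{n+s+1}}\,dz=\int_{\R^n\setminus B_\epsilon}\frac{z\,(u(x+z)-u(x))}{|z|^{n+s+1}}\,dz .
\end{equation*}
This subtraction is legitimate on the whole exterior domain only after splitting, because $\int_{\R^n\setminus B_R} |z|^{-n-s}\,dz<\infty$ for $s>0$. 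So I split $\R^n\setminus B_\epsilon=(B_R\setminus B_\epsilon)\cup(\R^n\setminus B_R)$ and argue on each piece.

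On the outer region $\R^n\setminus B_R$, the choice $R\ge |x_0|+\overline R+1$ and $|x-x_0|<1$ give $|x+z|\ge |z|-|x|>R-|x_0|-1\ge \overline R$. Hence $u(x+z)=0$ there. The outer contribution of $u(x+z)-u(x)$ is therefore $-u(x)\int_{\R^n\setminus B_R} z|z|^{-n-s-1}dz$, which vanishes by odd symmetry; this integral converges absolutely since $s>0$. So the exterior part contributes nothing, and the integral over $\R^n\setminus B_\epsilon$ equals the integral over $B_R\setminus B_\epsilon$.

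On the inner region $B_R\setminus B_\epsilon$, I pass to the limit $\epsilon\searrow 0$ by dominated convergence. Since $u$ is smooth with bounded gradient, $|z|\,|u(x+z)-u(x)|\,|z|^{-n-s-1}\le \|Du\|_{L^\infty}|z|^{1-n-s}$, which is integrable on $B_R$ because $s<1$. Thus the limit exists and equals $\int_{B_R}$, which is the first identity in~\eqref{hu43ty854b97689v6cb}. The second identity follows from the change of variables $y=x+z$, which maps $B_R$ onto $B_R(x)$.

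The only delicate point is the order of operations. The integral of $z\,u(x)|z|^{-n-s-1}$ over the punctured space is not absolutely convergent near the origin, so the subtraction of $u(x)$ must be done for fixed $\epsilon$ before letting $\epsilon\to0$. Doing it that way resolves the issue.
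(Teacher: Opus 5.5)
Your proof is correct and follows essentially the same route as the paper: you use odd symmetry~\eqref{simzeromenomale09} at fixed $\epsilon$ to insert $-u(x)$, the support of $u$ to discard $\R^n\setminus B_R$ (where $|x+z|>\overline R$), dominated convergence on $B_R$ with the Lipschitz bound $\|Du\|_{L^\infty}|z|^{1-n-s}$, and finally the change of variables $y=x+z$. Your remark that the subtraction is legitimate ``only after splitting'' is unnecessary, since $|z|^{-n-s}$ is already integrable on $\R^n\setminus B_\epsilon$ for each fixed $\epsilon>0$, but it does no harm.
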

			
			\begin{proof} We use the short notation~$R_0:=|x_0|+ \overline R+1$. Let now~$R\ge R_0$.
			We notice that, for every~$x\in B_1(x_0)$ and~$z\in\R^n\setminus B_R$,
			$$|x+z|\ge|z|-|x-x_0|-|x_0|>R-1-|x_0|\ge R_0-1-|x_0|=\overline R$$
			and accordingly~$u(x+z)=0$.
			
			Therefore, recalling also~\eqref{simzeromenomale09}, we find that, for any~$\epsilon>0$,
\begin{eqnarray*}
&&\int_{\R^n \setminus B_{\epsilon}} \frac{z (u(x+z)-u(x))}{|z|^{n+s+1}} \, dz
= \int_{\R^n \setminus B_{\epsilon}} \frac{z u(x+z)}{|z|^{n+s+1}} \, dz\\&&\qquad
= \int_{B_{R_0} \setminus B_{\epsilon}} \frac{z u(x+z)}{|z|^{n+s+1}} \, dz=
\int_{B_{R_0} \setminus B_{\epsilon}} \frac{z (u(x+z)-u(x))}{|z|^{n+s+1}} \, dz.
\end{eqnarray*}
			Consequently,
\begin{equation}\label{hu43ty854b97689v6cb2}\begin{split}
				D^s u(x) &= c_{s}  \lim_{\epsilon \searrow 0} \int_{\R^n \setminus B_{\epsilon}} \frac{z (u(x+z)-u(x))}{|z|^{n+s+1}} \, dz \\&=  c_{s}  \lim_{\epsilon \searrow 0} \int_{B_{R_0} \setminus B_{\epsilon}} \frac{z (u(x+z)-u(x))}{|z|^{n+s+1}} \, dz.\end{split}
\end{equation}
				
Now we denote by~$L_u$ the Lipschitz constant of~$u$ and by~$\mathds{1}_{A}$ the characteristic function of a set~$A\subset\R^n$. Then, we set
				\begin{eqnarray*}
					g(z)&:= &\dfrac{L_u\mathds{1}_{\{|z|\leq 1\}}}{|z|^{n-(1-s)}} + \dfrac{2 \Vert u \Vert_{L^{\infty}(\R^n)} \mathds{1}_{\{|z|>1\}}}{|z|^{n+s}}
				\\{\mbox{and }}\quad  
	f_{\epsilon}(z)&:=&\dfrac{z(u(x+z)-u(x))}{|z|^{n+s+1}} \mathds{1}_{\{|z|> \varepsilon\}}.\end{eqnarray*}
Since~$g \in L^1(\R^n)$ and~$|f_\epsilon| \leq g$ for any~$\varepsilon>0$, we can apply the Dominated Convergence Theorem to obtain that				
\begin{eqnarray*} \lim_{\epsilon \searrow 0}
\int_{B_{R_0} \setminus B_{\epsilon}}   \dfrac{z(u(x+z)-u(x))}{|z|^{n+s+1}} \, dz &=&
					\lim_{\epsilon \searrow 0}  \int_{B_{R_0}} f_\epsilon(z) \, dz   \\&=&  \int_{B_{R_0}}  \frac{z(u(x+z)-u(x))}{|z|^{n+s+1}} \, dz,    
\end{eqnarray*}
which, together with~\eqref{hu43ty854b97689v6cb2}, establishes
the first line in~\eqref{hu43ty854b97689v6cb}.
Moreover, by the change of variables~$z= y-x$, we obtain the second line in~\eqref{hu43ty854b97689v6cb}.
			\end{proof}  With the aid of Lemma \ref{kj} we now prove the following continuity result for the fractional gradient:
			
\begin{corollary}\label{ptems}
Let $s \in (0,1]$ and~$u \in \mathcal{D}(\R^n)$. Then, for any $x_0 \in \R^n$, 
\[\lim_{x\to x_0} D^s u(x) = D^s u(x_0). \]
\end{corollary}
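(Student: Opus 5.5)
We split into the two cases of Definition~\ref{curate}. If $s=1$, then $D^1u=Du$ is the classical gradient of a smooth function, so continuity is immediate. The substantive case is $s\in(0,1)$, where we use the representation of Lemma~\ref{kj} on a fixed ball of integration together with the Dominated Convergence Theorem.

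Fix $x_0\in\R^n$ and let $\overline R$ be such that $B_{\overline R}$ contains the support of $u$. Set $R:=|x_0|+\overline R+1$. By Lemma~\ref{kj}, for every $x\in B_1(x_0)$, and in particular at $x=x_0$,
\[
D^su(x)=c_s\int_{B_R}\frac{z\,(u(x+z)-u(x))}{|z|^{n+s+1}}\,dz .
\]
The key point is that the domain of integration $B_R$ does not depend on $x$ as long as $x\in B_1(x_0)$. It therefore suffices to take any sequence $x_k\to x_0$ with $x_k\in B_1(x_0)$ and pass to the limit under the integral sign.

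To do so, we introduce
\[
h_k(z):=\frac{z\,(u(x_k+z)-u(x_k))}{|z|^{n+s+1}} .
\]
Since $u$ is continuous, $h_k(z)\to \frac{z(u(x_0+z)-u(x_0))}{|z|^{n+s+1}}$ for every $z\neq0$. For a $k$-independent majorant, we argue as in the proof of Lemma~\ref{kj}:
\[
|h_k(z)|\le \frac{L_u\,\mathds{1}_{\{|z|\le1\}}}{|z|^{n-(1-s)}}+\frac{2\Vert u\Vert_{L^\infty(\R^n)}\,\mathds{1}_{\{|z|>1\}}}{|z|^{n+s}},
\]
where $L_u$ is the Lipschitz constant of $u$. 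This function is integrable on $B_R$ because $s\in(0,1)$. The Dominated Convergence Theorem then gives $D^su(x_k)\to D^su(x_0)$, and since the sequence was arbitrary the limit holds as $x\to x_0$.

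The main obstacle would have been that the principal value in~\eqref{def} and the unbounded domain both depend on $x$. Lemma~\ref{kj} has already removed both issues locally uniformly in $x$, so the only remaining check is that the majorant is independent of $k$. This holds because it uses only the global quantities $L_u$ and $\Vert u\Vert_{L^\infty(\R^n)}$.
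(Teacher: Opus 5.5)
Your proposal is correct and follows essentially the paper's argument. You use Lemma~\ref{kj} to reduce to an integral over a fixed ball $B_R$ for all $x\in B_1(x_0)$, and then apply the Dominated Convergence Theorem; the only (cosmetic) difference is that the paper uses the single majorant $\Vert Du\Vert_{L^\infty(\R^n)}|z|^{-(n+s-1)}$, which is integrable on the bounded ball $B_R$, whereas you reuse the split majorant from the proof of Lemma~\ref{kj}.
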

\begin{proof}
If $s=1$ the result is trivial. Thus, from now on, we suppose that~$s \in (0,1)$.

Let $x_0 \in \R^n$
and~$\overline R >0$ be such that~$B_{\overline R}$ contains the support of~$u$.
Thus, by Lemma~\ref{kj}, for all $R\ge|x_0|+\overline R+1$ and for all~$x\in B_1(x_0)$ we have that
\begin{equation}
D^su(x) = c_s \int_{B_R}  \frac{z(u(x+z)-u(x))}{|z|^{n+s+1}} \, dz \quad\mbox{for any}\quad x \in (x_0-1, x_0+1).
\end{equation}

Now we define
\[ f(z) := \frac{\|Du\|_{L^{\infty}(\R^n)}}{|z|^{n-(1-s)}} \]and we note that $f \in L^1(B_R)$.

Moreover, for any $z \in B_R $,
\begin{equation*}\begin{split}&
\left|\frac{z(u(x+z)-u(x))}{|z|^{n+s+1}}\right|\le
\frac{1}{|z|^{n+s}}\left|\int_0^1 D u(x+tz)\cdot z\,dt\right|\\&\qquad\qquad\le \frac{\|Du\|_{L^{\infty}(\R^n)}}{|z|^{n+s-1}} =f(z).\end{split}
\end{equation*}
As a consequence, we can apply the Dominated Convergence Theorem and exploit the continuity of~$u$ to obtain that
\[ \lim_{x\to x_0} \int_{B_R}   \frac{z(u(x+z)-u(x))}{|z|^{n+s+1}} \, dz = \int_{B_R}  \frac{z(u(x_0+z)-u(x_0))}{|z|^{n+s+1}} \, dz ,\]which gives the desired result.
\end{proof}

We are now ready to provide a  justification to the definition given in~\eqref{def} for~$s=1$. The following proposition, indeed, shows that the fractional gradient converges pointwise to the classical gradient as~$s \nearrow 1$. 

\begin{proposition}\label{gra}
				Let~$u \in \mathcal{D}(\R^n)$. Then, for any~$x \in \R^n$,
				\[ \lim_{s \nearrow 1}  D^s u (x) =  Du (x) .\]
				
			\end{proposition}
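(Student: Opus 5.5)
We fix $x\in\R^n$ and use the representation from Lemma~\ref{kj} with $R\ge |x|+\overline R+1$, so that for every $s\in(0,1)$
\[
D^su(x)=c_s\int_{B_R}\frac{z\,(u(x+z)-u(x))}{|z|^{n+s+1}}\,dz .
\]
The strategy is to Taylor expand $u(x+z)-u(x)$ to first order. The linear part should carry all the mass as $s\nearrow1$, because $c_s\sim (1-s)/\omega_n$ by~\eqref{gf}. The remainder should be killed by the vanishing factor $c_s$.

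\textbf{First step: the splitting.} We write $u(x+z)-u(x)=Du(x)\cdot z+r(x,z)$, where $|r(x,z)|\le \tfrac12\|D^2u\|_{L^\infty(\R^n)}|z|^2$. The remainder contributes at most
\[
c_s\,\tfrac12\|D^2u\|_{L^\infty(\R^n)}\int_{B_R}|z|^{2-n-s}\,dz=c_s\,\tfrac12\|D^2u\|_{L^\infty(\R^n)}\,\frac{S_n R^{2-s}}{2-s}.
\]
Since $c_s\to0$ as $s\nearrow1$ (by~\eqref{constdef} or~\eqref{gf}), this term tends to $0$.

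\textbf{Second step: the linear part.} The linear term is the vector $c_s\int_{B_R}\frac{z\,(z\cdot Du(x))}{|z|^{n+s+1}}\,dz$. By symmetry, $\int_{B_R} z_iz_j|z|^{-n-s-1}\,dz=\frac{\delta_{ij}}{n}\int_{B_R}|z|^{1-n-s}\,dz$. In polar coordinates this equals $\frac{\delta_{ij}}{n}S_n\frac{R^{1-s}}{1-s}$. Hence the linear term equals
\[
\frac{c_s}{1-s}\cdot\frac{S_n}{n}R^{1-s}\,Du(x).
\]

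\textbf{Third step: passing to the limit.} As $s\nearrow1$ we have $R^{1-s}\to1$, and $\frac{c_s}{1-s}\to\frac1{\omega_n}$ by~\eqref{gf}. Using $S_n=n\omega_n$, the linear term therefore tends to $\frac{1}{\omega_n}\cdot\frac{n\omega_n}{n}Du(x)=Du(x)$. Adding the vanishing remainder from the first step gives the claim.

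The only delicate point is the normalization, namely checking that $S_n/(n\omega_n)=1$ with the paper's convention ($\omega_n$ the volume and $S_n$ the surface measure of the unit ball). The rest is routine: the radius $R$ can be fixed independently of $s$, which is exactly what Lemma~\ref{kj} allows.
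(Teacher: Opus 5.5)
Your proposal is correct and follows essentially the same route as the paper. You localize via Lemma~\ref{kj} and split the first-order Taylor expansion into two pieces: a linear part, computed by symmetry and polar coordinates as $\frac{c_s}{1-s}\frac{S_n}{n}R^{1-s}Du(x)\to Du(x)$ by~\eqref{gf}, and a quadratic remainder of size $O\big(c_sR^{2-s}/(2-s)\big)$, which vanishes as $s\nearrow1$. Your explicit bound $\frac12\|D^2u\|_{L^\infty(\R^n)}|z|^2$ simply makes the paper's $O(|z|^2)$ term precise.
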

			
			\begin{proof} Let~$x \in \R^n$.
			In order to establish the claim of Proposition~\ref{gra}, we prove that, for any~$i= 1, \ldots, n$, 
			\begin{equation}\label{firty34586fejhkwfgw4ketgu4i43908}
			\lim_{s \nearrow 1}  D^s_i u (x) =  D_i u (x).\end{equation}
For this, we suppose that the support of~$u$ is contained in some ball~$B_R$ and set~$R_x:=|x|+R+1$. In this way, by Lemma~\ref{kj}, we have that
				\[D^s_i u (x) = c_s \int_{B_{R_x}}\frac{z_i (u(x+z)-u(x))}{|z|^{n+s+1}}  \, dz.\]
Thus,
\begin{equation}\label{popo}	D^s_i u (x) = c_s \int_{B_{R_x}} \frac{z_i (D_ku(x) z_k+ O(|z|^2)) }{|z|^{n+s+1}} \, dz  = A_s(x) + B_s(x),
				\end{equation}
where
\begin{eqnarray*} A_s(x) &:=& c_s D_ku(x) \int_{B_{R_x}}  \frac{ z_i z_k }{|z|^{n+s+1}} \, dz \\
{\mbox{and }}\qquad
B_s(x) &:=& c_s \int_{B_{R_x}} \frac{z_i  O(|z|^2)}{|z|^{n+s+1}} \, dz. \end{eqnarray*}
We observe that
				\[\int_{B_{R_x}}  \frac{ z_i z_k }{|z|^{n+s+1}} \, dz= 0,\]
				whenever~$k \neq i$, and therefore
\begin{eqnarray*}
					A_s(x) &=& c_s D_iu(x) \int_{B_{R_x}}  \frac{ z^{2}_i }{|z|^{n+s+1}} \, dz \\&=& \frac{c_s}{n} D_iu(x)   \int_{B_{R_x}}  \frac{ dz  }{|z|^{n-(1-s)}} 
					\\&=& \frac{c_s}{(1-s)}   \frac{R_x^{1-s}S_{n-1}}{n}D_iu(x).\end{eqnarray*}
				Using~\eqref{gf} and the relation~$\omega_n= S_{n-1}/n$, we thereby obtain that
$$				\lim_{s \nearrow 1} A_s(x) = D_iu(x).$$

Accordingly, to complete the proof of~\eqref{firty34586fejhkwfgw4ketgu4i43908},
it remains to check that
\begin{equation}\label{firty34586fejhkwfgw4ketgu4i439082}
\lim_{s\nearrow 1} B_s(x) =0.
\end{equation}
For this, we notice that
$$ |B_s(x)|\le c_s \int_{B_{R_x}} \frac{O(|z|^2)}{|z|^{n+s}} \, dz 
= \frac{c_s O(R_x^{2-s})}{2-s},	$$
which vanishes as~$s\nearrow1$ (thanks to~\eqref{gf}), thus establishing~\eqref{firty34586fejhkwfgw4ketgu4i439082}
and completing the proof of~\eqref{firty34586fejhkwfgw4ketgu4i43908}, as desired.
\end{proof}

We mention that, as~$s \nearrow 1$, the convergence of the fractional gradient to the classical gradient does not hold only pointwise. For example, for any~$p \in (1, +\infty)$ and~$u \in W^{1,p}(\R^n)$, we have that (see~\cite[Theorem~3.2]{mora})
			\[ {\mbox{$D^s u \to  Du$ in~$L^p(\R^n)$ as~$s \nearrow 1$.}}\]


Now, we study the decay rate at infinity of the fractional gradient operator, that, under suitable integrability assumptions on~$u$, can be shown to be polynomial:
			
			\begin{proposition}\label{usasempre}
				Let~$s\in (0,1)$,~$u \in \mathcal{D}(\R^n)$ and  let~$ R >0$ be such that~$B_{ R}$ contains the support of~$u$.			
				Then, for any~$x\in\R^n\setminus B_{ 2 R}$,
				\[  |D^s u (x)| \leq  \frac{  2^{n+s} c_s  \Vert u \Vert_{L^1\left(B_{ R}\right)}} {|x|^{n+s}} . \]
			\end{proposition}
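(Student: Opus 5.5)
The plan is to use the fact that, for $x$ far from the support of $u$, the principal value in Definition~\ref{curate} is not singular. Indeed, fix $x\in\R^n\setminus B_{2R}$. For $y$ in the support of $u$ we have $|y|<R\le |x|/2$, hence
\[
|x-y|\ge |x|-|y|\ge \tfrac{|x|}{2}>0 .
\]
Writing $D^su$ in the form $c_s\lim_{\epsilon\searrow0}\int_{\R^n\setminus B_\epsilon(x)}\frac{(y-x)\,u(y)}{|x-y|^{n+s+1}}\,dy$, which follows from \eqref{def} after the change of variable $y=x+z$, we see that the integrand vanishes on $B_{|x|/2}(x)$. As soon as $\epsilon<|x|/2$, the truncated integral therefore equals
\[
c_s\int_{B_R}\frac{(y-x)\,u(y)}{|x-y|^{n+s+1}}\,dy ,
\]
and this no longer depends on $\epsilon$. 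The limit is then trivial. Alternatively one can invoke Lemma~\ref{kj}, but the direct observation is cleaner and needs no regularity of $u$ beyond integrability.

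Next I take absolute values inside the integral:
\[
|D^su(x)|\le c_s\int_{B_R}\frac{|u(y)|}{|x-y|^{n+s}}\,dy .
\]
On $B_R$ the denominator is bounded below by $(|x|/2)^{n+s}$, which gives
\[
|D^su(x)|\le c_s\,\frac{2^{n+s}}{|x|^{n+s}}\,\|u\|_{L^1(B_R)} .
\]
This is the claimed estimate.

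No step here is a genuine obstacle. The only point needing care is justifying the removal of the principal value, that is, noting that the integrand is identically zero near the singularity $y=x$ once $|x|\ge 2R$, so the limit in $\epsilon$ is attained for all small $\epsilon$. One should also note that $c_s>0$ for $s\in(0,1)$ by \eqref{constant}, so pulling $c_s$ out of the absolute value is legitimate.
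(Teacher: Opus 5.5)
Your proof is correct and follows essentially the same route as the paper: once $|x|\ge 2R$, the integrand is supported in $B_R$, where $|x-y|\ge |x|/2$, and the bound follows immediately. The only cosmetic difference is that the paper removes the principal value by citing Lemma~\ref{kj} and using $u(x)=0$, whereas you observe directly that the truncated integrals stabilize for $\epsilon<|x|/2$.
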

			
			\begin{proof}
Let~$x \in \R^n \setminus B_{2  R}$. For any~$y \in B_{ R}$, we have that
				$$
					|x-y| \ge |x|-|y|\ge |x|-\frac{|x|}2= \frac{|x|}{2}. $$
				As a consequence,  making use of Lemma~\ref{kj}, we find that
				\begin{eqnarray*}
				&&	|D^s u (x)| \leq  c_s \Int \frac{|u(x)-u(y)|}{|x-y|^{n+s}} \, dy =  c_s \int_{\R^n} \frac{|u(y)|}{|x-y|^{n+s}} \, dy \\ &&\qquad=  c_s \int_{B_{ R}} \frac{|u(y)|}{|x-y|^{n+s}} \, dy \leq \frac{  2^{n+s} c_s} {|x|^{n+s}} \Vert u \Vert_{L^1\left(B_{ R}\right)},
			\end{eqnarray*} as desired.\end{proof}
			
			Regarding the regularity properties of the fractional gradient, we recall that, for any~$s \in (0,1)$
			and~$u \in \mathcal{D}(\R^n)$, one has that (see~\cite[Lemma~3.1]{piola})
			\begin{equation}\label{fracpiola}
				{\mbox{$D^s u \in L^p (\R^n)$ for all~$p \in [1, +\infty]$.}}
			\end{equation}
			
			In particular, the next lemma ensures that the~$L^{\infty}$ norm of the fractional gradient can be bounded in terms of the~$L^{\infty}$ norm of the classical gradient, uniformly with respect to~$s \in (0,1]$.
			
			\begin{lemma}\label{drday}
				Let~$u \in \mathcal{D}(\R^n)$ and let~$R>1/3$ be such that~$B_{R}$ contains the support of~$u$. 
				
				Then, there exists a positive constant $C$, depending only on~$n$ and~$R$, such that
				\[\sup_{s \in (0,1]} \Vert D^s u \Vert_{L^{\infty}(\R^n)} \leq C \Vert Du \Vert_{L^{\infty}(\R^n)}.\]
			\end{lemma}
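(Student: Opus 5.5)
The case~$s=1$ is immediate, since~$D^1u=Du$. For~$s\in(0,1)$ we fix~$x\in\R^n$ and split the integral defining~$D^su(x)$ into a near part~$|z|\le 1$ and a far part~$|z|>1$, using the representation of Lemma~\ref{kj}:
\[
D^su(x)=c_s\int_{B_1}\frac{z(u(x+z)-u(x))}{|z|^{n+s+1}}\,dz+c_s\int_{\R^n\setminus B_1}\frac{z\,u(x+z)}{|z|^{n+s+1}}\,dz,
\]
where in the far part the term with~$u(x)$ has been dropped thanks to~\eqref{simzeromenomale09}. The near part is absolutely convergent by the mean value bound~$|u(x+z)-u(x)|\le\|Du\|_{L^\infty(\R^n)}|z|$.

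For the near part we get
\[
c_s\|Du\|_{L^\infty(\R^n)}\int_{B_1}\frac{dz}{|z|^{n+s-1}}=\frac{c_s\,S_{n-1}}{1-s}\,\|Du\|_{L^\infty(\R^n)}.
\]
This quantity is bounded uniformly in~$s$ thanks to~\eqref{constdef}. This is exactly where the blow-up of the kernel as~$s\nearrow1$ is compensated by the vanishing of~$c_s$, and it is the only delicate point.

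For the far part we need to control~$u$ itself by~$Du$. Since~$u$ is supported in~$B_R$, integrating along a ray from a point outside~$B_R$ gives~$\|u\|_{L^\infty(\R^n)}\le 2R\|Du\|_{L^\infty(\R^n)}$. Moreover, the integrand vanishes unless~$x+z\in B_R$, so the far part is bounded by
\[
c_s\|u\|_{L^\infty(\R^n)}\int_{\{|z|>1\}\cap (B_R-x)}\frac{dz}{|z|^{n+s}}\le c_s\|u\|_{L^\infty(\R^n)}\,\frac{|B_R|}{1}=c_s\,\omega_n R^n\|u\|_{L^\infty(\R^n)},
\]
using~$|z|^{-n-s}\le1$ on~$\{|z|>1\}$. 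Since~$c_s$ is bounded on~$(0,1)$ by~\eqref{constdef}, this gives a bound~$C(n,R)\|Du\|_{L^\infty(\R^n)}$.

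Summing the two parts yields the claim, with~$C$ depending only on~$n$ and~$R$. The hypothesis~$R>1/3$ plays no essential role in this argument. It presumably just makes some constant, such as~$\max\{1,R\}$-type factors, cleaner in the authors' version. The main point to watch is the uniformity as~$s\nearrow1$, which is handled entirely by~\eqref{constdef}. Note that no lower bound on~$s$ is required, since~$c_s$ remains bounded as~$s\searrow0$ as well.
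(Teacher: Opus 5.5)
Your proof is correct, but it organizes the estimate differently from the paper. The paper splits according to the position of the point $x$. For $|x|\ge 2R$ it uses the decay bound of Proposition~\ref{usasempre} together with $\|u\|_{L^1(B_R)}\le |B_R|\,\|u\|_{L^\infty(B_R)}\le 2R|B_R|\,\|Du\|_{L^\infty}$. For $x\in B_{2R}$ it truncates the integral to $B_{3R}(x)$ via Lemma~\ref{kj} and applies the Lipschitz bound on that whole ball, which gives a term of order $c_s(3R)^{1-s}/(1-s)$.

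You instead split in the integration variable at the fixed scale $|z|=1$, uniformly in $x$. On $B_1$ you use the Lipschitz bound. On $\{|z|>1\}$ you combine $\|u\|_{L^\infty}\le 2R\|Du\|_{L^\infty}$ with the fact that $u(x+\cdot)$ is supported in a set of measure $|B_R|$.

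Both arguments rest on the same two facts: the boundedness of $c_s/(1-s)$ from \eqref{constdef}, and the Poincar\'e-type sup bound on $u$. Yours avoids the case distinction in $x$ and does not need Proposition~\ref{usasempre}, so it is slightly more self-contained. (The split itself follows directly from the definition and \eqref{simzeromenomale09}, not really from Lemma~\ref{kj}.)

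The price is a worse dependence on $R$. Your constant is of order $1+R^{n+1}$, whereas the paper's is of order $\max\{1,R\}$. Cutting at $|z|=R$ instead of $|z|=1$, and using $|z|^{-n-s}\le R^{-n-s}$ on the far region, would recover the paper's scaling $R^{1-s}$. Since the statement only asks for a constant depending on $n$ and $R$, this is immaterial. You are also right that the hypothesis $R>1/3$ plays no essential role in either argument.
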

			
			\begin{proof}
We notice that, if~$s=1$, we can take~$C=1$ to obtain the desired estimate. Hence, from now on, we suppose that~$s \in (0,1)$.
				
				We observe that, since~$u \in \mathcal{D}(\R^n)$, we can choose~$x_0 \in B_{R}$ satisfying~$u(x_0)=0$. Furthermore, applying the Fundamental Theorem of Calculus, we obtain that, for any~$x \in \R^n$,
				\begin{equation*}\begin{split}&
				|u(x)| =|u(x)-u(x_0)|= \left|
				\int_0^1 Du(tx + (1-t)x_0) \cdot  (x-x_0) \, dt \right|\\&\qquad\qquad\leq 2R \Vert Du \Vert_{L^{\infty}(B_{R})}.	\end{split}
				\end{equation*}
				As a consequence,
$$				\Vert u \Vert_{L^{\infty}(B_{R})} \leq 2R \Vert Du \Vert_{L^{\infty}(B_{R})}.
$$
{F}rom this and Proposition~\ref{usasempre}, we have that, for any~$x \in \R^n \setminus B_{2R} $,
\begin{equation}\begin{split}\label{primd}
&|D^s u (x) | \leq \frac{2^{n+s} c_s \Vert u \Vert_{L^{1}(B_{R})}} {|x|^{n+s}} 
\leq  \frac{2^{n+s} c_s |B_{R}| \Vert u \Vert_{L^{\infty}(B_{R})} } {|x|^{n+s}}   \\
&\qquad\leq  \frac{c_s|B_{R}| \Vert u \Vert_{L^{\infty}(B_{R})} } {{R}^{n+s}} 
\leq 
\frac{2c_s|B_{R}|  } {{R}^{n+s-1}}  \Vert Du \Vert_{L^{\infty}(B_{R})}.\end{split}\end{equation} 

Furthermore, by Lemma~\ref{kj}, we have that,
for any~$x \in B_{2R}$,
\begin{equation*}\begin{split}
|D^s u (x)| &\leq c_s \int_{B_{3R}(x)} \frac{|u(y)-u(x)|}{|x-y|^{n+s}} \, dy \\&\leq c_s \Vert Du \Vert_{L^{\infty}(B_{R})} \int_{B_{3R}} \frac{dz}{|z|^{n-(1-s)}}\\&
=  \frac{c_s S_n (3R)^{1-s}}{(1-s)}   \Vert Du \Vert_{L^{\infty}(B_{R})} . \end{split}\end{equation*} 
This and~\eqref{primd} entail that
\begin{equation*}
\|D^s u \|_{L^\infty(\R^N)}\le\left( \frac{2c_s|B_{R}|  } {{R}^{n+s-1}}+ \frac{c_s S_n (3R)^{1-s}}{(1-s)}   \right) \Vert Du \Vert_{L^{\infty}(B_{R})}.
\end{equation*}
In light of~\eqref{constdef} and~\eqref{gf}, we obtain that the constant appearing
in the above estimate depends only on~$n$ and~$R$, but is independent of~$s\in(0,1)$, as desired.
\end{proof}
			
There exists a fractional Fundamental Theorem of Calculus, valid for any~$u \in \mathcal{D}(\R^n)$, stating that, for any~$x$, $y \in \R^n$,
			\begin{equation}\label{yu}
				u(y)-u(x) = c_{-s} \Int \left( \frac{(z-x)}{|z-x|^{n+1-s}} - \frac{(z-y)}{|z-y|^{n+1-s}} \right) \cdot D^{s}u(z) \, dz.
			\end{equation} 
			While we refer to~\cite[Theorem~3.12]{comi} for a detailed proof of~\eqref{yu}, we show here a useful consequence.
			
			\begin{corollary}\label{aar}
				Let~$s \in (0,1)$. Then, for any~$u \in \mathcal{D}(\R^n)$ and any~$x \in \R^n$,
				\[ u(x)= c_{-s} \Int \frac{(x-z)}{|x-z|^{n-s+1}}\cdot D^s u (z) \, dz. \]
			\end{corollary}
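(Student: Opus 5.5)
The plan is to derive Corollary~\ref{aar} from the fractional Fundamental Theorem of Calculus~\eqref{yu} by fixing $y$ and letting $|y|\to+\infty$. Since $u\in\mathcal{D}(\R^n)$, we have $u(y)=0$ as soon as $|y|$ is large. Rewriting~\eqref{yu} with the roles of $x$ and $y$ arranged so that the term with $x$ carries the sign $(x-z)$, we get, for every $x$ and every $y$ outside the support of $u$,
\begin{equation*}
u(x)= c_{-s}\Int \frac{(x-z)}{|x-z|^{n-s+1}}\cdot D^s u(z)\,dz - c_{-s}\Int \frac{(y-z)}{|y-z|^{n-s+1}}\cdot D^s u(z)\,dz .
\end{equation*}
The claim then reduces to two facts. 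First, the integral $\int \frac{(x-z)}{|x-z|^{n-s+1}}\cdot D^s u(z)\,dz$ converges absolutely, so that the splitting of the integral in~\eqref{yu} into two separate pieces is legitimate. Second, the term involving $y$ tends to $0$ as $|y|\to+\infty$.

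For absolute convergence, the kernel is bounded by $|x-z|^{-(n-s)}$. This is locally integrable, and $D^s u$ is bounded by~\eqref{fracpiola}, so the region near $x$ causes no trouble. At infinity, Proposition~\ref{usasempre} gives $|D^s u(z)|\le C|z|^{-n-s}$. The integrand is therefore $O(|z|^{-(n-s)}|z|^{-n-s})=O(|z|^{-2n})$, which is integrable.

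For the vanishing of the term $J(y):=\int \frac{(y-z)}{|y-z|^{n-s+1}}\cdot D^s u(z)\,dz$, we split $\R^n$ into two regions.
\begin{enumerate}
\item On $\{|z-y|\ge |y|/2\}$, the kernel is at most $(|y|/2)^{-(n-s)}$. Since $D^s u\in L^1(\R^n)$ by~\eqref{fracpiola}, this part is bounded by $C|y|^{-(n-s)}\|D^s u\|_{L^1}\to0$.
\item On $\{|z-y|<|y|/2\}$, we have $|z|\ge|y|/2$, so Proposition~\ref{usasempre} applies for $|y|$ large and gives $|D^s u(z)|\le C|y|^{-n-s}$. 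Integrating the kernel over $B_{|y|/2}(y)$ gives $C|y|^{s}$, so this part is $O(|y|^{-n})\to0$.
\end{enumerate}
Letting $|y|\to\infty$ in the identity above then yields the formula of Corollary~\ref{aar}.

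The only delicate step is the justification of splitting~\eqref{yu} into two integrals, that is, the integrability estimates just described. Everything else is a direct substitution into~\eqref{yu}.
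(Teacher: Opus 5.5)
Your proof is correct and follows the same strategy as the paper. In both, one passes to the limit in \eqref{yu} as the auxiliary point goes to infinity, using that $u$ vanishes far out and that the remaining integral tends to zero by \eqref{fracpiola} and Proposition~\ref{usasempre}.

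The differences are only technical. You split according to the distance from the moving point, which gives explicit rates $O(|y|^{-(n-s)})$ and $O(|y|^{-n})$, where the paper uses a fixed ball $B_R$, dominated convergence and a further limit in $R$. You also explicitly check the absolute convergence needed to split \eqref{yu} into two integrals, a step the paper uses tacitly.
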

			
\begin{proof}
We take~$R_1>0$ such that~$B_{R_1}$ contains the support of~$u$. Then, for any~$R>2R_1$ we define
\begin{eqnarray*}A_R(x)&:=& \int_{B_{R}} \frac{|D^{s}u(z)|}{|z-x|^{n-s}} \, dz \\
{\mbox{and }}\qquad
E_R(x)&:=& \int_{\R^n \setminus B_{R}} \frac{|D^{s}u(z)|}{|z-x|^{n-s}} \, dz. \end{eqnarray*}
We claim that
\begin{equation}\label{schis}
\lim_{R \to +\infty} \lim_{|x| \to +\infty} A_R(x) = 0 
\end{equation}
and
\begin{equation}\label{schisd}
\lim_{R \to  +\infty } \lim_{|x| \to +\infty} E_R(x) = 0. 
\end{equation}

To prove~\eqref{schis}, we notice that for any~$R>0$, $z\in B_R$ and~$x\in\R^n\setminus B_{ R +1}$,
\[ \frac{| D^{s}u(z)|}{|z-x|^{n-s}}  \leq |D^{s}u(z)|. \]
As a consequence, since~$D^su \in L^1(\R^n)$ by~\eqref{fracpiola}, we can apply the Dominated Convergence Theorem and conclude that
				\[\lim_{|x| \to +\infty} A_R(x) \leq \int_{B_{R}}  \lim_{|x| \to +\infty} \left(\frac{1}{|z-x|^{n-s}} \right) |D^{s}u(z)| \, dz = 0. \]
				Taking the limit as~$R \to +\infty$, we establish~\eqref{schis}.
				
Now we prove~\eqref{schisd}. For the sake of clarity, we use the
notation~$B^c_r(p):=\R^n \setminus B_r(p)$ for every~$r>0$ and~$p\in\R^N$.
Then, for any~$R>2R_1$ and~$x \in \R^n$, using Proposition~\ref{usasempre},
we conclude that
\begin{eqnarray*}&&
E_R(x) \le 2^{n+s} c_s  \Vert u \Vert_{L^1(\R^n)} \int_{ B_{R}^c} \frac{dz}{|z-x|^{n-s}|z|^{n+s}} 
\\&&\quad\le 2^{n+s} c_s  \Vert u \Vert_{L^1(\R^n)} 
 \left(\frac{1}{ R^{n+s}}
 \int_{B^c_{R} \cap B_1(x)} \frac{dz}{|z-x|^{n-s}} + \int_{B^c_{R}\cap
 B_1^c(x)} \frac{dz}{|z|^{n+s}}\right)\\ 
&&\quad\leq 2^{n+s} c_s  \Vert u \Vert_{L^1(\R^n)} \left( \frac{1}{ R^{n+s}} \int_{B_1(x)} \frac{dz}{|z-x|^{n-s}} + \int_{B^c_{R}} \frac{dz}{|z|^{n+s}} \right) \\ 
&&\quad= \frac{2^{n+s} c_s S_n \Vert u \Vert_{L^1(\R^n)}}{s} \left( \frac{1}{R^{n+s}}+ \frac{1}{R^{s}} \right).
\end{eqnarray*}
Since this estimate does not depend on~$x$, letting~$R \to +\infty$ we obtain~\eqref{schisd}, as desired.

Now, we point out that
\begin{eqnarray*}
\left| \Int \frac{(z-x)}{|z-x|^{n+1-s}} \cdot D^{s}u(z) \, dz  \right| \le A_R(x)+E_R(x)
\end{eqnarray*}
and therefore,
from~\eqref{schis} and~\eqref{schisd}, we see that
\begin{eqnarray*}&&
\lim_{|x|\to+\infty}\left| \Int \frac{(z-x)}{|z-x|^{n+1-s}} \cdot D^{s}u(z) \, dz  \right| \\&
=& \lim_{R\to+\infty}\lim_{|x|\to+\infty}\left| \Int \frac{(z-x)}{|z-x|^{n+1-s}} \cdot D^{s}u(z) \, dz  \right| \\
&
\le& \lim_{R\to+\infty}\lim_{|x|\to+\infty}\Big(A_R(x)+E_R(x)\Big)=0.
\end{eqnarray*}

As a result, taking the limit as~$|x|\to+\infty$ in~\eqref{yu}, we obtain that 
\begin{eqnarray*}
u(y)&=&\lim_{|x|\to+\infty}\Big(u(y)-u(x)\Big)
\\&=& \lim_{|x|\to+\infty} c_{-s} \Int \left( \frac{(z-x)}{|z-x|^{n+1-s}} - \frac{(z-y)}{|z-y|^{n+1-s}} \right) \cdot D^{s}u(z) \, dz\\
&=& c_{-s} \Int \frac{(y-z)}{|z-y|^{n+1-s}} \cdot D^{s}u(z) \, dz,
\end{eqnarray*}
as desired.
\end{proof}
The next proposition deals with the differentiability properties of the fractional gradient. For any multi-index $\alpha= (\alpha_1, \ldots, \alpha_n)\in\N^n$, we set~$|\alpha|:= \sum_{i=1}^n \alpha_i$ and
\[ \partial^{\alpha} u := \frac{\partial^{|\alpha|}u}{\partial x_1^{\alpha_1} \cdots \partial x_{n}^{\alpha_n}} \, .\]
\begin{proposition}\label{ffffff}
Let~$s\in(0,1]$ and $u \in \mathcal{D}(\R^n)$. Then, we have that~$D^s u \in C^{\infty}(\R^n, \R^n)$ and, for any $\alpha\in\N^n$,
\begin{equation}\label{emscdl}
\partial^{\alpha} \left(D^s u \right) = D^s \left(\partial^{\alpha} u \right).
\end{equation}
\end{proposition}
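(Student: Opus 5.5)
The case $s=1$ is immediate, since then $D^s u=Du$ and classical derivatives of a $\mathcal{D}(\R^n)$ function commute. We therefore fix $s\in(0,1)$. Because $\partial^\alpha u\in\mathcal{D}(\R^n)$ for every multi-index $\alpha$, an induction on $|\alpha|$ reduces everything to two facts:
\[
\partial_k D^s u = D^s(\partial_k u)\quad\text{for each } k=1,\ldots,n,
\]
and the continuity of $D^s v$ for every $v\in\mathcal{D}(\R^n)$. The continuity is already provided by Corollary~\ref{ptems}. Indeed, if the identity holds for first derivatives, then $D^s u$ has first partial derivatives equal to $D^s(\partial_k u)$, which are continuous. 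Hence $D^s u\in C^1$. Applying the same reasoning to $\partial_k u$ in place of $u$ and iterating gives $D^s u\in C^\infty$ together with~\eqref{emscdl}.

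To prove the first-order identity, I fix $x_0\in\R^n$ and choose $\overline R$ with $\operatorname{supp} u\subset B_{\overline R}$. I then take $R\ge |x_0|+\overline R+1$. By Lemma~\ref{kj}, for every $x\in B_1(x_0)$,
\[
D^s u(x)=c_s\int_{B_R}\frac{z\,(u(x+z)-u(x))}{|z|^{n+s+1}}\,dz .
\]
Here the domain $B_R$ does not depend on $x$. The same formula holds with $u$ replaced by $\partial_k u$, since it has the same support. For $|h|<1/2$ and $x=x_0$, I write the difference quotient
\[
\frac{D^s u(x_0+he_k)-D^s u(x_0)}{h}=c_s\int_{B_R}\frac{z}{|z|^{n+s+1}}\,\frac{\big(u(x_0+he_k+z)-u(x_0+he_k)\big)-\big(u(x_0+z)-u(x_0)\big)}{h}\,dz .
\]
By the mean value theorem, the integrand converges pointwise to $\dfrac{z(\partial_k u(x_0+z)-\partial_k u(x_0))}{|z|^{n+s+1}}$ as $h\to0$. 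The conclusion then follows from dominated convergence.

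The only step needing care is the domination near $z=0$, because a crude bound of order $|z|^{-n-s}$ is not integrable there. I handle it by rewriting the numerator as
\[
\frac1h\int_0^h\big(\partial_k u(x_0+te_k+z)-\partial_k u(x_0+te_k)\big)\,dt .
\]
The integrand inside is bounded in absolute value by $\|D^2u\|_{L^\infty}|z|$. Hence the full integrand is bounded by $\|D^2u\|_{L^\infty}\,|z|^{-(n+s-1)}$, and this function lies in $L^1(B_R)$ because $s<1$. Dominated convergence therefore yields $\partial_k D^s u(x_0)=D^s(\partial_k u)(x_0)$. This completes the first-order step, and the induction described above finishes the proof.
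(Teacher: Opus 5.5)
Your proof is correct and follows essentially the same route as the paper: you differentiate under the integral sign via difference quotients and dominated convergence, then iterate using that $\partial^{\alpha}u\in\mathcal{D}(\R^n)$ together with Corollary~\ref{ptems}. The differences are only technical. You work with the truncated representation of Lemma~\ref{kj} and dominate by $\|D^2u\|_{L^\infty}|z|^{1-n-s}$ on $B_R$, whereas the paper uses the symmetric form $\frac{c_s}{2}\int_{\R^n}\frac{z(u(x+z)-u(x-z))}{|z|^{n+s+1}}\,dz$, splits at $|z|=1$, and controls the inner part by an explicit $O(|h|)$ bound involving third derivatives of $u$.
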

\begin{proof}
If $s=1$, then Proposition~\ref{ffffff} is obviously true, thus we suppose from now on that~$s \in (0,1)$.
We point out that the continuity of~$D^su$ follows from Corollary~\ref{ptems}.
Throughout the proof, for the sake of clarity, we denote the partial derivative with respect to~$x_i$ by either~$\frac{\partial u}{\partial x_i}$ or $u_{x_i}$. 

Also, since $u \in \mathcal{D}(\R^n)$, we have that
\[ D^s u(x) = \frac{c_s}{2} \int_{\R^n}  \frac{z(u(x+z)-u(x-z))}{|z|^{n+s+1}} \, dz. \]

We claim that, for any $ x \in \R^n$,
\begin{equation}\label{qelucms}
\frac{\partial D^s u}{\partial x_1}(x)  = D^s u_{x_1}(x) .
\end{equation}
With the aim of proving~\eqref{qelucms}, we set
 \[ f(z):=\frac{2\|Du \|_{L^{\infty}(\R^n)}}{|z|^{n+s}}\]
 and we observe that $f \in L^1(\R^n \setminus B_1)$.
 
Moreover, for any $x \in \R^n$,
$z \in \R^n \setminus B_1$ and~$|h|\leq 1$,
\begin{eqnarray*}&&
\left|\frac{z \big[(u (x+ h e_1+z) - u (x+ h e_1-z))-(u(x+z)-u(x-z))\big]}{h |z|^{n+s+1}} \right| \\
&&\qquad\le\frac1{|h|\,|z|^{n+s}}\left| \int_0^h u_{x_1}(x+z+te_1)\,dt-\int_0^h u_{x_1}(x-z+te_1)\,dt\right|\\
&&\qquad\le\frac{2\|Du \|_{L^{\infty}(\R^n)}}{|h|\,|z|^{n+s}}= f(z).
\end{eqnarray*}
As a consequence, we can apply the Dominated Convergence Theorem to find that
 \begin{equation}\label{qelpcms}
\begin{split}
&\lim_{h \to 0} \int_{\R^n\setminus B_1} \frac{z \big[(u (x+ h e_1+z) - u (x+ h e_1-z))-(u(x+z)-u(x-z))\big]}{h |z|^{n+s+1}} \, dz \\
&\qquad\qquad=  \int_{\R^n\setminus B_1} \frac{z \left( u_{ x_1} (x+ z) - u_{ x_1 }  (x-z)\right)}{|z|^{n+s+1}} \, dz .
\end{split}
\end{equation}

Now, for any $x \in\R^n$,~$z\in B_1$ and $|h|\leq1$, 
\begin{equation*}\footnotesize{
\begin{split}
&\Bigg|  \frac{\big(u (x+ h e_1+z) - u (x+ h e_1-z)\big)-
\big(u(x+z)-u(x-z)\big)}{h} - \big( u_{x_1}(x+z) - u_{x_1}(x-z)  \big)\Bigg|\\
&= \left| \left( \frac{1}{h}\int_0^h u_{x_1}(x+z+\tau e_1) - u_{x_1}(x-z+\tau e_1) \, d\tau \right)  - \big(u_{x_1}(x+z) - u_{x_1}(x-z)  \big)\right|\\
&= \frac{1}{|h|} \left| \int_0^h \Big( \left[u_{x_1}(x+z+\tau e_1) - u_{x_1}(x-z+\tau e_1) \right]   - \big( u_{x_1}(x+z) - u_{x_1}(x-z)  \big)\Big) \,  d\tau \right|\\
&=  \frac{1}{|h|} \left|\int_0^h \left( \int_0^\tau\frac{\partial^2 u}{\partial x_1^2}(x+z+t e_1) - \frac{\partial^2 u}{\partial x_1^2}(x-z+t e_1)  \, dt\right) \,  d\tau \right|\\
&\leq |z| \,|h|\sup_{{|\alpha|=3}\atop{x \in \R^n} }\left|\partial^{\alpha}u(x)\right| .
\end{split}
}\end{equation*}

As a consequence, 
\begin{eqnarray*}
&&\lim_{h \to 0}\left| \int_{ B_1} \frac{z \big[(u (x+ h e_1+z) - u (x+ h e_1-z))-(u(x+z)-u(x-z))\big]}{h |z|^{n+s+1}} \, dz\right.\\&&\qquad\qquad\qquad\left. - \int_{ B_1} \frac{z \left( u_{ x_1} (x+ z) - u_{ x_1 }  (x-z)\right)}{|z|^{n+s+1}} \, dz\right|\\&&\qquad
\le\lim_{h\to0}\int_{B_1}|z|^{1-n-s} \,|h|\sup_{{|\alpha|=3}\atop{x \in \R^n} }\left|\partial^{\alpha}u(x)\right| \,dz=0.
\end{eqnarray*}
Combining this information with~\eqref{qelpcms} we establish~\eqref{qelucms}, as desired.

Now, since $u \in \mathcal{D}(\R^n)$, for any $\alpha\in\N^n$ we have that $\partial^{\alpha} u \in \mathcal{D}(\R^n)$ too. Consequently, we can apply the same argument as above to each derivatives of~$u$
in order to obtain~\eqref{emscdl}. 
\end{proof}

For further reference, we also recall the following ``integration by parts'' formula for the fractional gradient
(see~\cite[Lemma 2.2]{mora}):

\begin{lemma}\label{eduldp}
			Let~$s \in (0,1]$ and $p \in (1,+\infty)$. Let~$v: \R^n \to \R^n$ be such that~$v_i \in \mathcal{D}(\R^n)$ for any~$i=1,\ldots,n$.
			
			Then, for any $\phi \in \mathcal{D}(\R^n)$,
			\begin{equation}\sum_{i=1}^n
 			\Int D^s_i v_i\, \phi \, dx = - \Int v \cdot D^s \phi  \, dx.
			\end{equation}	
			\end{lemma}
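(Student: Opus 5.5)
The plan is to reduce the identity to the classical integration by parts for the divergence, through a symmetry argument on the kernel. The case $s=1$ is the usual divergence theorem for compactly supported smooth functions, so we assume $s\in(0,1)$. Because the formula is linear in $v$, it is enough to prove, for each fixed $i$ and all $\psi,\phi\in\mathcal{D}(\R^n)$,
\begin{equation*}
\Int D^s_i\psi\,\phi\,dx=-\Int \psi\, D^s_i\phi\,dx ,
\end{equation*}
and then apply this with $\psi=v_i$ and sum over $i$.

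To prove the scalar identity we use the representation from Lemma~\ref{kj}, together with the principal value form in Definition~\ref{curate}:
\begin{equation*}
D^s_i\psi(x)=c_s\lim_{\epsilon\searrow0}\int_{\{|y-x|>\epsilon\}}\frac{(y_i-x_i)\,\psi(y)}{|x-y|^{n+s+1}}\,dy .
\end{equation*}
Multiply by $\phi(x)$ and integrate in $x$. For a fixed $\epsilon>0$ the truncated kernel $K_\epsilon(x,y)=(y_i-x_i)|x-y|^{-n-s-1}\mathds{1}_{\{|x-y|>\epsilon\}}$ is bounded by $\epsilon^{-n-s}$. Since $\phi$ and $\psi$ lie in $L^1$, the function $K_\epsilon(x,y)\phi(x)\psi(y)$ is absolutely integrable on $\R^n\times\R^n$, and Fubini applies.

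Swapping the roles of $x$ and $y$, and using $K_\epsilon(y,x)=-K_\epsilon(x,y)$, gives
\begin{equation*}
\iint K_\epsilon(x,y)\phi(x)\psi(y)\,dy\,dx=-\iint K_\epsilon(y,x)\psi(x)\phi(y)\,dy\,dx .
\end{equation*}
The inner integral on the right-hand side is exactly the truncated fractional gradient $D^s_{i,\epsilon}\phi(x)$, so the right-hand side equals $-\int \psi\,D^s_{i,\epsilon}\phi\,dx$.

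It remains to pass to the limit $\epsilon\searrow0$ under the outer integral, which is the only delicate point. We plan to use dominated convergence with a dominating function that does not depend on $\epsilon$. Using the odd symmetry \eqref{simzeromenomale09}, the truncated gradient can be written as
\begin{equation*}
D^s_{i,\epsilon}\psi(x)=c_s\int_{\{|z|>\epsilon\}}\frac{z_i\,(\psi(x+z)-\psi(x))}{|z|^{n+s+1}}\,dz .
\end{equation*}
The argument in the proof of Lemma~\ref{kj} (a Lipschitz bound for $|z|\le1$ and a sup bound for $|z|>1$) then gives $|D^s_{i,\epsilon}\psi(x)|\le C(\psi)$ uniformly in $x$ and $\epsilon$. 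Since $\phi$ has compact support, the integrand $|\phi\,D^s_{i,\epsilon}\psi|$ is dominated by $C|\phi|\in L^1$, and the same bound holds with $\phi$ and $\psi$ exchanged. Pointwise convergence $D^s_{i,\epsilon}\to D^s_i$ holds by definition, so both sides converge and the scalar identity follows. Summing over $i$ yields the claim. The exponent $p$ plays no role in this argument, since only smooth functions with compact support are involved.
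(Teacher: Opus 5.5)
Your argument is correct. It cannot follow the paper's proof, because the paper gives none: it simply quotes \cite[Lemma 2.2]{mora}. What you supply is a self-contained derivation from the principal-value definition, and each step is justified:
\begin{itemize}
\item Fubini at fixed $\epsilon$ is legitimate, since the truncated kernel is bounded by $\epsilon^{-n-s}$ and $\phi,\psi\in L^1(\R^n)$.
\item The antisymmetry of $(y_i-x_i)|x-y|^{-n-s-1}$ moves the operator onto the other factor.
\item The majorant from the proof of Lemma~\ref{kj} is independent of $x$ and $\epsilon$, so $\sup_{x,\epsilon}|D^s_{i,\epsilon}\psi(x)|<+\infty$. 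Dominated convergence then passes to the limit on both sides.
\end{itemize}
You are also right that the hypothesis on $p$ plays no role.

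There is one slip to fix. In your display you kept the relabelled kernel $K_\epsilon(y,x)$ and also inserted the minus sign coming from antisymmetry. The right-hand side should read $-\iint K_\epsilon(x,y)\psi(x)\phi(y)\,dy\,dx$, or equivalently $+\iint K_\epsilon(y,x)\psi(x)\phi(y)\,dy\,dx$. Correspondingly, $\int K_\epsilon(y,x)\phi(y)\,dy=-c_s^{-1}D^s_{i,\epsilon}\phi(x)$, not $D^s_{i,\epsilon}\phi(x)$. The two sign errors cancel, so your final identity is right, but the line as written is false.

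Your opening sentence also misdescribes the proof. For $s\in(0,1)$ you never reduce to classical integration by parts; the argument is purely kernel antisymmetry.

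The paper's own machinery offers two other routes:
\begin{itemize}
\item \emph{Fourier.} By Proposition~\ref{huy}, $D^s_i$ is the purely imaginary Fourier multiplier $i(2\pi)^s\xi_i|\xi|^{s-1}$, so Plancherel gives skew-adjointness at once.
\item \emph{Riesz potential.} Theorem~\ref{usacchialo} with $s=1$ gives $D^{\bar s}u=I_{1-\bar s}Du$. The claim then follows from the symmetry of the Riesz kernel plus classical integration by parts. This is the route that genuinely reduces to the classical case.
\end{itemize}
Both are shorter once that machinery is available, but they depend on the Fourier computations of Appendix~\ref{tappend}, which the paper develops only later. Your route needs only Lemma~\ref{kj}, and it works verbatim for, e.g., compactly supported Lipschitz functions.
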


\subsection{The Riesz Potential}\label{rieszsec}
			This section aims at showing some relations occurring between the fractional gradient and the Riesz potential operator. After a brief introduction on this operator, Proposition~\ref{huy} below will provide an explicit form of the Fourier transform of the fractional gradient. Then, Theorem~\ref{usacchialo} will constitute the main theorem of this section and it will be used to study embedding properties of the functional spaces under consideration in this paper (see the forthcoming
Proposition~\ref{crizzo}). For a complete discussion on this topic, we refer the reader to~\cite{stein, hedberg, gravakos}. 
			
The Riesz potential of order~$\alpha \in (0,1)$ is formally defined as
			\begin{equation}\label{riesz}
				I_{\alpha}u (x) := K_{\alpha} \ast u (x) = \frac{1}{\gamma_{\alpha,n}}\Int \frac{u(y)}{|x-y|^{n-\alpha}} \, dy,   
			\end{equation}
			where~$K_{\alpha}$ is defined by 
	\begin{equation}\label{defgamma09}
			K_{\alpha}(x)= \frac{1}{\gamma_{\alpha,n}} |x|^{-n+\alpha}
		\qquad {\mbox{ with }}	\qquad
				\gamma_{\alpha,n}= \frac{2^{\alpha} \pi^{\frac{n}{2}} \Gamma(\frac{\alpha}{2})}{\Gamma(\frac{n-\alpha}{2})}.    \end{equation}
							Since the space dimension~$n$ constitues a fixed parameter in this work, we will refer to~$\gamma_{\alpha,n}$ simply as~$\gamma_{\alpha}$. We stress that, for any~$s \in (0,1)$, the relation between~$\gamma_{s}$ and~$c_s$ is the following
			\begin{equation}\label{relation}
				c_s= \frac{n+s-1}{\gamma_{1-s}}.
			\end{equation}
It is known (see e.g.~\cite[I, Theorem~1]{stein}) that if
$$ p \in \left(1,\frac{ n}{\alpha}\right)\qquad {\mbox{and}}\qquad
q:=\frac{np}{n-\alpha p},
$$ 		
then, for some~$C>0$, it holds that
\begin{equation}\label{riz}
\Vert I_{\alpha}u \Vert_{L^q(\R^n)} \leq C \Vert u \Vert_{L^p(\R^n)}.
\end{equation}

Now we proceed by computing the Fourier transform of the Riesz potential.
To this end, we establish the following preliminary result:

			\begin{lemma}\label{fr}
				Let~$\alpha \in (0,1)$ and~$u \in \mathcal{S}(\R^n)$. Then,
				\[
				\mathcal{F}(I_{\alpha}u) (\xi) = |2 \pi \xi|^{-\alpha}\, \widehat{u}(\xi),
				\]
				where the Fourier transform is intended in the sense of distribution.
			\end{lemma}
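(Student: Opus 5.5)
The plan is to reduce the claim to the classical Fourier transform of the homogeneous function $|x|^{-n+\alpha}$, and then use that the Fourier transform turns convolution into multiplication. With the convention $\widehat{u}(\xi)=\int_{\R^n}u(x)e^{-2\pi i x\cdot\xi}\,dx$, the first step is to show that
\begin{equation*}
\mathcal{F}\big(|x|^{-n+\alpha}\big)(\xi)=\frac{\pi^{\frac{n}{2}-\alpha}\,\Gamma(\frac{\alpha}{2})}{\Gamma(\frac{n-\alpha}{2})}\,|\xi|^{-\alpha}
\qquad\text{in }\mathcal{S}'(\R^n).
\end{equation*}
Dividing by $\gamma_\alpha$ as in \eqref{defgamma09} then gives $\widehat{K_\alpha}(\xi)=(2\pi)^{-\alpha}|\xi|^{-\alpha}=|2\pi\xi|^{-\alpha}$. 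Both $|x|^{-n+\alpha}$ and $|\xi|^{-\alpha}$ are locally integrable (since $0<\alpha<1\le n$) and bounded at infinity, so they are tempered distributions.

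To prove the displayed formula, I would use the Gaussian subordination identity
\begin{equation*}
|x|^{-\beta}=\frac{\pi^{\beta/2}}{\Gamma(\beta/2)}\int_0^{+\infty}t^{\frac{\beta}{2}-1}e^{-\pi t|x|^2}\,dt,
\end{equation*}
valid for $\beta>0$, applied with $\beta=n-\alpha$. Pair $|x|^{-n+\alpha}$ against $\widehat{\varphi}$ for a test function $\varphi\in\mathcal{S}(\R^n)$. By Tonelli/Fubini, which is justified by absolute integrability of $t^{\frac{\beta}{2}-1}e^{-\pi t|x|^2}|\widehat\varphi(x)|$, one may swap the $t$- and $x$-integrals. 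Then use $\mathcal{F}(e^{-\pi t|x|^2})=t^{-n/2}e^{-\pi|\xi|^2/t}$ and the Parseval-type identity $\int g\,\widehat\varphi=\int \widehat g\,\varphi$ to move the transform onto the Gaussian. Integrating back in $t$ after the substitution $t\mapsto 1/t$ reproduces $|\xi|^{-\alpha}$, with the same identity now used with $\beta=\alpha$. The constant comes out as $\pi^{\frac{n-\alpha}{2}}\Gamma(\frac{\alpha}{2})\big/\big(\pi^{\frac{\alpha}{2}}\Gamma(\frac{n-\alpha}{2})\big)$, which is the claimed one. An alternative is to cite the computation in Appendix~\ref{tappend} or the classical reference \cite{stein}.

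The second step passes from $K_\alpha$ to $I_\alpha u=K_\alpha\ast u$ for $u\in\mathcal{S}(\R^n)$. Note that $I_\alpha u$ is a smooth function with decay like $|x|^{-n+\alpha}$, hence tempered. For $\varphi\in\mathcal{S}$, I would compute
\begin{equation*}
\langle \mathcal{F}(I_\alpha u),\varphi\rangle=\langle K_\alpha\ast u,\widehat\varphi\rangle=\langle K_\alpha,\tilde u\ast\widehat\varphi\rangle,
\end{equation*}
where $\tilde u(x)=u(-x)$, using Fubini since $K_\alpha\in L^1+L^\infty$. Next write $\tilde u\ast\widehat\varphi=\mathcal{F}(\widehat u\,\varphi)$, noting that $\widehat u\,\varphi\in\mathcal{S}$. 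By the first step this gives $\langle \widehat{K_\alpha},\widehat u\,\varphi\rangle=\int|2\pi\xi|^{-\alpha}\widehat u(\xi)\varphi(\xi)\,d\xi$. This is the claim, and the right-hand side is a locally integrable function.

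The main obstacle is the careful justification of the distributional Fourier transform of $|x|^{-n+\alpha}$: Fubini interchanges near $t=0$ and $t=\infty$, and keeping track of constants so that $\gamma_\alpha$ produces exactly the factor $(2\pi)^{-\alpha}$. The convolution step is routine once that is in place.
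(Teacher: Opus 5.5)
Your proof is correct and has the same structure as the paper's: show $\widehat{K_\alpha}=|2\pi\xi|^{-\alpha}$, then use $\mathcal{F}(K_\alpha\ast u)=\widehat{K_\alpha}\,\widehat{u}$. The paper quotes \cite[Theorem~2.4.6]{gravakos} for the transform of $|x|^{-n+\alpha}$ and the standard convolution theorem for $\mathcal{S}'\ast\mathcal{S}$, while you prove both directly, and your Gaussian subordination computation and constants check out.

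One minor correction: Appendix~\ref{tappend} computes the odd-kernel integrals $\int_{\R^n}\sin(\xi\cdot t)\,t_j\,|t|^{-n-s-1}\,dt$ used for $\mathcal{F}(D^s u)$, not the transform of $|x|^{-n+\alpha}$. It would therefore not serve as an alternative citation for your first step.
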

			
			\begin{proof}
				We claim that
				\begin{equation}\label{shov}
				K_{\alpha} \in \mathcal{S}'(\R^n).
				\end{equation}
				To this end, for every~$\phi \in \mathcal{S}(\R^n)$, we define
	$$\langle  K_{\alpha}, \phi \rangle: 
= \Int \frac{\phi(x)}{\gamma_{\alpha}|x|^{n-\alpha}} \, dx.$$
In this way, $K_\alpha$ is identified with a linear
functional from~$\mathcal{S}(\R^n)$ to~$\R$.
				Moreover,
				\begin{equation*}\begin{split}
					&\langle \gamma_{\alpha} \, K_{\alpha}, \phi \rangle 
					= \Int \frac{\phi(x)}{|x|^{n-\alpha}} \, dx = \int_{\{|x|<1\}} \frac{\phi(x)}{|x|^{n-\alpha}} \, dx + \int_{\{|x|>1\}} \frac{\phi(x)}{|x|^{n-\alpha}} \, dx  \\ 
					&\qquad\leq \Vert \phi \Vert_{L^{\infty}(\R^n)} \int_{\{|x|<1\}} \frac{dx}{|x|^{n-\alpha}}+ \Vert x \, \phi \Vert_{L^{\infty}(\R^n)} \int_{\{|x|>1\}} \frac{dx}{|x|^{n+1-\alpha}},\end{split}\end{equation*} 
		for some~$C>0$, which proves~\eqref{shov}. 
				
				As a consequence of~\eqref{shov}, since~$u \in \mathcal{S}(\R^n)$, we can write that
				\begin{equation}\label{huk}
				\mathcal{F}(I_{\alpha}u)(\xi) = \mathcal{F}(K_{\alpha} \ast u)(\xi) = \widehat{K_{\alpha}}(\xi) \widehat{u}(\xi).
				\end{equation}
				Now, for all~$z \in \mathbb{C}$, with~$\mbox{Re} z >-n$, we define the function
				\[
				u_z(x) := \frac{\pi^{\frac{z+n}{2}}|x|^z }{\Gamma(\frac{z+n}{2})}.
				\]
We observe that, since~$u_z \in L^1_{\rm loc}(\R^n)$, we can compute its Fourier transform, which is given by (see~\cite[Theorem~2.4.6.]{gravakos})
				\begin{equation}\label{grr}
					\widehat{u}_z(\xi)=u_{-n-z}(\xi).
				\end{equation}
				
				Thus, taking~$z:= -n + \alpha$,
				\[
				u_{-n+\alpha} (x)= \frac{\pi^{\frac{\alpha}{2}}|x|^{-n + \alpha} }{\Gamma(\frac{\alpha}{2})} 
				\]
				and, by~\eqref{grr},
				\begin{equation*}
	\mathcal{F}	\left( \frac{\pi^{\frac{\alpha}{2}}|x|^{-n + \alpha} }{\Gamma(\frac{\alpha}{2})}\right) =
	\widehat{u}_{-n+\alpha}(\xi)=u_{-\alpha}(\xi)=
	 \frac{\pi^{\frac{n-\alpha}{2}}|\xi|^{-\alpha}}{\Gamma(\frac{n-\alpha}{2})}.   
				\end{equation*}
As a result, recalling the definitions of~$K_\alpha$ and~$\gamma_\alpha$ in~\eqref{defgamma09}, we obtain that
				\begin{equation}\label{weneedit}
		\widehat{K_{\alpha}}(\xi)= \mathcal{F}\left(\frac1{\gamma_\alpha}
		|x|^{-n+\alpha}\right)(\xi)=  |2\pi \xi|^{-\alpha}. \end{equation}
Together with~\eqref{huk}, this entails the desired result.		
\end{proof}			

\begin{remark}
We stress that, if~$u \in \mathcal{S}(\R^n)$, then~$\mathcal{F}(I_{\alpha}u) \in L^p(\R^n)$ for all~$p \in [1, n/\alpha)$. Indeed, in light of Lemma~\ref{fr},
\begin{eqnarray*}&&
\Vert \mathcal{F}(I_{\alpha}u)\Vert^p_{L^p(\R^n)} \leq C \left( \int_{B_1} |\xi|^{-\alpha p} | \widehat{u}(\xi)|^p \, d\xi + \int_{\R^n \setminus B_1} |\xi|^{-\alpha p} | \widehat{u}(\xi)|^p \, d\xi \right)\\ &&\qquad
\leq C   \left( \int_{B_1} |\xi|^{-\alpha p} \, d\xi + \int_{\R^n \setminus B_1} |\xi|^{-(n+\alpha p)} \, d\xi \right)\leq C
,
\end{eqnarray*}
up to renaming~$C>0$ line after line.
\end{remark}				
			
With the aid of Lemma~\ref{fr} and a density argument, we now show the following:
			
			\begin{proposition}\label{prdue}
				Let~$\alpha \in (0,1)$ and~$u \in L^p(\R^n)$ for some~$p \in (1, n/\alpha)$. Then,
				\[
				\mathcal{F}(I_{\alpha} u) (\xi)= | 2 \pi \xi|^{-\alpha}\widehat{u}(\xi),
				\]
				where the Fourier transform is intended in the sense of distribution.
			\end{proposition}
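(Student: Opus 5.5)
We prove the identity by density, starting from Lemma~\ref{fr}. Take a sequence $(u_k)\subset\mathcal{S}(\R^n)$ (for instance, $(u_k)\subset\mathcal{D}(\R^n)$) with $u_k\to u$ in $L^p(\R^n)$. Lemma~\ref{fr} gives, for every $k$,
\[
\mathcal{F}(I_\alpha u_k)(\xi)=|2\pi\xi|^{-\alpha}\,\widehat{u_k}(\xi)\quad\mbox{in }\mathcal{S}'(\R^n).
\]
The task is to let $k\to+\infty$ on each side, in the sense of tempered distributions, and to check that the limits are $\mathcal{F}(I_\alpha u)$ and $|2\pi\xi|^{-\alpha}\widehat u$.

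\textbf{Left-hand side.} Set $q:=np/(n-\alpha p)$. By the Hardy--Littlewood--Sobolev bound~\eqref{riz},
\[
\|I_\alpha u_k-I_\alpha u\|_{L^q(\R^n)}\le C\|u_k-u\|_{L^p(\R^n)}\to0 .
\]
Convergence in $L^q$ implies convergence in $\mathcal{S}'(\R^n)$, and $\mathcal{F}$ is continuous on $\mathcal{S}'(\R^n)$. Hence $\mathcal{F}(I_\alpha u_k)\to\mathcal{F}(I_\alpha u)$ in $\mathcal{S}'$. Since $I_\alpha u\in L^q$, it is a genuine tempered distribution.

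\textbf{Right-hand side.} Here we must first give meaning to $|2\pi\xi|^{-\alpha}\widehat u$. We define it as the distribution
\[
\phi\mapsto\langle\widehat u,\,|2\pi\xi|^{-\alpha}\phi\rangle=\langle u,\,\mathcal{F}(|2\pi\xi|^{-\alpha}\phi)\rangle .
\]
This requires the pairing on the right to make sense, so we need $\mathcal{F}(|2\pi\xi|^{-\alpha}\phi)\in L^{p'}(\R^n)$ for $\phi\in\mathcal{S}$. By Lemma~\ref{fr} and Fourier inversion, $\mathcal{F}(|2\pi\xi|^{-\alpha}\phi)$ equals $I_\alpha$ applied to a Schwartz function, namely $I_\alpha\tilde\phi$ with $\tilde\phi(x):=\phi(-x)$ up to the reflection convention. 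Such a function is smooth and decays like $|x|^{\alpha-n}$ at infinity. It therefore lies in $L^{p'}$ exactly when $(n-\alpha)p'>n$, that is, when $p<n/\alpha$, which is our hypothesis.

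With this in hand, $\langle u_k,I_\alpha\tilde\phi\rangle\to\langle u,I_\alpha\tilde\phi\rangle$ by H\"older's inequality, which gives the convergence of the right-hand sides. An equivalent route is to note that the computation above shows
\[
\langle\mathcal{F}(I_\alpha u),\phi\rangle=\int_{\R^n} u\,I_\alpha\hat\phi\,dx ,
\]
using Fubini and the symmetry of the kernel $K_\alpha$. The decay estimate makes both sides absolutely convergent.

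\textbf{Main obstacle.} The key step is the decay estimate $|I_\alpha\psi(x)|\le C(1+|x|)^{\alpha-n}$ for $\psi\in\mathcal{S}$. To prove it, split the convolution integral into the region $|y|<|x|/2$ and its complement. On the first region $|x-y|\ge|x|/2$, so the kernel is bounded by $C|x|^{\alpha-n}$ and $\psi$ is integrable. On the complement, the rapid decay of $\psi$ dominates the locally integrable kernel.

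Once this estimate is available, passing to the limit on both sides identifies the two distributions, which proves the proposition.
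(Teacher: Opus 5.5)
Your proof is correct. Its skeleton is the paper's: Lemma~\ref{fr} for smooth approximants $u_k$, then~\eqref{riz} to pass to the limit in $\mathcal{F}(I_\alpha u_k)$. The genuine difference is the right-hand side.

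\textbf{What the paper does.} It only proves $\widehat{u_k}\to\widehat u$ in $\mathcal{S}'(\R^n)$, by pairing with $\widehat\phi\in L^{p'}(\R^n)$. It then writes $\lim_k|2\pi\xi|^{-\alpha}\widehat{u_k}=|2\pi\xi|^{-\alpha}\widehat u$ without comment. Since $|2\pi\xi|^{-\alpha}$ is singular at the origin, $|2\pi\xi|^{-\alpha}\phi$ is not a Schwartz function, so this step does not follow from $\mathcal{S}'$-convergence. Moreover, for $p>2$ the product $|2\pi\xi|^{-\alpha}\widehat u$ is not even defined a priori.

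\textbf{What your version adds.} Three ingredients make that step rigorous: your duality definition of the product, the identification of $\mathcal{F}(|2\pi\xi|^{-\alpha}\phi)$ with $I_\alpha$ of a Schwartz function, and the decay bound $|I_\alpha\psi(x)|\le C(1+|x|)^{\alpha-n}$. This is also where $p<n/\alpha$ enters a second time; the paper uses it only through~\eqref{riz}. The price is the extra decay lemma. The gain is a complete argument. Your identity $\langle\mathcal{F}(I_\alpha u),\phi\rangle=\int_{\R^n}u\,I_\alpha\widehat\phi\,dx$ even shows that no approximation is needed.

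\textbf{Two small fixes.}
\begin{enumerate}
\item The Schwartz function in question is $\widehat\phi$ (possibly reflected, depending on the Fourier convention), not $\phi(-\cdot)$, as you yourself write later. This is harmless, since you only use that it lies in $\mathcal{S}(\R^n)$.
\item You should state that, for $u_k\in\mathcal{S}(\R^n)$, your duality definition agrees with the function $|2\pi\xi|^{-\alpha}\widehat{u_k}$ of Lemma~\ref{fr}. This is just the multiplication formula $\int\widehat f\,g=\int f\,\widehat g$ with $f=u_k$ and $g=|2\pi\xi|^{-\alpha}\phi\in L^1(\R^n)$.
\end{enumerate}
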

			
\begin{proof}
To start with, we claim that 
\begin{equation}\label{pooo}
I_{\alpha}u \in \mathcal{S}'(\R^n).
\end{equation}
To this end, for any~$\phi\in{\mathcal{S}}(\R^n)$, we define
$$\langle I_{\alpha}u, \phi\rangle= \Int I_{\alpha}u (x)\, \phi(x) \, dx.$$
As a result, $I_{\alpha}u$ is identified with a linear functional from~${\mathcal{S}}(\R^n)$ to~$\R$. Furthermore,
we set	
\[q:= \frac{np}{n-\alpha p}, \]
and we use the H\"older inequality and~\eqref{riz} to see that
				\begin{eqnarray*}&&\Int I_{\alpha}u(x) \, \phi(x) \, dx \leq \Vert I_{\alpha}u \Vert_{L^q(\R^n)} \Vert \phi \Vert_{L^{\frac{q}{q-1}}(\R^n)} \\&&\qquad\qquad\leq C \Vert u \Vert_{L^p(\R^n)}\Vert \phi \Vert_{L^{\frac{q}{q-1}}(\R^n)} < +\infty,\end{eqnarray*}
				which completes the proof of~\eqref{pooo}.

Now, by density, we take a sequence~$(u_k)_k \subset \mathcal{D}(\R^n)$ that converges to~$u$ in~$L^p(\R^n)$ as~$k\to+\infty$. Thanks to Lemma~\ref{fr}, we know that, for any~$k\in\N$, in the sense of distribution,
				\begin{equation}\label{klim}
		\mathcal{F}(I_{\alpha} u_k) (\xi) = | 2 \pi \xi|^{-\alpha} \widehat{u_k} (\xi).
				\end{equation}
				
We point out that
\begin{equation}\label{wq4r87365gfsekjh}
\mathcal{F}(I_{\alpha} u)= \lim_{k \to +\infty} \mathcal{F}(I_{\alpha} u_k).
\end{equation}
Indeed, by the H\"older inequality and~\eqref{riz}, for all~$\phi\in{\mathcal{S}}(\R^n)$,
\begin{eqnarray*}
\left|\Int I_{\alpha}(u_k-u) (x)\, \phi(x) \, dx \right|& \leq& \Vert \phi \Vert_{L^{\frac{q}{q-1}}(\R^n)} \Vert I_{\alpha}(u_k-u) \Vert_{L^q(\R^n)} \\&\leq& C \Vert \phi \Vert_{L^{\frac{q}{q-1}}(\R^n)} \Vert u_k-u\Vert_{L^p(\R^n)}
,\end{eqnarray*}
which gives that
$$ \lim_{k\to+\infty} I_{\alpha}u_k= I_{\alpha}u.$$
This implies~\eqref{wq4r87365gfsekjh}, as desired.

Moreover, we have that, in the sense of distribution,
\begin{equation}\label{jeoiwt5734230h56yg}
\lim_{k\to+\infty} \widehat{u_k} = \widehat{u}.\end{equation}
To check this, we notice that, for all~$\phi\in{\mathcal{S}}(\R^n)$,
\begin{eqnarray*}
\left|\int_{\R^n} \Big(\widehat{u_k}(\xi) -\widehat{u}(\xi)\Big)\phi(\xi)\,d\xi\right|
&=&\left| \int_{\R^n} \Big( {u_k}(\xi) - {u}(\xi)\Big)\widehat\phi(\xi)\,d\xi\right|\\&\le&
\Vert {u_k} - {u} \Vert_{L^p(\R^n)} \Vert\widehat\phi \Vert_{L^{\frac{p}{p-1}}(\R^n)} ,
\end{eqnarray*}
from which~\eqref{jeoiwt5734230h56yg} follows.

Gathering together~\eqref{klim}, \eqref{wq4r87365gfsekjh} and~\eqref{jeoiwt5734230h56yg}, we conclude that, in the sense of distribution,
\begin{equation*}
\mathcal{F}(I_{\alpha} u)= \lim_{k \to +\infty} \mathcal{F}(I_{\alpha} u_k) 
=  \lim_{k \to +\infty} | 2 \pi \xi|^{-\alpha} \widehat{u_k} (\xi)
= | 2 \pi \xi|^{-\alpha} \widehat{u} (\xi),
\end{equation*}
which is the desired result.
\end{proof}
			
We now compute the Fourier transform of the fractional gradient.

			\begin{proposition}\label{huy}
				Let~$s \in (0,1)$ and~$u \in \mathcal{D}(\R^n)$. Then,
				\begin{equation}\label{four}
					\mathcal{F}(D^s u)(\xi)= i (2 \pi)^s \xi |\xi|^{s-1} \widehat{u}(\xi).
				\end{equation}
			\end{proposition}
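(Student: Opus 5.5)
We plan to express $D^s u$ as the classical gradient of a Riesz potential and then invoke Lemma~\ref{fr}. Specifically, we aim to prove that, for $u\in\mathcal{D}(\R^n)$ and $s\in(0,1)$,
\begin{equation*}
D^s u = D\big(I_{1-s}u\big) = I_{1-s}(Du).
\end{equation*}
Granting this identity, Lemma~\ref{fr} with $\alpha=1-s$, applied to $D_iu\in\mathcal{S}(\R^n)$, gives
\[
\mathcal{F}(D^s_iu)(\xi)=|2\pi\xi|^{s-1}\,\widehat{D_iu}(\xi)=|2\pi\xi|^{s-1}\,2\pi i\xi_i\,\widehat u(\xi).
\]
This simplifies to $i(2\pi)^s\xi_i|\xi|^{s-1}\widehat u(\xi)$, which is exactly~\eqref{four}.

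To prove the identity $D^su=I_{1-s}(Du)$, we work from $I_{1-s}(D_iu)(x)=\frac{1}{\gamma_{1-s}}\int_{\R^n} \frac{D_iu(y)}{|x-y|^{n-1+s}}\,dy$. We excise a ball $B_\epsilon(x)$ and integrate by parts on $\R^n\setminus B_\epsilon(x)$, which is legitimate since $u$ has compact support. The derivative falls on the kernel: $\partial_{y_i}|x-y|^{-(n-1+s)}=(n-1+s)(x_i-y_i)|x-y|^{-(n+s+1)}$. The boundary term on $\partial B_\epsilon(x)$ is bounded by $\|u\|_{L^\infty}\,\epsilon^{-(n-1+s)}\epsilon^{n-1}\to0$.

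What remains is
\[
\frac{n-1+s}{\gamma_{1-s}}\lim_{\epsilon\searrow0}\int_{\R^n\setminus B_\epsilon(x)}\frac{(y_i-x_i)u(y)}{|x-y|^{n+s+1}}\,dy.
\]
We must track the sign carefully, since integration by parts contributes a minus sign that combines with $x_i-y_i$ to give $y_i-x_i$. By~\eqref{relation}, the prefactor $\frac{n-1+s}{\gamma_{1-s}}$ equals $c_s$, so the whole expression is precisely the principal value in Definition~\ref{curate}.

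The main obstacle is justifying the Fourier step in the distributional sense. Lemma~\ref{fr} is stated for Schwartz functions, and $D_iu\in\mathcal{D}(\R^n)\subset\mathcal{S}(\R^n)$, so it applies directly. We also need $D^su\in\mathcal{S}'(\R^n)$, which follows from~\eqref{fracpiola}. Finally, the product $\xi|\xi|^{s-1}\widehat u$ must be read as a locally integrable function, which is clear because $\widehat u\in\mathcal{S}$ and $|\xi|^{s}$ is bounded near the origin. As an alternative route, one could write $D^su=D(I_{1-s}u)$, use Proposition~\ref{prdue} for $I_{1-s}u$ (valid since $u\in L^p$ with $p\in(1,n/(1-s))$), and then apply the rule that the Fourier transform of a distributional derivative multiplies by $2\pi i\xi$; both routes give the same formula.
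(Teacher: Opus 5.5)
Your route is genuinely different from the paper's and is sound overall, but one step fails as written: the boundary term. In $\int_{\R^n\setminus B_\epsilon(x)} D_iu(y)\,|x-y|^{-(n-1+s)}\,dy$, the outward normal of $\R^n\setminus B_\epsilon(x)$ on $\partial B_\epsilon(x)$ is $-(y-x)/\epsilon$. So the boundary contribution is
\[
-\epsilon^{-(n+s)}\int_{\partial B_\epsilon(x)} u(y)\,(y_i-x_i)\,d\mathcal{H}^{n-1}(y).
\]
Your bound $\|u\|_{L^\infty}\,\epsilon^{-(n-1+s)}\epsilon^{n-1}=\|u\|_{L^\infty}\,\epsilon^{-s}$ blows up as $\epsilon\searrow0$; it does not tend to $0$. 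A size estimate alone cannot work here, because the kernel is too singular on the small sphere. The missing ingredient is the cancellation $\int_{\partial B_\epsilon(x)}(y_i-x_i)\,d\mathcal{H}^{n-1}(y)=0$. This is the same odd symmetry that makes the principal value in Definition~\ref{curate} exist. It lets you replace $u(y)$ by $u(y)-u(x)$, and then the term is bounded by $C_n\|Du\|_{L^\infty(\R^n)}\,\epsilon^{1-s}\to0$. With this repair the remaining steps check out:
\begin{itemize}
\item the sign $\partial_{y_i}|x-y|^{-(n-1+s)}=(n-1+s)(x_i-y_i)|x-y|^{-(n+s+1)}$;
\item the identification $(n-1+s)/\gamma_{1-s}=c_s$ from \eqref{relation};
\item the application of Lemma~\ref{fr} with $\alpha=1-s$ to $D_iu\in\mathcal{S}(\R^n)$, which gives \eqref{four}.
\end{itemize}

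As for the comparison, the paper never uses the Riesz potential for this proposition. It proceeds as follows:
\begin{itemize}
\item it symmetrizes $D^su(x)=\frac{c_s}{2}\int\frac{[u(x+z)-u(x-z)]z}{|z|^{n+s+1}}\,dz$;
\item it checks that the integrand is in $L^1(\R^{2n})$ and uses Fubini to reduce to $c_s\, i\,\widehat u(\xi)\int\sin(2\pi z\cdot\xi)\,z_i|z|^{-n-s-1}\,dz$;
\item it evaluates this oscillatory integral by hand in Appendix~\ref{tappend} (Lemmas~\ref{vcc} and~\ref{ccc} and Proposition~\ref{fbp}, via Beta functions and the Legendre duplication formula).
\end{itemize}
Your argument replaces all of that with the pointwise identity $D^su=I_{1-s}(Du)$ plus the Riesz-kernel transform already available in Lemma~\ref{fr}, so the only constant bookkeeping left is \eqref{relation}. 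A side benefit is that you obtain the case $s=1$ of Theorem~\ref{usacchialo} directly, whereas the paper deduces that theorem from Proposition~\ref{huy}. The price is that everything rests on Lemma~\ref{fr}, and hence on the cited formula for the Fourier transform of $|x|^z$. The paper's proof of this proposition, by contrast, is a direct computation.
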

			
			\begin{proof}
We let~$R>0$ such that the support of~$u$ is contained in~$B_R$.			
			
Also, we point out that 
$$
D^s u(x) =  \frac{c_s}{2}  \Int \frac{[u(x+z)-u(x-z)]z}{|z|^{n+s+1}} \, dz.
$$
Thus, for all~$i\in\{1,\ldots,n\}$, we have that
\begin{equation}\label{mnbvcxz123456780}
{\mathcal{F}}(D^s_iu)(\xi)=\frac{c_s}2 \int_{\R^n}
\left(\Int \frac{[u(x+z)-u(x-z)]z_i}{|z|^{n+s+1}} \, dz\right)e^{2\pi i\xi\cdot x}\,dx.
\end{equation}

We claim that
\begin{equation}\label{mnbvcxz12345678}
{\mbox{the function~$\frac{[u(x+z)-u(x-z)]z_i}{|z|^{n+s+1}} e^{2\pi i\xi\cdot x}$
belongs to~$L^1(\R^{2n})$.}}
\end{equation}
To check this, we observe that, when~$z\in B_1$,
\begin{eqnarray*}&&
\left| \frac{[u(x+z)-u(x-z)]z_i}{|z|^{n+s+1}} e^{2\pi i\xi\cdot x}\right|\le
\frac{|u(x+z)-u(x-z)|}{|z|^{n+s}}\\ &&\qquad
=\frac{1}{|z|^{n+s}}\left|\int_{-1}^1 \nabla u(x+tz)\cdot z\,dt\right|\le
\frac{1}{|z|^{n+s-1}}\sup_{t\in(-1,1)}|\nabla u(x+tz)|\\
&&\qquad \le \frac{\mathds{1}_{B_{R+1}}(x)\|\nabla u\|_{L^\infty(\R^n)}  }{|z|^{n+s-1}}
=: f_1(x,z).
\end{eqnarray*}
Moreover, when~$z\in\R^n\setminus B_1$,
\begin{eqnarray*}
&&
\left| \frac{[u(x+z)-u(x-z)]z_i}{|z|^{n+s+1}} e^{2\pi i\xi\cdot x}\right|\le
\frac{|u(x+z)|+|u(x-z)|}{|z|^{n+s}}\\&&\qquad \le
\frac{\mathds{1}_{B_R}(x+z)+\mathds{1}_{B_R}(x-z)}{|z|^{n+s}}\,\|u\|_{L^\infty(\R^n)}
=: f_2(x,z).
\end{eqnarray*}
{F}rom these observations, we deduce that
$$ \left| \frac{[u(x+z)-u(x-z)]z_i}{|z|^{n+s+1}} e^{2\pi i\xi\cdot x}\right|\le f_1(x,z)
\mathds{1}_{B_1}(z)+ f_2(x,z)\mathds{1}_{\R^n\setminus B_1}(z),$$
which belongs to~$L^1(\R^{2n})$, thus proving~\eqref{mnbvcxz12345678}.

In light of~\eqref{mnbvcxz12345678}, we can exploit Fubini-Tonelli Theorem and obtain from~\eqref{fracpiola} and~\eqref{mnbvcxz123456780} that
\begin{align*}
\mathcal{F}\left(D^s_i u\right)(\xi) &=  c_s i \left( \Int \frac{\left( e^{i 2 \pi z \cdot \xi} - e^{- i 2 \pi z \cdot \xi} \right)}{2i} \frac{z_i}{|z|^{n+s+1}} \, dz \right) \widehat{u} (\xi) \\ &= c_s i \left( \int_{\R^n} \sin(2 \pi z \cdot \xi) \frac{z_i}{|z|^{n+s+1}} \, dz \right) \widehat{u} (\xi)\\ &= c_s i (2\pi)^s \left( \int_{\R^n} \sin(\zeta \cdot \xi) \frac{\zeta_i}{|\zeta|^{n+s+1}} \, d\zeta \right) \widehat{u} (\xi).
\end{align*}
Then, by~\eqref{constant} and by Proposition~\ref{fbp}, we see that
\begin{equation*}\begin{split}
\mathcal{F}\left(D^s_i u\right)(\xi) &= c_si (2\pi)^s 2^{-s}\pi^{\frac{n}{2}} \frac{\Gamma(\frac{1-s}{2})}{\Gamma(\frac{n+s+1}{2})}\xi_i |\xi|^{s-1} \widehat{u} (\xi) 
\\&= i (2\pi)^s \xi_i |\xi|^{s-1} \widehat{u} (\xi). \qedhere\end{split}
\end{equation*}
\end{proof}
			
With the work done so far, we can now relate the fractional gradient with the Riesz potential via the following result. 
			
			\begin{theorem}\label{usacchialo}
				Let~$\Bar{s} \in (0,1)$ and~$u \in \mathcal{D}(\R^n)$. Then, for any~$s \in (\bs,1]$,
				\begin{equation}\label{imp}
					D^{\bs} u= I_{s-\bs} D^s u.
				\end{equation}
			\end{theorem}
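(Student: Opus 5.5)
The plan is to prove \eqref{imp} on the Fourier side, combining Proposition~\ref{huy} with Proposition~\ref{prdue}, and then to invoke injectivity of the Fourier transform on $\mathcal{S}'(\R^n)$.

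\emph{Case $s\in(\bs,1)$.} Put $\alpha:=s-\bs\in(0,1)$. By \eqref{fracpiola}, each component $D^s_i u$ lies in every $L^p(\R^n)$. Fix some $p\in(1,n/\alpha)$; this interval is nonempty because $\alpha<1\le n$. Proposition~\ref{prdue} applied to $D^s_iu$, followed by Proposition~\ref{huy}, gives, in the sense of distributions,
\[
\mathcal{F}\big(I_{\alpha}D^s_i u\big)(\xi)=|2\pi\xi|^{-(s-\bs)}\, i(2\pi)^s\xi_i|\xi|^{s-1}\widehat u(\xi) = i(2\pi)^{\bs}\xi_i|\xi|^{\bs-1}\widehat u(\xi).
\]
By Proposition~\ref{huy} again, the right-hand side is $\mathcal{F}(D^{\bs}_iu)(\xi)$. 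One point needs care: the product of the multiplier with the distribution must be a genuine function identity. This holds here because $\widehat u\in\mathcal{S}(\R^n)$, and the resulting symbol $\xi_i|\xi|^{\bs-1}\widehat u$ is locally integrable, since $|\xi|^{\bs}$ is bounded near the origin. Both sides are therefore tempered distributions given by $L^1_{\rm loc}$ functions of moderate growth, and they coincide. Injectivity of $\mathcal{F}$ on $\mathcal{S}'$ then yields $D^{\bs}u=I_{s-\bs}D^su$ almost everywhere. Both sides are continuous: $D^{\bs}u$ by Corollary~\ref{ptems}, and $I_\alpha D^su$ because it is the convolution of $K_\alpha$ with a smooth function that is bounded and decays like $|x|^{-n-s}$ (Propositions~\ref{ffffff} and~\ref{usasempre}). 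Hence the equality holds pointwise.

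\emph{Case $s=1$.} Here $D^1u=Du$ and $\mathcal{F}(D_iu)=-2\pi i\xi_i\widehat u$ up to the sign convention fixed by the transform $e^{2\pi i\xi\cdot x}$. Note that formula \eqref{four} at $s=1$ reads $i(2\pi)\xi\widehat u$, which is consistent with this convention. With that convention in place, the same computation with $\alpha=1-\bs$ gives $|2\pi\xi|^{\bs-1}\,2\pi i\xi_i\widehat u=i(2\pi)^{\bs}\xi_i|\xi|^{\bs-1}\widehat u$, as required. Proposition~\ref{prdue} applies because $Du\in\mathcal{D}(\R^n)\subset L^p(\R^n)$.

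The main obstacle is bookkeeping rather than ideas. The first task is to check that the sign and normalization convention for $\mathcal{F}$ matches the one implicit in Proposition~\ref{huy}, especially at $s=1$. The second is to justify the distributional multiplication by $|2\pi\xi|^{-\alpha}$ when the resulting symbol is only locally integrable. If that justification feels fragile, an alternative is to test both sides against $\phi\in\mathcal{S}(\R^n)$ and use Parseval, where all the integrands are absolutely integrable.
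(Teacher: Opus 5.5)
Your proposal is correct and is essentially the paper's proof: you identify the Fourier transforms of both sides via Propositions~\ref{prdue} and~\ref{huy} and conclude by injectivity of $\mathcal{F}$ on $\mathcal{S}'(\R^n)$, and you add care the paper skips, namely the case $s=1$ (which Proposition~\ref{huy} does not literally cover) and the upgrade from a.e.\ to pointwise equality. Your sign remark at $s=1$ is garbled, since with the kernel $e^{2\pi i\xi\cdot x}$ one gets $\widehat{D_iu}=-2\pi i\xi_i\widehat u$, which does not match \eqref{four}; this is harmless, because under any fixed convention $\widehat{D^s u}$ carries the same sign for every $s\in(0,1]$ while the multiplier $|2\pi\xi|^{-\alpha}$ is real and even.
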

			
			\begin{proof}
We point out that~$D^\sigma u \in \mathcal{S}'(\R^n)$, thanks to~\eqref{fracpiola},
for all~$\sigma\in(0,1]$. Moreover,	 by~\eqref{shov} we have that~$K_{s-\bar s} \in \mathcal{S}'(\R^n)$, and therefore~$I_{s-\bar s} D^su =K_{s-\bar s} \ast D^su \in \mathcal{S}'(\R^n)$. 

Also, by Propositions~\ref{prdue} and~\ref{huy},
$$
\mathcal{F}(I_{s-\bar s}D^s u) (\xi)= | 2 \pi \xi|^{\bar s-s}\widehat{D^su}(\xi)
= i( 2 \pi)^{\bar s} \xi|\xi|^{\bar s-1}\widehat u(\xi) = \mathcal{F}(D^ {\bs} u)(\xi),
$$
from which the desired result follows.
			\end{proof}

\section{The space $H_0^{s,p}(\Omega)$: embeddings and inequalities}\label{asd}

In this section we present the function spaces that naturally arise when dealing with fractional gradients and partial differential equations. A detailed account on these function spaces can be found in~\cite{shieh, comi, piola, mora}. 

In line with~\cite{shieh, comi, mora}, we define the following spaces.
\begin{definition}
Let~$s \in (0,1]$ and~$p \in [1, +\infty)$. We define the norm
\begin{equation}\label{odjschv nk:SWDFP}
\Vert \phi \Vert_{H^{s,p}(\R^n)} := \left(\Vert \phi \Vert^{p}_{L^p(\R^n)} + \Vert D^s \phi \Vert^{p}_{L^p(\R^n)}\right)^{\frac1p}
\end{equation}
and the space
\begin{equation}\label{sf}
H^{s,p} (\R^n):= \overline{ \mathcal{D}(\R^n) }^{\Vert \cdot \Vert_{H^{s,p}(\R^n)}}.
\end{equation}
\end{definition}

It is worth observing that the definition above
is well posed, since if~$u\in \mathcal{D}(\R^n) $, then~$u$, $ D^s \phi \in L^p(\R^n)$.

\begin{remark}
We stress that, for any function $u \in H^{s,p}(\R^n)$ with $s \in (0,1]$ and $p \in [1, +\infty)$, we have that $D^s u$ is well-defined according to Definition~\ref{defini}. 
\end{remark}
Furthermore, in a bounded domain $\Omega \subset \R^n$ we have the following setting
(see~\cite[Section 2.3]{mora}):
\begin{definition}
Let $\Omega \subset \R^n$ be a bounded domain. Let~$s \in (0,1)$ and~$p \in [1, +\infty)$. We define
\begin{equation}
H^{s,p}_0 (\Omega):= \overline{\mathcal{D}(\Omega)}^{\Vert \cdot \Vert_{H^{s,p}(\R^n)}}.
\end{equation}
\end{definition}
We remark that $H^{s,p}_0 (\Omega)$ is a subspace of $H^{s,p} (\R^n)$.

\begin{proposition}\label{ascmtm}
Let $s \in (0,1]$ and $p \in(1,+\infty)$. Also, let $\Omega$ be a bounded domain in $\R^n$.

Then, the spaces $H^{s,p}(\R^n)$ and $H^{s,p}_0(\Omega)$ are reflexive.
\end{proposition}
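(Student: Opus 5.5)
The plan is to embed both spaces isometrically as closed subspaces of a reflexive product space, and then use the fact that closed subspaces of reflexive Banach spaces are reflexive.

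Consider $E:=L^p(\R^n)\times L^p(\R^n,\R^n)$, i.e. $(L^p(\R^n))^{n+1}$, with norm
\[
\|(f,g)\|_E:=\left(\|f\|^p_{L^p(\R^n)}+\|g\|^p_{L^p(\R^n)}\right)^{\frac1p}.
\]
Since $p\in(1,+\infty)$, each factor is reflexive (for instance by uniform convexity and the Milman--Pettis theorem, or by Riesz representation of the duals). A finite product of reflexive spaces is reflexive, so $E$ is reflexive.

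Define $T:H^{s,p}(\R^n)\to E$ by $Tu:=(u,D^su)$, where $D^su$ is the limit given by Definition~\ref{defini}; this is well posed by the remark that follows that definition. By \eqref{odjschv nk:SWDFP}, $\|Tu\|_E=\|u\|_{H^{s,p}(\R^n)}$, so $T$ is a linear isometry.

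The key step is to check that $T(H^{s,p}(\R^n))$ is closed in $E$. Since $H^{s,p}(\R^n)$ is defined in \eqref{sf} as a completion, it is complete. The isometric image of a complete space is complete, hence closed in $E$. If one instead views $H^{s,p}(\R^n)$ concretely as a subset of $L^p$, the same conclusion follows directly. Take $Tu_k\to(f,g)$ in $E$. Approximate each $u_k$ by some $\phi_k\in\mathcal D(\R^n)$ with $\|\phi_k-u_k\|_{H^{s,p}}<1/k$. Then $\phi_k\to f$ in $L^p$ and $(D^s\phi_k)$ is Cauchy in $L^p$. By Definition~\ref{defini}, $D^sf=g$, so $f\in H^{s,p}(\R^n)$ with $Tf=(f,g)$. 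The only subtle point is this consistency with Definition~\ref{defini}, which relies on the independence of the chosen sequence (\cite[Lemma 2.3]{mora}). I expect this to be the main, though mild, obstacle.

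Therefore $H^{s,p}(\R^n)$ is isometrically isomorphic to a closed subspace of the reflexive space $E$, and so it is reflexive. Finally, $H^{s,p}_0(\Omega)$ is by definition a closed subspace of $H^{s,p}(\R^n)$, since it is the closure of $\mathcal D(\Omega)$ in that norm. Applying the closed-subspace result once more shows that $H^{s,p}_0(\Omega)$ is reflexive. The case $s=1$ is covered by the same argument, with $D^1=D$.
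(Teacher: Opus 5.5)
Your proof is correct and follows the paper's argument: the isometry $u\mapsto(u,D^su)$ into the reflexive space $L^p(\R^n)\times L^p(\R^n,\R^n)$ has closed image, so $H^{s,p}(\R^n)$ is reflexive, and $H^{s,p}_0(\Omega)$ is reflexive as a closed subspace. The only difference is that you handle $s=1$ with the same argument instead of citing Brezis, which is fine since that is the standard proof of reflexivity of $W^{1,p}$ anyway.
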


\begin{proof}
We point out that~$H^{1,p}(\R^n)$ is reflexive, thanks to~ \cite[Proposition 8.1]{brezis}.

Hence, we now focus on the case~$s \in (0,1)$.
To this end,
we notice that the product space~$L^{p}(\R^n) \times L^p(\R^n,\R^n)$ is reflexive. Also, we define the operator~$T: H^{s,p}(\R^n) \to L^{p}(\R^n) \times L^p(\R^n,\R^n)$ as~$T u := (u, D^su)$ and observe that~$T$
is an isometry from $H^{s,p}(\R^n)$ to $L^{p}(\R^n) \times L^p(\R^n,\R^n)$.
As a consequence, $T(H^{s,p}(\R^n))$ is a closed subspace of~$L^{p}(\R^n) \times L^p(\R^n,\R^n)$ and
therefore~$H^{s,p}(\R^n)$ is reflexive
(see e.g.~\cite[Theorem~15 on page~82]{MR1892228}). 

Since $H^{s,p}_0(\Omega)$ is a closed subspace of $H^{s,p}(\R^n)$, it is reflexive as well.
\end{proof}

\begin{remark}
{I}n \cite[Theorem 1.7]{shieh}, it was established, for any $s \in (0,1)$ and $p \in (1,+\infty)$, the identification of the spaces $H^{s,p}(\R^n)$ with the classical Bessel spaces (see~\cite[Definition 2.1]{shieh} for a formal definition of Bessel spaces). We refer the interested reader to~\cite[Theorem 2.2]{shieh} for a detailed characterization of these spaces.
\end{remark}  	
			
{N}ow, we present an embedding result for the spaces $H^{s,p}(\R^n)$ and $H^{s,p}_0(\Omega)$.

\begin{theorem}\label{qtppd}
Let~$s \in (0,1)$, $p \in (1, +\infty)$ and~$\Omega$ be a bounded domain of~$\R^n$.

Let also
\begin{equation}\label{pranbef}
			\begin{cases}
				q \in [1,p^*_s] & \mbox{ if } sp<n ,\\
				q \in [1,+\infty) & \mbox{ if } sp=n ,\\
				q \in [1,+\infty] & \mbox{ if } sp>n,
			\end{cases}
		\end{equation}
		where~$p^*_s := np / (n-sp)$ is the so-called critical exponent. 
		
Then,
\begin{equation}\label{polopo}
\mbox{$H_0^{s,p}(\Omega)$ continuously embeds into $L^q(\Omega)$.}
\end{equation}
		
Furthermore, if~$q$ satisfies 
		\begin{equation} \label{pran}
			\begin{cases}
				q \in [1,p^*_s) & \mbox{ if } sp<n ,\\
				q \in [1,+\infty) & \mbox{ if } sp=n, \\
				q \in [1,+\infty] & \mbox{ if } sp>n,
			\end{cases}
		\end{equation}
		then, for any sequence~$(u_k) \subset H^{s,p}_0(\Omega)$ such that~$u_k \rightharpoonup u$ in~$H^{s,p}(\R^n)$ as~$k\to+\infty$, for some~$u \in H^{s,p}(\R^n)$, we have that~$u \in H^{s,p}_0(\Omega)$ and 
		\begin{equation}\label{orcio}
		\mbox{$u_k \to u $ in $ L^q(\R^n)$ as~$k\to+\infty$.}
		\end{equation}
		\end{theorem}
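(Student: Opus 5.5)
The plan is to transfer everything to the classical Bessel potential spaces and then use the Riesz potential representation of Section~\ref{rieszsec}.

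\emph{Continuous embedding~\eqref{polopo}.} It suffices to argue for $\phi\in\mathcal{D}(\Omega)$ and conclude by density. By Corollary~\ref{aar},
\[ |\phi(x)|\le c_{-s}\int_{\R^n}\frac{|D^s\phi(z)|}{|x-z|^{n-s}}\,dz = C\, I_s(|D^s\phi|)(x). \]
When $sp<n$, the Hardy--Littlewood--Sobolev bound~\eqref{riz} with $\alpha=s$ gives $\|\phi\|_{L^{p^*_s}(\R^n)}\le C\|D^s\phi\|_{L^p(\R^n)}$. Since $\Omega$ is bounded, H\"older's inequality then gives every $q\in[1,p^*_s]$.

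When $sp\ge n$, split the kernel as $\mathds{1}_{B_1}|z|^{s-n}+\mathds{1}_{B_1^c}|z|^{s-n}$.
\begin{itemize}
\item The near part lies in $L^{r}$ for all $r<n/(n-s)$. Young's inequality then puts its convolution with $D^s\phi\in L^p$ in $L^q$ for every finite $q$ when $sp=n$, and in $L^\infty$ when $sp>n$, since then $p'<n/(n-s)$.
\item The far part is harder because $|z|^{s-n}$ is not integrable at infinity. The cleanest fix is to invoke the identification of $H^{s,p}(\R^n)$ with the Bessel spaces $L^{s,p}$ (\cite[Theorem~1.7]{shieh}) and the classical Bessel embeddings, whose kernel $G_s$ is integrable at infinity. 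The inhomogeneous norm~\eqref{odjschv nk:SWDFP} includes $\|\phi\|_{L^p}$, which is exactly what this requires.
\end{itemize}
Alternatively, for $sp\ge n$ one can choose $\tilde s<s$ with $\tilde sp<n$ and $\tilde p^*$ as large as needed. Then $D^{\tilde s}\phi=I_{s-\tilde s}D^s\phi$ by Theorem~\ref{usacchialo}, but its $L^p$ control also needs the $L^p$ norm of $\phi$. So I would rely on the Bessel identification and cite it.

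\emph{Compactness~\eqref{orcio}.}
\begin{enumerate}
\item \emph{The limit lies in $H^{s,p}_0(\Omega)$.} This space is a closed convex subspace of $H^{s,p}(\R^n)$, hence weakly closed, so $u\in H^{s,p}_0(\Omega)$. The sequence $(u_k)$ is bounded in norm by the uniform boundedness principle.
\item \emph{Strong convergence in $L^p$.} I would show that the bounded set $\{u_k\}$ is precompact in $L^p(\R^n)$ via Riesz--Fr\'echet--Kolmogorov.
\begin{itemize}
\item Tightness is automatic, since all supports lie in $\overline\Omega$.
\item For equicontinuity of translations, use a Bessel/Fourier estimate: $\|\tau_h v-v\|_{L^p}\le C|h|^{\theta}\|v\|_{H^{s,p}}$ for some $\theta\in(0,s)$. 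For $p=2$ this is immediate from Proposition~\ref{huy}, because $|e^{2\pi i h\xi}-1|\le C\min(1,|h\xi|)\le C|h|^s|\xi|^s$. For general $p$, obtain it from the embedding of Bessel spaces into Besov/Nikolskii spaces $B^{s}_{p,\infty}$, again through the identification in~\cite{shieh}.
\end{itemize}
Hence $u_k\to u$ strongly in $L^p$, where the limit is identified as $u$ because weak convergence in $H^{s,p}$ implies weak convergence in $L^p$.
\item \emph{Extension to all admissible $q$.} If $q\le p$, use H\"older on the bounded $\Omega$. If $p<q<p^*_s$ (or $q$ finite, or $q=\infty$ when $sp>n$), interpolate: $\|u_k-u\|_{L^q}\le\|u_k-u\|_{L^p}^{\vartheta}\|u_k-u\|_{L^{r}}^{1-\vartheta}$, where $r$ is an exponent from~\eqref{polopo} with $r>q$. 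The second factor is bounded by step~1. When $q=\infty$ and $sp>n$, instead use a uniform H\"older modulus, which follows from the same Bessel embedding, together with Arzel\`a--Ascoli.
\end{enumerate}

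The main obstacle is the translation estimate for general $p$ (and the $sp\ge n$ endpoint cases). Proving these directly from the fractional gradient is delicate, so I would reduce them to the classical Bessel space theory via the identification in~\cite{shieh}.
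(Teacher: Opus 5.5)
Your argument is correct. It differs from the paper mainly because the paper proves the theorem entirely by citation. There, \eqref{polopo} is quoted from \cite[Theorems~1.8, 1.10 and~2.2(e)]{shieh} in the regimes $sp<n$, $sp=n$, $sp>n$. The whole weak-to-strong statement, including $u\in H^{s,p}_0(\Omega)$, is deduced from \cite[Theorem~2.8]{mora}.

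You instead prove the case $sp<n$ directly from Corollary~\ref{aar} and \eqref{riz}. You also unpack the compactness statement into:
\begin{itemize}
\item weak closedness of the subspace;
\item Riesz--Fr\'echet--Kolmogorov in $L^p$ and identification of the limit;
\item interpolation against \eqref{polopo};
\item Arzel\`a--Ascoli for $q=\infty$.
\end{itemize}
For $sp\ge n$, and for the translation estimate when $p\ne2$, you still fall back on the Bessel identification of \cite{shieh} together with Besov/Morrey-type embeddings. The paper's version is two lines but opaque. Yours exposes the mechanism, but imports more external machinery than it needs.

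In fact the obstacle you flag is not there. Corollary~\ref{aar} says $\phi=c_{-s}K*D^s\phi$ (convolution with a dot product), where $K(w):=w|w|^{s-n-1}$ is homogeneous of degree $s-n$. Hence
\[
\|K(\cdot+h)-K\|_{L^1(\R^n)}=|h|^{s}\,\|K(\cdot+e)-K\|_{L^1(\R^n)},\qquad e:=\frac{h}{|h|}.
\]
The right-hand side is finite: $|K(v+e)-K(v)|\le C|v|^{s-n-1}$ for large $|v|$, which is integrable at infinity because $s<1$. Young's inequality then gives $\|\phi(\cdot+h)-\phi\|_{L^p(\R^n)}\le C|h|^s\|D^s\phi\|_{L^p(\R^n)}$ for every $p$, with no Besov theory.

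Likewise, the alternative you discarded for $sp\ge n$ goes through. The missing bound $\|D^{\tilde s}\phi\|_{L^p}\le C\|D^s\phi\|_{L^p}$ on $H^{s,p}_0(\Omega)$ is exactly Proposition~\ref{crizzo}, whose proof does not use Theorem~\ref{qtppd}. So every finite $q$ follows from your $sp<n$ argument applied with some $\tilde s<s$, $\tilde sp<n$. For $q=\infty$ when $sp>n$, use H\"older's inequality with the same difference kernel in $L^{p'}$ (finite exactly when $sp>n$). This gives $|\phi(x+h)-\phi(x)|\le C|h|^{s-n/p}\|D^s\phi\|_{L^p}$. Since $\phi$ vanishes on $\partial\Omega$, this yields both the $L^\infty$ bound and the equicontinuity needed for Arzel\`a--Ascoli. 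So the Bessel identification can be avoided entirely.
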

		\begin{proof}
	When $sp<n$, the claim in~\eqref{polopo} is immediate from~\cite[Theorem~1.8]{shieh}.
	When~$sp=n$, the claim in~\eqref{polopo} comes from~\cite[Theorem~1.10]{shieh}. When~$sp>n$,
		\eqref{polopo} plainly follows from~\cite[Theorem~2.2, \textit{(e)}]{shieh}.

		The second part of Theorem~\ref{qtppd} is a straightforward consequence of \cite[Theorem 2.8]{mora}.
		\end{proof}
		\begin{corollary} \label{kas}
			Let~$s \in (0,1)$ and~$p \in (1, +\infty)$. Let~$\Omega$ be a bounded domain of~$\R^n$
			and~$q$ satisfy~\eqref{pran}. 
			Then, the embedding of~$H^{s,p}_0(\Omega)$ into~$L^q(\Omega)$ is compact.
		\end{corollary}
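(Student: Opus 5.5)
The plan is to deduce Corollary~\ref{kas} from the second part of Theorem~\ref{qtppd}, using reflexivity (Proposition~\ref{ascmtm}) to extract weakly convergent subsequences. We check that bounded sets of $H^{s,p}_0(\Omega)$ are sent to relatively compact subsets of $L^q(\Omega)$; equivalently, every bounded sequence has a subsequence converging strongly in $L^q(\Omega)$.

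First, take a sequence $(u_k)\subset H^{s,p}_0(\Omega)$ with $\sup_k \Vert u_k\Vert_{H^{s,p}(\R^n)}<+\infty$. Since $p\in(1,+\infty)$, the space $H^{s,p}(\R^n)$ is reflexive by Proposition~\ref{ascmtm}. By the Eberlein--\v{S}mulian theorem (or Banach--Alaoglu combined with reflexivity and the separability of the span of the sequence), we extract a subsequence, still denoted $(u_k)$, and some $u\in H^{s,p}(\R^n)$ with $u_k\rightharpoonup u$ weakly in $H^{s,p}(\R^n)$. Alternatively, the same conclusion holds with $u\in H^{s,p}_0(\Omega)$ directly, since $H^{s,p}_0(\Omega)$ is closed and convex, hence weakly closed.

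Second, since $q$ satisfies~\eqref{pran}, the second part of Theorem~\ref{qtppd} gives $u\in H^{s,p}_0(\Omega)$ and $u_k\to u$ in $L^q(\R^n)$, by~\eqref{orcio}. Restricting to $\Omega$ yields $u_k\to u$ in $L^q(\Omega)$. This proves that the embedding, which is linear and, by~\eqref{polopo}, continuous, maps bounded sets into relatively compact sets, i.e.\ it is compact.

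There is no serious obstacle, since the analytic content is contained in Theorem~\ref{qtppd}. The only points needing care are the justification of weak sequential compactness, which rests on reflexivity and hence on $p>1$, and the case $q=+\infty$ when $sp>n$, which Theorem~\ref{qtppd} covers directly through~\eqref{orcio}.
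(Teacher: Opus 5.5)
Your proposal is correct and follows essentially the same route as the paper: reflexivity from Proposition~\ref{ascmtm} gives weak sequential compactness, and then the second part of Theorem~\ref{qtppd} upgrades weak convergence in $H^{s,p}(\R^n)$ to strong convergence in $L^q$. The only cosmetic difference is in how weak convergence reaches $H^{s,p}(\R^n)$. The paper uses the fact that an operator on a reflexive space mapping weakly convergent sequences to strongly convergent ones is compact, and transfers weak convergence from $H^{s,p}_0(\Omega)$ to $H^{s,p}(\R^n)$ by restricting functionals; you instead extract the weakly convergent subsequence directly in $H^{s,p}(\R^n)$, which sidesteps that transfer step.
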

		\begin{proof}
		By Proposition~\ref{ascmtm}, we know that $H^{s,p}_0(\Omega)$ is reflexive. Also, by
		Theorem~\ref{qtppd}, we have that the embeddings of~$H_0^{s,p}(\Omega)$ into~$ L^q(\Omega)$ are continuous if~$q$ satisfies~\eqref{pran}. Then, by the theory of compact operators, we obtain that the embeddings are compact if they  map sequences converging in the weak topology to sequences that converge in the strong sense (i.e., in the norm topology); namely if for any~$(u_k)\subset H_0^{s,p}(\Omega)$ and~$u\in H_0^{s,p}(\Omega) $ such that~$u_k \rightharpoonup u$ in~$H^{s,p}_0(\R^n)$ as~$k\to+\infty$, we have that
\begin{equation}\label{plkj}\lim_{k\to+\infty}
\Vert u_k - u \Vert_{L^q(\Omega)}= 0.
\end{equation}		 
	Aiming at proving~\eqref{plkj}, we observe that, since~$H^{s,p}_0(\Omega) \subset H^{s,p}(\R^n)$, the reverse inclusion for the dual spaces is valid, i.e.~$\left(H^{s,p}(\R^n)\right)^*  \subset \left(  H^{s,p}_0(\Omega) \right) ^*$. Accordingly, any sequence~$(u_k)$ weakly converging to~$u$ in~$ H^{s,p}_0(\Omega)$, weakly converges to~$u$ in~$H^{s,p}(\R^n)$. Then,
	we are in the position of using the second statement in Theorem~\ref{qtppd} and therefore~\eqref{plkj} is a consequence of~\eqref{orcio}.
		\end{proof}
	
		We stress that, if~$sp<n$, Corollary~\ref{kas} constitutes a fractional counterpart of the classical Sobolev embedding.

		Now, let~$s \in (0,1)$, $p \in (1, +\infty)$ and~$\Omega$ be a bounded domain in~$\R^n$. According to~\cite[Theorem~2.9]{mora}, we know that there exists a positive
		constant~$C$, depending only on~$n$ and~$ \Omega$, such that, for any~$u \in H_0^{s,p}(\Omega)$,
		\begin{equation}\label{simn}
			\Vert u \Vert_{L^p(\Omega)} \leq \frac{C}{s} \Vert D^s u \Vert_{L^p(\R^n)}.
		\end{equation}
	Our aim is now to extend this result to the case $p=1$, which seems not to be covered in the available literature.		
		\begin{proposition}\label{costa}
			Let~$s \in (0,1)$ and~$\Omega$ be a bounded domain of~$\R^n$. Let~$\rho>0$ be such that~${\Omega \subset [-\rho,\rho]^n}$.
			
			Then, there exists~$C>0$, depending only on~$n$, such that, for any~$u \in H_0^{s,1}(\Omega)$,
			\begin{equation*}
				\Vert u \Vert_{L^1(\Omega)} \leq \frac{C\,\rho^s }{s} \Vert D^s u \Vert_{L^1(\R^n)}.
			\end{equation*}
		\end{proposition}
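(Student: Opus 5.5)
By density of $\mathcal{D}(\Omega)$ in $H^{s,1}_0(\Omega)$, it suffices to prove the estimate for $u\in\mathcal{D}(\Omega)$ and then pass to the limit. Both sides are continuous with respect to the $H^{s,1}(\R^n)$ norm: the left-hand side trivially, and the right-hand side because $D^s u$ is defined as an $L^1$ limit (Definition~\ref{defini}). The natural tool is the fractional Fundamental Theorem of Calculus in its one-sided form, Corollary~\ref{aar}. However, integrating $|x-z|^{-(n-s)}$ over $x\in\Omega$ only gives a bound of order $\rho^s/s$ when $z$ is near $\Omega$, while for $z$ far away we need decay. So instead of integrating the Riesz-type kernel directly, I would use the one-dimensional structure hinted at by the cube $[-\rho,\rho]^n$.

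\textbf{Plan.} Fix $u\in\mathcal{D}(\Omega)$ and start from Corollary~\ref{aar}:
\[
u(x)=c_{-s}\int_{\R^n}\frac{(x-z)\cdot D^s u(z)}{|x-z|^{n-s+1}}\,dz .
\]
Integrate $|u(x)|$ over $x\in[-\rho,\rho]^n$ and exchange the integrals with Tonelli:
\[
\|u\|_{L^1(\Omega)}\le c_{-s}\int_{\R^n}|D^s u(z)|\left(\int_{[-\rho,\rho]^n}\frac{dx}{|x-z|^{n-s}}\right)dz .
\]
For each fixed $z$, the inner integral is at most the integral of $|y|^{s-n}$ over the ball centered at $0$ whose volume equals $(2\rho)^n$, since this kernel is radially decreasing (rearrangement). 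With $r$ the radius of that ball, $r\sim\rho$, this gives
\[
\int_{[-\rho,\rho]^n}\frac{dx}{|x-z|^{n-s}}\le \int_{B_r}\frac{dy}{|y|^{n-s}}=\frac{S_n r^s}{s}\le \frac{C\rho^s}{s}.
\]
Hence $\|u\|_{L^1(\Omega)}\le c_{-s}\,C\rho^s s^{-1}\|D^s u\|_{L^1(\R^n)}$. This is uniform in $z$, so no decay at infinity is needed.

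\textbf{Main obstacle: the constant $c_{-s}$.} From \eqref{constant},
\[
c_{-s}=\frac{2^{-s}\pi^{-n/2}\,\Gamma\!\left(\frac{n-s+1}{2}\right)}{\Gamma\!\left(\frac{1+s}{2}\right)}.
\]
For $s\in(0,1)$ the arguments $(1+s)/2$ and $(n-s+1)/2$ lie in compact subsets of $(0,\infty)$, so $c_{-s}$ is bounded by a constant depending only on $n$. This is also consistent with \eqref{constdef}, which gives $c_{-s}\le C(1+s)$. The final constant therefore depends only on $n$.

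One detail to check is the justification of Tonelli and of Corollary~\ref{aar} for $u\in\mathcal{D}(\Omega)$. This holds because $D^s u\in L^1(\R^n)$ by \eqref{fracpiola}.
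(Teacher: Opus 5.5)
Your argument is correct, and it takes a different and simpler route than the paper's.

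\textbf{The paper's route.} The paper also starts from Corollary~\ref{aar} and Tonelli. It does not, however, bound the kernel integral uniformly in $z$. Instead it fixes $R\sim\rho$ (with a large dimensional factor) and splits the $y$-integral into $B_{2R}$ and its complement.
\begin{itemize}
\item The near part is bounded via $\int_{B_{3R}}|z|^{s-n}\,dz\lesssim R^s/s$.
\item The far part is treated with the decay estimate of Proposition~\ref{usasempre}, $|D^s u(y)|\le 2^{n+s}c_s\|u\|_{L^1}|y|^{-n-s}$. By the choice of $R$ this contributes at most $\frac12\|u\|_{L^1(\Omega)}$, which is then absorbed into the left-hand side.
\end{itemize}

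\textbf{Your route.} You observe that $\int_{[-\rho,\rho]^n}|x-z|^{s-n}\,dx\le\int_{B_r}|y|^{s-n}\,dy$ for every $z$, where $|B_r|=(2\rho)^n$. This follows from the elementary bathtub comparison at level $r^{s-n}$ on $E\setminus B_r$ and $B_r\setminus E$, which have equal measure. This one bound makes the splitting, the decay estimate and the absorption all unnecessary. It also yields the explicit constant $c_{-s}S_n(2\omega_n^{-1/n})^s/s$, which is better than the paper's.

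\textbf{What each route buys.} The paper's version uses only tools already developed in the text (a ball integral and Proposition~\ref{usasempre}). It pays for this with the absorption step and a worse constant. In fact even the paper's far region needs no decay of $D^s u$: for $|y|\ge 2R$ one has $\int_\Omega|x-y|^{s-n}\,dx\le 2^{n-s}|\Omega|\,|y|^{s-n}$, which is directly controlled by $\|D^s u\|_{L^1}$.

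\textbf{One thing to clean up.} Your preamble says that far-away $z$ ``need decay'' and that you would exploit a ``one-dimensional structure'' of the cube. Neither is what your plan actually does. As you note yourself at the end, the rearrangement bound is uniform in $z$ and handles everything, so those remarks should be removed.
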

		\begin{proof}
		
		Let us set
			\begin{equation*}
				\Tilde{c}:= \sup_{s \in(-1,1)} c_{s},
			\end{equation*}		
			which is finite, thanks to~\eqref{constdef}, and 
		\begin{equation}\label{zizo}
R:= \left(  2^{\frac{n+1}{n}} \left(\frac{{\Tilde{c}}^2 S_{n-1}}{n}\right)^{\frac{1}{	n}} +1 \right)  {\rho},
\end{equation}	
			so that~$\Omega \subset B_{R}$.
			
			We establish the desired result for~$u \in \mathcal{D}(\Omega)$, then we will apply a density argument to complete the proof of the claim in its full generality. 		
			
We observe that, since~$u \in \mathcal{D}(\Omega)$, we infer from Corollary~\ref{aar} that, for any~$x \in \R^n$,
			\[ |u(x)| \leq c_{-s} \Int \frac{|D^s u(y)|}{|x-y|^{n-s}} \, dy. \] 
As a result, 
\begin{equation}\label{col}
\Vert u \Vert_{L^1(\Omega)} \leq A + B,
\end{equation}
		where
		\begin{eqnarray*}A&:=& c_{-s} \int_{B_{2R}}\left(\int_{\Omega}\frac{dx}{|x-y|^{n-s}}  \right)|D^s u(y)| \, dy 		\\
{\mbox{and }}\qquad 
	B&:=& c_{-s} \int_{\R^n \setminus B_{2R}}  \left( \int_{\Omega}\frac{dx}{|x-y|^{n-s}} \right) |D^s u(y)| \, dy.	\end{eqnarray*}
We first estimate~$A$. To this end, we change variable~$z:=x-y$ and obtain that
			\begin{equation}\label{098}\begin{split}&
				A \leq \Tilde{c} \left(\int_{B_{2R}}|D^s u(y)| \, dy\right) \left(\int_{B_{3R}}\frac{dz}{|z|^{n-s}} \right)\\&\qquad \leq \frac{3\Tilde{c}\, S_{n-1}\,{R}^s}{s} \Vert D^s u \Vert_{L^1(\R^n)} \leq \frac{C\, \rho^s}{s} \Vert D^s u \Vert_{L^1(\R^n)},\end{split}
			\end{equation} for some~$C>0$ depending only on~$n$.
			
Now we estimate~$B$. For this, we observe that if~$y \in \R^n \setminus B_{2R}$ and~$x \in \Omega$, then
\begin{equation*}
\frac{1}{|x-y|^{n-s} } \leq \frac{2^{n-s}}{|y|^{n-s}}.
\end{equation*}
Thus, by Proposition~\ref{usasempre} and the definition of~$R$ in~\eqref{zizo}, we obtain that
\begin{align*}&
B \leq c_{-s} 2^{n-s}|\Omega| \int_{\R^n \setminus	B_{2R}} \frac{|D^su(y)|}{|y|^{n-s}} \, dy \leq  {\Tilde{c}}^2 2^{2n} \rho^n \left( \int_{\R^n \setminus B_{2R}}\frac{dy}{|y|^{2n}}  \right) \Vert u \Vert_{L^1(\Omega)} \\&\qquad\qquad= \frac{{\Tilde{c}}^2 S_{n-1} 2^{n} \rho^n}{n\,R^n}  \Vert u \Vert_{L^1(\Omega)} \leq \frac{1}{2} \Vert u \Vert_{L^1(\Omega)}.
\end{align*}

From this, \eqref{col} and~\eqref{098}, we conclude that
		\begin{equation}\label{sggs}
			\Vert u \Vert_{L^1(\Omega)} \leq \frac{C\, \rho^s}{s} \Vert D^s u \Vert_{L^1(\R^n)} + \frac{1}{2} \Vert u \Vert_{L^1(\Omega)},
		\end{equation}
		which proves the thesis for~$u \in \mathcal{D}(\Omega)$. 
		
{N}ow we take~$u \in H^{s,1}_0(\Omega)$. Then, by definition there exists  a sequence $(\phi_k) \subset \mathcal{D}(\Omega)$ converging to~$u$ in~$H^{s,1}(\R^n)$ as~$k\to+\infty$. Hence, for all~$k\in\N$,
\begin{equation}
\Vert \phi_k \Vert_{L^1(\Omega)} \leq \frac{C\,\rho^s}{s} \Vert D^s \phi_k \Vert_{L^1(\R^n)} .
\end{equation}
Passing to the limit in $k$ concludes the proof.
\end{proof}

We observe that Proposition~\ref{costa} in the case $s=1$ reduces to the classical Poincar\'{e} inequality.
Also, relying on~\eqref{simn} and Proposition~\ref{costa}, we obtain the following properties for the space~$H_0^{s,p}(\Omega)$.

	\begin{corollary}\label{equivalentno} Let $s \in (0,1]$, $p \in [1, +\infty)$
		and~$\Omega$ be a bounded domain of~$\R^n$. 
		
		Then, there exists a positive constant~${\overline C}$, depending only
		on~$n$, $s$, $p$ and~$\Omega$, and independent of~$u$,
		such that, for any~$u \in H_0^{s,p}(\Omega)$,
\begin{equation}\label{gdg}
 \Vert u \Vert_{H^{s,p}(\R^n)} \leq \overline C\Vert D^s u \Vert_{L^p(\R^n)} .
\end{equation}

Moreover, if $s \in (0,1)$, the constant $\overline C$ can be made independent of~$p$ and takes the form
$$ {\overline C}=\left( \frac{C}{s}+1 \right),$$
with~$C>0$ depending only on~$n$ and~$\Omega$.
\end{corollary}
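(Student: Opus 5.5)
The argument reduces everything to a Poincaré-type bound $\Vert u\Vert_{L^p(\Omega)}\le C_P\Vert D^s u\Vert_{L^p(\R^n)}$, valid for all $u\in H^{s,p}_0(\Omega)$. Once this holds, the definition of the norm in~\eqref{odjschv nk:SWDFP} gives
\[
\Vert u\Vert_{H^{s,p}(\R^n)}=\left(\Vert u\Vert^p_{L^p(\Omega)}+\Vert D^su\Vert^p_{L^p(\R^n)}\right)^{\frac1p}\le \Vert u\Vert_{L^p(\Omega)}+\Vert D^su\Vert_{L^p(\R^n)}\le (C_P+1)\Vert D^su\Vert_{L^p(\R^n)}.
\]
Here we use that $u$ vanishes outside $\Omega$, so that $\Vert u\Vert_{L^p(\R^n)}=\Vert u\Vert_{L^p(\Omega)}$, together with the elementary inequality $(a^p+b^p)^{1/p}\le a+b$ for $a,b\ge0$ and $p\ge1$. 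This inequality is what makes the final constant independent of~$p$ whenever $C_P$ is.

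We would then split into cases according to $s$ and $p$.

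\textbf{Case $s\in(0,1)$, $p\in(1,+\infty)$.} Here we invoke~\eqref{simn} directly, which gives $C_P=C/s$ with $C$ depending only on $n$ and $\Omega$.

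\textbf{Case $s\in(0,1)$, $p=1$.} We use Proposition~\ref{costa}, which gives $C_P=C\rho^s/s$. Fixing $\rho$ once and for all in terms of $\Omega$, we have $\rho^s\le\max\{1,\rho\}$, so this factor can be absorbed into a constant depending only on $n$ and $\Omega$.

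Enlarging $C$ to the maximum of the two constants from these cases, we get $\overline C=C/s+1$ uniformly in $p\in[1,+\infty)$.

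\textbf{Case $s=1$.} Here $H^{1,p}_0(\Omega)$ is the closure of $\mathcal{D}(\Omega)$ in the $W^{1,p}$ norm, and the classical Poincaré inequality $\Vert u\Vert_{L^p(\Omega)}\le C(n,p,\Omega)\Vert Du\Vert_{L^p(\Omega)}$ holds first on $\mathcal{D}(\Omega)$ and then on $H^{1,p}_0(\Omega)$ by density. In this case $\overline C$ is allowed to depend on $p$, which is consistent with the statement.

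The only delicate point is making sure the constant in~\eqref{simn} truly depends only on $n$ and $\Omega$ and not on $p$, as claimed in the second part of the statement. We rely on its formulation as recalled from~\cite[Theorem~2.9]{mora}. Should that dependence include $p$, we would instead rederive the bound by the same strategy as in Proposition~\ref{costa}: start from the representation in Corollary~\ref{aar} and apply Young's inequality, splitting the kernel $|x-y|^{s-n}$ into its parts inside and outside $B_{2R}$. Each piece contributes an $L^1$ kernel norm that is independent of $p$.
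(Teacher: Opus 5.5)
Your proposal is correct and follows the paper's proof essentially verbatim. You use Poincar\'e for $s=1$, \eqref{simn} for $p>1$ and Proposition~\ref{costa} for $p=1$, then the bound $((C/s)^p+1)^{1/p}\le C/s+1$; you also make explicit the absorption $\rho^s\le\max\{1,\rho\}$, which the paper leaves implicit.

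Your fallback is not needed, since the paper takes \eqref{simn} with a constant depending only on $n$ and $\Omega$. As written it would not quite work anyway: the tail of the kernel $|z|^{s-n}$ outside a ball is not integrable. The outer region therefore needs the decay bound of Proposition~\ref{usasempre} plus an absorption step, exactly as in the proof of Proposition~\ref{costa}, rather than an $L^1$-kernel Young estimate.
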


\begin{proof}
{W}hen $s=1$,~\eqref{gdg} is an straightforward consequence of the Poincar\'{e} inequality. 

When~$s \in (0,1)$, we exploit
either Proposition~\ref{costa} if~$p=1$ or~\eqref{simn} if~$p \in (1, +\infty) $, obtaining that
\begin{equation*}\begin{split}&
\Vert u \Vert _{H^{s,p}(\R^n)}= \left(\Vert u \Vert _{L^p(\Omega)}^p  +\Vert D^s u \Vert _{L^p(\R^n)}^p \right)^{\frac1p}\\&\qquad\leq \left(  \left(\frac{C}{s}\right)^p+1 \right)^{\frac1p} \Vert D^s u \Vert _{L^p(\R^n)}
\le \left( \frac{C}{s}+1 \right)\Vert D^s u \Vert _{L^p(\R^n)},\end{split}
\end{equation*}
for some~$C>0$ depending only on~$n$ and~$\Omega$.
\end{proof}

\begin{proposition}\label{doverf}
Let~$p \in [1, +\infty)$ and~$\Omega$ be a bounded domain of~$\R^n$. 

Then, there exists~${C}>0$, depending only on~$n$, $p$ and~$\Omega$, such that, for all~$s\in(0,1]$
and~$R>0$ with
\begin{equation}\label{pcd}
s^{2}R^s> C ,
\end{equation} 
we have that, for any $u \in H^{s,p}_0(\Omega)$,
\begin{equation}\label{hdowe}
 \Vert D^s u \Vert_{L^p(\R^n)} \leq 2 \Vert D^s u \Vert_{L^p(B_R)} .
\end{equation}
\end{proposition}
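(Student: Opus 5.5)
The plan is to show that the tail $\Vert D^s u\Vert_{L^p(\R^n\setminus B_R)}$ is at most a small multiple of $\Vert D^s u\Vert_{L^p(\R^n)}$. Absorbing it on the left will then give \eqref{hdowe}. I will argue for $u\in\mathcal{D}(\Omega)$ and conclude by density. This is legitimate because $D^s u_k\to D^s u$ in $L^p(\R^n)$ for $u_k\to u$ in $H^{s,p}(\R^n)$, so both sides of \eqref{hdowe} pass to the limit.

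The case $s=1$ is trivial. Indeed, $Du$ is supported in $\Omega$, so it suffices to take $R$ large enough that $\Omega\subset B_R$; this is implied by \eqref{pcd} once $C$ exceeds the radius of a ball containing $\Omega$.

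For $s\in(0,1)$, fix $\rho$ with $\Omega\subset B_\rho$ and take $R\ge 2\rho$. Proposition~\ref{usasempre} gives, for $|x|\ge 2\rho$,
\[
|D^s u(x)|\le \frac{2^{n+s}c_s\Vert u\Vert_{L^1(\Omega)}}{|x|^{n+s}}.
\]
Integrating in polar coordinates then yields
\[
\Vert D^s u\Vert_{L^p(\R^n\setminus B_R)}\le 2^{n+1}c_s\Vert u\Vert_{L^1(\Omega)}\Big(\frac{S_{n-1}}{(n+s)p-n}\Big)^{1/p}R^{-(n+s)+n/p}.
\]
The factor $((n+s)p-n)^{-1}$ is bounded for $p\ge1$, since $(n+s)p-n\ge s$ and in fact $\ge n(p-1)+s$. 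For $p=1$ this gives a factor $1/s$. The exponent of $R$ is $-(n+s)+n/p\le -s$, so for $R\ge1$ the power of $R$ is at most $R^{-s}$. Next I bound $\Vert u\Vert_{L^1(\Omega)}\le |\Omega|^{1-1/p}\Vert u\Vert_{L^p(\Omega)}$ and use either \eqref{simn} or Proposition~\ref{costa}, which give $\Vert u\Vert_{L^p(\Omega)}\le \frac{C}{s}\Vert D^s u\Vert_{L^p(\R^n)}$ with $C$ independent of $s$. Finally, $c_s$ is bounded uniformly by \eqref{constdef}.

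Collecting these bounds, I expect
\[
\Vert D^s u\Vert_{L^p(\R^n\setminus B_R)}\le \frac{C_1}{s^2R^s}\Vert D^s u\Vert_{L^p(\R^n)},
\]
with $C_1$ depending only on $n$, $p$ and $\Omega$. One factor $1/s$ comes from Poincar\'e, and one from the tail integral when $p$ is near $1$; this is exactly the $s^2R^s$ appearing in \eqref{pcd}. Choosing $C\ge 2^pC_1$ in \eqref{pcd} gives tail $\le 2^{-p}\Vert D^s u\Vert_{L^p}^p$ in $p$-th powers, hence
\[
\Vert D^s u\Vert^p_{L^p(\R^n)}\le \Vert D^s u\Vert^p_{L^p(B_R)}+2^{-p}\Vert D^s u\Vert^p_{L^p(\R^n)}.
\]
It follows that $\Vert D^s u\Vert_{L^p(\R^n)}\le (1-2^{-p})^{-1/p}\Vert D^s u\Vert_{L^p(B_R)}\le 2\Vert D^s u\Vert_{L^p(B_R)}$.

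The main obstacle is bookkeeping the side conditions $R\ge\max\{2\rho,1\}$ and getting the $s$-dependence uniform. I will enforce $R\ge\max\{2\rho,1\}$ by enlarging $C$: if $s^2R^s>C$ with $C\ge \max\{2\rho,1\}$, then $R^s>C$ forces $R>C^{1/s}\ge C$. The other delicate point is verifying that the Poincar\'e constant in \eqref{simn} is truly $C/s$ uniformly in $s$, which the paper states as given.
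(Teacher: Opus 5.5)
Your proof is correct and follows the paper's argument: you reduce to $\mathcal{D}(\Omega)$, bound the tail on $\R^n\setminus B_R$ using the decay estimate of Proposition~\ref{usasempre}, pass from $\Vert u\Vert_{L^1(\Omega)}$ to $\Vert D^s u\Vert_{L^p(\R^n)}$ via H\"older and \eqref{simn} or Proposition~\ref{costa}, and then absorb the tail into the left-hand side. Estimating at the level of norms rather than $p$-th powers is in fact the cleaner bookkeeping, because it gives exactly $C_1/(s^2R^s)$. In the paper's $p$-th power chain the Poincar\'e factor is written as $1/s$, whereas it is really $s^{-p}$, and \eqref{pcd} controls it only if $(n(p-1)+sp)R^{n(p-1)+sp}$ is bounded below by $sR^{sp}$ rather than by $sR^{s}$, which is what your computation effectively does.
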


\begin{proof}
By density, it is enough to prove the inequality for $u\in \mathcal{D}(\Omega)$.

When $s=1$ the thesis plainly follows taking~$C:={\sup_{x\in\Omega} |x|}$.

When~$s\in(0,1)$, we take
\begin{equation}\label{feutyjkrt8456}
C:=\max\left\{ 1+2\sup_{x\in\Omega}|x|, \frac{2^p\,\widetilde C}{2^p-1}\right\},\end{equation}
for a suitable~$\widetilde C$ that will be specified later on (depending only on~$n$, $p$ and~$\Omega$).

With this choice, if~$R>0$ satisfies~\eqref{pcd}, we have that~$\Omega\subset B_{R/2}$, and therefore
we can exploit Proposition~\ref{usasempre} to deduce that
\begin{eqnarray*} 
\Vert D^s u \Vert^p_{L^p(\R^n \setminus B_{R})} &\le& 2^{(n+s)p} c_s^p\|u\|^p_{L^1(\Omega)}
\int_{\R^n \setminus B_{R}}\frac{dx}{|x|^{(n+s)p} }\\&=&\frac{
2^{(n+s)p} \,c_s^p \,S_{n-1}\|u\|^p_{L^1(\Omega)}}{(n(p-1)+sp) R^{n(p-1)+sp}}\\ &
\le& \frac{
2^{(n+s)p} \,c_s^p \,S_{n-1}|\Omega|^{p-1}\|u\|^p_{L^p(\Omega)}}{(n(p-1)+sp) R^{n(p-1)+sp}}.
\end{eqnarray*}
Now we use either~\eqref{simn} if~$p\in(1,+\infty)$ or
Proposition~\ref{costa} if~$p=1$ and we see that
\begin{eqnarray*} &&
\Vert D^s u \Vert^p_{L^p(\R^n \setminus B_{R})} 
\le \frac{\widetilde C \|D^su\|^p_{L^p(\R^n)}}{s\,(n(p-1)+sp) R^{n(p-1)+sp}},
\end{eqnarray*}
for some~$\widetilde C>0$ depending only on~$n$, $p$ and~$\Omega$.

We notice that, in light of the definition of~$C$ in~\eqref{feutyjkrt8456} and the condition in~\eqref{pcd},
we have that~$R>1$, and consequently the map~${\color{red}[0,+\infty)}\ni\tau\mapsto \tau R^\tau$ is increasing.
Thus, since~$n(p-1)+sp\ge s$, we have that~$(n(p-1)+sp) R^{n(p-1)+sp}\ge s R^s$. Accordingly,
\begin{eqnarray*} &&
\Vert D^s u \Vert^p_{L^p(\R^n \setminus B_{R})} 
\le \frac{\widetilde C \|D^su\|^p_{L^p(\R^n)}}{s^2 R^{s}}\le\left(1- \frac1{2^p}\right) \|D^su\|^p_{L^p(\R^n)}.
\end{eqnarray*}

As a result,
\begin{eqnarray*}
\|D^su\|^p_{L^p(\R^n)}&=& \|D^su\|^p_{L^p(B_R)}+\Vert D^s u \Vert^p_{L^p(\R^n \setminus B_{R})}\\& \le&
\|D^su\|^p_{L^p(B_R)}+\left(1-\frac1{2^p}\right)\Vert D^s u \Vert^p_{L^p(\R^n )},
\end{eqnarray*}
which gives the desired result.
\end{proof}

	Now, let~$0< \bs < s_0 <1$ and~$\Omega$ be a bounded domain of~$\R^n$. Then, in light~\cite[Proposition~4.1]{mora}, there exists a constant~$C>0$, depending only on~$n$, $s_0$, $\bs$ and~$\Omega$, such that,
	for every~$s \in [s_0,1)$, $p\in(1, +\infty)$ and~$u \in H_0^{s,p}(\Omega)$, we have that
	\begin{equation}\label{NF}
		 \Vert D^{\bs} u \Vert_{L^p(\R^n)} \leq C \Vert D^s u \Vert_{L^p(\R^n)}.
	\end{equation}
	In the following proposition, we provide a new proof of~\eqref{NF}, 
	and in fact we extend the result to the case~$p=1$, which seemed not
	to be covered in the available literature.
	Moreover, we show that the dependence of~$C$ on~$s_0$ can be dropped, which seems to be also new.
	 
		\begin{proposition}\label{crizzo}
		Let~$\bs \in (0,1)$ and~$\Omega$ be a bounded domain of~$\R^n$.
			
			 Then, there exists~$C>0$,
			 depending only on~$n$, $\bs$ and~$\Omega$,
			 such that, for any~$s \in [\bs,1)$, $p \in [1,+\infty)$ and~$u \in H_0^{s,p}(\Omega)$, 
			\begin{equation}\label{re}
				\Vert D^{\bs}u \Vert_{L^p(\R^n)} \leq C\Vert D^s u \Vert_{L^p(\R^n)}. 
			\end{equation}
		\end{proposition}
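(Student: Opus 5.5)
We will use Theorem~\ref{usacchialo}, which gives $D^{\bs}u = I_{s-\bs}D^s u$ for $u\in\mathcal{D}(\Omega)$ and $s\in(\bs,1]$. The case $s=\bs$ is trivial with $C=1$, and the general $u\in H_0^{s,p}(\Omega)$ follows by density, since both sides of~\eqref{re} pass to the limit along a sequence in $\mathcal{D}(\Omega)$ converging in $H^{s,p}(\R^n)$. It therefore suffices to take $u\in\mathcal{D}(\Omega)$ and $s\in(\bs,1)$; the case $s=1$ is not required by the statement, though the same argument covers it. The Hardy--Littlewood--Sobolev bound~\eqref{riz} is not usable: it maps $L^p$ to $L^q$ with $q>p$, it fails at $p=1$, and its constant degenerates. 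We will therefore split the Riesz kernel into a near part and a far part, with $\alpha:=s-\bs\in(0,1-\bs)$:
\[
D^{\bs}u = \frac{1}{\gamma_\alpha}\int_{B_\rho}\frac{D^su(x-z)}{|z|^{n-\alpha}}\,dz+\frac{1}{\gamma_\alpha}\int_{\R^n\setminus B_\rho}\frac{D^su(x-z)}{|z|^{n-\alpha}}\,dz=:N(x)+F(x),
\]
where $\rho$ is a fixed radius depending only on $\Omega$ and $n$.

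For the near part we apply Young's inequality with the $L^1$ kernel $\mathds{1}_{B_\rho}|z|^{\alpha-n}/\gamma_\alpha$. This gives
\[
\|N\|_{L^p}\le \frac{S_{n-1}\rho^\alpha}{\alpha\gamma_\alpha}\|D^su\|_{L^p}.
\]
From~\eqref{defgamma09}, $\gamma_\alpha\sim 2\pi^{n/2}\Gamma(n/2)^{-1}\cdot \frac{2}{\alpha}$ as $\alpha\to0$, so $\alpha\gamma_\alpha$ is bounded below by a positive constant uniformly for $\alpha\in(0,1-\bs]$. This is exactly the mechanism that removes the dependence on $s_0$: as $s\searrow\bs$ the kernel tends to a Dirac mass with bounded total mass. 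We also need $\rho^\alpha\le \max\{1,\rho\}$. This settles the near part with a constant depending only on $n$, $\bs$ and $\Omega$.

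The far part is the main obstacle, because $|z|^{\alpha-n}$ is not integrable at infinity. We will handle it through the cancellation of $D^su$, which is a derivative-like object, combined with its decay. The plan is to write $D^s u = D^{s}u\,\mathds{1}_{B_{2R}}+D^su\,\mathds{1}_{\R^n\setminus B_{2R}}$, where $\Omega\subset B_R$.

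The piece $D^su\,\mathds{1}_{\R^n\setminus B_{2R}}$ is controlled pointwise by Proposition~\ref{usasempre}: $|D^su(y)|\le C\|u\|_{L^1(\Omega)}|y|^{-n-s}$. Convolving this bound with $|z|^{\alpha-n}$ gives a function bounded by $C\|u\|_{L^1}(1+|x|)^{-n-\bs}$. Here the exponents combine as $-n-s+\alpha=-n-\bs$, and the factor $1/\gamma_\alpha$ compensates the logarithmic-type growth, so the result lies in every $L^p$ with norm $\le C\|u\|_{L^1(\Omega)}$. The piece $D^su\,\mathds{1}_{B_{2R}}$ convolved with the far kernel is bounded by $\rho^{\alpha-n}\|D^su\|_{L^1(B_{2R})}$ for $|x|\le 3R$, and it decays like $|x|^{\alpha-n}$ for large $|x|$. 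This decay alone is not $L^p$-integrable for small $p$.

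To close the far-part estimate we use instead the direct bound from Proposition~\ref{usasempre} applied to $\bs$: for $|x|\ge 2R$, $|D^{\bs}u(x)|\le C\|u\|_{L^1}|x|^{-n-\bs}$. So we estimate $D^{\bs}u$ on $\R^n\setminus B_{2R}$ directly, and use the splitting only on $B_{2R}$, where the bounded far part contributes at most $C\|D^su\|_{L^1}$. Finally, $\|u\|_{L^1(\Omega)}$ and $\|D^su\|_{L^1(B_{2R})}$ are converted to $\|D^su\|_{L^p(\R^n)}$ via Hölder on bounded sets together with~\eqref{simn} and Proposition~\ref{costa}. Their constants $C/s\le C/\bs$ are uniform in $s\in[\bs,1)$, which gives~\eqref{re}.
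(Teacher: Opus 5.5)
Your final argument is correct and is essentially the paper's proof. Both rest on $D^{\bs}u=I_{s-\bs}D^su$ (Theorem~\ref{usacchialo}), on the uniform lower bound for $(s-\bs)\gamma_{s-\bs}$ (the paper's $\widetilde\gamma$ in~\eqref{fgh}), which absorbs the factor $1/(s-\bs)$ coming from the singular part of the kernel, on Proposition~\ref{usasempre} applied to $D^su$ for large $|y|$ and directly to $D^{\bs}u$ outside $B_{2R}$, and on~\eqref{simn} and Proposition~\ref{costa} to trade $\|u\|_{L^1(\Omega)}$ for $\|D^su\|_{L^p(\R^n)}$ with constants uniform in $s\ge\bs$. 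The paper merely splits (for $x\in B_{2R}$) according to $y\in B_{4R}$ versus $y\notin B_{4R}$ and uses H\"older, where you cut the kernel at $|x-y|=\rho$ and use Young's inequality.

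The only blemish is your aside that the tail piece is bounded by $C\|u\|_{L^1(\Omega)}(1+|x|)^{-n-\bs}$ on all of $\R^n$. That is false: it decays only like $|x|^{s-\bs-n}$, which is not integrable at infinity when $p=1$. After your switch, however, you use it only for $x\in B_{2R}$, where $|x-y|\ge\rho$ bounds it trivially by $C\|u\|_{L^1(\Omega)}$, so nothing breaks.
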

		
		\begin{proof}
If~$s = \bar{s}$, the desired result holds true by taking~$C=1$, thus from now on
we suppose that~$s\in (\bar{s},1)$. 
In this case, we first establish Proposition~\ref{crizzo} for~$u \in \mathcal{D}(\Omega)$ and then we apply a density argument to complete the proof. 
						
			We take~$R>0$ such that~$\Omega \subset B_{R}$. Also, recalling~\eqref{relation}, we see that
			\[ \gamma_{s-\bs} = \frac{n-(s- \bs)}{c_{1-(s-\bs)}},\]
and therefore, in light of~\eqref{constdef},			
			\begin{equation}\label{fgh} \widetilde\gamma:= \sup_{ s \in (\bs,1)} \frac{n-(s- \bs)}{\gamma_{s-\bs}(s-\bs)} =
				\sup_{ s \in (\bs,1)} \frac{c_{1-(s-\bs)}}{s-\bs} < +\infty. 
				\end{equation}
					
	Now we prove the desired result for~$p=1$.
To this end, we define
	\begin{align*}
	 &A_I := \frac{1}{\gamma_{s-\bs}} \int_{B_{2R}} \left( \int_{B_{4R}} \frac{|D^su(y)|}{|x-y|^{n-(s-\bs)}} \, dy \right) \, dx, \\
	  &A_{II} := \frac{1}{\gamma_{s-\bs}} \int_{B_{2R}} \left( \int_{\R^n \setminus B_{4R}} \frac{|D^su(y)|}{|x-y|^{n-(s-\bs)}} \, dy \right) \, dx \\ {\mbox{and }}\qquad
	 &B := \int_{\R^n \setminus B_{2R}} |D^{\bar{s}} u (x)| \, dx.
	\end{align*}
	We first estimate~$A_I$: making use of~\eqref{fgh}, we find that 			
\begin{equation}\label{jeiwoytgrej768905}\begin{split}
A_I &\leq   \frac{1}{\gamma_{s-\bs}}  \left( \int_{B_{6R}} \frac{dz}{|z|^{n-(s-\bs)}} \right)\Vert D^s u \Vert_{L^1(\R^n)}\\&
= \frac{S_{n-1}(6R)^{s- \bar{s}}}{\gamma_{s-\bs}(s-\bar{s})} \Vert D^s u \Vert_{L^1(\R^n)}\\
&\le  \frac{6S_{n-1}\widetilde\gamma\max\{1,R\} }{n-(s-\bs)} \Vert D^s u \Vert_{L^1(\R^n)}\\&
\le  \frac{C_1 }{\bs} \Vert D^s u \Vert_{L^1(\R^n)},
\end{split}\end{equation} for some~$C_1>0$, depending only on~$n$ and~$\Omega$.

Now we estimate~$A_{II}$. 
To this end, we set~$\Tilde{c}:= \sup_{s \in(-1,1)} c_{s}$, which is finite, thanks to~\eqref{constdef}.
As a result, 
exploiting Propositions~\ref{usasempre} and~\ref{costa},
\begin{equation}\label{jeiwoytgrej7689052}\begin{split}
			A_{II}& \leq \frac{ 2 ^{n+s}\,\Tilde{c}}{\gamma_{s-\bar{s}}} \left[ \int_{\R^n \setminus B_{4R}} \frac{1}{|y|^{n+s}} \left( \int_{B_{2R}} \frac{dx}{|x-y|^{n-(s-\bar{s})}} \right) \, dy \right] \Vert u \Vert_{L^1(\Omega)}	 \\ 
& \leq \frac{ 2^{3n+\bs}\,\Tilde{c} \,S_{n-1} R^n}{\gamma_{s-\bar{s}}} \int_{\R^n \setminus B_{4R}} \frac{dy}{|y|^{2n + \bar{s}}}\,\Vert u \Vert_{L^1(\Omega)}  \\&
\le\frac{ 2^{n}\,\Tilde{c} \,S^2_{n-1}}{\gamma_{s-\bar{s}} \, R^{\bs} } \,\Vert u \Vert_{L^1(\Omega)} \\
& \le 
\frac{ 2^{n}\,\Tilde{c} \,\widetilde\gamma\,S^2_{n-1}(s-\bs)}{(n-(s-\bs)) R^{\bs} } \,\Vert u \Vert_{L^1(\Omega)}\\& \le
\frac{ 2^{n}\,\Tilde{c} \,\widetilde\gamma\,S^2_{n-1}}{\bs\,\min\{1, R\} } \,\Vert u \Vert_{L^1(\Omega)}\\&
\le\frac{ C_2}{\bar{s}^2} \Vert D^s u \Vert_{L^1(\R^n)},
			\end{split}\end{equation} for some~$ C_2>0$, depending
			only on~$n$ and~$\Omega$.

In order to estimate~$B$, we use Proposition~\ref{usasempre} and~\ref{costa}
to obtain that
\begin{equation}\label{jeiwoytgrej7689053}\begin{split}&
B \leq 2^{n+\bar{s}} \Tilde{c} \left( \int_{\R^n \setminus B_{2R}}\frac{dx}{|x|^{n+\bs}}\right) \Vert u \Vert_{L^1(\Omega)} 
\\&\qquad= \frac{ S_{n-1} 2^{n} \Tilde{c}}{\bar{s}\,R^{\bar{s}}}  \Vert u \Vert_{L^1(\Omega)}
\leq \frac{C_3}{\bar{s}^2} \Vert  D^s u \Vert_{L^1(\R^n)},\end{split}
\end{equation} for some~$C_3>0$, depending only on~$n$ and~$\Omega$.
			
Now we recall that, in light of Theorem~\ref{usacchialo}, $D^{\bar{s}}u =I_{s-\bar{s}} D^s u$, and thus, recalling the definition of~$I_{s-\bar{s}}$ 
in~\eqref{riesz}
\begin{eqnarray*}&& \Vert D^{\bar{s}}u \Vert_{L^1(\R^n)}=\int_{\R^n}| D^{\bar{s}}u(x)|\,dx
=\int_{\R^n}| K_{s-\bar s} \ast D^su(x)|\,dx\\&&\qquad\qquad
=\frac{1}{\gamma_{s-\bar s}}\int_{\R^n}\left|
\Int \frac{D^su(y)}{|x-y|^{n-(s-\bar s)}} \, dy\right|\,dx.\end{eqnarray*}
As a consequence of this and the estimates provided in~\eqref{jeiwoytgrej768905},
\eqref{jeiwoytgrej7689052} and~\eqref{jeiwoytgrej7689053}, we conclude that
$$
\Vert D^{\bar{s}}u \Vert_{L^1(\R^n)} \leq A_{I} + A_{II} + B\le\frac{C_4}{\bs}\left(1+\frac{1}{\bs}\right) \Vert  D^s u \Vert_{L^1(\R^n)},
$$ for some~$C_4>0$, depending only on~$n$ and~$\Omega$.
Hence, the proof of the desired claim
is complete for~$p=1$ and~$u \in \mathcal{D}(\Omega)$.
			
			We now establish the desired result for~$p \in (1, +\infty)$.
			For this, we define
			\begin{align*}
			&A_{I}(p):= \frac{2^p}{\gamma^p_{s-\bs}} \int_{B_{2R}} \left( \int_{B_{4R}} \frac{|D^su(y)|}{|x-y|^{n-(s-\bs)}} \, dy\right)^p \, dx,  \\
			&A_{II}(p):= \frac{2^p}{\gamma^p_{s-\bs}} \int_{B_{2R}} \left (\int_{\R^n \setminus B_{4R}} \frac{|D^su(y)|}{|x-y|^{n-(s-\bs)}} \, dy\right)^p \, dx \\ {\mbox{and }} \qquad
			&B(p):= \int_{\R^n \setminus B_{2R}} |D^{\bs}u(x)|^p \, dx.
			\end{align*}
			We first estimate~$A_I(p)$. For this purpose, we observe that, if~$x \in B_{2R}$, setting~$p':=\frac{p}{p-1}$ and employing the H\"older inequality,
			\begin{eqnarray*}&&
				\int_{B_{4R}} \frac{|D^su(y)|}{|x-y|^{n-(s-\bs)}} \, dy\\& =&
				 \int_{B_{4R}} \frac{|D^su(y)|}{|x-y|^{\frac{n-(s-\bs)}{p}}} \frac{1}{|x-y|^{\frac{n-(s-\bs)}{p'}}} \, dy  \\ 
				 &\leq&  \left(\int_{B_{6R}} \frac{dz}{|z|^{n-(s-\bs)}} \right)^{\frac{1}{p'}} \left(\int_{B_{4R}} \frac{|D^s u(y)|^p}{|x-y|^{n-(s-\bs)}} \, dy \right)^{\frac{1}{p}}
				  \\ &= &\left( \frac{ S_{n-1} (6R)^{s-\bar{s}}}{s-\bs}\right)^{\frac{1}{p'}} \left(\int_{B_{4R}} \frac{|D^s u(y)|^p}{|x-y|^{n-(s-\bs)}} \, dy \right)^{\frac{1}{p}}  
\\ &\le &
\frac{ 6S_{n-1} \max\{1,R\} }{(s - \bar{s})^{\frac{1}{p'}}} \left(\int_{B_{4R}} \frac{|D^s u(y)|^p}{|x-y|^{n-(s-\bs)}} \, dy \right)^{\frac{1}{p}}. 
\end{eqnarray*}
Consequently, 
\begin{equation*}\begin{split}
A_I(p)& \leq \frac{(12 S_{n-1} \max\{1,R\})^p}{\gamma^p_{s-\bs}(s-\bs)^{\frac{p}{p'}}} \int_{B_{4R}} \left(\int_{B_{R_2}} \frac{dx}{|x-y|^{n-(s-\bs)}}  \right) |D^s u(y)|^p\, dy \\ &
=\frac{12^{p+s-\bar{s}} S_{n-1}^{p+2} \max\{1,R\}^p\,R^{s-\bar s}}{\gamma^p_{s-\bs}(s-\bs)^{\frac{p}{p'}+1}}  \Vert D^s u \Vert^p_{L^p(\R^n)}
\\&=\frac{12^{p+s-\bar{s}} S_{n-1}^{p+2} \max\{1,R\}^p\,R^{s-\bar s}}{\big(\gamma_{s-\bs}(s-\bs)\big)^p} \Vert D^s u \Vert^p_{L^p(\R^n)}\\&
\le\frac{12^{p+1} S_{n-1}^{p+2} \max\{1,R\}^{p+1}\,\widetilde\gamma^p}{\big(n-(s-\bar s)\big)^p} \Vert D^s u \Vert^p_{L^p(\R^n)}\\&\le
\frac{12^{p+1} S_{n-1}^{p+2} \max\{1,R\}^{p+1}\,\widetilde\gamma^p}{\bar{s}^p} \Vert D^s u \Vert^p_{L^p(\R^n)}
.
			\end{split}\end{equation*}
			
			Now we estimate~$A_{II}(p)$. By Proposition~\ref{usasempre} and equation~\eqref{simn}, we see that
\begin{equation*}\begin{split}
A_{II}(p) &\leq \frac{2^{p+n+s} \Tilde{c}}{\gamma^p_{s-\bar{s}}}  \left[ \int_{B_{2R}} \left( \int_{\R^n \setminus B_{4R}} \frac{dy}{|y|^{n+s} |x-y|^{n-(s-\bs)}}\right)^p \, dx \right] \Vert u \Vert_{L^1(\Omega)}^p \\ & 
\leq  \frac{2^{p+n+s+p(n-s+\bar s)} \Tilde{c}}{\gamma^p_{s-\bar{s}}}
\left(\int_{\R^n \setminus B_{4R}} \frac{dy}{|y|^{2n+ \bs}}\right)^p \Vert u \Vert_{L^1(\Omega)}^p \\&\le
\frac{2^{p+n+s+p(n-s+\bar s)} \Tilde{c}\,S_{n-1}^p|\Omega|^{p-1}}{(4R)^{p(n+\bar s)}\gamma^p_{s-\bar{s}}} \Vert u \Vert_{L^p(\Omega)}^p\\&  \leq
\frac{2^{p+n+s+p(n-s+\bar s)} \Tilde{c}\,S_{n-1}^p|\Omega|^{p-1}\widetilde\gamma^p}{(4R)^{p(n+\bar s)}\bar{s}^p} \Vert u \Vert_{L^p(\Omega)}^p\\&\le 
\frac{C\,2^{p+n+s+p(n-s+\bar s)} \Tilde{c}\,S_{n-1}^p|\Omega|^{p-1}\widetilde\gamma^p}{(4R)^{p(n+\bar s)}\bar{s}^{2p}} \Vert D^su \Vert_{L^p(\Omega)}^p
,\end{split}\end{equation*}
where~$C$ depends only on~$n$ and~$\Omega$.
			
Now we estimate~$B(p)$. We will proceed in a similar way as for~$p=1$,
using Proposition~\ref{usasempre} and equation~\eqref{simn}. The details go as follows:
\begin{equation*}\begin{split}
B(p)& \leq (2^{n+ \bar{s}} \Tilde{c})^p  \left( \int_{\R^n \setminus B_{2R}} \frac{dx}{|x|^{p(n+ \bs)}}\right) \Vert u \Vert^p_{L^1(\Omega)} \\
& =  \frac{(2^{n+ \bar{s}} \Tilde{c})^p S_{n-1} }{(n(p-1)+ \bar{s}p) R^{n(p-1)+ \bar{s}p}} \Vert u \Vert^p_{L^1(\Omega)}\\&\leq 
\frac{(2^{n+ \bar{s}} \Tilde{c})^p S_{n-1}\,|\Omega|^{p-1} }{(n(p-1)+ \bar{s}p) R^{n(p-1)+ \bar{s}p}} \Vert u \Vert^p_{L^p(\Omega)}
\\& \le \frac{C(2^{n+ \bar{s}} \Tilde{c})^p S_{n-1}\,|\Omega|^{p-1} }{(n(p-1)+ \bar{s}p) R^{n(p-1)+ \bar{s}p}\bar{s}^p}  \Vert D^s u \Vert^p_{L^p(\R)}.
\end{split}\end{equation*}

Gathering these estimates and recalling
Theorem~\ref{usacchialo}, we conclude that
\begin{equation}\label{eiwoty437683869wejfo}\begin{split}&
\Vert D^{\bs} u \Vert^p_{L^p(\R^n)} \le A_{I}(p) + A_{II}(p) + B(p)\\
&\qquad \le\left(
\frac{12^{p+1} S_{n-1}^{p+2} \max\{1,R\}^{p+1}\,\widetilde\gamma^p}{\bar{s}^p}\right.\\&\qquad\qquad\qquad
+\frac{C\,2^{p+n+s+p(n-s+\bar s)} \Tilde{c}\,S_{n-1}^p|\Omega|^{p-1}\widetilde\gamma^p}{(4R)^{p(n+\bar s)}\bar{s}^{2p}}\\ &\qquad\qquad\qquad \left. 
+\frac{C(2^{n+ \bar{s}} \Tilde{c})^p S_{n-1}\,|\Omega|^{p-1} }{(n(p-1)+ \bar{s}p) R^{n(p-1)+ \bar{s}p}\bar{s}^p} \right) \Vert D^s u \Vert^p_{L^p(\R)}.
\end{split}\end{equation}

Now we observe that, for all~$a$, $b\ge0$, we have that~$a^p+b^p\le(a+b)^p$, therefore,
for every~$j\in\N$ and~$a_1,\dots, a_j\ge0$, we have that
$$ \sum_{i=1}^j a_i^p\le \left( \sum_{i=1}^j a_i\right)^p.$$
Using this into~\eqref{eiwoty437683869wejfo}, we thereby obtain that
\begin{equation}\label{dewiosdvjt473kipipoiupuoy}\begin{split}
\Vert D^{\bs} u \Vert_{L^p(\R^n)}& \le\left(
\frac{12^{p+1} S_{n-1}^{p+2} \max\{1,R\}^{p+1}\,\widetilde\gamma^p}{\bar{s}^p}\right.\\ &\qquad \qquad
+\frac{C\,2^{p+n+s+p(n-s+\bar s)} \Tilde{c}\,S_{n-1}^p|\Omega|^{p-1}\widetilde\gamma^p}{(4R)^{p(n+\bar s)}\bar{s}^{2p}}\\ &\qquad\qquad \left. 
+\frac{C(2^{n+ \bar{s}} \Tilde{c})^p S_{n-1}\,|\Omega|^{p-1} }{(n(p-1)+ \bar{s}p) R^{n(p-1)+ \bar{s}p}\bar{s}^p} \right)^{\frac1p} \Vert D^s u \Vert_{L^p(\R)}\\
&\le\left(
\frac{12^{1+\frac1p} S_{n-1}^{1+\frac2p} \max\{1,R\}^{1+\frac1p}\,\widetilde\gamma}{\bar{s}}\right.\\ &\qquad \qquad
+\frac{C^{\frac1p}\,2^{1+n-s+\bar s+\frac{n+s}p} \Tilde{c}^{\frac1p}\,S_{n-1}|\Omega|^{\frac{p-1}p}\widetilde\gamma}{(4R)^{n+\bar s}\bar{s}^{2}}\\ &\qquad \qquad \left. 
+\frac{C^{\frac1p} 2^{n+ \bar{s}} \Tilde{c}\, S_{n-1}^{\frac1p}\,|\Omega|^{\frac{p-1}p} }{(n(p-1)+ \bar{s}p)^{\frac1p} R^{\frac{n(p-1)+ \bar{s}p}p}\bar{s}} \right) \Vert D^s u \Vert_{L^p(\R)}
.\end{split}\end{equation}

Moreover, we point out that, for all~$M\ge0$, we have that
$$\min\{1,M\}\le M^{\frac1p}\le \max\{1,M\}.$$
{F}rom this and~\eqref{dewiosdvjt473kipipoiupuoy}, we infer that
$$ \Vert D^{\bs} u \Vert_{L^p(\R^n)}\le \widetilde C\Vert D^s u \Vert_{L^p(\R)},$$
for some~$\widetilde C>0$, depending only on~$n$, $\bs$ and~$\Omega$, as desired.
					
{N}ow we complete the proof Proposition~\ref{crizzo} by
employing a density argument. More precisely, let~$u \in H^{s,p}_0(\Omega)$ and~$(\phi_k)\in\mathcal{D}(\Omega)$ converging to~$u$ in~$u \in H^{s,p}_0(\Omega)$ as~$k\to+\infty$. Then, for all~$k\in\N$,
\begin{equation}\label{nevren}
				\Vert D^{\bs}\phi_k \Vert_{L^p(\R^n)} \leq C \Vert D^s \phi_k \Vert_{L^p(\R^n)}. 
\end{equation}
Furthermore, since~$(D^s \phi_k)$ is a Cauchy sequence in $L^p(\R^n, \R^n)$, we see that $(D^{\bar{s}}\phi_k)$ is a Cauchy sequence in $L^p(\R^n, \R^n)$ too,
thanks to~\eqref{nevren}.
As a result, the thesis plainly follows passing to the limit as $k \to +\infty$ in~\eqref{nevren}.
\end{proof}
		
In addition to Proposition~\ref{crizzo}, we have the following statement, that takes
into account the~$L^p$-norm of the classical gradient. 
		
		\begin{proposition} \label{capire}
		Let~$s \in (0,1]$, $p \in [1,+\infty)$ and~$\Omega$ be a bounded domain of~$\R^n$. 
			
			Then, there exists~$C>0$, depending only on~$n$, $p$ and~$\Omega$, such that, for all~$u \in  H_0^{1,p}(\Omega) $,
			\begin{equation*}
\Vert D^s u \Vert_{L^p(\R^n)} \leq \frac{C}{s} \Vert Du \Vert_{L^p(\Omega)}.
\end{equation*}
		\end{proposition}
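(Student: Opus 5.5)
The plan is to reduce everything to the Riesz-potential representation of Theorem~\ref{usacchialo}, and then split $\R^n$ into a bounded region and its complement, as in the proof of Proposition~\ref{crizzo}. If $s=1$ there is nothing to prove: take $C\geq 1$, since $D^1u=Du$ is supported in $\overline\Omega$. So assume $s\in(0,1)$. First I would prove the estimate for $u\in\mathcal{D}(\Omega)$. Fix $R>0$ with $\Omega\subset B_R$; $R$ depends only on $\Omega$. Applying Theorem~\ref{usacchialo} with $\bar s:=s$ and with $1$ in place of $s$ gives
\[
D^s u = I_{1-s}Du = \frac{1}{\gamma_{1-s}}\int_{\Omega}\frac{Du(y)}{|x-y|^{n-(1-s)}}\,dy .
\]

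\emph{Near region $B_{2R}$.} For $x\in B_{2R}$ and $y\in\Omega$ we have $|x-y|<3R$. Hence on $B_{2R}$ the function $D^su$ coincides with the convolution of $Du$ with the truncated kernel $\mathds{1}_{B_{3R}}K_{1-s}$, and Young's inequality gives
\[
\Vert D^su\Vert_{L^p(B_{2R})}\le \Vert \mathds{1}_{B_{3R}}K_{1-s}\Vert_{L^1(\R^n)}\,\Vert Du\Vert_{L^p(\Omega)}
=\frac{S_{n-1}(3R)^{1-s}}{\gamma_{1-s}(1-s)}\,\Vert Du\Vert_{L^p(\Omega)} .
\]
The main point is that this constant is bounded uniformly in $s$, even though $\gamma_{1-s}$ degenerates as $s\nearrow1$. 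By~\eqref{relation},
\[
\frac{1}{\gamma_{1-s}(1-s)}=\frac{c_s}{(n+s-1)(1-s)},
\]
and~\eqref{constdef} makes the right-hand side bounded by a constant depending only on $n$ (since $n+s-1\ge s$, at worst one picks up a factor $1/s$ when $n=1$, which is allowed). Together with $(3R)^{1-s}\le\max\{1,3R\}$, this bounds the near part by $\frac{C}{s}\Vert Du\Vert_{L^p(\Omega)}$.

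\emph{Far region $\R^n\setminus B_{2R}$.} Here I would use Proposition~\ref{usasempre}, then H\"older and the classical Poincar\'e inequality:
\[
\Vert D^su\Vert^p_{L^p(\R^n\setminus B_{2R})}\le (2^{n+1}\tilde c)^p\,\Vert u\Vert^p_{L^1(\Omega)}\int_{\R^n\setminus B_{2R}}\frac{dx}{|x|^{(n+s)p}}
\le \frac{C\,|\Omega|^{p-1}\Vert u\Vert_{L^p(\Omega)}^p}{(n(p-1)+sp)(2R)^{n(p-1)+sp}}\le \frac{C}{s}\Vert Du\Vert^p_{L^p(\Omega)} .
\]
Taking $p$-th roots produces a factor $s^{-1/p}\le s^{-1}$. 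Adding the near and far contributions gives the claim for test functions, with $C$ depending only on $n$, $p$ and $\Omega$.

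\emph{Density.} For a general $u\in H^{1,p}_0(\Omega)$, take $\phi_k\in\mathcal{D}(\Omega)$ with $\phi_k\to u$ in $W^{1,p}$. The estimate applied to $\phi_k-\phi_j$ shows that $(D^s\phi_k)$ is Cauchy in $L^p$. Hence $D^su$ is defined through Definition~\ref{defini}, and passing to the limit gives the inequality. I expect the only delicate step to be the uniform-in-$s$ control of the kernel mass near $s=1$, which~\eqref{relation} and~\eqref{constdef} settle.
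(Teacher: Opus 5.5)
Your proof is correct, but it takes a different route from the paper's. The paper handles the case $s\in(0,1)$ in two lines. It quotes the external bound $\Vert D^su\Vert_{L^p(\R^n)}\le \frac{C}{s}\Vert u\Vert_{H^{1,p}(\R^n)}$ from \cite[Proposition~2.7]{mora} and then applies the classical Poincar\'e inequality.

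You instead prove the estimate from scratch, using only tools already in the paper:
\begin{itemize}
\item the representation $D^su=I_{1-s}Du$ (Theorem~\ref{usacchialo} with upper index $1$);
\item a near/far splitting modelled on Proposition~\ref{crizzo};
\item Young's inequality for the truncated kernel $\mathds{1}_{B_{3R}}K_{1-s}$ on $B_{2R}$, with uniformity in $s$ coming from \eqref{relation} and \eqref{constdef};
\item the decay bound of Proposition~\ref{usasempre}, together with H\"older and Poincar\'e, on $\R^n\setminus B_{2R}$;
\item a density argument, which also justifies that $D^su$ is defined through Definition~\ref{defini}.
\end{itemize}

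The paper's route is shorter but says nothing about the constant. Yours is self-contained and shows exactly where the factor $1/s$ enters. It comes through $1/(n+s-1)$ when $n=1$, and through the tail integral $\int_{\R^n\setminus B_{2R}}|x|^{-(n+s)p}\,dx$ when $p=1$. In particular, for $n\ge2$ and $p>1$ your bound is actually uniform in $s\in(0,1]$.

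One small remark concerns Theorem~\ref{usacchialo} with upper index $1$. That case rests on the classical formula for the Fourier transform of $Du$, not on Proposition~\ref{huy}, which is only stated for $s<1$. If you want to avoid Fourier conventions altogether, you can prove $I_{1-s}D_ju=D^s_ju$ for $u\in\mathcal{D}(\R^n)$ directly:
\begin{itemize}
\item integrate by parts on $\R^n\setminus B_\epsilon(x)$ against $|x-y|^{-(n-1+s)}$;
\item note that the boundary term is $O(\epsilon^{1-s})$, because the $j$-th component of the normal has zero average on $\partial B_\epsilon(x)$;
\item conclude using~\eqref{relation}.
\end{itemize}
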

		
		\begin{proof}
		If $s=1$, then the claim is obviously true.
		If instead~$s \in (0,1)$, we recall~\cite[Proposition~2.7]{mora} to conclude that $$ \Vert D^s u \Vert_{L^p(\R^n)} \leq \frac{C}{s} \Vert u \Vert_{H^{1,p}(\R^n)}=
\frac{C}{s} \Vert u \Vert_{H^{1,p}(\Omega)},$$
for some~$C>0$ depending only on~$n$ and~$p$.
			
This and the Poincar\'{e} inequality yield the desired result.
\end{proof}

\section{The function spaces $L^p(h, \Omega)$ and $H^{0}(\am, g, \Omega)$.}\label{ipo} 

In this section we introduce the function spaces related to the operator~$\mathcal{L}$ in~\eqref{rawop}.

Let $\Omega$ be a bounded domain of~$\R^n$, $t \in[ 1,+\infty]$
and~$h:\Omega\to[0,+\infty]$ be a measurable function such that~$h^{-1} \in L^t(\Omega)$.  For any~$p \in [1, +\infty]$,
we define the norm
	\begin{align*}
				&\Vert u \Vert_{L^p(h, \Omega)} = \begin{cases}
					\left( \displaystyle\int_{\Omega} h (x)|u(x)|^p \ dx \right)^{\frac{1}{p}}  & \mbox{ if } p \in [1, +\infty) ,\\  \displaystyle
					\esssup_{x \in \Omega} |u(x)|  & \mbox{ if } p = +\infty,
				\end{cases} 
			\end{align*}
			where the~$\esssup$ is intended with respect to the measure~$\nu(A):= \int_{A} h(x) \, dx$, for any set~$A \subset \Omega$.

Then, we write~$L^p(h, \Omega)$ for the Banach space of functions satisfying $\| u \|_{L^p(h, \Omega)}<+\infty$.

\begin{remark}\label{vozzi}
We point out that the space~$L^p(h, \Omega)$ is not empty. Indeed, for any~$p \in [1, +\infty)$, let
\begin{equation}
A_{h}:= \left\{ x\in \Omega: h^{-1}(x) \geq 1 \right\} \qquad\mbox{and}\qquad B_{h}:= \left\{ x\in\Omega: h^{-1}(x) < 1 \right\}.
\end{equation}
Then, if~$u:\Omega\to\R$ is a measurable function satisfying
\begin{equation}
|u(x)| \leq C \begin{cases}
h^{- \frac{1+t}{p}}(x) &\mbox{if } x\in A_h, \\
h^{- \frac{1}{p}}(x) &\mbox{if } x\in B_h,
\end{cases}
\end{equation}
for some constant~$C>0$,  we have that
\begin{equation*}
\int_{\Omega} h(x) |u(x)|^{p} 
\leq C \left( \int_{A_h} h^{-t}(x) \, dx + \int_{B_h} \, dx\right) < +\infty.
\end{equation*} 
Thus, $u$ belongs to~$L^p(h, \Omega)$. 

Moreover, we notice that any measurable function~$u:\Omega\to\R$ such that~$ |u(x)| \leq C h^{-1}(x)$ for all~$x\in\Omega$, for some constant~$C>0$, lies in~$L^{\infty}(h, \Omega)$.

Furthermore, if~$h \in L^1_{\rm loc}(\Omega)$, then~$\mathcal{D}(\Omega) \subset L^p(h, \Omega)$ for any~$p \in [1, +\infty)$.
\end{remark} 

In this setting, we have the following result: 
			
\begin{lemma}\label{gt}
Let~$\Omega$ be a bounded domain of~$\R^n$, $t\in [ 1,+\infty]$ and~$h^{-1} \in L^t(\Omega)$. Let also
\begin{equation}\label{ATVT0}
p \in \left[\frac{t+1}t, +\infty\right)\end{equation}
				
				Then, for all measurable functions~$u:\Omega\to\R$,
				\[ \Vert u \Vert_{L^{\frac{pt}{t+1}}(\Omega)} \leq  \Vert h^{-1} \Vert^{\frac{1}{p}}_{L^t(\Omega)}  \Vert u \Vert_{L^p(h,\Omega)}.\]
			\end{lemma}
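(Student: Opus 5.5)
The plan is a single application of the H\"older inequality, after writing $|u|$ as a product of a power of $h$ and a power of $h^{-1}$. Set $r:=\frac{pt}{t+1}$. Condition~\eqref{ATVT0} gives exactly $r\ge1$. For $t=+\infty$ we read $r=p$ and $\frac{t}{t+1}=1$.

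Write $|u|^r=\big(h|u|^p\big)^{\frac{r}{p}}\,h^{-\frac{r}{p}}$ pointwise on $\Omega$. Here $\frac{r}{p}=\frac{t}{t+1}\in(0,1]$. If $t<+\infty$, apply H\"older with exponents $a:=\frac{p}{r}=\frac{t+1}{t}$ and $a':=t+1$. This gives
\[
\int_\Omega |u|^r\,dx\le\left(\int_\Omega h|u|^p\,dx\right)^{\frac{t}{t+1}}\left(\int_\Omega h^{-\frac{r}{p}(t+1)}\,dx\right)^{\frac{1}{t+1}}.
\]
Since $\frac{r}{p}(t+1)=t$, the last factor is $\|h^{-1}\|_{L^t(\Omega)}^{\frac{t}{t+1}}$. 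Taking the power $\frac1r=\frac{t+1}{pt}$ of both sides yields
\[
\|u\|_{L^{r}(\Omega)}\le \|u\|_{L^p(h,\Omega)}\,\|h^{-1}\|_{L^t(\Omega)}^{\frac1p},
\]
which is the claim. If either right-hand factor is infinite there is nothing to prove.

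In the case $t=+\infty$ we have $r=p$. The bound is then immediate from $|u|^p=h|u|^p\,h^{-1}\le \|h^{-1}\|_{L^\infty(\Omega)}\,h|u|^p$, after integrating and taking $p$-th roots.

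The only point needing care is the bookkeeping of the exponents. One checks that $\frac{p}{r}\ge1$, so H\"older applies, and that $\frac{r}{p}\,(t+1)=t$, so the $h^{-1}$ factor produces precisely the $L^t$ norm raised to the power $\frac1p$. There is no genuine obstacle. The set where $h=0$ has measure zero because $h^{-1}\in L^t$, so the factorization is legitimate almost everywhere.
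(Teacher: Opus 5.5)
Your proof is correct and matches the paper's argument. For $t<+\infty$ you apply H\"older's inequality with the conjugate exponents $\frac{t+1}{t}$ and $t+1$ to the splitting $|u|^{r}=(h|u|^p)^{r/p}h^{-r/p}$, and for $t=+\infty$ you use the pointwise bound $|u|^p\le \|h^{-1}\|_{L^\infty(\Omega)}\,h|u|^p$; your exponent bookkeeping ($\frac{r}{p}(t+1)=t$, then raising to the power $\frac1r$) gives exactly the factor $\|h^{-1}\|_{L^t(\Omega)}^{1/p}$.
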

			
			\begin{proof}
			We set	\begin{equation}\label{ATVT}q :=
				\frac{pt}{t+1}
\end{equation} and notice that~$q\ge1$, thanks to~\eqref{ATVT0}.
			
 If~$t\in[1,+\infty)$, we apply the H\"older inequality with exponents~$\alpha:=\frac{t+1}t$ and~$\beta:=t+1$
			and find that
				\begin{eqnarray*}
\into |u(x)|^q \, dx &=& \into |u(x)|^q h^{\frac{1}{\alpha}}(x) h^{-\frac{1}{\alpha}}(x) \, dx \\&\leq& \left(\into |u(x)|^{q \alpha} h(x) \, dx \right)^{\frac{1}{\alpha}} \left( \into h^{-\frac{\beta}{\alpha}}(x) \, dx \right)^{\frac{1}{\beta}}  \\&
					=& \left( \into |u(x)|^{\frac{q(t+1)}{t}} h(x) \, dx \right)^{\frac{t}{t+1}}  \left( \into  \left|h(x) \right|^{-t} \, dx \right)^{\frac{1}{t+1}}.
				\end{eqnarray*}
				The desired result follows from~\eqref{ATVT}.
				
When~$t=+\infty$, we have that~$q=p$, and therefore
\begin{eqnarray*}&&\into |u(x)|^q \, dx =\into |u(x)|^p \, dx = \into h^{-1}(x)h(x)|u(x)|^p \, dx\\&&\qquad\qquad\le
\|h^{-1}\|_{L^\infty(\Omega)}  \Vert u \Vert_{L^p(h,\Omega)}^p,\end{eqnarray*}
which completes the proof.
			\end{proof}

Now we deal with the definition of the space~$H^0(\mathcal{A}, g, \Omega)$.
For this, we let~$\delta>0$ be given by~\eqref{a_condi} and set
				\begin{equation}\label{piz}
					p(\delta):=  \frac{1+\delta}{1+\frac{\delta}{2}}.
				\end{equation}
We stress that~$p(\delta)\in(1,2)$.

We recall the assumptions made on~$\mathcal{A}$ and on~$\mu(s)$ in Section~\ref{hyp_section}.
\begin{lemma}\label{wieg}
Let~$\Omega$ be a bounded domain of $\R^n$ and~$g \in L^1(\Omega, [0,+\infty))$. 

Then, the bilinear form
				\begin{equation}\label{rc}\begin{split}&
					\langle u, v \rangle_{H^{0}(\mathcal{A}, g, \Omega)}\\& := \int_{\R^n} \int_{(0,1]} a^{ij}_S(s,x) D_i^su(x) D_j^sv(x) \, d\mu(s) \, dx + \int_{\Omega} g (x)u(x)v(x) \, dx \end{split}
				\end{equation}
				defines a real scalar product on~$\mathcal{D}(\Omega)$.
\end{lemma}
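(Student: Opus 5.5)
We need to verify three things: the form in \eqref{rc} is finite on $\mathcal{D}(\Omega)\times\mathcal{D}(\Omega)$, it is bilinear and symmetric, and it is positive definite. Bilinearity is immediate from linearity of $D^s$ and of the integrals. Symmetry follows because $\mathcal{A}_S$ is symmetric: $a^{ij}_S=a^{ji}_S$, so exchanging $u$ and $v$ and relabelling indices leaves the first integral unchanged. The $g$-term is symmetric trivially.

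\emph{Well-posedness (finiteness).} Let $u,v\in\mathcal{D}(\Omega)$. By \eqref{vwm}, or just Cauchy--Schwarz for the positive semidefinite form $\mathcal{A}_S$ together with \eqref{essa}, we have
$|a^{ij}_S D^s_iu\,D^s_jv|\le \Lambda(x)|D^su(x)|\,|D^sv(x)|$.
We split $\R^n$ into $B_{\rho}$ and $\R^n\setminus B_\rho$, with $\rho\ge\max\{R,2\bar R\}$, where $B_{\bar R}\supset\Omega$.

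On $B_\rho$, Lemma~\ref{drday} gives $|D^su|,|D^sv|\le C\|Du\|_\infty\|Dv\|_\infty$ uniformly in $s\in(0,1]$. Since $\Lambda\in L^1(B_R)$ and, by \eqref{a_condi}, $\Lambda$ is bounded by $C|x|^p$ on $B_\rho\setminus B_R$, the contribution of $B_\rho$ is at most $C\mu((0,1])$.

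On the complement, Proposition~\ref{usasempre} gives $|D^su(x)|\le C|x|^{-n-s}$, with $c_s$ bounded by \eqref{constdef}. Let $s_*>0$ be a lower bound for the support of $\mu$. The integrand is then bounded by $C|x|^{p-2n-2s_*}$, and this is integrable at infinity because $p<n$ forces $p-2n-2s_*<-n$. This step is the only delicate one: it needs uniformity in $s$, which comes from the support of $\mu$ being bounded away from $0$ together with \eqref{constdef}. It also needs $\mu$ to be finite, which we read the standing hypothesis as providing; otherwise we restrict to the $\mu$-finite setting. Finally, $\int_\Omega g|uv|\le\|u\|_\infty\|v\|_\infty\|g\|_{L^1}$. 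Measurability in $(s,x)$ of $D^su(x)$ follows from Corollary~\ref{ptems} and the continuity in $s$ implicit in Lemma~\ref{kj} (dominated convergence), so Fubini--Tonelli applies.

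\emph{Positive definiteness.} Nonnegativity holds because $\xi^T\mathcal{A}_S\xi=\xi^T\mathcal{A}\xi\ge\lambda|\xi|^2\ge0$ and $g\ge0$. Suppose now $\langle u,u\rangle=0$. Then $\lambda(x)|D^su(x)|^2=0$ for $\mu$-a.e. $s$ and a.e. $x$. Since $\lambda^{-1}\in L^{1+\delta}_{\rm loc}$, $\lambda>0$ a.e., so $D^su=0$ a.e. for some $s$ in the support of $\mu$; such an $s$ exists if $\mu\neq0$, which we assume. If $s=1$, then $u$ is constant with compact support, hence $u=0$. If $s<1$, then $u\in H^{s,2}_0(\Omega)$, and by \eqref{simn} (or directly Corollary~\ref{aar}, using continuity of $D^su$) we get $\|u\|_{L^2}\le \frac Cs\|D^su\|_{L^2}=0$. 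Thus $u=0$.
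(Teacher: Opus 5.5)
Your proof is correct. The finiteness part follows the paper's proof: the same split into a large ball, handled by Lemma~\ref{drday} and $\Lambda\in L^1$, and its complement, handled by Proposition~\ref{usasempre} together with $\Lambda\le C|x|^p$ and $p<n$.

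Two small remarks on that part. The paper just uses $|x|^{-2n-2s}\le|x|^{-2n}$ for $|x|\ge1$, so the lower bound $s_*$ on the support is not needed there. Like you, the paper tacitly assumes $\mu([s_0,1])<+\infty$ in \eqref{pk} and \eqref{dbw}, so flagging it was right. Also, in the ball estimate you mean the product bound $|D^su|\,|D^sv|\le C\|Du\|_\infty\|Dv\|_\infty$.

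The genuine difference is in positive definiteness.

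\emph{The paper's route.} It proves the quantitative bound $\langle u,u\rangle_{H^0(\mathcal{A},g,\Omega)}\ge C\|u\|^2_{H^{s_0,p(\delta)}(\R^n)}$ by chaining four results:
\begin{enumerate}
\item Lemma~\ref{gt}, which is where $\lambda^{-1}\in L^{1+\delta}_{\rm loc}$ is actually used;
\item Proposition~\ref{doverf};
\item Propositions~\ref{crizzo} and~\ref{capire}, which compare $D^{s_0}u$ with $D^su$ uniformly in $s$;
\item Corollary~\ref{equivalentno}.
\end{enumerate}

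\emph{Your route.} You argue qualitatively. By Tonelli, which is fine for $\sigma$-finite $\mu$, the vanishing of the form forces $D^su\equiv0$ for some $s$, using only $\lambda>0$ a.e. Then \eqref{simn} or Corollary~\ref{aar} gives $u=0$. This is shorter, needs no uniformity in $s$, and uses a weaker hypothesis on $\lambda$. It fully suffices for the lemma as stated.

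\emph{What the paper's route buys.} It yields a coercivity inequality that passes to the completion. That inequality is reused in Proposition~\ref{ytr} to embed $H^0(\mathcal{A},g,\Omega)$ continuously into $H^{\bar s,p(\delta)}_0(\Omega)$. This is what makes elements of the completion genuine functions, and it underlies Proposition~\ref{qer} and Theorem~\ref{trente}. A form that is merely positive definite on $\mathcal{D}(\Omega)$ would not give this.
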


\begin{proof}
Let~$s_0 \in (0,1]$ denote the minimum of the support of~$\mu$
(we stress that~$s_0>0$ since the support of~$\mu$ is assumed to be bounded away from~$0$).
Let~$R$ be the constant given by condition~\eqref{a_condi} and~$R_1>0$ be such that~$\Omega\subset B_{{R_1}/2}$.
Let~$C$ be the constant appearing in Proposition~\ref{doverf} and
\begin{equation}\label{noti354237} R_2:= 2\sup_{s\in[s_0,1]}\left(\frac{C}{s^2}\right)^{\frac1s}.\end{equation}
Let also~$\overline R:=\max\{R,R_1, R_2\}$.

We first claim that, for any~$u\in \mathcal{D}(\Omega)$,
\begin{equation}\label{940wjhfskr984hgfehgoiuerp09546}
\langle u, u \rangle_{H^{0}(\mathcal{A}, g, \Omega)}<+\infty.
\end{equation}
For this purpose, recalling~\eqref{a_condi} and Proposition~\ref{usasempre}, we estimate
\begin{equation}\begin{split}\label{pk}&
\int_{\R^n \setminus B_{\overline R}} \int_{(0,1]} \Lambda(x) |D^s u(x)|^2  \, d\mu(s) \, dx \\&\qquad\qquad\leq  \Vert{\phi}\Vert^2_{L^1(\Omega)} \int_{\R^n \setminus B_{\overline R}} \int_{(0,1]} \frac{4^{n+s}c_s^2\Lambda(x)}{|x|^{2n+2s}}  \, d\mu(s) \, dx \\ &\qquad\qquad
\leq 4^{n+1}\sup_{s\in(0,1]}c_s^2\,\mu([s_0,1]) \int_{\R^n \setminus B_{\overline R}} \frac{\Lambda(x)}{|x|^{2n}}  \, dx \\ &\qquad\qquad \leq  C  4^{n+1}\sup_{s\in(0,1]}c_s^2\,\mu([s_0,1])\int_{\R^n \setminus B_{\overline R}} \frac{dx}{|x|^{2n-p}} \\&\qquad\qquad < +\infty.
\end{split}\end{equation}

Moreover, 
we observe that, in light of~\eqref{a_condi},
\begin{eqnarray*}&&
\Vert \Lambda \Vert_{L^1(\overline B_R)}= \int_{B_{\overline R}}|\Lambda(x)|\,dx=
\int_{B_{R}}|\Lambda(x)|\,dx+\int_{B_{\overline R}\setminus B_R}|\Lambda(x)|\,dx\\&&\qquad\qquad\le \Vert \Lambda \Vert_{L^1(B_R)}+C\int_{B_{\overline R}\setminus B_R}|x|^p\,dx,
\end{eqnarray*}
which is a finite quantity.

Accordingly,
exploiting Lemma~\ref{drday},
\begin{equation}\begin{split}\label{dbw}&
\int_{B_{\overline R}} \int_{(0,1]} \Lambda(x) |D^s u(x)|^2  \, d\mu \, dx \leq \Vert \Lambda \Vert_{L^1(B_{\overline R})} \int_{[s_0,1]} \Vert D^s u \Vert^2_{L^{\infty}(\overline B_R)} \, d\mu \\ &\qquad
\leq \Vert \Lambda \Vert_{L^1(B_{\overline R})}\mu([s_0,1]) \sup_{s \in [s_0,1]}\Vert D^s u \Vert^2_{L^{\infty}(B_{\overline R})}\\&\qquad \leq C \Vert \Lambda \Vert_{L^1(B_{\overline R})} \mu([s_0,1])
\Vert Du \Vert^2_{L^{\infty}(\Omega)} < + \infty.\end{split}\end{equation}

As a consequence, from~\eqref{essa}, \eqref{pk} and~\eqref{dbw} we have that
\begin{eqnarray*}
\langle u, u \rangle_{H^{0}(\mathcal{A}, g, \Omega)} &\leq&  \int_{ \R^n  } \int_{[0,1]} \Lambda(x) |D^s u(x)|^2  \, d\mu (s)\, dx + \int_{\Omega} g (x)|u(x)|^2 \, dx \\&< &+ \infty ,
\end{eqnarray*}
which establishes~\eqref{940wjhfskr984hgfehgoiuerp09546}.

{N}ow, we show that the bilinear form defined in~\eqref{rc} is a scalar product. It is clearly linear and symmetric, thus we only need to show that it is positive definite. To this end, we use condition~\eqref{essa} to see that
\begin{eqnarray*} \langle u, u \rangle_{H^{0}(\mathcal{A}, g, \Omega)}&\ge&
\int_{\R^n}\int_{[s_0,1]}  \lambda(x) |D^su(x)|^2\, d\mu(s) \, dx\\&
\ge&  \int_{[s_0,1]}\int_{B_{\overline R}}  \lambda(x) |D^su(x)|^2 \, dx\, d\mu(s).\end{eqnarray*}

Moreover, taking~$t:= 1+\delta$, $p:=2$, $\Omega:=B_R$ and~$h:=\lambda$ 
in Lemma~\ref{gt} (used here with~$D^s u $ in place of~$u$), we see that
\begin{equation}\label{pot}
 \Vert D^s u \Vert^2_{L^{p(\delta)}(B_R)}\le \Vert \lambda^{-1}\Vert^{-1}_{L^{1+\delta}(B_R)} \Vert D^s u \Vert_{L^2(\lambda, B_R)}^2.
\end{equation}
Accordingly, we obtain that
$$\langle u, u \rangle_{H^{0}(\mathcal{A}, g, \Omega)}\ge
\Vert \lambda^{-1} \Vert_{L^{1+\delta}(B_{\overline R})}^{-1} \int_{[s_0,1]} \Vert D^s u \Vert^2_{L^{p(\delta)}(B_{\overline R})} \, d\mu(s).$$
Therefore, using Proposition~\ref{doverf}
(notice that the assumption in~\eqref{pcd}
is satisfied thanks to~\eqref{noti354237}), we obtain that
$$\langle u, u \rangle_{H^{0}(\mathcal{A}, g, \Omega)}\ge\frac{
\Vert \lambda^{-1} \Vert_{L^{1+\delta}(B_{\overline R})}^{-1}}2 \int_{[s_0,1]} \Vert D^s u \Vert^2_{L^{p(\delta)}(\R^n)} \, d\mu(s).$$
This and Propositions~\ref{crizzo} (used here with~$\bar s:=s_0$) and~\ref{capire} (used with~$s:=s_0$) give that
$$\langle u, u \rangle_{H^{0}(\mathcal{A}, g, \Omega)}\ge \frac{C \mu([s_0,1])
\Vert \lambda^{-1} \Vert_{L^{1+\delta}(B_{\overline R})}^{-1}}2 \Vert D^{s_0} u \Vert^2_{L^{p(\delta)}(\R^n)}.$$
Therefore, using Proposition~\ref{doverf}
(notice that the assumption in~\eqref{pcd}
is satisfied thanks to~\eqref{noti354237}), we obtain that
\begin{eqnarray*}&&
\langle u, u \rangle_{H^{0}(\mathcal{A}, g, \Omega)}\geq 
C \mu([s_0,1])
\Vert \lambda^{-1} \Vert_{L^{1+\delta}(B_{\overline R})}^{-1} 
  \Vert D^{s_0} u \Vert^2_{L^{p(\delta)}(\R^n)}  
					\end{eqnarray*}
up to renaming~$C>0$.					
					
This and Corollary~\ref{equivalentno} give that			
$$	\langle u, u \rangle_{H^{0}(\mathcal{A}, g, \Omega)}\geq C \Vert u \Vert^2_{H^{s_0, p(\delta)}(\R^n)}.	$$
Accordingly, if we have that~$\langle u, u \rangle_{H^{0}(\mathcal{A}, g, \Omega)}=0$,
recalling~\eqref{odjschv nk:SWDFP}
we find that~$\Vert u \Vert_{L^{p(\delta)}(\R^n)} =0$ and consequently~$u$ vanishes identically.
This shows
that the bilinear form in~\eqref{rc}
is positive definite.
\end{proof}

We call~$H^{0}(\mathcal{A}, g,\Omega)$
the Hilbert space associated with the scalar product given in Lemma~\ref{wieg}
and we point out that~$H^{0}(\mathcal{A}, g,\Omega)$ is endowed with the norm
$$ \| u\|_{H^{0}(\mathcal{A}, g,\Omega)} :=\sqrt{   \langle u, u \rangle_{H^{0}(\mathcal{A}, g, \Omega)}}.$$
In particular, taking~$g \equiv 0$
in Lemma~\ref{wieg} we have that the bilinear form
$$ \int_{\R^n} \int_{(0,1]} a^{ij}_S(s,x) D_i^su(x) D_j^sv(x) \, d\mu(s) \, dx$$
defines a scalar product on~$\mathcal{D}(\Omega)$ and we name the associated Hilbert space $H^{0}(\mathcal{A}, \Omega)$ and the associated norm by~$ \| u\|_{H^{0}(\mathcal{A}, \Omega)}$. 

\begin{remark}\label{eba}
We stress that, if the measure~$\mu$ coincides with the Dirac delta at~$s=1$,
the space~$H^{0}(\mathcal{A}, g, \Omega)$ coincides with the one defined by Trudinger in~\cite{trudinger}.
\end{remark}

{N}ow, we provide some embedding results for the space~$H^0(\mathcal{A}, g, \Omega)$. 
\begin{proposition}\label{ytr}
Let~$S_0 \in (0,1]$ denote the maximum of the support of~$\mu$.

Then, the space~$H^{0}(\mathcal{A}, g, \Omega)$ continuously embeds into~$H^{\bar{s},p(\delta)}_0(\Omega)$ for any $\bar{s} \in (0,S_0)$.

Moreover, if~$\mu\left(\{S_0\}\right)>0$, 
then the embedding holds for any $\bar{s}\in(0,S_0]$.
\end{proposition}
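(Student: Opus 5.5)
Here is the plan: prove the inequality
\[
\Vert u \Vert_{H^{\bar s,p(\delta)}(\R^n)} \le C \Vert u \Vert_{H^0(\mathcal{A},g,\Omega)}
\]
for every $u\in\mathcal{D}(\Omega)$, and then extend it by density. The argument is essentially the lower bound obtained at the end of the proof of Lemma~\ref{wieg}, with one change: we integrate $\mu$ over a window $[s_1,S_0]$ with $s_1>\bar s$, instead of over all of $[s_0,1]$.

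Fix $\bar s\in(0,S_0)$. Since $S_0$ is the maximum of the support of $\mu$, we can pick $s_1\in(\bar s,S_0)$ with $\mu([s_1,S_0])>0$. Take $u\in\mathcal{D}(\Omega)$. We have $g\ge0$ and, by~\eqref{essa}, $a^{ij}_S\xi_i\xi_j\ge\lambda|\xi|^2$. Hence
\[
\Vert u\Vert^2_{H^0(\mathcal{A},g,\Omega)}\ge \int_{[s_1,S_0]}\int_{B_{\overline R}}\lambda|D^su|^2\,dx\,d\mu(s),
\]
with $\overline R$ chosen as in~\eqref{noti354237}, using $s_0$ (the minimum of the support of $\mu$).

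Next we bound the inner integral from below:
\begin{itemize}
\item Apply Lemma~\ref{gt} with $h=\lambda$, $t=1+\delta$, $p=2$. It gives $\Vert D^su\Vert^2_{L^{p(\delta)}(B_{\overline R})}\le \Vert\lambda^{-1}\Vert_{L^{1+\delta}(B_{\overline R})}\Vert D^su\Vert^2_{L^2(\lambda,B_{\overline R})}$.
\item Proposition~\ref{doverf}, whose hypothesis holds uniformly for $s\in[s_0,1]$, lets us replace $B_{\overline R}$ by $\R^n$ at the cost of a factor $4$.
\item For $s\in[s_1,1)$, Proposition~\ref{crizzo} with this $\bar s$ gives $\Vert D^{\bar s}u\Vert_{L^{p(\delta)}(\R^n)}\le C\Vert D^su\Vert_{L^{p(\delta)}(\R^n)}$, with $C$ depending only on $n,\bar s,\Omega$. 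This uniformity in $s$ is exactly what we need.
\item For $s=1$, Proposition~\ref{capire} gives the same bound.
\end{itemize}
Integrating over $[s_1,S_0]$, we get
\[
\Vert u\Vert^2_{H^0}\ge c\,\mu([s_1,S_0])\,\Vert D^{\bar s}u\Vert^2_{L^{p(\delta)}(\R^n)}.
\]
Corollary~\ref{equivalentno} then upgrades this to control of the full $H^{\bar s,p(\delta)}$ norm.

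Now the density step. Let $u\in H^0(\mathcal{A},g,\Omega)$ and take $\phi_k\in\mathcal{D}(\Omega)$ with $\phi_k\to u$ in $H^0$. By the inequality, $(\phi_k)$ is Cauchy in $H^{\bar s,p(\delta)}_0(\Omega)$, so it converges there. The embedding is defined by sending $u$ to this limit. It is linear and bounded, and it is well defined: two approximating sequences of the same $u$ differ by a sequence that is null in $H^0$, hence null in $H^{\bar s,p(\delta)}$.

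Identifying the limit with $u$ needs one extra check. Convergence in $H^0$ controls $D^s\phi_k$ in $L^2(\lambda)$ on a set of $s$ of positive $\mu$-measure, and $L^{p(\delta)}$-convergence of $\phi_k$ pins down the limit. Still, elements of $H^0$ are a priori abstract completions, so one should note that we are taking the realisation of $H^0$ inside $H^{\bar s,p(\delta)}_0(\Omega)$ furnished by this very map.

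Finally, the endpoint case. If $\mu(\{S_0\})>0$, take $\bar s=S_0$ and keep only the atom: $\Vert u\Vert^2_{H^0}\ge \mu(\{S_0\})\int\lambda|D^{S_0}u|^2$. Then apply Lemma~\ref{gt}, Proposition~\ref{doverf} and Corollary~\ref{equivalentno} directly, with no comparison between different orders.

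The main obstacle is uniformity in $s$ over $[s_1,S_0]$. The constants in Propositions~\ref{doverf} and~\ref{crizzo} must not blow up as $s\nearrow1$ or as $s\searrow s_1$. Proposition~\ref{crizzo} was stated with $C$ independent of $s\in[\bar s,1)$ precisely for this purpose, and $s=1$ is handled separately by Proposition~\ref{capire}. This is also why we need $\mu([s_1,S_0])>0$ for some $s_1>\bar s$, which is automatic when $\bar s<S_0$.
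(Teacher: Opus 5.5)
Your proposal is correct and follows the paper's proof essentially step by step. You bound the $H^0$ norm from below by $\int\int\lambda|D^su|^2$ over a window of the support of $\mu$ lying above $\bar s$, pass to $L^{p(\delta)}$ via Lemma~\ref{gt} and to $\R^n$ via Proposition~\ref{doverf}, compare with $D^{\bar s}u$ uniformly in $s$ via Propositions~\ref{crizzo} and~\ref{capire}, conclude with Corollary~\ref{equivalentno}, and use the atom $\mu(\{S_0\})$ for the endpoint. The only difference is cosmetic: the paper integrates directly over $[\bar s,S_0]$, which already has positive $\mu$-mass, instead of over $[s_1,S_0]$ with $s_1>\bar s$. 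Your handling of the constants and of the density and identification step is in fact more careful than the paper's.
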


\begin{proof} Let~$s_0$ denote the minimum of the support of~$\mu$.
Let~$C$ be the constant appearing in Proposition~\ref{doverf} and take
$$ R\ge \sup_{s\in[s_0,S_0]}\left(\frac{C}{s^2}\right)^{\frac1s}.$$
In this way, $R$ satisfies the assumption
in~\eqref{pcd} for any~$s$ in the support of~$\mu$, and therefore Proposition~\ref{doverf} can be used in this setting. 

Moreover, taking~$t:= 1+\delta$, $p:=2$, $\Omega:=B_R$ and~$h:=\lambda$ 
in Lemma~\ref{gt} (used here with~$D^s u $ in place of~$u$), we see that
\begin{equation}\label{pot}
 \Vert D^s u \Vert^2_{L^{p(\delta)}(B_R)}\le \Vert \lambda^{-1}\Vert^{-1}_{L^{1+\delta}(B_R)} \Vert D^s u \Vert_{L^2(\lambda, B_R)}^2.
\end{equation}

Now, we let~$\bar{s} \in (0,S_0)$. We exploit~\eqref{essa}, \eqref{pot}
and Proposition~\ref{doverf} to find that
\begin{eqnarray*}
\Vert u \Vert^2_{H^{0}(\mathcal{A}, g, \Omega)}&\ge&
\int_{B_R}\int_{(0,S_0]} \lambda(x) |D^su(x)|^2 \, d\mu(s) \, dx \\&
\geq& \Vert \lambda^{-1} \Vert_{L^{1+\delta}(B_R)}^{-1} \int_{(0,S_0]} \Vert D^s u \Vert^2_{L^{p(\delta)}(B_R)} \, d\mu (s)\\
&   \ge &
\frac{\Vert \lambda^{-1} \Vert_{L^{1+\delta}(B_R)}^{-1}}2 \int_{(0,S_0]} \Vert D^s u \Vert^2_{L^{p(\delta)}(\R^n)} \, d\mu (s)
 \\&\ge&
\frac{\Vert \lambda^{-1} \Vert_{L^{1+\delta}(B_R)}^{-1}}2 \int_{[\bar s,S_0]} \Vert D^s u \Vert^2_{L^{p(\delta)}(\R^n)} \, d\mu (s)
.\end{eqnarray*}
{F}rom this and Propositions~\ref{crizzo} and~\ref{capire}, we conclude that
\begin{equation}\label{fhweiuthwo4twogfeqe098765}
\Vert u \Vert^2_{H^{0}(\mathcal{A}, g, \Omega)}\ge \frac{C\mu([s_0,S_0])\Vert \lambda^{-1} \Vert_{L^{1+\delta}(B_R)}^{-1}}2
\Vert D^{\bar{s}} u \Vert^2_{L^{p(\delta)}(\R^n)}.
\end{equation} 
Using Corollary~\ref{equivalentno} we thereby conclude that
$$ 
\Vert u \Vert^2_{H^{0}(\mathcal{A}, g, \Omega)}
  \geq C \Vert u \Vert^2_{H^{\bar{s}, p(\delta)}(\R^n)},
$$ as desired.

Moreover,  if~$\mu\left(\{S_0\}\right)>0$, then in particular~\eqref{fhweiuthwo4twogfeqe098765} holds true with~$\bar s:=S_0$, and
this gives the desired result.
\end{proof}

\begin{proposition}\label{qer}
Let~$S_0 \in (0,1]$ denote the maximum of the support of~$\mu$. Let~$p(\delta)$ be as defined in~\eqref{piz}.

Then, for any~$s \in (0,S_0)$, setting
$$ p(\delta)^*:=\frac{np(\delta)}{n-sp(\delta)},$$ we have that
\begin{itemize}
\item[(i)] $H^{0}(\mathcal{A}, g, \Omega)$ continuously embeds into~$L^q (\Omega)$ for any~$q$ in the range
		\begin{equation}
			\begin{cases}
				q \in [1,p(\delta)^*] & \mbox{ if } sp(\delta)<n ,\\
				q \in [1,+\infty) & \mbox{ if } sp(\delta)=n, \\
				q \in [1,+\infty] & \mbox{ if } sp(\delta)>n.
			\end{cases}
		\end{equation} 
\item[(ii)] $H^{0}(\mathcal{A}, g, \Omega)$ compactly embeds into~$L^q(\Omega)$ for any~$q$ in the range
		\begin{equation}
			\begin{cases}
				q \in [1,p(\delta)^*) & \mbox{ if } sp(\delta)<n, \\
				q \in [1,+\infty) & \mbox{ if } sp(\delta)=n, \\
				q \in [1,+\infty] & \mbox{ if } sp(\delta)>n.
			\end{cases}
\end{equation}
\end{itemize}

Moreover, if~$\mu(\{S_0\})>0$, then the above embeddings hold for any~$s \in (0,S_0]$.
\end{proposition}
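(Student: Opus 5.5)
The plan is to compose the embedding of Proposition~\ref{ytr} with the Sobolev-type embeddings for Bessel spaces in Theorem~\ref{qtppd} and Corollary~\ref{kas}.

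Fix $s\in(0,S_0)$, or $s\in(0,S_0]$ when $\mu(\{S_0\})>0$. By Proposition~\ref{ytr}, used with $\bar s:=s$, the space $H^{0}(\mathcal{A}, g, \Omega)$ embeds continuously into $H^{s,p(\delta)}_0(\Omega)$. This gives a constant $C>0$ with
$$\Vert u\Vert_{H^{s,p(\delta)}(\R^n)}\le C\Vert u\Vert_{H^{0}(\mathcal{A}, g, \Omega)}$$
for all $u\in H^{0}(\mathcal{A}, g, \Omega)$. The inequality first holds on $\mathcal{D}(\Omega)$ and then extends by density, since both spaces are completions of $\mathcal{D}(\Omega)$. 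We have $p(\delta)\in(1,2)$, so the hypotheses $p\in(1,+\infty)$ of Theorem~\ref{qtppd} and Corollary~\ref{kas} are satisfied.

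For (i) I would treat the case $s<1$ first. Theorem~\ref{qtppd} with $p=p(\delta)$ gives the continuous embedding of $H_0^{s,p(\delta)}(\Omega)$ into $L^q(\Omega)$ for $q$ in the range \eqref{pranbef}, which is exactly the range in (i). Composing two bounded linear maps gives (i).

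For (ii), Corollary~\ref{kas} makes $H_0^{s,p(\delta)}(\Omega)\hookrightarrow L^q(\Omega)$ compact for $q$ in the range \eqref{pran}. A compact operator composed with a bounded one is compact, which gives (ii). Concretely, a bounded sequence in $H^{0}(\mathcal{A}, g, \Omega)$ is bounded in $H_0^{s,p(\delta)}(\Omega)$, and therefore has a subsequence converging in $L^q(\Omega)$.

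The main obstacle is the endpoint $s=1$, which can only occur when $S_0=1$ and $\mu(\{1\})>0$, because Theorem~\ref{qtppd} and Corollary~\ref{kas} are stated only for $s\in(0,1)$. There are two ways to handle it. The first is to invoke the classical Sobolev and Rellich--Kondrachov theorems for $H^{1,p(\delta)}_0(\Omega)=W^{1,p(\delta)}_0(\Omega)$, whose exponent ranges match the statement for $s=1$. The second, which I would try if the classical route is not allowed, is to observe that any $s'<1$ can be chosen as close to $1$ as desired. This yields every $q$ strictly below the critical exponent for $s=1$, and in particular covers the compact range. It does not reach the critical exponent $q=p(\delta)^*$ in the continuous part, so I would rely on the classical Sobolev inequality for that case. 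No other step needs new estimates.
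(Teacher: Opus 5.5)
Your proposal is correct and follows essentially the same route as the paper. Both arguments compose the continuous embedding of Proposition~\ref{ytr} into $H^{s,p(\delta)}_0(\Omega)$ with Theorem~\ref{qtppd} (equivalently Corollary~\ref{kas} for compactness) when $s<1$, and with the classical Sobolev and Rellich--Kondrachov embeddings in the endpoint case $s=1$, which, as you note, can only arise when $S_0=1$ and $\mu(\{1\})>0$.
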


\begin{proof}
Proposition~\ref{ytr} establishes a continuous embedding of~$H^{0}(\mathcal{A}, g, \Omega)$ into~$H_0^{s,p(\delta)}(\Omega)$ for any~$s\in(0,S_0)$ (and also for~$S_0$
if~$\mu(\{S_0\})>0$).

Thus, the thesis follows from
Theorem~\ref{qtppd} if~$s\in(0,1)$
(notice indeed that the assumptions in~\eqref{pranbef} and~\eqref{pran} are guaranteed to hold in cases~$(i)$ and~$(ii)$, respectively) and from the
classical Sobolev embeddings if~$s=1$.
\end{proof}


\section{Boundedness in $H^{0}(\mathcal{A}, g, \Omega)$ }\label{bounded}
				
In this section, we introduce the concepts of boundedness and compactness in~$H^{0}(\mathcal{A}, g, \Omega)$. The compactness result stated in Theorem~\ref{trente} here below provides crucial information needed to develop the Fredholm alternative for~$\mathcal{L}$.
		  
{L}et~$f$ be a nonnegative measurable function on~$\Omega$. Then, $f$ is said to be \textit{bounded} in~$H^0(\am, g, \Omega )$, if there exists a constant~$C>0$ such that, for any~$\phi \in \mathcal{D}(\Omega)$,
				\begin{equation}\label{wres}
					\int_{\Omega} f(x) \phi^2(x) \, dx \leq C \| \phi\|^2_{H^0(\am, g, \Omega )}.
				\end{equation}
				
				\begin{corollary}\label{coroinut99}
				The following statements are equivalent:
				\begin{enumerate}[(i)]
				\item $f$ is bounded in~$H^0(\am, g, \Omega )$.
				\item $H^0(\am, g, \Omega )$ continuously embeds into~$L^2(f, \Omega)$.
				\item  There exist two positive constants~$C_1$ and~$C_2$ such that, for any~$\phi \in H^0(\am, g, \Omega )$,
				\begin{equation*}
C_1 \| \phi\|_{H^{0}(\mathcal{A}, g+f, \Omega)} \leq \| \phi\|_{H^{0}(\mathcal{A}, g, \Omega)} 
\leq C_2 \|\phi\|_{H^{0}(\mathcal{A}, g+f, \Omega)}.
\end{equation*}
				\end{enumerate}
				\end{corollary}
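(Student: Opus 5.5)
I will prove the cycle of implications (i)$\Rightarrow$(iii)$\Rightarrow$(ii)$\Rightarrow$(i). The starting observation is that, by the definition of the scalar product in~\eqref{rc}, for every~$\phi\in\mathcal{D}(\Omega)$
\begin{equation*}
\|\phi\|^2_{H^{0}(\mathcal{A}, g+f, \Omega)} = \|\phi\|^2_{H^{0}(\mathcal{A}, g, \Omega)} + \int_\Omega f\phi^2\,dx = \|\phi\|^2_{H^{0}(\mathcal{A}, g, \Omega)} + \|\phi\|^2_{L^2(f,\Omega)}.
\end{equation*}
Since~$f\ge0$, this gives at once~$\|\phi\|_{H^0(\mathcal{A},g,\Omega)}\le\|\phi\|_{H^0(\mathcal{A},g+f,\Omega)}$ on~$\mathcal{D}(\Omega)$, so one of the two inequalities in~(iii) holds with~$C_2=1$ without any assumption on~$f$.

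For (i)$\Rightarrow$(iii), I plug~\eqref{wres} into the identity above and obtain~$\|\phi\|^2_{H^0(\mathcal{A},g+f,\Omega)}\le(1+C)\|\phi\|^2_{H^0(\mathcal{A},g,\Omega)}$ on~$\mathcal{D}(\Omega)$. This yields~$C_1=(1+C)^{-1/2}$. Since the two norms are equivalent on the dense set~$\mathcal{D}(\Omega)$, the completions~\eqref{NVLDDMDsC} coincide as sets and the inequalities extend to all of~$H^0(\mathcal{A},g,\Omega)$ by continuity.

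For (iii)$\Rightarrow$(ii), the identity shows that~$\|\phi\|_{L^2(f,\Omega)}\le\|\phi\|_{H^0(\mathcal{A},g+f,\Omega)}\le C_1^{-1}\|\phi\|_{H^0(\mathcal{A},g,\Omega)}$ for every~$\phi\in\mathcal{D}(\Omega)$. To pass to a general~$u\in H^0(\mathcal{A},g,\Omega)$, I take~$\phi_k\to u$ in~$H^0(\mathcal{A},g,\Omega)$. The sequence~$(\phi_k)$ is then Cauchy in~$L^2(f,\Omega)$ and has some limit~$w$ there.

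The main obstacle is to check that~$w=u$ almost everywhere on~$\{f>0\}$, so that the embedding is really the identity map. For this I use Proposition~\ref{qer}(i): it gives~$\phi_k\to u$ in~$L^1(\Omega)$. Hence a subsequence converges to~$u$ almost everywhere. The same subsequence is Cauchy in~$L^2(f,\Omega)$, so a further subsequence converges to~$w$ almost everywhere on~$\{f>0\}$. Therefore~$w=u$ there, and the bound extends to~$u$ with the same constant.

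Finally, (ii)$\Rightarrow$(i) is immediate: continuity of the embedding means~$\|\phi\|_{L^2(f,\Omega)}\le C\|\phi\|_{H^0(\mathcal{A},g,\Omega)}$, and restricting this to~$\phi\in\mathcal{D}(\Omega)$ gives~\eqref{wres}.
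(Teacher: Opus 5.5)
Your proof is correct and uses the same ingredients as the paper: the identity $\|\phi\|^2_{H^0(\mathcal{A},g+f,\Omega)}=\|\phi\|^2_{H^0(\mathcal{A},g,\Omega)}+\int_\Omega f\phi^2\,dx$ on $\mathcal{D}(\Omega)$, and a density argument based on the a.e.\ convergence supplied by Proposition~\ref{qer}. The differences are only organizational: the paper proves (i)$\Leftrightarrow$(ii), handling the density step with Fatou's lemma rather than by identifying the $L^2(f,\Omega)$-limit, and then (ii)$\Leftrightarrow$(iii); you instead run the cycle (i)$\Rightarrow$(iii)$\Rightarrow$(ii)$\Rightarrow$(i) and extend the inequalities from $\mathcal{D}(\Omega)$ by density, which spares you from justifying the norm identity directly on the completion, as the paper implicitly does.
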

				
\begin{proof}
We first show that \textit{(i)} implies \textit{(ii)} (the reverse implication is trivial). 
In order to do this, we show that
\begin{equation}\label{EWSTery4u509876}
{\mbox{\eqref{wres} is valid for any~$u \in H^0(\am, g, \Omega )$.}}\end{equation}
For this, we take~$u \in H^0(\am, g, \Omega )$ and, in light of
Definition~\ref{qeimsot},
we consider~$(\phi_k) \in \mathcal{D}(\Omega)$ that converges to~$u$ in~$H^0(\am, g, \Omega )$ as~$k\to+\infty$.
Thus, from~\eqref{wres} we have that, for any~$k\in\N$,
$$
\int_{\Omega} f(x) \phi_k^2(x) \, dx \leq C \| \phi_k\|^2_{H^0(\am, g, \Omega )}.
$$

Now, thanks to Proposition~\ref{qer}, we have that~$(\phi_k)$ converges a.e. in~$\Omega$ and therefore, by Fatou's Lemma,
\begin{eqnarray*}&& \int_{\Omega} f(x) u^2(x) \, dx\le
\lim_{k\to+\infty}\int_{\Omega} f(x) \phi_k^2(x) \, dx \\&&\qquad\leq C \lim_{k\to+\infty}\| \phi_k\|^2_{H^0(\am, g, \Omega )} =C\|u\|^2_{H^0(\am, g, \Omega )} ,
\end{eqnarray*} which establishes~\eqref{EWSTery4u509876}.

We now show that \textit{(ii)} implies \textit{(iii)}. We notice that, being~$f$ and~$g$
nonnegative functions, for any~$\phi  \in H^0(\am, g, \Omega )$,
\begin{equation*}
\|\phi\|_{H^{0}(\mathcal{A}, g, \Omega)} \leq \| \phi\|_{H^{0}(\mathcal{A}, g+f, \Omega)}, 
\end{equation*}				
Also, we deduce from \textit{(ii)} that, for any~$\phi  \in H^0(\am, g, \Omega )$,
			\begin{equation*}\begin{split}
\| \phi\|^2_{H^{0}(\mathcal{A}, g+f, \Omega)} &= \| \phi\|^2_{H^{0}(\mathcal{A}, g, \Omega)} + \int_{\Omega} f(x) |\phi(x)|^2 \,dx\\&\leq 
 \| \phi\|^2_{H^{0}(\mathcal{A}, g, \Omega)} +C\|u\phi\|_{H^{0}(\mathcal{A}, g, \Omega)}^2.\end{split}
\end{equation*}
Combining these observations, we obtain that \textit{(iii)} holds true.

To complete the proof of Corollary~\ref{coroinut99}, it remains to establish
that~\textit{(iii)} implies~\textit{(ii)}.
For this, we observe that, for any~$\phi \in H^0(\am, g, \Omega )$,
\begin{eqnarray*}
\int_{\Omega} f(x) \phi^2(x) \, dx \le \| \phi\|^2_{H^{0}(\mathcal{A}, g+f, \Omega)}\le
\frac1{C_1^2}\| \phi\|^2_{H^{0}(\mathcal{A}, g, \Omega)},
\end{eqnarray*}
which entails \textit{(ii)}, as desired.
\end{proof}

We now recall the definition of compact boundedness as given in Definition~\eqref{qeimsot2} and we point out that this is a stronger property than boundedness, as the next corollary points out.
				
				\begin{corollary}\label{crol}
					Let~$f$ be compactly bounded on~$H^0(\am, g, \Omega )$. Then, $f$ is bounded in~$H^0(\am, g, \Omega )$.
				\end{corollary}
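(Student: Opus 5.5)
The plan is to fix $\epsilon:=1$ in Definition~\ref{qeimsot2} and then absorb the $L^1$ term in~\eqref{ghhg} into the $H^0(\am, g, \Omega)$-norm through the embedding of Proposition~\ref{qer}. Once this is done we obtain exactly~\eqref{wres}.

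Let $K_1\ge0$ be the constant associated with $\epsilon=1$. Then, for every $\phi\in\mathcal{D}(\Omega)$,
$$\int_\Omega f\phi^2\,dx\le \|\phi\|^2_{H^0(\am,g,\Omega)}+K_1\|\phi\|^2_{L^1(\Omega)}.$$
It therefore suffices to bound $\|\phi\|_{L^1(\Omega)}$ by a constant multiple of $\|\phi\|_{H^0(\am,g,\Omega)}$. For this we use Proposition~\ref{qer}(i) with $q=1$, which is always allowed since $1$ lies in the admissible range in each of the three cases. Choosing any $s\in(0,S_0)$, this gives a constant $C_0>0$, independent of $\phi$, with $\|\phi\|_{L^1(\Omega)}\le C_0\|\phi\|_{H^0(\am,g,\Omega)}$. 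Alternatively, one can reach the same bound from Proposition~\ref{ytr} combined with H\"older's inequality on the bounded set $\Omega$, since $p(\delta)>1$.

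Combining the two estimates gives $\int_\Omega f\phi^2\le (1+K_1C_0^2)\|\phi\|^2_{H^0(\am,g,\Omega)}$, and this is~\eqref{wres} with $C:=1+K_1C_0^2$.

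The only point needing care is that the embedding constant in Proposition~\ref{qer} is valid for general nonnegative $g$. This holds because the proof of Proposition~\ref{ytr} used only the $\mathcal{A}$-part of the norm, which is bounded above by the full $H^0(\am,g,\Omega)$-norm. So there is no real obstacle, and the argument is a direct absorption.
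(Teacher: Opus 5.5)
Your proposal is correct and follows the paper's proof: it sets $\epsilon=1$ in~\eqref{ghhg} and absorbs the $L^1(\Omega)$ term into $\|\phi\|^2_{H^0(\am,g,\Omega)}$ via the continuous embedding of Proposition~\ref{qer}. You also make the resulting constant $1+K_1C_0^2$ explicit, and you correctly note that the embedding constant does not depend on $g$, since only the $\am$-part of the norm is used.
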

				
				\begin{proof}
Taking~$\epsilon:=1$ in~\eqref{ghhg}
	and exploiting Proposition~\ref{qer}, we obtain that, for any~$\phi \in \mathcal{D}(\Omega)$
\begin{equation*}
\int_{\Omega} f(x) \phi^2(x) \, dx \leq \| \phi\|^2_{H^0(\am, g, \Omega )} + K_{1} \Vert \phi \Vert_{L^1(\Omega)}^2 \leq C \| \phi\|^2_{H^0(\am, g, \Omega )}. \quad\qedhere
\end{equation*}
\end{proof}

We stress that the converse does not hold true.
We refer the reader to Appendix~\ref{counterexa} for an example of function~$f$ that is bounded in~$H^0(\mathcal{A}, g, \Omega )$ but not compactly bounded in~$H^0(\mathcal{A}, g,\Omega )$.

Now we present a compactness result.

\begin{theorem}\label{trente}
Let~$f$ be compactly bounded in~$H^0(\am, g, \Omega )$. Then, the embedding of~$H^0(\am, g, \Omega )$ into~$L^2(f, \Omega)$ is compact.
				\end{theorem}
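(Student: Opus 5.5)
Take a bounded sequence $(u_k)$ in $H^0(\am, g, \Omega)$, say $\|u_k\|_{H^0(\am,g,\Omega)}\le M$, and show that it has a subsequence which is Cauchy in $L^2(f,\Omega)$. The argument combines the compact embedding of Proposition~\ref{qer}\,(ii) into $L^1(\Omega)$ with the interpolation-type inequality~\eqref{ghhg} that defines compact boundedness.

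First I extend~\eqref{ghhg} from $\mathcal{D}(\Omega)$ to all of $H^0(\am,g,\Omega)$. Given $u$ in the space, pick $\phi_j\in\mathcal{D}(\Omega)$ with $\phi_j\to u$ in $H^0(\am,g,\Omega)$. By Proposition~\ref{qer}, $\phi_j\to u$ in $L^1(\Omega)$, and after passing to a subsequence, also almost everywhere. Applying Fatou's Lemma on the left-hand side of~\eqref{ghhg} gives
\[
\|u\|^2_{L^2(f,\Omega)}\le \epsilon\|u\|^2_{H^0(\am,g,\Omega)}+K_\epsilon\|u\|^2_{L^1(\Omega)}
\]
for every $u\in H^0(\am,g,\Omega)$. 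This is the same Fatou argument as in the proof of Corollary~\ref{coroinut99}. By Corollary~\ref{crol}, the embedding into $L^2(f,\Omega)$ is already known to be well defined and continuous.

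Next, since $H^0(\am,g,\Omega)$ is a Hilbert space, the sequence $(u_k)$ has a weakly convergent subsequence. More importantly, Proposition~\ref{qer}\,(ii) with $q=1$ (admissible in every case listed there) lets me extract a subsequence, still denoted $(u_k)$, that converges in $L^1(\Omega)$ and is therefore Cauchy in $L^1(\Omega)$. I then apply the extended inequality to the differences $u_k-u_m$, which lie in the space by linearity and satisfy $\|u_k-u_m\|_{H^0(\am,g,\Omega)}\le 2M$:
\[
\|u_k-u_m\|^2_{L^2(f,\Omega)}\le 4M^2\epsilon+K_\epsilon\|u_k-u_m\|^2_{L^1(\Omega)}.
\]
Fix $\eta>0$ and choose $\epsilon=\eta/(8M^2)$. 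Then take $k,m$ large enough that $K_\epsilon\|u_k-u_m\|^2_{L^1(\Omega)}<\eta/2$. This gives $\|u_k-u_m\|^2_{L^2(f,\Omega)}<\eta$, so the subsequence is Cauchy in $L^2(f,\Omega)$. Since $L^2(f,\Omega)$ is complete, the subsequence converges there, and the embedding is compact.

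The only delicate point is the density step. The inequality~\eqref{ghhg} is stated only for test functions, and it must be transferred to the completion. This requires checking that $H^0$-convergence forces a.e. convergence, which is what Proposition~\ref{qer} supplies, so that Fatou's Lemma applies while the right-hand side passes to the limit by continuity. Once this is done, the rest is the standard $\epsilon$-argument above.
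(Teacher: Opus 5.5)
Your proof is correct and follows the paper's route. You first extend \eqref{ghhg} from $\mathcal{D}(\Omega)$ to all of $H^0(\mathcal{A},g,\Omega)$ by Fatou's Lemma, and then combine it with the compact embedding into $L^1(\Omega)$ from Proposition~\ref{qer}, letting $\epsilon\to0$; the only difference is cosmetic: you show an $L^1$-convergent subsequence of a bounded sequence is Cauchy in $L^2(f,\Omega)$, whereas the paper shows weakly convergent sequences converge strongly and uses reflexivity (and your explicit passage to an a.e.\ convergent subsequence before Fatou is slightly more careful than the paper's wording).
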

	
				\begin{proof}
We first check that
\begin{equation}\label{fhreuity4bv3fjsdkfgasdfghj}
{\mbox{the inequality in~\eqref{ghhg} holds true for every~$\phi\in H^0(\am, g, \Omega)$.}}
\end{equation}				
To this aim, we let~$\phi\in H^0(\am, g, \Omega)$ and~$(\phi_k)$ be a sequence
of functions in~${\mathcal{D}}(\Omega)$ such that~$\phi_k$ converges
to~$\phi$ in~$H^0(\am, g, \Omega)$ as~$k\to+\infty$. Thus, for all~$k$, we deduce from~\eqref{ghhg} that
$$ \Vert \phi_k \Vert_{L^2(f, \Omega)}^2 \leq \epsilon \Vert \phi_k \Vert^2_{H^0(\am, g, \Omega )} + K_{\epsilon} \Vert \phi_k \Vert_{L^1(\Omega)}^2 .$$
Moreover, by Proposition~\ref{qer}, we know that~$H^0(\am, g, \Omega)$ compactly embeds into~$L^1(\Omega)$,
and therefore~$\phi_k$ converges
to~$\phi$ in~$L^1(\Omega)$ as~$k\to+\infty$. These considerations and the Fatou's Lemma give that
\begin{eqnarray*}&&
\Vert \phi \Vert_{L^2(f, \Omega)}^2\le \lim_{k\to+\infty} \Vert \phi_k \Vert_{L^2(f, \Omega)}^2 \le  \lim_{k\to+\infty}\left(\epsilon \Vert \phi_k \Vert^2_{H^0(\am, g, \Omega )} + K_{\epsilon} \Vert \phi_k \Vert_{L^1(\Omega)}^2\right)\\
&&\qquad\qquad =\epsilon \Vert \phi\Vert^2_{H^0(\am, g, \Omega )} + K_{\epsilon} \Vert \phi \Vert_{L^1(\Omega)}^2,
\end{eqnarray*}
thus establishing~\eqref{fhreuity4bv3fjsdkfgasdfghj}.

Now, let~$(u_k)$ be a sequence of functions in~$H^0(\am, g, \Omega )$
that converges weakly to some~$u$ in~$H^0(\am, g, \Omega )$. 
In light of~\eqref{fhreuity4bv3fjsdkfgasdfghj}, we have that, for all~$k\in\N$,
\begin{eqnarray*} \Vert u_k - u \Vert_{L^2(f, \Omega)}^2 &\leq& \epsilon \Vert u_k - u \Vert^2_{H^0(\am, g, \Omega )} + K_{\epsilon} \Vert u_k- u \Vert_{L^1(\Omega)}^2 \\
&\leq&2 \epsilon\left( \Vert u_k \Vert^2_{H^0(\am, g, \Omega )}+\Vert u \Vert^2_{H^0(\am, g, \Omega )}\right) + K_{\epsilon} \Vert u_k- u \Vert_{L^1(\Omega)}^2\\
&\le& M\varepsilon+ K_{\epsilon} \Vert u_k- u \Vert_{L^1(\Omega)}^2,
.\end{eqnarray*}
for some~$M>0$.

This and the compact embedding of~$H^0(\am, g, \Omega)$ into~$L^1(\Omega)$ entail that
					\begin{align*}
						\lim_{k \to +\infty} \Vert \phi_k - \phi \Vert_{L^2(f, \Omega)}^2 \leq \lim_{k \to +\infty} \left( M \epsilon + K_{\epsilon} \Vert \phi_k - \phi \Vert_{L^1(\Omega)}^2 \right) = M \epsilon.
					\end{align*} 
					Letting~$\epsilon \to 0$, we conclude that
$$ \lim_{k \to +\infty} \Vert \phi_k - \phi \Vert_{L^2(f, \Omega)}^2=0.$$
This says that the embedding of~$H^0(\am, g, \Omega )$ into~$L^2(f, \Omega)$ is compact.
Since $H^0(\am, g, \Omega )$ is a Hilbert space and thus reflexive, we obtain the desired result.
\end{proof}
				
\section{The Fredholm alternative and proofs of Theorem~\ref{freddi} and Remark~\ref{remi}}\label{mainres}

This section presents the proofs of the main results stated in Theorem~\ref{freddi} and Remark~\ref{remi}.

Let us recall here that the function~$f$ defined in~\eqref{efe} is assumed to be compactly bounded in~$H^0(\mathcal{A}, \Omega)$. Also, we recall the notation for the quantities~$a^i(x)$,~$b^i(x)$ in~\eqref{fds} and the bilinear form~$(\mathcal{L}u,v)$ in~\eqref{mainop}.

We present the following observation, that provides a justification to the definition of the operator in~\eqref{mainop} as the variational formulation of the operator~$\mathcal{L}$ in~\eqref{rawop}.

\begin{theorem}\label{hdhdhd}
 Let~$\Omega$ be a bounded domain in~$\R^n$. 
Suppose that there exists a constant~$C>0$ such that
\begin{equation}\label{bimbm}
\begin{split}&
 \tilde{\mathcal{A}} :=
\displaystyle \sup_{{s\in (0,1]}\atop{ x \in \R^n}} | \mathcal{A}(s,x)| \leq C ,\qquad
 \tilde{a}:= \displaystyle\sup_{{x \in \Omega}\atop{i=1,\ldots,n} } |a^i(x) | \leq C ,\\
{\mbox{and }}\quad &\tilde{b}:= \sup_{{x \in \Omega}\atop{i=1,\ldots,n} } |b^i(x) | \leq C .
\end{split}
\end{equation}
Also, suppose that~$a^{ij}(s,\cdot)\in\mathcal{D}(\Omega)$ for all~$s\in(0,1]$, for all~$i$, $j=1,\dots,n$.

Moreover let~$a^i(s,\cdot) \in C^{\infty}(\Omega)$ for all~$s\in(0,1]$, for all~$i=1,\dots,n$. 

In addition, let~$a \in L^{1}_{\rm loc}(\Omega)$. 
 
Then, for any $u $, $\phi\in \mathcal{D}(\Omega)$,
\begin{equation}\label{hucdmdt}
\begin{split}&
\int_{\Omega}  \mathcal{L}u(x) \phi(x) \, dx \\&= \int_{\Omega} \left( \int_{(0,1]} \Big(a^{ij}(s,x) D^s_ju D^s_i \phi + a^i(s,x) u D^s_i \phi + b^i(s,x)\phi D^s_i u \Big)\, d\mu (s) \right)\, dx \\
&\quad\quad+ \int_{\Omega} a(x) u(x) \phi(x) \, dx.
\end{split} 
\end{equation}
In particular, for any~$u \in \mathcal{D}(\Omega)$, the map
\begin{equation}\label{sssss}
\mathcal{D}(\Omega) \ni  \phi \mapsto \int_{\Omega}  \mathcal{L}u \phi  \, dx.
\end{equation}
defines a distribution, namely is linear and continuous.
\end{theorem}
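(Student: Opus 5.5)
The identity \eqref{hucdmdt} follows by testing each piece of $\mathcal{L}u$ against $\phi$ and moving one fractional gradient onto $\phi$ with the integration by parts formula of Lemma~\ref{eduldp}. The $s$-integral is exchanged with the $x$-integral by Fubini's theorem. The continuity claim in \eqref{sssss} then comes from bounding the right-hand side by seminorms of $\phi$.

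Fix $u,\phi\in\mathcal{D}(\Omega)$ and $s$ in the support of $\mu$. Set $v_i:=a^{ij}(s,\cdot)D^s_ju+a^i(s,\cdot)u$. Since $a^{ij}(s,\cdot)\in\mathcal{D}(\Omega)$, Proposition~\ref{ffffff} gives $D^s_ju\in C^\infty$, so the first summand is in $\mathcal{D}(\Omega)$. Since $u\in\mathcal{D}(\Omega)$ and $a^i(s,\cdot)$ is smooth in $\Omega$, the second summand is also in $\mathcal{D}(\Omega)$. Hence $v_i\in\mathcal{D}(\R^n)$, and Lemma~\ref{eduldp} gives
\[-\int_{\R^n}D^s_iv_i\,\phi\,dx=\int_{\R^n}\big(a^{ij}(s,x)D^s_ju+a^i(s,x)u\big)D^s_i\phi\,dx .\]
The remaining terms $b^iD^s_iu\,\phi$ and $a\,u\phi$ are integrated directly. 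The term $a u\phi$ is integrable because $a\in L^1_{\rm loc}(\Omega)$ and $u\phi$ has compact support in $\Omega$.

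To integrate in $d\mu(s)$ and swap the order, I need a dominating function uniform in $s$. On the integrand side of \eqref{hucdmdt}, the bounds \eqref{bimbm} together with Lemma~\ref{drday} give $|D^su|,|D^s\phi|\le C\|Du\|_{L^\infty}$, $C\|D\phi\|_{L^\infty}$ uniformly in $s$. Every integrand is supported in $\Omega$, either because the coefficients are supported there or because of the factor $u$ or $\phi$. This yields a bound $C\,\mathds{1}_\Omega$, which is integrable against $dx\,d\mu$ provided $\mu$ of the support is finite.

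The main obstacle is the other side, $\int_\Omega\mathcal{L}u\,\phi$, which requires $\int_{(0,1]}D^s_iv_i\,d\mu(s)$ to make sense pointwise. For this I will also bound $\|D^s(v_i)\|_{L^\infty}$ uniformly in $s$, again by Lemma~\ref{drday} applied to $v_i$. That needs a uniform-in-$s$ bound on $\|Dv_i\|_{L^\infty}$ and on the supports, which in turn requires some uniformity of $a^{ij}(s,\cdot)$ in $s$ (derivatives and supports) beyond \eqref{bimbm}. The hypotheses only give $a^{ij}(s,\cdot)\in\mathcal{D}(\Omega)$ for each $s$, so this uniformity must either be assumed implicitly or the identity must be read with $\mathcal{L}u$ defined through the $s$-integral after pairing with $\phi$.

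Continuity of \eqref{sssss} then follows from the identity. For $\phi$ supported in a fixed compact $K\subset\Omega$,
\[\Big|\int\mathcal{L}u\,\phi\Big|\le C\big(\|D\phi\|_{L^\infty}+\|\phi\|_{L^\infty}\big),\]
with $C$ depending on $u$, $K$, $\mu$, the constants in \eqref{bimbm} and $\|a\|_{L^1(K)}$. Here Lemma~\ref{drday} is used once more, and its constant depends only on a ball containing $K$. Linearity in $\phi$ is clear, so the map is a distribution of order at most one.
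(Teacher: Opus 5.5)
Your route is the same as the paper's. You use Proposition~\ref{ffffff} to put $v_i=a^{ij}(s,\cdot)D^s_ju+a^i(s,\cdot)u$ in $\mathcal{D}(\Omega)$, apply Lemma~\ref{eduldp} for each fixed $s$, dominate via \eqref{bimbm} and Lemma~\ref{drday}, swap the integrals by Fubini--Tonelli, and read off continuity of \eqref{sssss} from the resulting bound.

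The obstacle you flag is real, and the paper's proof does not resolve it either. The paper bounds $\int_{(0,1]}\int_{\R^n}\big|\big(D^s_iv_i+b^iD^s_iu\big)\phi\big|\,dx\,d\mu(s)$ by $C\big(\tilde{\mathcal{A}}\|Du\|_{L^\infty}\|D\phi\|_{L^\infty}+\tilde a\|u\|_{L^\infty}\|D\phi\|_{L^\infty}+\tilde b\|Du\|_{L^\infty}\|\phi\|_{L^\infty}\big)$, citing \eqref{bimbm} and Lemma~\ref{drday}. Those are bounds for the integrand \emph{after} integration by parts:

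\begin{itemize}
\item Lemma~\ref{eduldp} only controls the signed integral $\int D^s_iv_i\,\phi\,dx$, not $\int|D^s_iv_i\,\phi|\,dx$.
\item Lemma~\ref{drday} applied to $v_i$ brings in $\|Dv_i\|_{L^\infty}$, hence $Da^{ij}(s,\cdot)$ and $Da^i(s,\cdot)$. Nothing in \eqref{bimbm} controls these uniformly in $s$.
\end{itemize}

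Without such control, $\mathcal{L}u(x)$ need not exist. If $a^{ij}(s,\cdot)$ oscillates at a frequency $k(s)\to+\infty$ as $s\nearrow 1$, then $|D^s v_i|$ is of order $k(s)^s$, and $s\mapsto D^s_iv_i(x)$ can fail to be $\mu$-integrable. So your argument is as complete as the paper's, and more accurate about what is actually used.

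One correction to your diagnosis: no uniformity of supports is needed. Since $\mathrm{supp}\,v_i\subset\Omega$ for every $s$, a single $R$ with $\Omega\subset B_R$ works in Lemma~\ref{drday}. What must be added is:

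\begin{itemize}
\item $\sup_s\|Da^{ij}(s,\cdot)\|_{L^\infty}<+\infty$;
\item $\sup_s\|Da^i(s,\cdot)\|_{L^\infty(K)}<+\infty$ for every compact $K\subset\Omega$;
\item $\mu((0,1])<+\infty$, which the paper uses tacitly throughout although it only states $\sigma$-finiteness.
\end{itemize}

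Your alternative reading, defining $\int_\Omega\mathcal{L}u\,\phi$ as the $\mu$-integral of the pairings, avoids the derivative bounds altogether and needs only finiteness of $\mu$.
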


\begin{proof}
Let us consider~$u \in \mathcal{D}(\Omega)$. Then, by Proposition~\ref{ffffff}, we gather that, for any $i=1,\ldots,n$
and~$s\in(0,1]$,
\[ a^{ij}(s,x) D^s_j u + a^i(s,x) u \in \mathcal{D}(\Omega). \]
As a consequence, exploiting Lemma~\ref{eduldp},
\begin{eqnarray*}
&& \int_{(0,1]} \int_{\Omega}\Big( D^s_i \left(a^{ij}(s,x) D^s_j u + a^i(s,x) u \right) + b^i(s,x) D^s_i u  \Big) \phi(x)\, dx \, d\mu \\
& =& -\int_{(0,1]} \int_{\Omega}  \Big(a^{ij}(s,x) D^s_j u D^s_i \phi + a^i(s,x) u D^s_i \phi + b^i(s,x) D^s_i u \phi\Big) \, dx \, d\mu (s).
\end{eqnarray*}
Therefore, by~\eqref{bimbm} and Lemma~\ref{drday},
\begin{equation*}
\begin{split}
&\int_{(0,1]} \int_{\R^n} \left|\Big(D^s_i \left(a^{ij}(s,x) D^s_j u + a^i(s,x) u \right) + b^i(s,x) D^s_i u \Big)\phi(x)\right|\, dx \, d\mu (s)  \\
&\quad\leq C \Big(\tilde{\mathcal{A}} \|Du\|_{L^{\infty}(\Omega)} \|D\phi\|_{L^{\infty}(\Omega)} + \tilde{a} \| u \|_{L^{\infty}(\Omega)} \|D\phi\|_{L^{\infty}(\Omega)}  \\ &\qquad\qquad\qquad+ \tilde{b} \|Du\|_{L^{\infty}(\Omega)} \|\phi\|_{L^{\infty}(\Omega)}\Big),
\end{split}
\end{equation*}
which is a finite quantity.

Accordingly, we can employ the Fubini-Tonelli Theorem and see that
\begin{equation*}
\begin{split}&
 \int_{\R^n}  \phi \mathcal{L}u  \, dx\\ &= \int_{(0,1]} \int_{\R^n} \Big( -D^s_i \left(a^{ij}(s,x) D^s_j u + a^i(s,x) u \right) + b^i(s,x) D^s_i u \Big)\phi(x) \, dx \, d\mu (s)\\&\qquad\qquad+ \int_{\Omega} a(x) u \phi \, dx\\
&= \int_{(0,1]} \int_{\R^n} a^{ij}(s,x) D^s_j u D^s_i \phi + a^i(s,x) u D^s_i \phi + b^i(s,x) D^s_i u \phi \, dx \, d\mu (s)\\&\qquad\qquad+ \int_{\Omega} a(x) u \phi \, dx\\
&= \int_{\Omega} \Bigg[ \int_{(0,1]} \Big(a^{ij}(s,x) D^s_ju D^s_i \phi + a^i(s,x) u D^s_i \phi + b^i(s,x)\phi D^s_i u \Big)\, d\mu(s)\\&\qquad\qquad\qquad+ a(x) u \phi   \Bigg] \, dx,
\end{split}
\end{equation*}
this proving~\eqref{hucdmdt}.

Moreover, the map in~\eqref{sssss} is clearly linear, and its continuity follows from~\eqref{hucdmdt}.
\end{proof}

			\begin{proposition}\label{ghighi}
				Let~$K_{\am}$ be given by~\eqref{vwm}. Then, for any~$u$, $v \in H^0(\mathcal{A}, f, \Omega)$,
				\[|(\ml u, v ) | \leq \big(3 \sqrt{K_{\am}}+1\big) \Vert u \Vert_{H^0(\mathcal{A}, f, \Omega)} \Vert v \Vert_{H^0(\mathcal{A}, f, \Omega)}.\]        
			\end{proposition}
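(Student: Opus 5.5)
We will first prove the inequality for $u,v\in\mathcal{D}(\Omega)$ and then extend it by density. Since the right-hand side is continuous on $H^0(\mathcal{A},f,\Omega)$, the extension only needs the bilinear form to pass to the limit. This will follow once each term is bounded as below.

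We split $(\mathcal{L}u,v)$ into its four pieces: the principal term $\int\int a^{ij}D^s_juD^s_iv\,d\mu\,dx$, the two first-order terms involving $a^i$ and $b^i$, and the zeroth-order term $\int_\Omega auv$.

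\textbf{Principal term.} We apply \eqref{vwm} pointwise in $(s,x)$ with $\xi=D^sv$ and $\psi=D^su$. Since $\xi^T\mathcal{A}\xi=\xi^T\mathcal{A}_S\xi$, this gives
\[|a^{ij}D^s_juD^s_iv|\le\sqrt{\mathcal{K}_{\am}}\,(D^su^T\mathcal{A}_SD^su)^{1/2}(D^sv^T\mathcal{A}_SD^sv)^{1/2}.\]
Cauchy--Schwarz on $\R^n\times(0,1]$ with respect to $dx\,d\mu$ then bounds this term by $\sqrt{\mathcal{K}_{\am}}\,\|u\|_{H^0(\mathcal{A},\Omega)}\|v\|_{H^0(\mathcal{A},\Omega)}$.

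\textbf{First-order terms.} These are the main step. For the term $a^i(s,x)uD^s_iv$, write $a^i\xi_i=(\mathcal{A}\mathcal{B}^T a)\cdot\xi$, or better use the duality $|a\cdot\xi|\le (a^T\mathcal{B}_S' a)^{1/2}(\xi^T\mathcal{A}_S\xi)^{1/2}$ in an appropriate form. Concretely, set $w:=\mathcal{B}^Ta$, so that $a\cdot\xi=w^T\mathcal{A}\xi$. Applying \eqref{vwm} gives
\[|a\cdot\xi|\le\sqrt{\mathcal{K}_{\am}}\,(w^T\mathcal{A}w)^{1/2}(\xi^T\mathcal{A}\xi)^{1/2},\]
and $w^T\mathcal{A}w=a^T\mathcal{B}a=b^{ij}(s,x)a^ia^j$. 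Using \eqref{fds}, this is bounded by $b^{ij}(x)a^i(x)a^j(x)\le f$; the comparison of $\mathcal{B}(s,x)$ quadratic forms with the majorants is where I expect to lean on the technical results of Appendix~\ref{fappend}.

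Hence
\[\Big|\int\int a^iuD^s_iv\Big|\le\sqrt{\mathcal{K}_{\am}}\int_\Omega\int|u|\,f^{1/2}(D^sv^T\mathcal{A}_SD^sv)^{1/2}\,d\mu\,dx .\]
Since $\Omega$ has finite measure and the relevant $\mu$-mass is finite, we then apply Cauchy--Schwarz. The $u$ factor is controlled by $\|u\|_{L^2(f,\Omega)}$ once we note that $\mu$ can be normalized, or that $\mu([s_0,1])$ is absorbed as in the paper's convention. The result is a bound of $\sqrt{\mathcal{K}_{\am}}\|u\|_{H^0(\mathcal{A},f,\Omega)}\|v\|_{H^0(\mathcal{A},f,\Omega)}$. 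The $b^i$ term is identical with the roles of $u$ and $v$ swapped.

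\textbf{Zeroth-order term.} Since $|a|\le f$, Cauchy--Schwarz gives $|\int_\Omega auv|\le\|u\|_{L^2(f,\Omega)}\|v\|_{L^2(f,\Omega)}\le\|u\|_{H^0(\mathcal{A},f,\Omega)}\|v\|_{H^0(\mathcal{A},f,\Omega)}$.

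Summing the four contributions yields the constant $3\sqrt{\mathcal{K}_{\am}}+1$. The delicate points are two. First, the $\mathcal{B}$-duality estimate must hold with $\mathcal{A}$ nonsymmetric. Second, the $\mu$-integration in the mixed terms must be handled carefully: a factor $\mu$-mass may appear, and it has to be absorbed using the finiteness of $\mu$ on its support.
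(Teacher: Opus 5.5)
Your decomposition and pointwise estimates are the paper's. You use \eqref{vwm} for the principal part, the substitution $w=\mathcal{B}^Ta$ (exactly Lemma~\ref{spexx}) together with \eqref{fds} for the first-order parts, and $|a|\le f$ for the zeroth-order part, and these bounds are correct. Two small points:
\begin{itemize}
\item Drop your first attempt $a^i\xi_i=(\mathcal{A}\mathcal{B}^Ta)\cdot\xi$. It holds only when $\mathcal{A}$ is symmetric; the identity you actually need, $a\cdot\xi=w^T\mathcal{A}\xi$, is the one you then use.
\item No appendix is needed to pass to the majorants. Termwise, $\big|\sum_{i,j}b^{ij}(s,x)a^i(s,x)a^j(s,x)\big|\le\sum_{i,j}b^{ij}(x)a^i(x)a^j(x)\le f(x)$, since every summand of $f$ is nonnegative.
\end{itemize}

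\textbf{The gap.} The genuine gap is where you dispose of the $\mu$-mass. Cauchy--Schwarz in $dx\,d\mu$ on the $a^i$-term gives
\[
\sqrt{K_{\am}\,\mu((0,1])}\;\Vert u\Vert_{L^2(f,\Omega)}\Vert v\Vert_{H^0(\mathcal{A},\Omega)},
\]
and neither of your remedies is legitimate.
\begin{itemize}
\item The paper imposes no normalization on $\mu$; its examples have arbitrary total mass.
\item Normalizing is not without loss of generality. Replace $\mu$ by $\mu/m$, $m:=\mu((0,1])$, and compensate with $m\mathcal{A}$, $ma^i$, $mb^i$. This leaves $\ml$, $\Vert\cdot\Vert_{H^0(\mathcal{A},\Omega)}$ and $K_{\am}$ unchanged, but turns $f$ into $m\,b^{ij}(a^ia^j+b^ib^j)+|a|$, so the norm in the statement changes.
\item A constant $3\sqrt{K_{\am}}+1$ that does not depend on $\mu$ cannot absorb the factor.
\end{itemize}

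\textbf{Counterexample.} The inequality as stated is false for large mass. Take $n=1$, $\mu=m\,\delta_1$, $\mathcal{A}\equiv1$, $a^1\equiv\sqrt m$, $b^1=a=0$, so that $K_{\am}=1$ and $f\equiv m$. For $v\in\mathcal{D}(\Omega)\setminus\{0\}$ and $u:=v'$ we have $\int_\Omega u'v'\,dx=\int_\Omega v''v'\,dx=0$, hence
\[
(\ml u,v)=m^{3/2}\int_\Omega (v')^2\,dx,\qquad
\Vert u\Vert_{H^0(\mathcal{A},f,\Omega)}\Vert v\Vert_{H^0(\mathcal{A},f,\Omega)}=m\,\Vert u\Vert_{H^{1,2}(\R)}\Vert v\Vert_{H^{1,2}(\R)}.
\]
The ratio grows like $\sqrt m$ and exceeds $4$ for $m$ large. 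The paper's own proof hides the same defect in its appeal to Jensen's inequality for $\int_{(0,1]}(D^sv^T\mathcal{A}_SD^sv)^{1/2}\,d\mu$, which is valid only when $\mu((0,1])\le1$.

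\textbf{What your argument does prove.} It gives
\[
|(\ml u,v)|\le\Big(\sqrt{K_{\am}}+1+2\sqrt{K_{\am}\,\mu((0,1])}\Big)\Vert u\Vert_{H^0(\mathcal{A},f,\Omega)}\Vert v\Vert_{H^0(\mathcal{A},f,\Omega)},
\]
which reduces to the stated constant exactly when $\mu((0,1])\le1$. You should state this version, or add that hypothesis, rather than claim the mass is absorbed. Nothing downstream is affected: Proposition~\ref{laximi} and Theorem~\ref{freddi} only need some finite continuity constant.
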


			\begin{proof}
Exploiting~\eqref{vwm},~\eqref{fds} and Lemma~\ref{spexx}
(used here with~$\xi_i:=D^s_i v$ and~$\psi_i:=a^i$, and also with~$\xi_i:=D^s_i u$ and~$\psi_i:=a^i$), we have that
				\begin{align*}&
					|(\ml u, v)| \\&\leq \sqrt{K_{\am}} \int_{\R^n} \int_{(0,1]} (D^su^T \mathcal{A}_S D^su)^{\frac{1}{2}} (D^sv^T \mathcal{A}_S D^sv)^{\frac{1}{2}} \, d\mu(s) \, dx \\ &\quad + \sqrt{K_{\am} }\int_{\R^n} \int_{(0,1]}\Big(
|u(x)| |f(x)|^{\frac{1}{2}} (D^sv^T \mathcal{A}_S D^sv)^{\frac{1}{2}} \\&\qquad\qquad\qquad+ |v(x)| |f(x)|^{\frac{1}{2}} (D^su^T \mathcal{A}_S D^su)^{\frac{1}{2}} \Big)\, d\mu(s) \, dx \\ &\quad +  \int_{\Omega}f(x) |u(x) v(x)| \, dx \\ &=A + B + C,
\end{align*}	
where
\begin{eqnarray*}
A&:=& \sqrt{K_{\am}}  \displaystyle\int_{\R^n}
\int_{(0,1]} (D^su^T \mathcal{A}_S D^su)^{\frac{1}{2}} (D^sv^T \mathcal{A}_S D^sv)^{\frac{1}{2}} \, d\mu(s) \, dx \\&&\qquad+ \int_{\Omega}f |u v| \, dx  ,\\
B&:=&\sqrt{ K_{\am} }\displaystyle\int_{\Omega} \int_{(0,1]} |u(x)| |f(x)|^{\frac{1}{2}} (D^sv^T \mathcal{A}_S D^sv)^{\frac{1}{2}} \, d\mu(s) \, dx \\ 
{\mbox{and }}\qquad C&:=& \sqrt{K_{\am}} \displaystyle\int_{\Omega} \int_{(0,1]} |v| |f|^{\frac{1}{2}} (D^su^T \mathcal{A}_S D^su)^{\frac{1}{2}} \, d\mu (s)\, dx.
\end{eqnarray*}

We first estimate~$A$. For this, we observe that, by the H\"{o}lder inequality,
\begin{align*}&
\int_{\R^n} \int_{(0,1]} (D^su^T \mathcal{A}_S D^su)^{\frac{1}{2}} (D^sv^T \mathcal{A}_S D^sv)^{\frac{1}{2}} \, d\mu(s) \, dx  \\ &\leq \left( \int_{\R^n} \int_{(0,1]} D^su^T \mathcal{A}_S D^su  \, d\mu(s) \, dx \right)^{\frac{1}{2}}  \left( \int_{\R^n} \int_{(0,1]} D^sv^T \mathcal{A}_S D^sv  \, d\mu \, dx \right)^{\frac{1}{2}}
\\ &\leq \Vert u \Vert_{H^0(\mathcal{A}, f, \Omega)} \Vert v \Vert_{H^0(\mathcal{A}, f, \Omega)}
\end{align*}
and
\begin{equation*}\begin{split}
\int_{\Omega}f |u v| \, dx &\leq  \left( \int_{\Omega} f |u|^2  \, dx \right)^{\frac{1}{2}} \left( \int_{\Omega} f |v|^2  \, dx \right)^{\frac{1}{2}} \\&\leq \Vert u \Vert_{H^0(\mathcal{A}, f, \Omega)} \Vert v \Vert_{H^0(\mathcal{A}, f, \Omega)} .\end{split}
\end{equation*}
As a consequence, 
\begin{equation}\label{jkdd}
A \leq \big(\sqrt{K_{\mathcal{A}}}+1\big) \Vert u \Vert_{H^0(\mathcal{A}, f, \Omega)} \Vert v \Vert_{H^0(\mathcal{A}, f, \Omega)}.
\end{equation}

In order to estimate~$B$, we exploit the H\"older inequality to the integral in~$\Omega$ and then the Jensen inequality to the integral in~$(0,1]$ and we have that
\begin{equation}\begin{split} \label{judch}
B &\leq \sqrt{K_{\am}} \int_{\Omega} \left(
|u| |f|^{\frac{1}{2}}  \int_{(0,1]} (D^sv^T \mathcal{A}_S D^sv)^{\frac{1}{2}} \, d\mu(s) \right) \, dx  \\ &\leq \sqrt{K_{\am}} \left(\int_{\Omega} |u|^2 |f| \, dx\right)^{\frac{1}{2}}\left(\int_{\Omega}\left( \int_{(0,1]}(D^sv^T \mathcal{A}_S D^sv )^{\frac12} \, d\mu(s)\right)^2 \, dx \right)^{\frac{1}{2}}\\ &\leq \sqrt{K_{\am}} \left(\int_{\Omega} |u|^2 |f| \, dx\right)^{\frac{1}{2}}\left(\int_{\Omega} \int_{(0,1]}D^sv^T \mathcal{A}_S D^sv  \, d\mu (s)\, dx \right)^{\frac{1}{2}}
\\ & \leq \sqrt{K_{\am}} \Vert u \Vert_{H^0(\mathcal{A}, f, \Omega)} \Vert v \Vert_{H^0(\mathcal{A}, f, \Omega)}.
\end{split}\end{equation}

By swapping the role of~$u$ and~$v$, we also estimate~$C$ as
\begin{equation*}
C \leq K_{\am} \Vert u \Vert_{H^0(\mathcal{A}, f, \Omega)} \Vert v \Vert_{H^0(\mathcal{A}, f, \Omega)}.
\end{equation*}
This, \eqref{jkdd} and~\eqref{judch} entail the desired result. 
			\end{proof}
			
			{F}rom Proposition~\ref{ghighi}, we infer that~$\mathcal{L}$ is a bounded bilinear form on $H^0(\am, f, \Omega)$, whose norm depends on~$K_{\am}$. In order to use the Lax Milgram Theorem and to develop a Fredholm alternative, we now study its coercivity.
			
\begin{proposition}\label{ghi}
There exists~$\sigma_0>0$, depending on~$\mu$ and~$K_{\am}$, such that, for any~$u \in H^0(\mathcal{A}, f, \Omega)$,
\begin{equation}\label{rds}
(\ml u,u) \geq \frac{1}{2} \Vert u \Vert^2_{H^0(\mathcal{A},\Omega)} - \sigma_0 \int_{\Omega} f |u|^2 .   
\end{equation}        
\end{proposition}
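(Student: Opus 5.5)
Since $u\in H^0(\mathcal{A},f,\Omega)$, we plug $v=u$ into \eqref{mainop}. The principal part $\int_{\R^n}\int_{(0,1]} a^{ij}D^s_ju\,D^s_iu\,d\mu\,dx$ equals $\Vert u\Vert^2_{H^0(\mathcal{A},\Omega)}$ because only the symmetric part $\mathcal{A}_S$ survives in a quadratic form. The remaining terms are the first-order ones, $\int\int (a^i(s,x)+b^i(s,x))\,u\,D^s_iu\,d\mu\,dx$, and the zero-order term $\int_\Omega a u^2$. The zero-order term is bounded below by $-\int_\Omega |a|u^2\ge-\int_\Omega f u^2$, by the definition of $f$ in \eqref{efe}.

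For the first-order terms I use the same pointwise estimate as in the proof of Proposition~\ref{ghighi}. By Lemma~\ref{spexx} (with $\xi_i:=D^s_iu$, $\psi_i:=a^i$ and then $\psi_i:=b^i$), together with \eqref{vwm} and \eqref{fds},
\[
|a^i(s,x)\,u\,D^s_iu|+|b^i(s,x)\,u\,D^s_iu|\le 2\sqrt{K_{\am}}\,|u|\,f^{1/2}\,(D^su^T\mathcal{A}_SD^su)^{1/2}.
\]
The idea is that $\psi^T\mathcal{B}\psi\le b^{ij}(x)a^ia^j\le f$ controls the dual norm of $\psi$ with respect to $\mathcal{A}$. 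Since $u$ vanishes outside $\Omega$ (it is a limit of $\mathcal{D}(\Omega)$ functions, converging a.e.\ by Proposition~\ref{qer}), the $x$-integral is only over $\Omega$. Integrating in $\mu$ and $x$, I then apply the weighted Young inequality $2\sqrt{K_{\am}}\,\alpha\beta\le\frac12\beta^2+2K_{\am}\alpha^2$ with $\beta=(D^su^T\mathcal{A}_SD^su)^{1/2}$ and $\alpha=|u|f^{1/2}$. This gives
\[
\int\!\!\int|\cdots|\le\frac12\Vert u\Vert^2_{H^0(\mathcal{A},\Omega)}+2K_{\am}\,\mu((0,1])\int_\Omega f u^2 .
\]
The factor $\mu((0,1])$ appears because $\alpha$ does not depend on $s$. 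It is finite, since in Lemma~\ref{wieg} $\mu([s_0,1])$ is already used as a finite quantity, and $\sigma$-finiteness plus the bounded-away support is implicitly taken to mean finiteness. Collecting the three pieces gives \eqref{rds} with $\sigma_0:=2K_{\am}\mu((0,1])+1$, which depends only on $\mu$ and $K_{\am}$.

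The delicate point is justifying the manipulations for general $u\in H^0(\mathcal{A},f,\Omega)$ rather than for test functions. To handle it, I first prove the inequality for $u\in\mathcal{D}(\Omega)$, where every integral is finite. I then pass to the limit using that $u\mapsto(\mathcal{L}u,u)$ is continuous in $H^0(\mathcal{A},f,\Omega)$, by Proposition~\ref{ghighi}, and that the right-hand side of \eqref{rds} is continuous in the same norm. A secondary check is that Lemma~\ref{spexx} really gives $|\psi\cdot\xi|\le(\psi^T\mathcal{B}\psi)^{1/2}(\xi^T\mathcal{A}_S\xi)^{1/2}$ up to the factor $\sqrt{K_{\am}}$, with $\psi^T\mathcal{B}\psi$ bounded by the $f$-terms. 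This is the same use of the lemma as in the proof of Proposition~\ref{ghighi}, so I will invoke it identically.
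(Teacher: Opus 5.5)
Your proposal is correct and follows the paper's proof essentially step by step. Lemma~\ref{spexx} bounds the first-order terms by $2\sqrt{K_{\am}}\,f^{1/2}|u|\,(D^su^T\mathcal{A}_S D^su)^{1/2}$, Young's inequality with the same weights absorbs half of the principal part, and the zero-order term is controlled by $\int_\Omega f|u|^2$, giving the same constant $\sigma_0=2K_{\am}\,\mu((0,1])+1$. Your only deviations are applying Lemma~\ref{spexx} separately to $a^i$ and $b^i$, and proving the inequality first on $\mathcal{D}(\Omega)$ before extending it by continuity via Proposition~\ref{ghighi}. The paper instead applies the lemma once with $\psi_i=(a^i+b^i)u$; your split makes the comparison $b^{ij}(s,x)a^i(s,x)a^j(s,x)\le b^{ij}(x)a^i(x)a^j(x)\le f(x)$ immediate. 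Both changes add rigor rather than constituting a different route.
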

			
\begin{proof}  
We observe that, exploiting Lemma~\ref{spexx} (with~$\xi_i:=D^s_iu$ and~$\psi_i:=(a^i+b^i)\,u$),
\begin{equation*}
(a^i+b^i) \,u\, D^s_iu  \geq - |(a^i+b^i) \,u\,D^s_i u| \geq -2 \sqrt{K_{\am}}        
|f|^{\frac{1}{2}} |u| (D^su^T \mathcal{A}_S D^su)^{\frac{1}{2}}.       
\end{equation*}
Thus, we have that
\begin{equation}\label{mbfweu354278}\begin{split}
(\ml u, u) &\geq \int_{\R^n}  \int_{(0,1]}\left(  D^su^T \mathcal{A}_S D^su 
+ (a^i + b^i) \,u\,D^s_i u\right) \, d\mu(s) \,dx +\int_\Omega a |u|^2\, dx
\\&\geq\Vert u \Vert^2_{H^0(\mathcal{A},\Omega)} -2 \sqrt{K_{\am}} \int_{\Omega} \left( \int_{(0,1]} (D^su^T \mathcal{A}_S D^su)^{\frac{1}{2}} \, d\mu (s)\right) |f|^{\frac{1}{2}} |u| \, dx\\&\qquad-\int_\Omega |f| |u|^2\,dx.
				\end{split}\end{equation}
	
Now we use the Young inequality and we gather that
\[ 2 \sqrt{K_{\am}} |f|^{\frac{1}{2}} |u| (D^su^T \mathcal{A}_S D^su)^{\frac{1}{2}} \leq \frac{(D^su^T \mathcal{A}_S D^su)}{2}+ \frac{\big(
2 \sqrt{K_{\am}} |f|^{\frac{1}{2}} |u|\big)^2}{2}.\]
Plugging this information into~\eqref{mbfweu354278}, we deduce that
\begin{eqnarray*}
(\ml u, u) 
&\ge &\Vert u \Vert^2_{H^0(\mathcal{A},\Omega)} -\frac12
\int_{\Omega} \int_{(0,1]} (D^su^T \mathcal{A}_S D^su) \, d\mu (s)\, dx\\&&\qquad
-\big(2K_{\mathcal{A}}\,\mu((0,1])+1\big)\int_\Omega |f| |u|^2\,dx
\\
&\geq &
\frac{1}{2} \Vert u \Vert^2_{H^0(\mathcal{A},\Omega)} -\big(2K_{\mathcal{A}}\,\mu((0,1])+1\big) \int_{\Omega} |f| |u|^2 \, dx.
				\end{eqnarray*}
Accordingly, the desired result holds true with~$ \sigma_0:=2K_{\mathcal{A}}\,\mu((0,1])+1$.
			\end{proof}
			
With this preliminary work, we have that if~$\sigma$ is big enough, the Lax Milgram Theorem applies to the operator~$\mathcal{L}_{\sigma}(f)$ (defined in~\eqref{jnh}), as the next proposition points out. 

\begin{proposition}\label{laximi}
Let~$\sigma_0$ be given by Proposition~\ref{ghi}.

Then, for any~$\sigma > \sigma_0$,
the operator~$ \ml_{\sigma}(f)$ is a bijection from~$H^0(\am, f, \Omega)$ to its dual space.

If in addition~$f$ is bounded in~$H^0(\am, \Omega)$, then, for any~$\sigma \geq \sigma_0$, the operator~$ \ml_{\sigma}(f)$ is a bijection from~$H^0(\am, \Omega)$ to its dual space.
\end{proposition}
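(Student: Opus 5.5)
The plan is to apply the Lax--Milgram Theorem to the bilinear form $(\mathcal{L}_\sigma(f)u,v)=(\mathcal{L}u,v)+\sigma\langle u,v\rangle_{L^2(f,\Omega)}$. Both parts of the statement reduce to checking continuity and coercivity on the relevant Hilbert space.

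\emph{First part.} We work on $H^0(\am,f,\Omega)$. Continuity of $(\mathcal{L}u,v)$ is Proposition~\ref{ghighi}. The extra term is controlled by Cauchy--Schwarz:
\[
|\langle u,v\rangle_{L^2(f,\Omega)}|\le \Vert u\Vert_{L^2(f,\Omega)}\Vert v\Vert_{L^2(f,\Omega)}\le \Vert u\Vert_{H^0(\am,f,\Omega)}\Vert v\Vert_{H^0(\am,f,\Omega)},
\]
since the $f$-weighted part is a summand of the norm. For coercivity, Proposition~\ref{ghi} gives
\[
(\mathcal{L}_\sigma(f)u,u)\ge \tfrac12\Vert u\Vert^2_{H^0(\am,\Omega)}+(\sigma-\sigma_0)\int_\Omega f|u|^2\,dx\ge \min\{\tfrac12,\sigma-\sigma_0\}\,\Vert u\Vert^2_{H^0(\am,f,\Omega)}.
\]
The constant $\min\{\tfrac12,\sigma-\sigma_0\}$ is positive exactly when $\sigma>\sigma_0$. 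Lax--Milgram then shows that $u\mapsto(\mathcal{L}_\sigma(f)u,\cdot)$ is a bijection onto $(H^0(\am,f,\Omega))'$. One point needs care: Proposition~\ref{ghi} was stated for $u\in H^0(\am,f,\Omega)$, and its proof is pointwise, so it applies directly here.

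\emph{Second part.} Assume now that $f$ is bounded in $H^0(\am,\Omega)$. By Corollary~\ref{coroinut99} with $g=0$, the norms of $H^0(\am,\Omega)$ and $H^0(\am,f,\Omega)$ are equivalent on $\mathcal{D}(\Omega)$. Since both spaces are completions of $\mathcal{D}(\Omega)$, they coincide as sets and have the same dual. Continuity of the form on $H^0(\am,\Omega)$ then follows from the first step together with this norm equivalence.

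For coercivity we again use \eqref{rds}, which gives
\[
(\mathcal{L}_\sigma(f)u,u)\ge\tfrac12\Vert u\Vert^2_{H^0(\am,\Omega)}+(\sigma-\sigma_0)\int_\Omega f|u|^2\,dx\ge \tfrac12\Vert u\Vert^2_{H^0(\am,\Omega)}
\]
whenever $\sigma\ge\sigma_0$. This is coercivity in the $H^0(\am,\Omega)$ norm, and it now includes the endpoint $\sigma=\sigma_0$, because the norm of $H^0(\am,\Omega)$ does not contain the $f$-term that required $\sigma>\sigma_0$ above. Lax--Milgram concludes the argument.

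The step needing most care is the identification of the two spaces. We must verify that an $H^0(\am,\Omega)$-Cauchy sequence in $\mathcal{D}(\Omega)$ is also Cauchy in $H^0(\am,f,\Omega)$. This follows from \eqref{wres} applied to differences of elements of $\mathcal{D}(\Omega)$, and it is what makes the two completions coincide. Everything else is routine.
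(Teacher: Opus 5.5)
Your proposal is correct and follows the paper's own proof. Both apply Lax--Milgram, with continuity from Proposition~\ref{ghighi} plus Cauchy--Schwarz on the $f$-term, coercivity from \eqref{rds} with constant $\min\{\frac12,\sigma-\sigma_0\}$, and Corollary~\ref{coroinut99} to pass to $H^0(\am,\Omega)$ and reach the endpoint $\sigma=\sigma_0$. Your explicit check that the two completions coincide only spells out a step the paper leaves implicit.
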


\begin{proof}
We exploit Proposition~\ref{ghighi} and the H\"{o}lder inequality to see that, for
any~$\sigma \geq \sigma_0$ and any~$u$, $v\in H^0(\mathcal{A},f, \Omega)$,
\begin{equation}\label{criceto}\begin{split}
|(\ml_{\sigma}(f) u, v)| &\leq |(\ml u, v)| + \sigma \int_{\Omega} f |u v| \,dx \\&\leq \big(3\sqrt{K_{\mathcal{A}}}+1+\sigma\big) \Vert u \Vert_{H^0(\mathcal{A},f, \Omega)} \Vert v \Vert_{H^0(\mathcal{A},f, \Omega)}.\end{split}
\end{equation}
In particular, if~$f$ is bounded in~$H^0(\am, \Omega)$, this gives that
$$ |(\ml_{\sigma}(f) u, v)| \leq  C\big(3\sqrt{K_{\mathcal{A}}}+1+\sigma\big) \Vert u \Vert_{H^0(\mathcal{A},\Omega)} \Vert v \Vert_{H^0(\mathcal{A},\Omega)},$$
for some~$C>0$ (recall Corollary~\ref{coroinut99}).

Additionally, by Proposition~\ref{ghi}, we find that
\begin{equation*}\begin{split}
(\ml_{\sigma}(f) u, u)& = (\ml u,u) + \sigma \int_{\Omega} f |u|^2 \, dx \\&\geq \frac12 \Vert u \Vert^2_{H^0(\mathcal{A},\Omega)} + (\sigma - \sigma_0) \int_{\Omega} f |u|^2 \, dx.\end{split}
\end{equation*}

Now, if~$\sigma>\sigma_0$, this implies that
$$ (\ml_{\sigma}(f) u, u)\ge
\min \left\{\frac12, \sigma-\sigma_0\right\} \Vert u \Vert^2_{H^0(\mathcal{A}, f, \Omega)},
$$
and, if~$f$ is bounded in~$H^0(\am, \Omega)$, that
$$ (\ml_{\sigma}(f) u, u)\ge
C\min \left\{\frac12, \sigma-\sigma_0\right\} \Vert u \Vert^2_{H^0(\mathcal{A},  \Omega)},
$$ fo some~$C>0$ (thanks to Corollary~\ref{coroinut99}).

If instead~$\sigma=\sigma_0$, we have that
$$  (\ml_{\sigma}(f) u, u)\ge\frac12 \Vert u \Vert^2_{H^0(\mathcal{A}, \Omega)}.$$
In both case, the Lax Milgram Theorem applies and the proof of Proposition~\ref{laximi} is complete.
\end{proof}
			
\begin{proof}[Proof of Theorem~\ref{freddi}] 
In lieu of Theorem~\ref{trente}, we can consider the following Hilbert Triplet
\begin{equation}\label{tripl}
H^0(\mathcal{A}, \Omega) \hookrightarrow \hookrightarrow L^2(f, \Omega) \hookrightarrow \left(H^0(\mathcal{A}, \Omega) \right)\rq{},
\end{equation}	
where~$\hookrightarrow \hookrightarrow$ denotes a compact embedding, while~$\hookrightarrow$ a continuous one.

As a consequence, thanks to Proposition~\ref{laximi}, we can rely on the Riesz-Schauder theory for compact operators (see e.g.~\cite[Theorem~1.8.10]{gazzola}) to conclude the proof.
\end{proof}

\begin{proof}[Proof of Remark~\ref{remi}]
{L}et us denote by~$\widetilde{H}^0(\mathcal{A},g,\Omega)$ the completion of~$\mathcal{D}(\Omega)$ with respect to the norm 
\begin{equation}\label{hido3w7543768tywg00} \|u\|_{\widetilde{H}^0(\mathcal{A},g,\Omega)}:=\left(\int_\Omega\left(
Du^T(x) \mathcal{A}_S (1,x)Du(x)+ g(x)| u(x)|^2
\right)\,dx\right)^{\frac12}\end{equation}
see e.g.~\cite[Equation~(1.9)]{trudinger}.
Then, we have that
\begin{equation}\label{hido3w7543768tywg}
\Vert u \Vert^2_{H^{0}(\mathcal{A}, \Omega)} \geq \mu(\{ 1\})\int_{\R^n} Du^T \mathcal{A}_S Du\, dx = \mu(\{ 1\})\Vert u \Vert^2_{\widetilde{H}^{0}(\mathcal{A}, \Omega)}
.\end{equation}

We recall that, since~$\lambda^{-1}\in L^1(\Omega)$, the set~$\widetilde{H}^0(\mathcal{A},\Omega)$
is compactly embedded into~$L^1(\Omega)$ (see the first statement in the proof of Lemma~1.6 in~\cite{trudinger}).
{F}rom this and~\eqref{hido3w7543768tywg}, we conclude that~$H^{0}(\mathcal{A}, \Omega)$ is compactly embedded into~$L^1(\Omega)$.

In light of this, we have that the proof of Theorem~\ref{trente} carries through if~$f$ is compactly bounded on~$H^{0}(\mathcal{A}, \Omega)$, this giving
that in this case the embedding of~$H^{0}(\mathcal{A}, \Omega)$ into~$L^2(f, \Omega)$ is compact.

Accordingly, we retrieve the Hilbert Triplet 
\begin{equation}
H^0(\mathcal{A}, \Omega) \hookrightarrow \hookrightarrow L^2(f, \Omega) \hookrightarrow \left(H^0(\mathcal{A}, \Omega) \right)\rq{},
\end{equation}	
where~$\hookrightarrow \hookrightarrow$ denotes a compact embedding, while~$\hookrightarrow$ a continuous one.

As a consequence, thanks to Proposition~\ref{laximi}, we can rely on the Riesz-Schauder theory for compact operators (see e.g.~\cite[Theorem~1.8.10]{gazzola}) to conclude the proof.
\end{proof}

\appendix

\section{Sufficient conditions for compact boundedness on $H^0(\mathcal{A}, g, \Omega)$}
\label{sappend}

Here we provide examples of functions that are compactly bounded on $H^0(\mathcal{A}, g, \Omega)$.

\begin{theorem}\label{mcza}
Let~$S_0\in (0,1]$ be the maximum of the support of~$\mu$. Let~$\delta$ be given by~\eqref{a_condi}.

If~$n\ge2$, assume that
\begin{equation}\label{jsd}
\delta> \frac{n-2S_0}{2S_0}.
\end{equation}
Then, for any
\begin{equation}\label{em1}
q \in \left( \frac{n(1+\delta)}{2S_0(1+\delta)-n}, + \infty \right],
\end{equation}
it holds that
\begin{equation}\label{lik}\begin{split}&
{\mbox{if~$f\in L^{q}(\Omega)$ with~$f^{-1}\in L^1(\Omega)$,}}\\& {\mbox{then~$f$ is compactly bounded on~$H^0(\mathcal{A},g,\Omega)$.}}\end{split}
\end{equation}

If~$n=1$, assume that
\begin{equation}\label{puj}
\delta (2S_0-1)> 2(1-S_0).
\end{equation}
Then, for any~$q \in \left( 1, + \infty \right]$, it holds that
\begin{equation}\label{hfm}\begin{split}&
{\mbox{if~$f\in L^{q}(\Omega)$ with~$f^{-1}\in L^1(\Omega)$,}}\\&{\mbox{then~$f$ is compactly bounded on~$H^0(\mathcal{A},g,\Omega)$.}}\end{split}
\end{equation}
\end{theorem}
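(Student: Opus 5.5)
The plan is to reduce compact boundedness to a Hölder estimate combined with the Sobolev-type embeddings of Proposition~\ref{qer}, and then absorb by an interpolation inequality. Write $q':=q/(q-1)$, with $q'=1$ if $q=+\infty$. For $\phi\in\mathcal{D}(\Omega)$, Hölder's inequality gives
\begin{equation*}
\|\phi\|^2_{L^2(f,\Omega)}=\int_\Omega f\phi^2\,dx\le \|f\|_{L^q(\Omega)}\,\|\phi\|^2_{L^{2q'}(\Omega)}.
\end{equation*}
It therefore suffices to show that $2q'$ lies strictly below the exponent $r$ of some continuous embedding $H^0(\mathcal{A},g,\Omega)\hookrightarrow L^{r}(\Omega)$ provided by Proposition~\ref{qer}(i). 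The hypothesis $f^{-1}\in L^1(\Omega)$ is only there so that $L^2(f,\Omega)$ fits the framework of Section~\ref{ipo}; I do not expect to use it otherwise.

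\emph{Exponent bookkeeping.} Recall $p(\delta)=\frac{2(1+\delta)}{2+\delta}$. The critical exponent at $s=S_0$ is
\begin{equation*}
\frac{np(\delta)}{n-S_0p(\delta)}=\frac{2n(1+\delta)}{n(2+\delta)-2S_0(1+\delta)},
\end{equation*}
and its denominator is positive exactly because $2S_0(1+\delta)>n$ fails in the regime $sp(\delta)<n$. The case $n\ge2$ is the case $S_0p(\delta)<n$; note that \eqref{jsd} is exactly the condition $2S_0(1+\delta)>n$, which makes the interval in \eqref{em1} meaningful. A short computation shows that the inequality $2q'<\frac{np(\delta)}{n-S_0p(\delta)}$ is equivalent to $\frac1q<\frac{2S_0(1+\delta)-n}{n(1+\delta)}$, which is precisely \eqref{em1}. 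Since Proposition~\ref{qer} is only guaranteed for $s<S_0$, I will use the continuity of $s\mapsto \frac{np(\delta)}{n-sp(\delta)}$ to pick $s<S_0$ close enough to $S_0$ that $2q'<r:=\frac{np(\delta)}{n-sp(\delta)}$ still holds, or $r$ arbitrarily large if $sp(\delta)\ge n$. When $n=1$, condition \eqref{puj} is equivalent to $S_0p(\delta)>1$, so for $s<S_0$ close to $S_0$ we still have $sp(\delta)>1=n$. Proposition~\ref{qer} then gives the embedding into $L^\infty(\Omega)$, and any $q\in(1,+\infty]$ works with $r=+\infty$.

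\emph{Absorption.} Fix $r>2q'$ as above, so that $\|\phi\|_{L^r(\Omega)}\le C_0\|\phi\|_{H^0(\mathcal{A},g,\Omega)}$. If $2q'=1$, the estimate is already of the required form with $\epsilon=0$. Otherwise, interpolating between $L^1$ and $L^r$ gives
\begin{equation*}
\|\phi\|_{L^{2q'}}\le\|\phi\|_{L^1}^{\theta}\|\phi\|_{L^r}^{1-\theta},\qquad \theta\in(0,1).
\end{equation*}
Squaring and applying Young's inequality with exponents $\frac1\theta$ and $\frac1{1-\theta}$ yields
\begin{equation*}
\|f\|_{L^q}\|\phi\|^2_{L^{2q'}}\le \epsilon\|\phi\|^2_{H^0(\mathcal{A},g,\Omega)}+K_\epsilon\|\phi\|^2_{L^1(\Omega)}
\end{equation*}
for every $\epsilon>0$, which is exactly \eqref{ghhg}. (As an alternative, one could use an Ehrling-type contradiction argument based on the compact embedding in Proposition~\ref{qer}(ii), but the interpolation route is more explicit.)

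The main obstacle is the exponent bookkeeping. One has to check that the strict inequality in \eqref{em1} survives passing from $S_0$ to some $s<S_0$, which is needed because Proposition~\ref{qer} excludes the endpoint unless $\mu(\{S_0\})>0$. One also has to keep the $n=1$ and $n\ge2$ regimes, and the borderline case $sp(\delta)=n$, separate. Everything else is routine Hölder and Young estimates.
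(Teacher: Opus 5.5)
Your argument is correct, but it differs from the paper's in the final step.

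\textbf{Where the two proofs agree.} The paper makes the same H\"older reduction. Its exponent $p$ with $q=p/(p-2)$ is exactly your $2q'$. Its threshold $\overline{q}=p(\delta)^*_{S_0}/(p(\delta)^*_{S_0}-2)$ is your equivalence between \eqref{em1} and $2q'<p(\delta)^*_{S_0}$.

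\textbf{How the paper finishes.} It argues qualitatively. It combines the \emph{compact} embedding of $H^0(\mathcal{A},g,\Omega)$ into $L^{2q'}(\Omega)$ (from Proposition~\ref{ytr} and Theorem~\ref{qtppd}) with the H\"older bound, which gives a compact embedding into $L^2(f,\Omega)$. It then uses Lemma~\ref{gt} with $t=1$, together with the hypothesis $f^{-1}\in L^1(\Omega)$, to embed $L^2(f,\Omega)$ continuously into $L^1(\Omega)$. Ehrling's lemma concludes.

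\textbf{How you finish.} You use only the \emph{continuous} embedding into $L^r$ for some $r>2q'$, obtained by taking $s<S_0$ close to $S_0$. You then turn the gap $r>2q'$ into the $\epsilon$-smallness by $L^1$--$L^r$ interpolation and Young's inequality.

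\textbf{What each route buys.} Your route gives an explicit $K_\epsilon$, in terms of $\Vert f\Vert_{L^q}$, the embedding constant and $\theta$. It avoids compactness and Ehrling's lemma altogether. It also shows that the assumption $f^{-1}\in L^1(\Omega)$ is not needed for \eqref{ghhg}: in the paper that assumption serves only to supply the third space $L^1(\Omega)$ in Ehrling's lemma. The paper's route is shorter, given the compactness results already proved.

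\textbf{The case $n=1$.} Your reading of \eqref{puj} as $S_0p(\delta)>1$ is the right one. The paper's text states the reverse inequality, but its conclusion, an embedding into every $L^p$ with $p<+\infty$, corresponds to $S_0p(\delta)>1$.

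\textbf{Two small slips.} First, the denominator $n(2+\delta)-2S_0(1+\delta)$ is positive because $S_0p(\delta)<2\le n$, not because ``$2S_0(1+\delta)>n$ fails''. Condition \eqref{jsd} is precisely $2S_0(1+\delta)>n$, and it is what ensures $p(\delta)^*_{S_0}>2$. Second, the case $2q'=1$ never occurs, since $q'\ge 1$ gives $2q'\ge 2$. Hence $\theta\in(0,1)$ always.
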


\begin{proof}
We prove Theorem~\ref{mcza} only for~$q<+\infty$ since, being~$\Omega$ bounded, if~$f \in L^{\infty}(\Omega)$, then~$f\in L^q(\Omega)$ for any~$q \geq 1$.

We observe that if~$n\ge2$, then~$n>S_0 p(\delta)$,
where~$p(\delta)$ is as in~\eqref{piz}. Therefore, in this case, we can define the fractional critical exponent
$$ {p(\delta)}_{S_0}^* = 
\frac{np(\delta)}{n-S_0p(\delta)}
=\frac{2n(1+\delta)}{(2+\delta)n - 2S_0(1+\delta)}.$$
Notice that condition~\eqref{jsd} implies that~${p(\delta)}^*_{S_0}> 2$. As a consequence,
we can also define the quantity
\begin{equation*}
\overline{q}:=\frac{{p(\delta)}_{S_0}^*}{{p(\delta)}_{S_0}^*-2}= \frac{2n(1+\delta)}{4S_0(1+\delta)-2n}.
\end{equation*}

{N}ow, we point out that if~$q$ satisfies~\eqref{em1}, then~$q\in (\overline{q}, +\infty)$.
Also, we observe that, for any~$t \in (2, {p(\delta)}^*_{S_0})$, the function~$t \mapsto q(t):=t/(t-2)$ is strictly decreasing and takes value in~$(\overline{q}, +\infty)$.
Accordingly, for any~$q\in(\overline{q}, +\infty)$
there exists~$p\in (2,{p(\delta)}^*_{S_0})$ such that~$q=q(p)=p/(p-2)$.
With this choice, for any~$u \in L^p(\Omega)$ and~$f \in L^q(\Omega)$, thanks to the H\"{o}lder inequality (used here
with exponents~$p/2$ and~$q=p/(p-2)$),
\begin{equation}\label{u943075v6bgfueigwufrvbnmfdgiuer}\begin{split}&
\Vert u \Vert^2_{L^2(f,\Omega)}=\int_{\Omega} f|u|^2\,dx\le \left(\int_{\Omega}|u|^p\,dx\right)^{\frac2p}
\left(\int_\Omega |f|^q\,dx\right)^{\frac{1}{q}}\\&\qquad\qquad=\Vert u \Vert^2_{L^p(\Omega)}\Vert f \Vert_{L^q(\Omega)}.\end{split}
\end{equation}
This says, in particular, that 
\begin{equation}\label{du93075vcuhfdsgfuieryt983}
{\mbox{the embedding of~$L^p(\Omega)$ into~$L^2(f, \Omega)$ is continuous.}}\end{equation}

Now, thanks to Proposition~\ref{ytr}, we have that the space~$H^0(\mathcal{A}, g, \Omega)$ continuously embeds into $H_0^{s, p(\delta)}(\Omega)$ for any~$s \in(0,S_0)$ (and also for~$s=S_0$ if~$\mu(\{S_0\})>0$). As a consequence, Theorem~\ref{qtppd} gives that
\begin{equation}\label{proppi}
{\mbox{$H^0(\mathcal{A}, g, \Omega)$ compactly embeds into~$L^{p}(\Omega)$ for any~$p\in [1,{p(\delta)}^*_{S_0})$.}}
\end{equation}
{F}rom this and~\eqref{du93075vcuhfdsgfuieryt983}, we deduce that
\begin{equation}\label{zega}
{\mbox{$H^0(\mathcal{A}, g, \Omega)$ compactly embeds into~$L^{2}(f,\Omega)$.}}
\end{equation}

We now claim that
\begin{equation}\label{zega22}
{\mbox{$L^{2}(f,\Omega)$ continuously embeds into~$L^{1}(\Omega)$.}}
\end{equation}
For this, we employ Lemma~\ref{gt} (with~$t:=1$ and~$p:=2$) and we see that
\[ \Vert u \Vert_{L^{1}(\Omega)} \leq  \Vert f^{-1} \Vert^{\frac{1}{2}}_{L^1(\Omega)}  \Vert u \Vert_{L^2(h,\Omega)},\]
which establishes~\eqref{zega22}.

{F}rom~\eqref{zega}, \eqref{zega22} and the Ehrling Lemma, we obtain~\eqref{lik},
as desired.

If instead~$n=1$, we observe that condition~\eqref{puj} entails that we are in the case~$S_0p(\delta)<1$.
Moreover, Proposition~\ref{ytr} entails that the space~$H^0(\mathcal{A}, g, \Omega)$ continuously embeds into~$H_0^{s, p(\delta)}(\Omega)$, for any~$s \in(0,S_0)$ (and also for~$s=S_0$ if~$\mu(\{S_0\})>0$). Therefore, by Theorem~\ref{qtppd} we deduce that
\begin{equation}\label{kr}
{\mbox{$H^0(\mathcal{A}, g, \Omega) $ compactly embeds into~$L^{p}(\Omega)$  for any~$p \in [1,+\infty)$.}}
\end{equation}

{N}ow, let~$f \in L^q(\Omega)$ for some~$q\in (1, +\infty)$.
We notice that, for any~$t \in (2, +\infty)$, the function~$t \mapsto q(t):=t/(t-2)$ is strictly decreasing and takes value in~$\left(1, +\infty\right)$. With this choice, we have that~\eqref{u943075v6bgfueigwufrvbnmfdgiuer} holds true for any~$f\in L^q(\Omega)$ and~$u\in L^p(\Omega)$, and therefore~$L^p(\Omega)$ embeds continuously into~$L^2(f,\Omega)$.
This and~\eqref{kr} give that
\begin{equation}\label{dvw}
{\mbox{$H^0(\mathcal{A}, g, \Omega)$ compactly embeds into~$L^{2}(f,\Omega)$.}}
\end{equation}
Moreover, thanks to~\eqref{zega22}, we have that~$L^{2}(f,\Omega)$ continuously embeds into~$L^{1}(\Omega)$.
This, \eqref{dvw} and the Ehrling Lemma give~\eqref{hfm}.
\end{proof}

\begin{theorem}\label{zega2}
Assume that~$\mu(\{1\})>0$. 
Let~$t\in[1,+\infty]$ and suppose that~$\lambda^{-1} \in L^t(\Omega)$. 

When
\[ 1 + \frac{1}{t} < \frac{2}{n}, \] 
assume that~$f \in L^s(\Omega)$ with 
\[ \frac{1}{t} + \frac{1}{s} = \frac{2}{n}. \]
When~$t= +\infty$ and $n=2$, assume that~$f$ belongs to $L$log$L(\Omega)$.

Then, $f$ is compactly bounded on~$H^0(\mathcal{A}, \Omega)$.
\end{theorem}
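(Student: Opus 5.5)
The plan is to reduce the statement to Trudinger's classical result for $\mathcal{T}$, through the same comparison already used in the proof of Remark~\ref{remi}. Since $\mu(\{1\})>0$, inequality \eqref{hido3w7543768tywg} gives, for every $\phi\in\mathcal{D}(\Omega)$,
\begin{equation*}
\|\phi\|^2_{H^0(\mathcal{A},\Omega)}\ \ge\ \mu(\{1\})\,\|\phi\|^2_{\widetilde H^0(\mathcal{A},\Omega)} .
\end{equation*}
Here $\widetilde H^0(\mathcal{A},\Omega)$ is the classical weighted space built on $\mathcal{A}_S(1,\cdot)$, see \eqref{hido3w7543768tywg00}. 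It therefore suffices to show that $f$ is compactly bounded on $\widetilde H^0(\mathcal{A},\Omega)$: for every $\epsilon>0$ there is $K_\epsilon$ with
$$\|\phi\|^2_{L^2(f,\Omega)}\le \epsilon\|\phi\|^2_{\widetilde H^0}+K_\epsilon\|\phi\|^2_{L^1(\Omega)}.$$
Replacing $\epsilon$ by $\epsilon\,\mu(\{1\})$ then yields \eqref{ghhg} for $H^0(\mathcal{A},\Omega)$.

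To prove compact boundedness on $\widetilde H^0$, I would use the classical degenerate Sobolev inequality. By \eqref{essa} and Lemma~\ref{gt} with $h=\lambda$ and $p=2$, we get $\|D\phi\|_{L^{2t/(t+1)}(\Omega)}\le \|\lambda^{-1}\|^{1/2}_{L^t}\|\phi\|_{\widetilde H^0}$. Sobolev's embedding for $W^{1,r}_0$ with $r=2t/(t+1)$ then gives $\|\phi\|_{L^{r^*}(\Omega)}\le C\|\phi\|_{\widetilde H^0}$, where
$$\frac1{r^*}=\frac{t+1}{2t}-\frac1n,\qquad\text{so}\qquad \frac{2}{r^*}=1+\frac1t-\frac2n .$$
The condition $\frac1t+\frac1s=\frac2n$ is exactly $\frac{2}{r^*}+\frac1s=1$ (I read the hypothesis as $1+\frac1t>\frac2n$, or $r<n$, which is the meaningful case). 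Hölder then gives $\int f\phi^2\le\|f\|_{L^s}\|\phi\|^2_{L^{r^*}}$, so $f$ is bounded.

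Since $s$ is the critical exponent, compactness does not come for free, and the key step is a truncation of $f$. Write $f=f\mathds{1}_{\{f>M\}}+f\mathds{1}_{\{f\le M\}}$. For the first piece,
$$\int f\mathds{1}_{\{f>M\}}\phi^2\le \|f\mathds{1}_{\{f>M\}}\|_{L^s}\,C\|\phi\|^2_{\widetilde H^0},$$
and the factor is at most $\epsilon$ once $M$ is large, by absolute continuity of $\int |f|^s$. For the second piece,
$$\int_{\{f\le M\}}f\phi^2\le M\|\phi\|^2_{L^2}.$$
Since $r^*>2$, interpolation gives $\|\phi\|_{L^2}\le\|\phi\|^{\theta}_{L^1}\|\phi\|^{1-\theta}_{L^{r^*}}$, and Young's inequality turns this into $\eta\|\phi\|^2_{\widetilde H^0}+C_\eta\|\phi\|^2_{L^1}$. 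This proves the claim. Here $r^*>2$ holds because $\frac{2}{r^*}=1-\frac1s<1$ for finite $s$.

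The borderline case $t=+\infty$, $n=2$ is where I expect the real difficulty. There $r=2=n$, Sobolev gives only exponential (Trudinger--Moser) integrability, and $L\log L$ is its dual Orlicz class. I would run the same truncation with Orlicz Hölder: $\int f\phi^2\le C\|f\|_{L\log L}\|\phi^2\|_{\exp L}$, using $\|\phi^2\|_{\exp L}\lesssim\|D\phi\|^2_{L^2}\lesssim\|\lambda^{-1}\|_{L^\infty}\|\phi\|^2_{\widetilde H^0}$. The large part still has small $L\log L$ norm, and the bounded part is handled as before. Alternatively, one can simply quote Trudinger~\cite{trudinger}, whose compact-boundedness criteria are stated exactly under these hypotheses, and transfer them through the displayed comparison. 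I would take this route and only sketch the Orlicz argument.
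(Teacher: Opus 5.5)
Your reduction is the paper's proof. The paper uses \eqref{hido3w7543768tywg} to bound $\mu(\{1\})\|\phi\|^2_{\widetilde H^0(\mathcal{A},\Omega)}$ by $\|\phi\|^2_{H^0(\mathcal{A},\Omega)}$, then quotes \cite[Lemma~1.4]{trudinger} for compact boundedness on $\widetilde H^0(\mathcal{A},\Omega)$, so if you cite Trudinger in all cases you are done. You are also right to read the hypothesis as $1+\frac1t>\frac2n$: as printed, it forces $n=1$ and $s<1$.

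If you rely instead on your own argument for the non-borderline case, it is complete only when $s<\infty$.

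\begin{itemize}
\item At the endpoint $t=\frac n2$, which is admissible for every $n\ge 2$, one has $\frac1s=0$. Hence $f\in L^\infty(\Omega)$ and $r^*=2$.
\item Truncating $f$ then gains nothing, and interpolation between $L^1$ and $L^{r^*}=L^2$ leaves no room.
\item The inequality you need, $\|\phi\|^2_{L^2(\Omega)}\le\eta\|\phi\|^2_{\widetilde H^0}+C_\eta\|\phi\|^2_{L^1(\Omega)}$, is a compactness statement at the critical Sobolev exponent. Your argument does not provide it.
\end{itemize}

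The inequality is true, but the smallness must come from the absolute continuity of $\int\lambda^{-n/2}$ rather than of $\int f^s$. Fix $h>0$ and set $E_h:=\{|\phi|>h\}$.

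\begin{itemize}
\item Applying the Sobolev inequality ($W^{1,r}_0\hookrightarrow L^{r^*}$ with $r=\frac{2n}{n+2}$, $r^*=2$) and then Lemma~\ref{gt} on $E_h$ to $(|\phi|-h)_+$ gives $\|(|\phi|-h)_+\|_{L^2}\le C\|\lambda^{-1}\|^{1/2}_{L^{n/2}(E_h)}\|\phi\|_{\widetilde H^0}$.
\item On the other hand, $\int_\Omega\min\{|\phi|,h\}^2\,dx\le h\|\phi\|_{L^1}$ and $|E_h|\le\|\phi\|_{L^1}/h$.
\item Choose $\rho>0$ so that $\|\lambda^{-1}\|_{L^{n/2}(E)}\le\eta$ whenever $|E|\le\rho$, and take $h:=\|\phi\|_{L^1}/\rho$.
\item Since $|\phi|=\min\{|\phi|,h\}+(|\phi|-h)_+$, this yields $\|\phi\|^2_{L^2}\le 2C^2\eta\|\phi\|^2_{\widetilde H^0}+\frac{2}{\rho}\|\phi\|^2_{L^1}$.
\end{itemize}

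Multiplying by $\|f\|_{L^\infty}$ gives \eqref{ghhg} in this case. Alternatively, quote Trudinger here too, as the paper does.
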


\begin{proof}
We recall the definition of the norm~$\tilde{H}^0(\mathcal{A},\Omega)$ 
in~\eqref{hido3w7543768tywg00}. Moreover, from~\eqref{hido3w7543768tywg}, we know that
\begin{equation*}
\Vert u \Vert^2_{H^{0}(\mathcal{A}, \Omega)}\ge \mu(\{1\}) \Vert u \Vert^2_{\tilde{H}^{0}(\mathcal{A}, \Omega)}.
\end{equation*}
Also, by~\cite[Lemma~1.4]{trudinger}, we obtain that, for any~$\epsilon>0$ there exists~$K_{\epsilon}>0$ such that,
for any~$\phi \in \mathcal{D}(\Omega)$,
\begin{equation}\label{dmw}
\Vert \phi \Vert^2_{L^2(f, \Omega)} \leq \epsilon \|\phi\|_{\widetilde{H}^0(\mathcal{A},\Omega)} + K_{\epsilon} \Vert \phi \Vert^2_{L^1(\Omega)}.
\end{equation}
{F}rome these considerations we deduce the desired result.
\end{proof}

\section{Examples of functions that are bounded but not compactly bounded on $H^0(\mathcal{A},g,\Omega)$}\label{counterexa}

In this section we provide examples of functions that
are bounded but not compactly bounded on $H^0(\mathcal{A},g,\Omega)$, thus establishing that
the notions of boundedness and compact boundedness on~$H^0(\mathcal{A},g,\Omega)$ do not coincide.

\begin{proposition}
Let~$n\geq 2$, $\mu= \delta_{\bar{s}}$ for some~$\bar{s}\in (0,1)$ and 
\begin{equation}\label{dced}
\delta:= \frac{n-2\bar{s}}{2\bar{s}}.
\end{equation}

Then, any constant function~$f$ is bounded on~$H^0(\mathcal{A},\Omega)$ but not compactly bounded on~$H^0(\mathcal{A},\Omega)$.
\end{proposition}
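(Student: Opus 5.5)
Throughout I take $\Omega\ni 0$ and choose the coefficients as $\mathcal{A}(x)=\lambda(x)\,I$ with $a^i=b^i=a=0$. Then the norm of $H^0(\mathcal{A},\Omega)$ reduces to $\|u\|^2=\int_{\R^n}\lambda|D^{\bar s}u|^2\,dx$ (with $\mu=\delta_{\bar s}$). This choice is forced: for $\mathcal{A}=I$ the space is $H^{\bar s,2}_0(\Omega)$, which embeds compactly into $L^2(\Omega)$ by Corollary~\ref{kas}, so constants would be compactly bounded.

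\emph{Exponents.} The choice \eqref{dced} gives $1+\delta=\frac{n}{2\bar s}$ and, by \eqref{piz}, $p(\delta)=\frac{2n}{n+2\bar s}$. Hence the critical exponent is
$$\frac{np(\delta)}{n-\bar s\,p(\delta)}=2 .$$
So $\delta$ is exactly the threshold at which $L^2$ is the endpoint of the Sobolev range in Proposition~\ref{qer}.

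\emph{Boundedness.} Since $\mu(\{\bar s\})>0$, Proposition~\ref{ytr} applies with $S_0=\bar s$, and Theorem~\ref{qtppd} part \eqref{polopo} with $q=2=p(\delta)^*_{\bar s}$ gives $\|\phi\|_{L^2(\Omega)}\le C\|\phi\|_{H^0(\mathcal{A},\Omega)}$ for every $\phi\in\mathcal{D}(\Omega)$. Multiplying by the constant $f$ yields \eqref{wres}. This part is routine.

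\emph{Failure of compact boundedness.} Suppose \eqref{ghhg} held for the constant $f=c>0$. It suffices to find $\phi_k\in\mathcal{D}(\Omega)$ with $\|\phi_k\|_{L^2}=1$, $\sup_k\|\phi_k\|_{H^0}<\infty$ and $\|\phi_k\|_{L^1}\to0$. Then \eqref{ghhg} with $\epsilon$ small gives $c\le \epsilon M+o(1)$, a contradiction.

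\emph{Construction.} I would use $L^2$-invariant concentration at the origin, $\phi_k(x)=k^{n/2}\phi(kx)$, so that $\|\phi_k\|_{L^1}=k^{-n/2}\|\phi\|_{L^1}\to0$. Scaling gives
$$\int\lambda|D^{\bar s}\phi_k|^2\,dx=k^{2\bar s}\int\lambda(y/k)\,|D^{\bar s}\phi(y)|^2\,dy .$$
This suggests $\lambda(x)\approx|x|^{2\bar s}$ near $0$, with $\Lambda=\lambda$ extended to grow at most polynomially at infinity so that \eqref{a_condi} holds. The tails of $D^{\bar s}\phi$ are controlled by Proposition~\ref{usasempre}, so the rescaled norms stay bounded as long as $\int|y|^{2\bar s}|D^{\bar s}\phi(y)|^2\,dy<\infty$, which holds because $|D^{\bar s}\phi(y)|\lesssim|y|^{-n-\bar s}$.

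\emph{Main obstacle.} This is the integrability $\lambda^{-1}\in L^{1+\delta}_{\rm loc}$. For $\lambda=|x|^{2\bar s}$ one gets $\lambda^{-(1+\delta)}=|x|^{-n}$, which fails only logarithmically, exactly at the critical $\delta$.

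\emph{Plan for the obstacle.} I would first try a logarithmic correction $\lambda(x)=|x|^{2\bar s}\,|\log|x||^{\beta}$ with $\beta(1+\delta)>1$. For pure rescalings this makes the norms grow like $(\log k)^{\beta}$. To compensate, I would instead use sums of $N_k$ normalized bumps at distinct dyadic scales, or at distinct disjoint centers where $\lambda$ is small on a thin set, weighted by $N_k^{-1/2}$. Disjointness of supports keeps the $L^2$ norm equal to $1$ while the $L^1$ norm tends to $0$. Cross terms in the nonlocal norm would be controlled via Proposition~\ref{usasempre}.

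\emph{Fallback.} If the logarithmic bookkeeping cannot keep the weighted norm bounded, I would instead argue abstractly. If constants were compactly bounded, Theorem~\ref{trente} would make the embedding $H^0(\mathcal{A},\Omega)\hookrightarrow L^2(\Omega)$ compact. I would then try to contradict this via the sharpness of the critical Hardy–Littlewood–Sobolev embedding underlying Theorem~\ref{usacchialo}, transferred through Lemma~\ref{gt} with equality nearly attained on a concentrating sequence. This last step is the delicate point of the argument.
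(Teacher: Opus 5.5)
There is a genuine gap, and it cannot be closed. Your boundedness step is correct and is the paper's argument. You are also right that the statement cannot hold for every admissible $\mathcal{A}$: for $\mathcal{A}=I$ the space is $H^{\bar s,2}_0(\Omega)$, which embeds compactly into $L^2(\Omega)$. But the non-compactness part is never proved. You do not produce an admissible $\mathcal{A}$ together with $\phi_k$ satisfying $\|\phi_k\|_{L^2}=1$, $\sup_k\|\phi_k\|_{H^0(\mathcal{A},\Omega)}<\infty$ and $\|\phi_k\|_{L^1}\to0$.

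None of your three routes (logarithmic weight, multi-bump sums, near-extremals for H\"older/HLS) can work, because the obstruction you hit is real. Under \eqref{a_condi} with $1+\delta=\frac{n}{2\bar s}$, the embedding $H^0(\mathcal{A},\Omega)\hookrightarrow L^2(\Omega)$ is compact for \emph{every} admissible $\mathcal{A}$. By Ehrling's lemma with $L^2(\Omega)\hookrightarrow L^1(\Omega)$, constants are then always compactly bounded, so the proposition is false as stated.

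Here is a sketch of the compactness.
\begin{itemize}
\item Setup. Take $v_k\rightharpoonup0$ in $H^0(\mathcal{A},\Omega)$ with norms at most $M$, and set $g_k:=D^{\bar s}v_k$. Then $\int\lambda|g_k|^2\le M^2$. Also $v_k=Tg_k$, where $T$ has the kernel of Corollary~\ref{aar}; $T$ is bounded $L^{p(\delta)}\to L^2$ by HLS, since $p(\delta)^*_{\bar s}=2$.
\item Splitting. Decompose $g_k$ over $A_m:=\{\lambda<1/m\}\cap B_\rho$, over $B_\rho\setminus A_m$, and over $\R^n\setminus B_\rho$.
\item On $A_m$. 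The H\"older step of Lemma~\ref{gt} gives $\|T(g_k\mathds{1}_{A_m})\|_{L^2}\le CM\|\lambda^{-1}\|^{1/2}_{L^{1+\delta}(A_m)}$. This tends to $0$ as $m\to\infty$, uniformly in $k$, by absolute continuity of the integral.
\item On $B_\rho\setminus A_m$. Here $\|g_k\|_{L^2}\le\sqrt m\,M$ and $g_k\rightharpoonup0$. Since $T\colon L^2(B_\rho)\to L^2(\Omega)$ is compact (weakly singular kernel), this piece tends to $0$ strongly.
\item Outside $B_\rho$. Proposition~\ref{usasempre} gives $|g_k|\le C\|v_k\|_{L^1}|x|^{-n-\bar s}$, and $\|v_k\|_{L^1}\to0$ by Proposition~\ref{qer}.
\item Conclusion. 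Hence $\limsup_k\|v_k\|_{L^2(\Omega)}\le CM\|\lambda^{-1}\|^{1/2}_{L^{1+\delta}(A_m)}$ for every $m$, so $v_k\to0$ in $L^2(\Omega)$.
\end{itemize}
This is exactly your logarithmic loss in general form. Every concentrating profile must spend a fixed amount of $\int\lambda^{-(1+\delta)}$ on a shrinking set, which strong integrability forbids.

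The paper's proof misses this because it replaces $\|\cdot\|_{H^0(\mathcal{A},\Omega)}$ by $\|D^{\bar s}\cdot\|_{L^{p(\delta)}(\R^n)}$, asserting that the two spaces coincide. It then uses the same rescaling $\lambda^{n/2}\phi(\lambda x)$ as you. That argument only shows that the critical embedding $H^{\bar s,p(\delta)}_0(\Omega)\hookrightarrow L^2(\Omega)$ is not compact. The conclusion does not pass to the smaller Hilbert space $H^0(\mathcal{A},\Omega)$, as your own $\mathcal{A}=I$ remark shows.

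Your first idea does give a correct example just outside the hypotheses. Take $0\in\Omega$ and $\mathcal{A}=|x|^{2\bar s}I$.
\begin{itemize}
\item Non-compactness: your rescaling leaves $\int|x|^{2\bar s}|D^{\bar s}\phi_k|^2$ exactly invariant, while the $L^2$ norm is fixed and the $L^1$ norm tends to $0$.
\item Boundedness: the paper's embedding does not apply here. Instead, use $|u|\le C I_{\bar s}|D^{\bar s}u|$ together with the Stein--Weiss inequality $\|I_{\bar s}g\|_{L^2}\le C\||x|^{\bar s}g\|_{L^2}$, which holds since $2\bar s<n$.
\item The catch: here $\lambda^{-1}\in L^{1+\delta'}_{\rm loc}$ only for $\delta'<\delta$. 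So the statement has to be reformulated with the integrability hypothesis strictly below the critical exponent.
\end{itemize}
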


\begin{proof}
We point out that, according to the specific choice of~$\mu$, the spaces $H^{\bar{s},p(\delta)}_0(\Omega)$ and~$H^0(\mathcal{A},\Omega)$ coincide.
Moreover, by~\eqref{piz} and~\eqref{dced}, we have that
\begin{equation}\label{nhbb000}
 p(\delta)=\frac{ 2+\frac{n-2\bar{s} }{\bar{s} }}{2+\frac{n-2\bar{s}}{2\bar{s}}}=\frac{2n}{n+2\bar s}
\end{equation} and thus
\begin{equation*}
p(\delta)_{\bar{s}}^* = \frac{n p(\delta)}{n-p(\delta)\bar s}
=2.
\end{equation*}
Then, by Theorem~\ref{qtppd} and Proposition~\ref{ytr}, we obtain that, for any~$\phi \in \mathcal{D}(\Omega)$,
\begin{equation}\label{dbfss}
\int_{\Omega} |\phi|^2 f \,dx\leq C \Vert \phi \Vert_{L^2(\Omega)}^2 \leq C \Vert \phi \Vert^2_{H^{s, p(\delta)}_0(\Omega)} \leq C \Vert \phi \Vert^2_{H^0(\mathcal{A},\Omega)},
\end{equation} up to changing~$C>0$,
namely, that~$f$ is bounded on~$H^0(\mathcal{A},\Omega)$.

{W}e now show that~$f$ is not compactly bounded on~$H^0(\mathcal{A},\Omega)$.
To prove this, we argue towards a contradiction and we suppose that~$f$ is compactly bounded on~$H^0(\mathcal{A},\Omega)$.

For any~$\phi \in \mathcal{D}(\Omega)$, $\lambda>0$ and~$\alpha \in \R$, we define~$\phi_{\lambda, \alpha}(x):= \lambda^{\alpha} \phi(\lambda x)$ for all~$x\in\R^n$.
Hence, changing variable~$\zeta:=\lambda y$, we see that
\begin{equation}\begin{split} \label{xop}
D^{\bar{s}} \phi_{\lambda, \alpha} \left(\frac{x}{\lambda}\right) &= \int_{\R^n} \frac{\left(\phi_{\lambda, \alpha}\left(\frac{x}{\lambda}\right)- \phi_{\lambda, \alpha}(y)\right)}{\left| \frac{x}{\lambda}-y \right|^{n+\bar{s}+1}} \left(\frac{x}{\lambda}-y\right) \, dy \\&= \lambda^{\alpha}\int_{\R^n} \frac{\left(\phi\left(x\right)- \phi(\lambda y)\right)}{\left| \frac{x}{\lambda}-y \right|^{n+\bar{s}+1}} \left(\frac{x}{\lambda}-y\right) \, dy \\&= \lambda^{\alpha+\bar{s}}\int_{\R^n} \frac{\left(\phi\left(x\right)- \phi(\zeta)\right)}{\left|x-\zeta \right|^{n+\bar{s}+1}} \left(x-\zeta\right) \, d\zeta \\& = \lambda^{\alpha+\bar{s}} D^{\bar{s}}  \phi(x).
\end{split}\end{equation}

Now, we set
\begin{equation}\label{21327496jfhffbllll540}
\bar{\alpha}:= \frac{n-\bar{s} p(\delta)}{p(\delta)}=\frac{n}2.
\end{equation}
{F}rom~\eqref{nhbb000} and~\eqref{xop} 
we infer that
\begin{equation}\begin{split}\label{mmmm}&
\Vert\phi_{\lambda, \bar{\alpha}} \Vert^{p(\delta)}_{H^0(\mathcal{A},\Omega)}= \int_{\R^n}|D^{\bar{s}} \phi_{\lambda, \bar{\alpha}}(x)|^{p(\delta)} \, dx = \lambda^{-n}\int_{\R^n} \left|D^{\bar{s}} \phi_{\lambda, \bar{\alpha}}\left(\frac{y}{\lambda}\right)\right|^{p(\delta)} \, dy  \\&\qquad\qquad
= \lambda^{p(\delta)(\bar{\alpha}+\bar{s})-n}\int_{\R^n} \left|D^{\bar{s}} \phi (y) \right|^{p(\delta)} \, dy \\&\qquad\qquad= \int_{\R^n} \left|D^{\bar{s}} \phi (y) \right|^{p(\delta)} \, dy = \Vert\phi\Vert_{H^0(\mathcal{A},\Omega)}^{p(\delta)}.
\end{split}\end{equation}
Also, by~\eqref{21327496jfhffbllll540}, we have that
\begin{equation}\label{opop}
\Vert \phi_{\lambda,\bar{\alpha}} \Vert_{L^2(f, \Omega)}^2 =  \lambda^{2 \bar{\alpha}} \int_{\Omega} f(x)|\phi(\lambda x)|^2 \, dx = \lambda^{2 \bar{\alpha} -n} \Vert \phi \Vert^2_{L^2(f,\Omega)} = \Vert \phi \Vert^2_{L^2(f,\Omega)}
\end{equation}
and that
\begin{equation}\label{jkkk}\begin{split}&
\Vert  \phi_{\lambda, \bar{\alpha}} \Vert_{L^{1}(\Omega)} = \int_{\Omega}\left|\phi_{\lambda, \bar{\alpha}}(x)\right| \, dx =\lambda^{\bar{\alpha}} \int_{\Omega} \left|\phi(\lambda x)\right| \, dx 
\\&\qquad= \lambda^{\bar{\alpha}-n} \Vert \phi \Vert_{L^1(\Omega)} = \lambda^{-\frac{n}2} \Vert \phi \Vert_{L^1(\Omega)}.\end{split}
\end{equation}

{N}ow, let~$\phi \in \mathcal{D}(\Omega)$ (not vanishing identically)
and set
\begin{equation*}
M:=\Vert \phi \Vert_{H^0(\mathcal{A},\Omega)} \qquad {\mbox{and}} \qquad L:=
\Vert \phi \Vert_{L^1(\Omega)} .
\end{equation*}
Furthermore, for all~$\lambda>0$, we set
\begin{equation}\label{mdss}
\widetilde{\phi}_{\lambda, \bar{\alpha}}: = \frac{\phi_{\lambda, \bar{\alpha}}}{\Vert \phi_{\lambda, \bar{\alpha}} \Vert_{L^2(f,\Omega)}}.
\end{equation}
We observe that, thanks to~\eqref{mmmm} and~\eqref{jkkk},
\begin{eqnarray*}&& \Vert \widetilde\phi_{\lambda, \bar{\alpha}} \Vert_{L^2(f,\Omega)}=1,\qquad
\Vert \widetilde{\phi}_{\lambda, \bar{\alpha}} \Vert_{H^0(\mathcal{A},\Omega)} =M\\&& {\mbox{and}}\qquad
\Vert \widetilde{\phi}_{\lambda, \bar{\alpha}} \Vert_{L^1(\Omega)}=
\lambda^{-\frac{n}2} \Vert \phi \Vert_{L^1(\Omega)}
.\end{eqnarray*}

Now, if~$f$ is compactly bounded on~$H^0(\mathcal{A},\Omega)$, given~$\epsilon_0 :=1/{2 M^2}$, there exists~$K_{\epsilon_0}>0$ such that
\begin{eqnarray*}
&&1=\Vert \widetilde{\phi}_{\lambda, \bar{\alpha}} \Vert_{L^2(f,\Omega)}^2  \leq \epsilon_0 \Vert \widetilde{\phi}_{\lambda, \bar{\alpha}} \Vert^2_{H^0(\mathcal{A},\Omega)} + K_{\epsilon_0} \Vert \widetilde{\phi}_{\lambda, \bar{\alpha}} \Vert^2_{L^1(\Omega)} 
\\&&\qquad \qquad= \frac1{2 M^2} M^2 + K_{\epsilon_0} \Vert \widetilde{\phi}_{\lambda, \bar{\alpha}} \Vert^2_{L^1(\Omega)} 
=\frac{1}{2} + K_{\epsilon_0}L\lambda^{-n}.\end{eqnarray*}
{F}rom this, sending~$\lambda \to +\infty$, we obtain that~$1 \leq 1/2$, which 
gives the desired contradiction.
\end{proof}

\section{Some integral computations towards the proof of Proposition~\ref{huy}}\label{tappend}

In this section we prove some integral results that are used in Proposition~\ref{huy} in order to compute the Fourier transform of the fractional gradient.

\begin{lemma}\label{vcc}
Let~$s \in (0,1)$ and~$\Gamma$ denote the Euler Gamma function. Then,
\begin{equation}
\int_{0}^{+\infty} \frac{\sin(t)}{t^{1+s}}\, dt = \frac{\Gamma(\frac{1+s}{2})\Gamma(\frac{1-s}{2})}{2\Gamma(1+s)}.
\end{equation}
\end{lemma}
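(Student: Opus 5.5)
The identity should follow from Euler's integral for the Gamma function together with the reflection and duplication formulas. Since $s\in(0,1)$, the integral converges absolutely near $0$ (the integrand behaves like $t^{-s}$) and conditionally at infinity. The classical representation
\[
\frac{1}{t^{1+s}}=\frac{1}{\Gamma(1+s)}\int_0^{+\infty} x^{s}e^{-tx}\,dx,\qquad t>0,
\]
lets us rewrite
\[
\int_0^{+\infty}\frac{\sin t}{t^{1+s}}\,dt=\frac{1}{\Gamma(1+s)}\int_0^{+\infty}x^{s}\left(\int_0^{+\infty}\sin(t)\,e^{-tx}\,dt\right)dx .
\]
The inner integral is elementary and equals $\frac{1}{1+x^2}$.

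The step that needs care is exchanging the order of integration, because $\sin t/t^{1+s}$ is not absolutely integrable at infinity. To handle it, we first insert a damping factor $e^{-\varepsilon t}$. With the damping, Fubini--Tonelli applies, since $|\sin t|\,x^s e^{-(x+\varepsilon)t}$ is integrable on $(0,\infty)^2$: integrating in $t$ first gives $\Gamma(1+s)\,t^{-1-s}$ times the bounded factor $|\sin t| e^{-\varepsilon t}$, and near $t=0$ we have $|\sin t|\le t$. After the exchange, the inner integral becomes $\frac{1}{1+(x+\varepsilon)^2}$.

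We then let $\varepsilon\searrow0$ on both sides:
\begin{itemize}
\item On the left, convergence holds because $\int_0^{+\infty}\sin(t)\,t^{-1-s}e^{-\varepsilon t}\,dt\to\int_0^{+\infty}\sin(t)\,t^{-1-s}\,dt$. This is Abel summability of a convergent improper integral, which we can justify by integrating by parts on $[1,\infty)$ to produce an absolutely convergent integrand dominated uniformly in $\varepsilon$.
\item On the right, dominated convergence applies with the integrable bound $x^s/(1+x^2)$, valid since $s<1$.
\end{itemize}

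This reduces the problem to the Beta integral
\[
\int_0^{+\infty}\frac{x^{s}}{1+x^2}\,dx=\frac{1}{2}\int_0^{+\infty}\frac{y^{\frac{s-1}{2}}}{1+y}\,dy=\frac12\,\Gamma\!\left(\tfrac{1+s}{2}\right)\Gamma\!\left(\tfrac{1-s}{2}\right),
\]
obtained with the substitution $y=x^2$ and the identity $B(a,1-a)=\Gamma(a)\Gamma(1-a)$ with $a=\frac{1+s}{2}$. Dividing by $\Gamma(1+s)$ gives exactly the claimed formula.

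The main obstacle is justifying the interchange, that is, the Abel-limit argument above. The remaining steps are routine computations.
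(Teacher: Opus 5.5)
Your proof is correct and follows the same route as the paper: the Gamma representation $t^{-1-s}=\Gamma(1+s)^{-1}\int_0^{+\infty}\tau^s e^{-t\tau}\,d\tau$, the exchange of integrals, the identity $\int_0^{+\infty}e^{-t\tau}\sin t\,dt=(1+\tau^2)^{-1}$, and the Beta integral after $\tau=\sqrt{\zeta}$. The damping factor and the Abel-limit step are unnecessary, because your claim that the integral converges only conditionally at infinity is false: for $s\in(0,1)$ one has $|\sin t|\,t^{-1-s}\le t^{-1-s}$ on $[1,+\infty)$, which is integrable, so Tonelli applied to $|\sin t|\,\tau^s e^{-t\tau}$ (whose $\tau$-integral is $\Gamma(1+s)\,|\sin t|\,t^{-1-s}$) justifies the exchange directly, exactly as the paper does.
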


\begin{proof}
For any~$t >0$, the change of variable~$z := t \tau$ gives that
\begin{equation*}
\Gamma(1+s) = \int_0^{+\infty} z^{s} e^{-z} \, dz = t^{s+1}\int_0^{+\infty}  \tau^s e^{-t \tau} \, d\tau.
\end{equation*}
Hence, denoting by~$B$ the Beta function, we have that
\begin{eqnarray*} &&
\int_0^{+\infty}  \frac{\sin t}{t^{1+s}}\, dt = \frac{1}{\Gamma(1+s)}\int_0^{+\infty}\int_0^{+\infty} \tau^s e^{-t \tau} \sin t \, d\tau \, dt\\&&\qquad = \frac{1}{\Gamma(1+s)} \int_0^{+\infty} \tau^s \left( \int_0^{+\infty} e^{-t\tau} \sin t \, dt \right)\, d\tau \\&&\qquad= \frac{1}{\Gamma(1+s)} \int_0^{+\infty}\frac{\tau^s}{\tau^2 +1} \, d\tau \\ &&\qquad= \frac{1}{2\Gamma(1+s)}\int_0^{+\infty} \frac{\zeta^{\frac{s-1}{2}}}{\zeta+1} \, d\zeta = \frac{B(\frac{1+s}{2}, \frac{1-s}{2})}{2\Gamma(1+s)},
\end{eqnarray*}
where we have used the Fubini Tonelli Theorem and applied the change of variable~$\tau := \sqrt{\zeta}$.

Now, recalling the property relating the Beta fuction to the Euler Gamma function
\begin{equation}\label{mnbvcxqwertyu1234567890}
B(z_1,z_2)=\frac{\Gamma(z_1)\Gamma(z_2)}{\Gamma(z_1+z_2)} \end{equation}
and that~$\Gamma(1)=1$, we obtain the desired result.
\end{proof}

\begin{lemma}\label{ccc}
Let~$n\geq 2$. Then,
\begin{equation}\label{xx}
\int_{\partial B_1} |\omega_1|^{1+s} \, d\mathcal{H}_{\omega}^{n-1} = 2 \pi^{\frac{n-1}{2}}  \frac{\Gamma\left(\frac{s+2}{2}\right)}{\Gamma\left(\frac{n+s+1}{2}\right)}.
\end{equation}
\end{lemma}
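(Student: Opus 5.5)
We reduce the surface integral to a one–dimensional integral in the polar angle measured from the $e_1$ axis, and then evaluate it with the Beta function, as in the proof of Lemma~\ref{vcc}. Writing $\omega_1=\cos\theta$ with $\theta\in[0,\pi]$, the standard slicing of the unit sphere gives, for $n\ge2$ and any integrable $g$,
\begin{equation*}
\int_{\partial B_1} g(\omega_1)\, d\mathcal{H}^{n-1}_\omega = \mathcal{H}^{n-2}(\partial B_1^{n-1})\int_0^\pi g(\cos\theta)\,(\sin\theta)^{n-2}\,d\theta ,
\end{equation*}
where $\partial B_1^{n-1}$ is the unit sphere of $\R^{n-1}$, whose measure is $2\pi^{\frac{n-1}{2}}/\Gamma(\frac{n-1}{2})$. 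For $n=2$ this is simply the parametrization of the unit circle by arclength, and the factor $\mathcal{H}^0(\{\pm1\})=2$ matches the same formula, so no separate case is needed.

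Taking $g(t)=|t|^{1+s}$ and using the symmetry $\theta\mapsto\pi-\theta$, the angular integral equals
\begin{equation*}
2\int_0^{\pi/2}(\cos\theta)^{1+s}(\sin\theta)^{n-2}\,d\theta = B\Big(\frac{s+2}{2},\frac{n-1}{2}\Big),
\end{equation*}
by the classical identity $2\int_0^{\pi/2}\cos^{2a-1}\theta\,\sin^{2b-1}\theta\,d\theta=B(a,b)$, here with $a=\frac{s+2}{2}$ and $b=\frac{n-1}{2}$. If one prefers to derive this identity, substitute $t=\cos^2\theta$ to obtain $\int_0^1 t^{a-1}(1-t)^{b-1}\,dt$.

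Finally we combine the two factors using~\eqref{mnbvcxqwertyu1234567890}:
\begin{equation*}
\frac{2\pi^{\frac{n-1}{2}}}{\Gamma(\frac{n-1}{2})}\cdot\frac{\Gamma(\frac{s+2}{2})\Gamma(\frac{n-1}{2})}{\Gamma(\frac{n+s+1}{2})} = 2\pi^{\frac{n-1}{2}}\frac{\Gamma(\frac{s+2}{2})}{\Gamma(\frac{n+s+1}{2})}.
\end{equation*}
The factor $\Gamma(\frac{n-1}{2})$ cancels and this is exactly~\eqref{xx}.

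The only step needing care is justifying the slicing formula, namely checking that the Jacobian weight is $(\sin\theta)^{n-2}$. This can be done with the coarea formula or with spherical coordinates. As a sanity check, $g\equiv1$ should recover $|\partial B_1|=2\pi^{n/2}/\Gamma(\frac n2)$. Alternatively, one could avoid the slicing entirely by computing $\int_{\R^n}|x_1|^{1+s}e^{-|x|^2}dx$ in two ways, once in polar coordinates and once by Fubini; this gives the same result and I would keep it as a backup check.
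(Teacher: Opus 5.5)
Your proof is correct and is essentially the paper's argument. The paper writes the hemisphere $\{\omega_1>0\}$ as a graph over the unit ball of $\R^{n-1}$, with surface element $(1-|\omega'|^2)^{-1/2}\,d\omega'$, and then uses polar coordinates there; the substitution $|\omega'|=\sin\theta$ turns this into exactly your slicing formula, so the paper's parametrization is one concrete justification of the weight $(\sin\theta)^{n-2}$ you flagged, and both routes reach $B\big(\frac{n-1}{2},\frac{s+2}{2}\big)$ and finish with the same Gamma identities.
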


\begin{proof}
For any~$x \in \R^n$, we write~$x=(x_1,x') \in\R\times \R^{n-1}$.
For all~$x'\in\R^{n-1}$ with~$|x'|\le1$, we define the function~$
\psi(x')= \sqrt{1- |x'|^2}$.
Then,
\begin{equation*}
\partial B_1^+:= \left\{ \omega \in \partial B_1 : \omega_1>0 \right\}= \left\{ (\psi(\omega'), \omega') \in\R\times\R^{n-1}:|\omega'| \leq 1 \right\}.
\end{equation*}
In addition, since
\begin{equation*}
D\psi (\omega')= - \omega' \left(1-|\omega'|^2\right)^{-\frac{1}{2}},
\end{equation*}
we can write
$$
d\mathcal{H}_{\omega}^{n-1}= \sqrt{1+|D\psi(\omega')|^2} d\omega' = \sqrt{1+\frac{|\omega'|^2}{1-|\omega'|^2}} d\omega' = \left(1-|\omega'|^2\right)^{-\frac{1}{2}} d\omega'.
$$

Accordingly, using polar coordinates and the change of variable~$\rho:= \sqrt{\zeta}$,
\begin{equation}\begin{split}\label{oos}
\int_{\partial B_1} |\omega_1|^{1+s} \, d\mathcal{H}_{\omega}^{n-1} 
&= 2\int_{\partial B_1^+} \omega_1^{1+s} \, d\mathcal{H}_{\omega}^{n-1}
\\&= 2\int_{\{|\omega'| \leq 1 \}} \left(1-|\omega'|^2\right)^{\frac{1+s}{2}} \left(1-|\omega'|^2\right)^{-\frac{1}{2}}\, d\omega' \\&= 2\int_{\{|\omega'| \leq 1 \}}\left(1-|\omega'|^2\right)^{\frac{s}{2}} \, d\omega' 
\\&= 2(n-1)\omega_{n-1} \int_0^1\left(1-\rho^2\right)^{\frac{s}{2}}\rho^{n-2} \, d\rho
\\&=(n-1)\omega_{n-1} \int_0^1 (1-\zeta)^{\frac{s+2}{2}-1} \zeta^{\frac{n-1}{2}-1} \, d\zeta 
\\&= (n-1)\omega_{n-1} B\left(\frac{n-1}{2}, \frac{s+2}{2}\right),
\end{split}\end{equation} where~$B$ is the Beta function.

Now, exploiting the equality
\begin{equation*}
\omega_{n-1}= \displaystyle\frac{\pi^{\frac{n-1}{2}}}{\Gamma\left(\frac{n-1}{2}+1\right)},
\end{equation*}
the property of the Gamma function that~$\Gamma(z+1)= z \Gamma(z)$
and~\eqref{mnbvcxqwertyu1234567890},
we infer from~\eqref{oos} that
\begin{equation*}\begin{split}
\int_{\partial B_1} |\omega_1|^{1+s} \, d\mathcal{H}_{\omega}^{n-1} &= (n-1)\pi^{\frac{n-1}{2}}\frac{\Gamma\left(\frac{n-1}{2}\right)\Gamma\left(\frac{s+2}{2}\right)}{\Gamma\left(\frac{n+1}{2}\right)\Gamma\left(\frac{n+s+1}{2}\right)} 
\\&= 2 \pi^{\frac{n-1}{2}}  \frac{\Gamma\left(\frac{s+2}{2}\right)}{\Gamma\left(\frac{n+s+1}{2}\right)}. \qedhere\end{split}
\end{equation*}
\end{proof}

\begin{proposition}\label{fbp}
Let~$\xi\in \R^n$. Then, for all~$j\in\{1,\cdots,n\}$,
\begin{equation}\label{dbd}
\int_{\R^n} \frac{\sin(\xi \cdot t) t_j}{|t|^{n+s+1}} \, dt = 2^{-s}\pi^{\frac{n}{2}} |\xi|^{s-1}\xi_j\frac{\Gamma(\frac{1-s}{2})}{\Gamma(\frac{n+s+1}{2})}.
\end{equation}
\end{proposition}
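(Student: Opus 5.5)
We reduce the $n$-dimensional integral in~\eqref{dbd} to the one-dimensional one in Lemma~\ref{vcc} and the spherical integral in Lemma~\ref{ccc}. The integral converges only conditionally near the origin and at infinity, so we read it as $\lim_{\epsilon\searrow0,\,R\to+\infty}\int_{B_R\setminus B_\epsilon}$; this is the sense in which it arises in Proposition~\ref{huy}. The case $\xi=0$ is trivial, since both sides vanish. For $\xi\neq0$ we exploit rotation invariance. The vector-valued integral $F(\xi):=\int_{\R^n}\sin(\xi\cdot t)\,t\,|t|^{-n-s-1}\,dt$ satisfies $F(Q\xi)=QF(\xi)$ for every orthogonal matrix $Q$, which we see by changing variables $t=Q\tau$ in the truncated integrals over rotation-invariant annuli. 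Hence $F(\xi)$ is parallel to $\xi$. Scaling $t=\tau/|\xi|$ gives $F(\xi)=|\xi|^{s}F(\xi/|\xi|)$. It therefore suffices to compute $F(e_1)\cdot e_1=\int_{\R^n}\sin(t_1)\,t_1\,|t|^{-n-s-1}\,dt$, after which $F(\xi)=|\xi|^{s-1}\xi\,(F(e_1)\cdot e_1)$.

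For $n\ge2$ we compute $F(e_1)\cdot e_1$ in polar coordinates $t=r\omega$:
\[
\int_{\partial B_1}\omega_1\int_0^{+\infty}\frac{\sin(r\omega_1)}{r^{1+s}}\,dr\,d\mathcal H^{n-1}_\omega .
\]
We substitute $\tau=r|\omega_1|$ in the inner integral and use the oddness of $\sin$. Lemma~\ref{vcc} then gives, for $\omega_1\ne0$,
\[
\int_0^{+\infty}\frac{\sin(r\omega_1)}{r^{1+s}}\,dr=\operatorname{sgn}(\omega_1)\,|\omega_1|^{s}\,\frac{\Gamma(\frac{1+s}2)\Gamma(\frac{1-s}2)}{2\Gamma(1+s)} .
\]
Multiplying by $\omega_1$ produces $|\omega_1|^{1+s}$, and Lemma~\ref{ccc} evaluates its spherical integral. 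We are left with
\[
\pi^{\frac{n-1}2}\,\frac{\Gamma(\frac{s+2}2)\,\Gamma(\frac{1+s}2)\,\Gamma(\frac{1-s}2)}{\Gamma(1+s)\,\Gamma(\frac{n+s+1}2)} .
\]
The Legendre duplication formula $\Gamma(\frac{1+s}2)\Gamma(\frac{s+2}2)=2^{-s}\sqrt\pi\,\Gamma(1+s)$ turns this into exactly $2^{-s}\pi^{\frac n2}\Gamma(\frac{1-s}2)/\Gamma(\frac{n+s+1}2)$. For $n=1$ the sphere is $\{\pm1\}$, and Lemma~\ref{vcc} applied twice together with the same duplication identity gives the result directly.

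The main obstacle is justifying the exchange between the radial and angular integrations, since the radial integral converges only conditionally. We would handle it on truncated shells. For fixed $\omega$, the quantity $\int_\epsilon^R\sin(r\omega_1)r^{-1-s}\,dr$ is bounded uniformly in $\epsilon$, $R$ and $\omega$ by a constant times $|\omega_1|^{s}$. Near zero this follows from $|\sin x|\le|x|$. For large $r$ it follows from the boundedness of $\int_a^b\sin\tau\,\tau^{-1-s}\,d\tau$, which is uniform in $a,b$ because this integrand is absolutely integrable on $[1,+\infty)$ and is bounded by $\tau^{-s}$ on $(0,1)$. Dominated convergence on $\partial B_1$ then lets us pass to the limit, and Fubini applies trivially on each compact shell.
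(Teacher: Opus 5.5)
Your proposal is correct and follows essentially the same route as the paper. Rotation and scaling reduce everything to $|\xi|^{s-1}\xi_j\int_{\R^n}\sin(t_j)\,t_j\,|t|^{-n-s-1}\,dt$; the paper does this with an explicit rotation plus a reflection to kill the off-diagonal terms, which is the same symmetry you use. Polar coordinates, Lemmas~\ref{vcc} and~\ref{ccc} and the Legendre duplication formula then finish the computation, as in your proposal.

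One correction: the integral is in fact absolutely convergent, not conditionally convergent. The integrand is bounded by $|\xi|\,|t|^{1-n-s}$ near the origin and by $|t|^{-n-s}$ at infinity, and likewise $\int_0^{+\infty}|\sin(r\omega_1)|\,r^{-1-s}\,dr<+\infty$. So Fubini--Tonelli in polar coordinates applies directly, and your truncation argument, though valid, is not needed.
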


\begin{proof}
Let~$\xi=(\xi_1,\cdots, \xi_n)$. Suppose that~$\xi_j=0$ for some~$j\in\{1,\dots,n\}$. In this case, we claim that
\begin{equation}\label{ffg}
\int_{\R^n} \frac{\sin(\xi \cdot t) t_j}{|t|^{n+s+1}} \, dt =0.
\end{equation}
For this, we perform the change of variable~$\tilde{t} := (t_1, \ldots, t_{j-1}, -t_j, t_{j+1}, \ldots, t_n)$ and we see that
\begin{equation}
\int_{\R^n} \frac{\sin(\xi \cdot t) t_j}{|t|^{n+s+1}} \, dt = - \int_{\R^n} \frac{\sin(\xi \cdot \tilde{t}) \tilde{t}_j}{|\tilde{t}|^{n+s+1}} \, d\tilde{t},
\end{equation}
which entails~\eqref{ffg}.

Suppose now that~$\xi_j \neq 0$ (in which case~$|\xi|\neq0$). In this case, we use the change of variable~$\tau:=|\xi|t $ to get that
\begin{equation}\label{exx}
\int_{\R^n} \frac{\sin(\xi \cdot t) t_j}{|t|^{n+s+1}} \, dt = |\xi|^{s} \int_{\R^n} \frac{\sin(\frac{\xi }{|\xi|}\cdot \tau) \tau_j}{|\tau|^{n+s+1}} \, d\tau
.\end{equation}
Now we consider a rotation matrix~$\mathcal{R}=[\mathcal{R}_{ik}]$ such that~$\mathcal{R} e_j= \frac{\xi}{|\xi|}$. Notice that~$\mathcal{R}_{ij}= \frac{\xi_i}{|\xi|}$, for all~$i\in\{1,\cdots,n\}$. Hence, 
changing variable~$\eta: = \mathcal{R}^{-1}\tau$,
\begin{equation*}\begin{split}
&\int_{\R^n} \frac{\sin(\frac{\xi }{|\xi|}\cdot \tau) \tau_j}{|\tau|^{n+s+1}} \, d\tau=
\int_{\R^n} \frac{\sin(\mathcal{R}e_j\cdot \tau) \tau_j}{|\tau|^{n+s+1}} \, d\tau \\&\qquad\quad
= \int_{\R^n} \frac{\sin(\mathcal{R}e_j\cdot \mathcal{R}\eta) 
\left(\mathcal{R}\eta \cdot e_j \right)}{|\eta|^{n+s+1}} \, d\eta \\ &\qquad\quad= \int_{\R^n} \frac{\sin(\eta_j) \left(\mathcal{R}\eta \cdot e_j \right)}{|\eta|^{n+s+1}} \, d\eta = \sum_{i=1}^n \mathcal{R}_{ji} \int_{\R^n} \frac{\sin(\eta_j) \eta_i}{|\eta|^{n+s+1}} \, d\eta  \\&\qquad\quad=\mathcal{R}_{jj} \int_{\R^n} \frac{\sin(\eta_j) \eta_j}{|\eta|^{n+s+1}} \, d\eta =  \frac{\xi_j}{|\xi|} \int_{\R^n} \frac{\sin(\eta_j) \eta_j}{|\eta|^{n+s+1}} \, d\eta.
\end{split}\end{equation*}
Plugging this information into~\eqref{exx}, we infer that
\begin{equation}\label{eqq}
\int_{\R^n} \frac{\sin(\xi \cdot t) t_j}{|t|^{n+s+1}} \, dt = |\xi|^{s-1}\xi_j \int_{\R^n} \frac{\sin(t_j) t_j}{|t|^{n+s+1}} \, dt.
\end{equation}

{F}rom now on, we consider separately the cases~$n=1$ and~$n>1$.

If~$n=1$, we exploit~\eqref{eqq} and Lemma~\ref{vcc} to find that
\begin{eqnarray*}&&
\int_{\R} \frac{\sin(\xi \cdot t) t}{|t|^{n+s+1}} \, dt = |\xi|^{s-1}\xi \int_{\R} \frac{\sin(t) t}{|t|^{n+s+1}} \, dt = 2|\xi|^{s-1}\xi \int_{0}^{+\infty} \frac{\sin(t)}{t^{n+s}} \, dt\\  &&\qquad\qquad= |\xi|^{s-1}\xi \frac{\Gamma(\frac{1+s}{2})\Gamma\left(\frac{1-s}{2}\right)}{\Gamma(1+s)} = |\xi|^{s-1}\xi \frac{2^{-s} \sqrt{\pi}\Gamma\left(\frac{1-s}{2}\right)}{\Gamma\left(\frac{2+s}{2}\right)},
\end{eqnarray*}
where the last equality exploits the Legendre duplication formula for the Gamma function
\begin{equation}\label{lege}
\Gamma(z) \Gamma\left(z+\frac{1}{2}\right)= 2^{1-2z} \sqrt{\pi} \Gamma(2z)
\end{equation}
used here above with~$z:= (s+1)/2$. This completes the proof of Proposition~\ref{fbp} for~$n=1$.

Hence, we now focus on the case~$n>1$. We set
\begin{equation*}
\partial B_1^+ = \left\{ \omega \in \partial B_1: \omega_j>0 \right\}
\end{equation*}
and, using polar coordinates and the change of variable~$r:= \rho/ \omega_j $, from~\eqref{eqq} we obtain that
\begin{equation*}\begin{split}
\int_{\R^n} \frac{\sin(\xi \cdot t) t_j}{|t|^{n+s+1}} \, dt &= |\xi|^{s-1}\xi_j \int_{\partial B_1}\omega_j \int_{0}^{+\infty} \frac{\sin(r \omega_j) }{r^{s+1}} \, dr \, d\mathcal{H}_{\omega}^{n-1}\\ &= 2|\xi|^{s-1}\xi_j \int_{\partial B_1^+}\omega_j \int_{0}^{+\infty} \frac{\sin(r \omega_j) }{r^{s+1}} \, dr \, d\mathcal{H}_{\omega}^{n-1}\\ &=2|\xi|^{s-1}\xi_j \left(\int_{\partial B_1^+}\omega_j^{1+s} \, d\mathcal{H}_{\omega} ^{n-1}\right) \left(\int_{0}^{+\infty} \frac{\sin\rho }{\rho^{s+1}} \, d\rho\right) \\ &= |\xi|^{s-1}\xi_j \left( \int_{\partial B_1}|\omega_1|^{1+s} d\mathcal{H}_{\omega}^{n-1}\right)\left( \int_{0}^{+\infty} \frac{\sin\rho }{\rho^{s+1}} \, d\rho\right),
\end{split}\end{equation*}
where we also exploited the rotation invariance of the integral in the last equality.

{F}rom this and Lemmata~\ref{vcc} and~\ref{ccc}, we obtain that
\begin{equation*}\begin{split}
\int_{\R^n} \frac{\sin(\xi \cdot t) t_j}{|t|^{n+s+1}} \, dt &= 
|\xi|^{s-1}\xi_j
\frac{2 \pi^{\frac{n-1}{2}}\Gamma\left(\frac{s+2}{2}\right)}{\Gamma\left(\frac{n+s+1}{2}\right)}  \frac{\Gamma(\frac{1+s}{2})\Gamma(\frac{1-s}{2})}{2\Gamma(1+s)}
\\ &=\frac{ \pi^{\frac{n-1}{2}}}{2} |\xi|^{s-1}\xi_j \frac{s \Gamma(\frac{1+s}{2})\Gamma\left(\frac{s}{2}\right)}{\Gamma(1+s)}  \frac{\Gamma(\frac{1-s}{2})}{\Gamma\left(\frac{n+s+1}{2}\right)}\\&=2^{-s}\pi^{\frac{n}{2}}|\xi|^{s-1}\xi_j \frac{ s\Gamma(s)}{\Gamma(1+s)}  \frac{\Gamma(\frac{1-s}{2})}{\Gamma\left(\frac{n+s+1}{2}\right)} \\&= 2^{-s}\pi^{\frac{n}{2}}|\xi|^{s-1}\xi_j \frac{\Gamma(\frac{1-s}{2})}{\Gamma\left(\frac{n+s+1}{2}\right)}.
\end{split}\end{equation*}
where we used that~$\Gamma(z+1)= z \Gamma(z)$ and the
Legendre duplication formula in~\eqref{lege} (used here with~$z:= s/2$).
This completes the proof of Proposition~\ref{fbp} for~$n>1$.
\end{proof}

\section{Properties of $\mathcal{A}$ and technical results about matrices}\label{fappend}

In this section we prove some algebraic results related to the matrix~$\mathcal{A}$, that are mostly used in Section~\ref{mainres} in the study of the operator~$\mathcal{L}$.

{W}e introduce the following norm in the vector space $\mathbb{R}^{n \times n}$: 
			\begin{equation*}
				\Vert \am \Vert := \sup_{{x\in\R^n}\atop{|x|=1}}|\am x |.
			\end{equation*} 
We will say that~$\mathcal{A}$ is bounded if~$\Vert \am \Vert <+\infty$.
			
	\begin{lemma}\label{lemon}
	Let~$\mathcal{A}$ be a positive definite matrix.
	
	If~$\mathcal{A}$ is symmetric, then~\eqref{vwm} holds true with~$\mathcal{K}_{\am} = 1$.
	
If~$\mathcal{A}$ is bounded and strictly elliptic in~$\R^n$ with constant~$c>0$, then~\eqref{vwm} holds true with
$$\mathcal{K}_{\am} = \left(\frac{\Vert \mathcal{A} \Vert}{ c}\right)^2.$$	
	\end{lemma}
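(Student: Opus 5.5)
The plan is to treat the two claims separately, working for a fixed $s$ and $x$ with the shorthand $A=\mathcal{A}(s,x)$. Write $A=A_S+A_{AS}$ for the splitting into symmetric and antisymmetric parts. Since $\xi^TA_{AS}\xi=0$, the quadratic form satisfies $\xi^TA\xi=\xi^TA_S\xi$, and $A_S$ is positive definite because $A$ is.

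\emph{Symmetric case.} If $A$ is symmetric and positive definite, then $(\xi,\psi)\mapsto \xi^TA\psi$ is an inner product on $\R^n$. The Cauchy--Schwarz inequality for this inner product gives
\[
|\xi^TA\psi|^2\le (\xi^TA\xi)(\psi^TA\psi),
\]
which is exactly~\eqref{vwm} with $\mathcal{K}_{\am}=1$.

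\emph{Bounded, strictly elliptic case.} Strict ellipticity with constant $c$ means $\xi^TA\xi\ge c|\xi|^2$ for all $\xi\in\R^n$. On the other hand, the operator norm bound gives
\[
|\xi^TA\psi|\le |\xi|\,|A\psi|\le \Vert A\Vert\,|\xi|\,|\psi|.
\]
Squaring and then applying ellipticity to each factor, we get
\[
|\xi^TA\psi|^2\le \Vert A\Vert^2|\xi|^2|\psi|^2\le \frac{\Vert A\Vert^2}{c^2}\,(\xi^TA\xi)(\psi^TA\psi).
\]
This is~\eqref{vwm} with $\mathcal{K}_{\am}=(\Vert\am\Vert/c)^2$, provided $\Vert\am\Vert$ and $c$ are uniform in $(s,x)$. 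The bound holds pointwise in $(s,x)$ with these constants. If only pointwise constants are available, the statement should be read at each fixed $(s,x)$ with those pointwise values.

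There is no genuine obstacle here; the argument is an elementary chain of inequalities. The only step requiring some care is the second case, where $A$ need not be symmetric, so one cannot apply Cauchy--Schwarz to the bilinear form directly. The operator norm estimate on $|\xi^TA\psi|$ sidesteps this, and ellipticity then converts $|\xi|^2$ and $|\psi|^2$ back into the quadratic forms.
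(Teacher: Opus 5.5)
Your proof is correct and follows the paper's route: Cauchy--Schwarz for the inner product $\xi^T\mathcal{A}\psi$ in the symmetric case, and in the bounded case the operator-norm bound $|\xi^T\mathcal{A}\psi|\le\Vert\mathcal{A}\Vert\,|\xi|\,|\psi|$ followed by strict ellipticity applied to $|\xi|^2$ and $|\psi|^2$. The paper instead writes $|\xi^T\mathcal{A}\psi|^2=(\mathcal{A}\psi\cdot\xi)(\mathcal{A}^T\xi\cdot\psi)$, which tacitly uses $\Vert\mathcal{A}^T\Vert=\Vert\mathcal{A}\Vert$; your direct bound avoids this, and the symmetric/antisymmetric splitting you mention is not needed.
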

	
	\begin{proof}
	If~$\mathcal{A}$ positive definite and symmetric, the form
	\begin{equation*}
	\langle \xi, \psi \rangle_{\mathcal{A}}: = \xi^T \mathcal{A} \psi \qquad\mbox{for any~$\xi$, $\psi \in \R^n$}
	\end{equation*}
	defines a scalar product in~$\R^n$. Thus, by the Cauchy-Schwarz inequality, we obtain that, for any~$ \xi$, $\psi \in \R^n$,
	\begin{equation*}
	\left|\xi^T \mathcal{A} \psi\right|^2 = \left|\langle \xi, \psi \rangle_{\mathcal{A}}\right|^2 \leq \langle \xi, \xi \rangle_{\mathcal{A}} \langle \psi, \psi \rangle_{\mathcal{A}} = \left(\xi^T \mathcal{A} \xi\right)\left(\psi^T \mathcal{A} \psi \right),
	\end{equation*}
	which is~\eqref{vwm} with~$\mathcal{K}_{\am} = 1$.
	
	If~$\mathcal{A}$ is bounded and strictly elliptic in~$\R^n$,
	we have that, for any~$ \xi$, $\psi \in \R^n$,
			\begin{equation*}\begin{split}&
				\left|\xi^T \mathcal{A} \psi\right|^2  = (\am \psi \cdot \xi) (\am^T \xi \cdot \psi) \leq \Vert \mathcal{A} \Vert^2 |\xi|^2 |\psi|^2 \\&\qquad= \frac{\Vert \mathcal{A} \Vert^2}{c^2} \left( c|\xi|^2 \right) \left( c |\psi|^2\right) \leq \left( \frac{\Vert \am\Vert}{c} \right)^2 \left(\xi^T \mathcal{A} \xi\right)\left(\psi^T \mathcal{A} \psi \right),\end{split}
			\end{equation*}
			which gives~\eqref{vwm} in this case.
\end{proof}
			
			\begin{lemma}\label{spexx}
			Let~$\mathcal{A}$ be positive definite matrix satisfying~\eqref{vwm} and let~$\mathcal{A}_S$ be its symmetric part.
Let~$\mathcal{B}=[b^{ij}]$ be the inverse matrix of~$\mathcal{A}$.

Then, for any~$\xi$, $\psi \in \R^n$, 
				\[|\xi\cdot \psi| \leq \sqrt{
				K_{\mathcal{A}} \big( \xi^T \mathcal{A}_S \xi \big) \big( 
				\psi^T \mathcal{B} \psi \big)} .\]
			\end{lemma}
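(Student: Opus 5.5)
The plan is to reduce the inequality to a Cauchy--Schwarz-type estimate for the bilinear form induced by $\mathcal{A}$, after the change of variables $\eta:=\mathcal{B}\psi$, i.e.\ $\psi=\mathcal{A}\eta$. With this substitution,
\[
\xi\cdot\psi=\xi^T\mathcal{A}\eta ,
\]
and \eqref{vwm} applied to the pair $(\xi,\eta)$ gives
\[
|\xi\cdot\psi|^2\le K_{\mathcal{A}}\,(\xi^T\mathcal{A}\xi)(\eta^T\mathcal{A}\eta).
\]

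Two identifications then finish the argument. First, $\xi^T\mathcal{A}\xi=\xi^T\mathcal{A}_S\xi$, because the antisymmetric part of $\mathcal{A}$ contributes nothing to a quadratic form. Second,
\[
\eta^T\mathcal{A}\eta=(\mathcal{B}\psi)^T\mathcal{A}\mathcal{B}\psi=(\mathcal{B}\psi)^T\psi=\psi^T\mathcal{B}^T\psi=\psi^T\mathcal{B}\psi,
\]
where the last equality again uses that a quadratic form only sees the symmetric part of its matrix. Substituting both identities and taking square roots yields the claim.

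I do not expect a real obstacle here. The only points to check are these:
\begin{itemize}
\item $\mathcal{A}$ is invertible, so $\mathcal{B}$ exists. This holds because positive definiteness gives $\xi^T\mathcal{A}\xi>0$ for $\xi\neq0$, so $\mathcal{A}\xi\neq0$ for $\xi\neq0$.
\item $\psi^T\mathcal{B}\psi\ge0$, so the square root is meaningful. This follows from $\psi^T\mathcal{B}\psi=\eta^T\mathcal{A}\eta\ge0$.
\end{itemize}
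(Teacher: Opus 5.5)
Your proof is correct and is essentially the paper's argument: both rewrite $\xi\cdot\psi$ as a value of the bilinear form of $\mathcal{A}$ by inserting the inverse, apply \eqref{vwm}, and then identify the quadratic forms via their symmetric parts. The only cosmetic difference is that you write $\psi=\mathcal{A}\mathcal{B}\psi$ and apply \eqref{vwm} to the pair $(\xi,\mathcal{B}\psi)$, whereas the paper writes $\xi=\mathcal{B}\mathcal{A}\xi$ and applies it to the pair $(\xi,\mathcal{B}^T\psi)$.
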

			
			\begin{proof}
				For any~$\xi$, $\psi \in \R^n$, exploiting~\eqref{vwm}, we have that	\begin{eqnarray*}&&
|\xi\cdot \psi| = |  \mathcal{B}\mathcal{A}\xi\cdot\psi| = |\mathcal{A}\xi\cdot
\mathcal{B}^T\psi |
\leq \sqrt{K_{\mathcal{A}}\big( \mathcal{A}\xi\cdot\xi\big) \big(
\mathcal{A}\mathcal{B}^T\psi\cdot\mathcal{B}^T\psi\big)} 
\\&&\qquad=   \sqrt{
K_{\mathcal{A}} \big( \mathcal{A}\xi\cdot\xi\big)\big( \mathcal{B}^T\psi\cdot\psi\big)}.
			\end{eqnarray*}	Moreover,
				\[
\big( \mathcal{A}\xi\cdot\xi\big)\big( \mathcal{B}^T\psi\cdot\psi\big)
= \big( \mathcal{A}_S \xi\cdot\xi\big)\big( \mathcal{B}\psi\cdot\psi\big)
= \big( \mathcal{A}_S \xi\cdot\xi\big) \big( \mathcal{B}\psi\cdot\psi\big).\]
These observations give the desired result.
			\end{proof}

\section*{Declarations}

\begin{itemize}
\item Ethical Approval: NOT APPLICABLE.
\item Funding: this work has been supported by the Australian Research Council Laureate Fellowship FL190100081 and
by the Australian Future Fellowship FT230100333.
\item Availability of data and materials: NOT APPLICABLE. 
\end{itemize}

\end{document}